\documentclass[article, 12pt]{amsart}

\usepackage{amssymb,latexsym}
\usepackage{amsmath}
\usepackage[left=1in,top=1in,right=1in,bottom=1in]{geometry} 
\usepackage{fancyhdr,lastpage} 
\usepackage{color}
\usepackage{leftidx}
\usepackage{mathrsfs}
\usepackage{verbatim}
\usepackage{amsthm}
\usepackage{wasysym}
\usepackage{upgreek}
\usepackage{accents}
\usepackage{fancyhdr}
\usepackage{times}

\oddsidemargin = -.25in
\evensidemargin= -.25in
\textwidth=7in
\textheight = 9.2 in
\topmargin =-.5in

\pagestyle{fancy}
\headheight 25pt

\rhead[]{\thepage}
\chead[\textsf{The Future Stability of the dust FLRW Family} \\ ]{\textsf{The Future Stability of the dust FLRW Family} \\ }
\lhead[\thepage]{}
\rfoot[]{}
\cfoot{}
\lfoot{}

\begin{document}

\sloppy
\renewcommand{\theequation}{\arabic{section}.\arabic{equation}}
\thinmuskip = 0.5\thinmuskip
\medmuskip = 0.5\medmuskip
\thickmuskip = 0.5\thickmuskip
\arraycolsep = 0.3\arraycolsep

\newtheorem{theorem}{Theorem}[section]
\newtheorem{lemma}[theorem]{Lemma}
\newtheorem{proposition}[theorem]{Proposition}
\newtheorem{corollary}[theorem]{Corollary}
\newtheorem{definition}{Definition}[section]
\newtheorem{remark}{Remark}[section]
\renewcommand{\thetheorem}{\arabic{section}.\arabic{theorem}}
\renewcommand{\thefootnote}{\alph{footnote}}
\newcommand{\prf}{\noindent{\bf Proof.}\ }
\def\prfe{\hspace*{\fill} $\square$

%\numberwithin{equation}{subsection}

\smallskip \noindent}

\def\be{\begin{equation}}
\def\ee{\end{equation}}
\def\bea{\begin{eqnarray}}
\def\eea{\end{eqnarray}}
\def\beas{\begin{eqnarray*}}
\def\eeas{\end{eqnarray*}}

\newcommand{\R}{\mathbb R}
\newcommand{\T}{\mathbb T}
\newcommand{\K}{\mathbb S}
\newcommand{\leftexp}[2]{{\vphantom{#2}}^{#1}{#2}}
\newcommand{\eqdef}{\overset{\mbox{\tiny{def}}}{=}}

\newcommand{\gzerozeronorm}[1]{S_{(g_{00},\partial g_{00});#1}}
\newcommand{\gzerozerounorm}[1]{S_{\u(g_{00},\partial g_{00});#1}}
\newcommand{\gzerozeroellipticnorm}[1]{S_{\underpartial \partial g_{00};#1}^e}

\newcommand{\gzerostarnorm}[1]{S_{(g_{0*},\partial g_{0*});#1}}
\newcommand{\gzerostarunorm}[1]{S_{\u(g_{0*}, \partial g_{0*});#1}}
\newcommand{\gzerostarellipticnorm}[1]{S_{\underpartial \partial g_{0*};#1}^e}

\newcommand{\hstarstarnorm}[1]{S_{\partial h_{**};#1}}
\newcommand{\hstarstarunorm}[1]{S_{\u h_{**};#1}}
\newcommand{\partialhstarstarunorm}[1]{S_{\u \partial h_{**};#1}}
\newcommand{\combinedpartialupartialhstarstarnorm}[1]{S_{\u (h,\partial h_{**});#1}}
\newcommand{\hstarstarellipticnorm}[1]{S_{\underpartial \partial h_{**};#1}^e}

\newcommand{\totalellipticnorm}[1]{S_{\underpartial \partial g;#1}^e}

\newcommand{\gnorm}[1]{S_{(g,\partial g);#1}}
\newcommand{\gunorm}[1]{S_{\u(g,\partial g);#1}}

\newcommand{\totalnorm}{\mathbf{S}_{(Total)}}
\newcommand{\totalbelowtopnorm}[1]{S_{(g,\partial g,u^*,\varrho);N-1}}
\newcommand{\totalbelowtopunorm}[1]{S_{\u (g,\partial g, u^*, \varrho);N-1}}

\newcommand{\velocitynorm}[1]{S_{u^*;#1}}
\newcommand{\topvelocitynorm}[1]{S_{\underpartial u^*;#1}}
\newcommand{\velocityunorm}[1]{S_{\u u^*;#1}}
\newcommand{\densnorm}[1]{S_{\varrho;#1}}
\newcommand{\densunorm}[1]{S_{\u \varrho;#1}}
\newcommand{\fluidnorm}[1]{S_{(u,\varrho);#1}}
\newcommand{\fluidunorm}[1]{S_{\u (u^*,\varrho);#1}}

\newcommand{\newufluidenergy}[1]{\mathscr{E}_{u^*;#1}}
\newcommand{\gzerozeroenergy}[1]{E_{(g_{00},\partial g_{00});#1}}
\newcommand{\gzerostarenergy}[1]{E_{(g_{0*},\partial g_{0*});#1}}
\newcommand{\partialhstarstarenergy}[1]{E_{\partial h_{**};#1}}
\newcommand{\hstarstarenergy}[1]{E_{(Alternate);\underpartial h_{**};#1}}

\newcommand{\gzerozeropartialuenergy}[1]{E_{(\u g_{00}, \partial \u g_{00});#1}}
\newcommand{\gzerostarpartialuenergy}[1]{E_{(\u g_{0*},\partial \u g_{0*});#1}}
\newcommand{\partialhstarstarpartialuenergy}[1]{E_{\partial \u h_{**};#1}}
\newcommand{\hstarstarpartialuenergy}[1]{E_{\u h_{**};#1}}

\newcommand{\fluidenergy}[1]{E_{\text{fluid};#1}}
\newcommand{\totalenergy}{\mathbf{E}_{(Total)}}

\newcommand{\velocityenergy}[1]{E_{u^*;#1}}
\newcommand{\topordervelocityenergy}[1]{E_{\underpartial u^*;#1}}
\newcommand{\densenergy}[1]{E_{\dens;#1}}

\newcommand{\genergy}[1]{E_{(g,\partial g);#1}}
\newcommand{\guenergy}[1]{{E_{(\u g,\partial \u g);#1}}}

\newcommand{\totalbelowtopenergy}[1]{E_{(g,\partial g,u^*,\varrho);N-1}}
\newcommand{\totalbelowtopuenergy}[1]{E_{(\u g,\partial \u g, \u u^*, \u \varrho);N-1}}

\newcommand{\speed}{c_s}
\newcommand{\underpartial}{\underline{\partial}}

\newcommand{\todo}[1]{\vspace{5 mm}\par \noindent

\marginpar{\textsc{ \hspace{.2 in}   \textcolor{red}{ To Fix}}} \framebox{\begin{minipage}[c]{0.95
\textwidth} \tt #1 \end{minipage}}\vspace{5 mm}\par}

\def\dens{\varrho}
\def\g{\partial}
\def\u{\partial_{{\bf u}}}
\def\H{\mathcal{H}_{N-1}}
\def\A{\mathcal{A}}
\def\E{\mathcal{ E}}
\def\D{\mathcal{D}}
\def\F{\mathcal{F}}
\def\M{\mathcal{M}}
\def\N{\mathcal{N}}
\def\div{\mbox{div}}
\def\curl{\mbox{curl}}
\def\a{\partial_{\vec{\alpha}}}
\def\b{\partial_{\vec{\beta}}}
\def\open#1{\setbox0=\hbox{$#1$}
\baselineskip = 0pt
\vbox{\hbox{\hspace*{0.4 \wd0}\tiny $\circ$}\hbox{$#1$}}
\baselineskip = 11pt\!}

\numberwithin{equation}{section} 

\title{The Global Future Stability of the FLRW Solutions to the Dust-Einstein System with a Positive Cosmological Constant}
\author{Mahir Had\v zi\'c $^{*}$
\and 
Jared Speck$^{**}$}

\thanks{$^*$King's College London, Department of Mathematics, Strand, London, WC2R2LS, UK. 
\texttt{mahir.hadzic@kcl.ac.uk}}

\thanks{$^{**}$Massachusetts Institute of Technology, Department of Mathematics, 77 Massachusetts Ave, Room E18-328, Cambridge, MA 02139-4307, USA. \texttt{jspeck@math.mit.edu}}

\thanks{$^{*}$ MH gratefully acknowledges support from NSF grant \# DMS-1211517.}

\thanks{$^{**}$ JS gratefully acknowledges support from NSF grant \# DMS-1162211 
and from a Solomon Buchsbaum grant administered by the Massachusetts Institute of Technology.
}

\begin{abstract}
  We study small perturbations of the well-known family of
  Friedman-Lema\^{\i}tre-Robertson-Walker (FLRW) solutions to the dust-Einstein system with a positive cosmological constant
  in the case that the spacelike Cauchy hypersurfaces are diffeomorphic to $\mathbb{T}^3.$
  These solutions model a quiet pressureless fluid 
  in a dynamic spacetime undergoing accelerated expansion.
  We show that the FLRW solutions are nonlinearly globally future-stable under small perturbations of
	their initial data.
	Our analysis takes place relative to a harmonic-type coordinate system, in which 
	the cosmological constant results in the presence of dissipative terms in the evolution equations.
	Our result extends the results of ~\cite{iRjS2012,jS2012,cLjVK}, 
	where analogous results were proved for the Euler-Einstein system under the equations of state $p = c_s^2 \rho$, 
	$0<c_s^2 \leq 1/3$. 
	The dust-Einstein system is the Euler-Einstein system with $c_s=0.$ The main difficulty that we
	overcome is that the energy density of the dust loses one degree of differentiability
	compared to the cases $0 < c_s^2 \leq 1/3.$ Because the dust-Einstein equations are coupled, 
	this loss of differentiability introduces new obstacles for
	deriving estimates for the top-order derivatives of all solution variables.
	To resolve this difficulty, we commute the equations with a well-chosen 
	differential operator and derive a collection of elliptic estimates
	that complement the energy estimates of ~\cite{iRjS2012,jS2012}. 
	An important feature of our analysis is that we are able to close our estimates even though
	the top-order derivatives of all solution variables can grow much more rapidly 
	than in the cases $0<c_s^2 \leq 1/3.$
	Our results apply in particular to small compact perturbations of the vanishing dust state.

\bigskip

\noindent \textbf{Keywords} accelerated expansion; cosmological constant; geodesically complete; 
Leray hyperbolic; pressureless fluid; wave coordinates

\bigskip

\noindent \textbf{Mathematics Subject Classification (2010)} Primary: 35A01; Secondary: 35L99, 35Q31, 35Q76, 83C05, 83F05

\end{abstract}

\setcounter{tocdepth}{1}

\maketitle

\tableofcontents

\section{Introduction}
A commonly used cosmological model for the evolution of a matter-containing universe is that of a fluid coupled to the 
Einstein field equations of general relativity (see Ch. 5 of~\cite{rW1984} or Ch. 10 of~\cite{sHgE1973}). 
Such systems of equations go under the general 
name of Euler-Einstein systems, and in this article, we study a particular case: the so-called
dust-Einstein system. Dust is a fluid in which the pressure is zero.
The dust model plays an important role in cosmology, where it is often
used to model the matter content of the universe starting in the so-called ``matter-dominated'' era.
The dynamic quantities in the dust-Einstein system 
are the spacetime manifold $\mathcal{M},$ the Lorentzian spacetime metric $g_{\mu \nu}$, 
the dust mass-energy density $\rho$, and the dust four-velocity $u^{\mu}.$
Relative to an arbitrary coordinate system, the dust-Einstein system can be expressed as\footnote{ Throughout the article, we use Greek letters to denote ``spacetime'' indices varying from 0 to 3, and Latin letters to denote ``spatial" indices varying between 1 and 3.}
\begin{subequations}
\label{E:dege}
\begin{align}
\text{Ric}_{\mu\nu}-\frac{1}{2}Rg_{\mu\nu}+\Lambda g_{\mu\nu}&=T_{\mu\nu}, && (\mu,\,\nu=\,0,1,2,3), \label{E:metricge}\\
D_{\alpha}T^{\alpha\mu}&=0, && \quad (\mu=0,1,2,3),\label{E:velocityge}\\
g_{\alpha \beta} u^{\alpha} u^{\beta}&=-1. &&\,\label{E:massshell1}
\end{align}
\end{subequations}
Above, $\text{Ric}_{\mu\nu}$ denotes the Ricci tensor of $g_{\mu \nu},$ 
$R$ is the scalar curvature of $g_{\mu \nu},$ 
$\Lambda>0$ is a fixed positive constant known as the {\em cosmological constant}, 
the energy momentum tensor $T_{\mu\nu}$ of the dust is given by
\begin{align} \label{E:energymomentumdust}
T_{\mu\nu} = \rho\, u_{\mu}u_{\nu}, \quad \ \ (\mu,\,\nu=\,0,1,2,3)\,,
\end{align}
and $D_{\mu}$ denotes the covariant derivative corresponding to $g_{\mu \nu}.$
Equations~\eqref{E:metricge} are the Einstein-field equations, while  
Eqs.~\eqref{E:velocityge}-\eqref{E:massshell1}
model the evolution of the pressureless dust. 
As we mentioned above, the system~(\ref{E:dege}) is a special case of a family of PDEs
known as Euler-Einstein systems; the family is parameterized by 
the choice of an equation of state, which is an equation that relates the fluid variables.
Euler-Einstein systems comprise Eqs. \eqref{E:metricge}-\eqref{E:massshell1},
but the dust energy-momentum tensor \eqref{E:energymomentumdust} is replaced with
the energy-momentum tensor of a perfect fluid:
\[
T_{\mu\nu}=(\rho+p)u_{\mu}u_{\nu}+pg_{\mu\nu}.
\]
Above, $p$ denotes the fluid pressure. The Euler-Einstein equations are not
closed because there are too many fluid variables. In order to close the equations, one can,
for example, prescribe a barotropic equation of state $p = f(\rho).$
The dust-Einstein system is a therefore a special case of the Euler-Einstein system in which 
the equation of state is $p = 0.$

The dust-Einstein system \eqref{E:dege}-\eqref{E:energymomentumdust} admits an important family of explicit
solutions known as the Friedman-Lema\^{\i}tre-Robertson-Walker solutions (from now on FLRW solutions). The FLRW solutions
serve as a model of a spatially homogeneous, isotropic,
dust-containing universe exhibiting accelerated expansion.
For a fixed constant $\Lambda>0$, the FLRW solutions are the quadruple $((-\infty,\infty)\times\T^3,\tilde{g}, \tilde{u},\tilde{\rho})$, where 
$\T^3=[-\pi,\pi]^3$ (with the endpoints identified) is the three-dimensional torus and 
\be\label{E:babyFLRW}
\tilde{g}=-dt^2+a^2(t)\sum_{i=1}^3(dx^i)^2, \ \ \tilde{u}=(1,0,0,0), \ \ \tilde{\rho}=a^{-3}(t)\bar{\dens},
\ \ a(t)\sim C e^{H t}.
\ee
We remark that corresponding FLRW solutions exist for many other spatial topologies besides $\mathbb{T}^3;$
for simplicity, we restrict our attention to the case of $\mathbb{T}^3.$
The scale factor $a(t)$ above captures the spatial expansion rate of the spacetime metric $\tilde{g}.$
Its asymptotic behavior is $a(t) \sim C e^{Ht}$ (see Lemma \ref{L:backgroundaoftestimate}).
The FLRW fluid is quiet, that is, $\tilde{u}=(1,0,0,0)$, $\bar{\dens} \geq 0$ is a non-negative constant, and
\begin{align}
H=\sqrt{\frac{\Lambda}{3}}
\end{align}
is known as the Hubble constant.
As we will explain in Sect.~\ref{S:backgroundsolution}, the FLRW solutions 
$((-\infty,\infty)\times\T^3,\tilde{g}, \tilde{u},\tilde{\rho})$
indeed solve \eqref{E:dege}, and they exhibit the following important accelerated expansion property:
\[
\frac{d^2}{dt^2}a(t)>0.
\]
\textbf{The main goal of this article is to show that the FLRW solutions \eqref{E:babyFLRW} are globally future-stable
solutions to the dust-Einstein system.} We now present an informal version of our main result, 
which is rigorously stated and proved in Theorem~\ref{T:maintheorem}.
\begin{theorem}[\textbf{Informal statement of the main result}]
Let $\Lambda>0$ be a fixed cosmological constant. 
The FLRW solutions $([0,\infty)\times\T^3,\tilde{g},\tilde{u},\tilde{\rho})$ of the dust-Einstein system 
given by~\eqref{E:babyFLRW} are globally future-stable. More precisely,
small perturbations of the data (given on $\mathbb{T}^3$) of the
FLRW solutions launch maximal globally hyperbolic developments that are
future geodesically complete and whose future halves are diffeomorphic to $[0,\infty)\times\T^3.$
\end{theorem}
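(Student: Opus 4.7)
The plan is to follow the wave-coordinate approach of Ringström, Rodnianski--Speck, and the referenced works, adapted to handle the derivative loss inherent in the pressureless case. First, I would fix a harmonic-type (wave) gauge in which the reduced Einstein equations become a quasilinear system of wave equations for the metric components $g_{\mu\nu}$, with the cosmological constant producing a friction/dissipative term of the form $H\partial_t g_{\mu\nu}$ after appropriate rescaling. In this gauge, writing $g=\tilde g+h$, $u=\tilde u+\delta u$, $\varrho=\tilde\varrho(1+\delta\varrho)$ and renormalizing by appropriate powers of $a(t)$, the evolution for $h_{\mu\nu}$ becomes a wave equation with Hubble friction, while the dust equations \eqref{E:velocityge}-\eqref{E:massshell1} become a transport system along the integral curves of $\mathbf u$ for $(u^*,\varrho)$.

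Next I would set up a bootstrap/continuity argument on a time interval $[0,T_{\max})$ with a smallness assumption on the composite norm $\totalnorm$ built from the energy pieces $\genergy{N}$, $\fluidenergy{N}$ and the $\u$-commuted analogues $\guenergy{N}$, $\fluidunorm{N}$. For the wave variables I would run the standard Rodnianski--Speck energy estimates that exploit the dissipation from $\Lambda$ to show that the energies, after a weight of $a(t)^{-q}$ for suitable $q$, grow at most mildly. For the fluid, the key structural observation is that $\partial_{\mathbf u}$ commutes nicely with the transport equations \eqref{E:velocityge} modulo controllable commutator terms, so commuting with $\partial_{\mathbf u}$ and pure spatial derivatives yields $L^2$ control of $(\partial^{\leq N}u^*,\partial^{\leq N-1}\varrho)$ and $(\partial_{\mathbf u}\partial^{\leq N-1}u^*,\partial_{\mathbf u}\partial^{\leq N-1}\varrho)$.

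The main obstacle, which is where the dust case departs from $0<c_s^2\le 1/3$, is closing the top-order estimates: the term $u^\alpha\partial_\alpha\varrho$ in \eqref{E:velocityge} does not give symmetric-hyperbolic control of $\partial^N\varrho$, so $\varrho$ genuinely loses one derivative relative to $h$ and $u^*$. To resolve this I would do two things in tandem. First, I would not attempt to estimate $\partial^N\varrho$ in $L^2$ via energy methods at all; instead I would control $\partial^{N-1}\varrho$ and $\partial_{\mathbf u}\partial^{N-1}\varrho$ from the transport equation, while estimating the $N$-th derivatives of $h$ and $u^*$ by energies. Second, wherever a product of the form ``$\varrho\cdot(\text{top-order metric term})$'' appears in the wave equations at order $N$, I would trade the missing derivative on $\varrho$ for an elliptic estimate on the metric: specifically, the gauge condition and the Hamiltonian/momentum-type constraints give elliptic control of $\underpartial\partial g_{00}$, $\underpartial\partial g_{0*}$, $\underpartial\partial h_{**}$ by lower-order $L^2$-quantities and the fluid variables. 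The elliptic norms $\gzerozeroellipticnorm{N}$, $\gzerostarellipticnorm{N}$, $\hstarstarellipticnorm{N}$ declared in the macros section are designed precisely for this purpose; combining them with the wave energies and the $\partial_{\mathbf u}$-commuted energies yields a Grönwall-type inequality for $\totalnorm$ with favorable exponential weights from the $\Lambda$-dissipation, even though individual top-order quantities are permitted to grow faster than in the $c_s^2>0$ case.

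Once the bootstrap is closed and $T_{\max}=\infty$, global existence in harmonic coordinates follows from a standard continuation criterion, and the decay rates on $h$ and its derivatives imply that the rescaled metric $a^{-2}g_{ij}$ converges and that timelike and null geodesics are future-complete by the usual affine-parameter/ODE argument (integrating the geodesic equation against the decay of Christoffel symbols in the rescaled coordinates). That the maximal globally hyperbolic development is diffeomorphic to $[0,\infty)\times\T^3$ then follows because the constant-$t$ slices remain spacelike Cauchy hypersurfaces throughout the evolution.
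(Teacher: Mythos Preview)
Your overall strategy matches the paper's: wave-coordinate gauge giving dissipative wave equations for the metric, transport equations for $(u^*,\varrho)$, commutation with $\u$, control of $\varrho$ only at order $N-1$, elliptic recovery of the top-order spatial derivatives of the metric, and a bootstrap closed via Gronwall with $\Lambda$-induced decay. Two points, however, differ from or are missing relative to the paper, and the first one is a genuine gap.

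\textbf{The elliptic estimate does not come from the constraints.} You propose to obtain elliptic control of $\underpartial\partial g$ from ``the gauge condition and the Hamiltonian/momentum-type constraints.'' The paper does not do this, and it is not clear your route would close. The paper's key elliptic estimate (Lemma~\ref{L:elliptic}) is obtained purely algebraically from the reduced wave operator: one writes $\partial_t=\tfrac{1}{u^0}\u-\tfrac{u^a}{u^0}\partial_a$, substitutes into $\hat\square_g v=g^{00}\partial_{tt}v+2g^{0a}\partial_t\partial_a v+g^{ab}\partial_a\partial_b v$, and arrives at an identity of the form $H^{ab}\partial_a\partial_b v=e^{2\Omega}\hat\square_g v+(\text{$\u$-derivative terms})$, with $H^{ab}$ uniformly elliptic. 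Then $\hat\square_g v$ is replaced using the wave equation itself, whose right-hand side involves $\varrho$ (not $\partial\varrho$) and is therefore controllable at order $N-1$. This is what permits recovery of $\|\underpartial^{(2)}\varphi\|_{H^{N-1}}$ from the $\u$-commuted energies. By contrast, the Gauss constraint couples $\underpartial^{(2)}g_{jk}$ back to $\rho$ at the same order and is a single scalar relation; combined with the first-order gauge conditions it does not obviously yield the full second-order elliptic control you need with the correct $t$-weights. Your phrase ``trade the missing derivative on $\varrho$ for an elliptic estimate'' also mislocates the difficulty: the issue is not products $\varrho\cdot(\text{top-order metric})$ in the wave equations, but rather that the top-order spatial derivatives of $u^j$ (needed to close the fluid energy) require $\partial^{(2)}\varphi$ at order $N-1$, which cannot be obtained from naive wave energies.

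\textbf{The partial-decoupling hierarchy is essential and not mentioned.} Even with the correct elliptic mechanism, a straightforward Gronwall would fail because of several ``dangerous'' linear terms with borderline $t$-behavior: the term $-2Hg^{ab}\Gamma_{ajb}$ in the $g_{0j}$ equation, the linear terms $\sim H\partial_t\a v$ generated when commuting the wave equation with $\u$ (see Lemma~\ref{L:COMMUTEDBASICSTRUCTURE} and Remark~\ref{R:DANGEROUSTOPORDERCOMMUTATORTERM}), and the Christoffel term $\triangle_{0\ 0}^{\ j}$ in the $u^j$ equation. The paper closes only because each such term is controlled by an energy that can be bounded \emph{first}, in a specific order (Props.~\ref{P:integralinequalities}--\ref{P:integralinequalitiesmetric} and the proof of Prop.~\ref{P:auxiliary}): one estimates $\partialhstarstarenergy{N-1}$, $\hstarstarenergy{N-2}$, $\gzerozeroenergy{N-1}$, $\densenergy{N-1}$ first (no dangerous terms), then $\gzerostarenergy{N-1}$, then the $\u$-commuted metric energies, then $\velocityenergy{N-1}$, and only at the end $\topordervelocityenergy{N-1}$. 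Without identifying and exploiting this hierarchy, the bootstrap does not close.
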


\begin{remark} [\textbf{Other FLRW solutions}]
Under the most commonly studied equations of state in cosmology
\begin{align*}
	p = c_s^2 \rho, \quad \ \  0\leq c_s\leq 1,
\end{align*}
there exist associated FLRW solutions to the Euler-Einstein system.
The constant $c_s$ is the speed of sound and the case $c_s=0$ corresponds precisely to the dust-Einstein case.
It has been shown in~\cite{iRjS2012, jS2012,cLjVK} 
that the FLRW family is globally
future-stable when $0<c_s^2\le1/3$. The cases $c_s^2=0$ and $c_s^2=1/3$ are of particular importance 
in cosmology literature and are called the {\em dust} and {\em pure radiation} case respectively.
In the present article, we address the ``end-point'' case $c_s^2=0.$
\noindent
We also remark that in~\cite{aR2004b}, Rendall uncovered heuristic evidence suggesting instability when $\speed ^2> 1/3.$
The heuristics were based on formal series expansions.
\end{remark}

\begin{remark} [\textbf{Compactly supported data}]
	Unlike the future stability results derived in ~\cite{iRjS2012,jS2012,cLjVK} 
	in the cases $0<c_s^2 \leq 1/3,$ the results derived in the 
	present article apply to compactly supported fluid energy-density data.
	This is the one redeeming feature of the degenerate nature of the dust model.
\end{remark}

\begin{remark} [\textbf{Reduced differentiability for $\rho$}]
The main difficulty that we overcome is that the density $\rho$
in the dust model exhibits a ``loss of a derivative"-phenomenon in contrast to the
cases $0 < c_s^2 \leq 1/3.$ This inherent degeneracy introduces difficulties even for the local well-posedness theory;
we will discuss these difficulties below. In~\cite{yCBhF2006}, Choquet-Bruhat and Friedrich proved a local well-posedness result for the dust-Einstein system 
by showing that in wave coordinate gauge, the equations form
a Leray-hyperbolic system in the unknowns $g,\,\rho,\,u$. 
The important point is that the Leray method leads to the availability of a priori Sobolev estimates for the unknowns.
For an overview of the Leray approach see~\cite{yCB2009}. 

In contrast to the methods of ~\cite{yCBhF2006}, our analysis is 
based on a method that couples elliptic estimates to the $L^2-$type energy estimates.
Our approach handles the degeneracy of the dust matter in a different way and 
in particular yields local well-posedness by a different method. Of course, the important point is that
our approach also yields estimates that allow us to prove our main future stability theorem.

\end{remark}

\begin{remark} [\textbf{No proof of convergence to FLRW}]
	We do not claim that the perturbed solution converges to an exact FLRW solution
	as $t \to \infty.$ However, one can show that relative to the wave coordinate system $(t,x^1,x^2,x^3) \in [0,\infty)\times\T^3$ 
 	introduced in Sect. \ref{SS:dustEinstein}, suitably time-rescaled components of the perturbed metric $g_{\mu \nu},$ 
	its inverse $g^{\mu \nu},$ $u^{\mu},$ $\rho,$ 
	and various coordinate derivatives of these quantities each converge to functions of the 
	spatial coordinates $(x^1,x^2,x^3)$ as $t \rightarrow \infty.$ 
	The limiting functions are close to time-rescaled components of the FLRW solution, which are constant in $t$ and $(x^1,x^2,x^3).$
	We do not prove these convergence results in this article, but they can be proved using the arguments given in
	the last section of \cite{iRjS2012}. The main idea of the proof is to revisit the equations after one has proved global existence
	and to use the already derived estimates to treat them as ``ODEs'' with small errors; such ODE-type treatment
	leads to sharper estimates. Of course, the equations are not actually ODEs and hence to carry out this approach, one must have already 
	derived suitable bounds for all terms involving spatial derivatives. We derive such suitable bounds in
	Theorem \ref{T:maintheorem}.
\end{remark}

There is a large amount of literature on the Euler-Einstein and dust-Einstein systems, and 
for additional mathematical overview, we point the reader to~\cite{aR2005b, rW1984}. For the physical context,
readers may consult the cosmological references \cite{sC2001, jPbR2003, vS2004}.
The dust matter model is also referred to as the {\em pure matter}, {\em incoherent matter}, or simply {\em matter}~\cite{yCB2009,sC2001}. 
It is a simple fluid matter model consisting of massive particles whose relative velocities are considered to be negligible.
In particular, in the dust model, on the cosmic scale, we idealize each galaxy as a ``grain of dust"~\cite{rW1984} whose individual velocities are so small that the ``pressure" thus created is negligible. 
In ~(\ref{E:dege}), we have coupled such a pressureless fluid to the Einstein-field equations with a {\em positive} cosmological constant $\Lambda.$ The cosmological constant was first added to the Einstein equations by Einstein himself~\cite{aE1917} in his effort to find a static cosmological \footnote{``Cosmological" refers to a physical theory describing the evolution of a universe as a whole.}
solution. His pursuit proved to be physically wrong, for 
Hubble's famous measurements \cite{eH1929} of the redshift of distant galaxies 
revealed that the universe is in fact expanding. Moreover, measurements of type Ia Supernovae redshift 
at the end of 1990s established that the universe is expanding in an {\em accelerated} fashion.
These experimental findings underlie the theoretical importance of the FLRW solutions: when $\Lambda > 0,$ 
they are among the simplest spatially homogeneous and isotropic solutions undergoing accelerated expansion.
The positive cosmological constant is also important in high-energy physics, where it is often 
identified with the ``vacuum energy." This formal identification follows 
if one views the vacuum as a perfect fluid under the equation of state $p = - \rho$~\cite{sC2001}. 
In particular, the ensuing cosmological models, which go under the name of 
$\Lambda$CDM-models, account for the ``dark energy'' contribution to the expansion of the universe.
%(the ``dark energy'' part of the contribution is precisely the one coming from $\Lambda$). 
%
\subsection{The initial value problem}
It was one of the basic insights of Choquet-Bruhat~\cite{CB1952} that the question of the existence of solutions to Einstein-matter
models can be formulated as an {\em initial value problem}. The conceptual difficulty in addressing this
issue arises from the diffeomorphism invariance of the Einstein equations and the lack of a canonical coordinate system.
In particular, the Einstein field equations are not hyperbolic in the standard sense.
Choquet-Bruhat overcame these difficulties by working in a so-called {\em wave coordinate system}
(these are also referred to as \emph{harmonic coordinates}), 
that is, by demanding that the constraint $\Gamma^{\mu} \eqdef g^{\alpha\beta} \Gamma_{\alpha \ \beta}^{\ \mu}=0$ is valid for
$\mu=0,1,2,3,$ where $\Gamma_{\alpha \ \beta}^{\ \mu}$ is a Christoffel symbol of $g_{\mu \nu}.$ 
The imposition of such a condition allows one to replace the Einstein-field equations 
with a ``modified'' system of equations. The rough idea is to judiciously set $\Gamma^{\mu} \equiv 0$ in certain terms. 
The modified Einstein field equations
form a manifestly hyperbolic system of quasilinear wave equations in the spacetime metric components $g_{\mu \nu}.$ 
For a variety of matter models, one can then prove local well-posedness by applying standard energy methods.
The main insight of~\cite{CB1952} was that the wave coordinate condition
is preserved by the flow of the modified equations if it is assumed to hold initially.
Hence, solutions to the modified system are also solutions to the Einstein equations.
In particular, as mentioned above, 
these ideas have been used to
prove a local well-posedness theorem for the coupled dust-Einstein system
\cite{yCB2009,yCBhF2006}.

In the present work, we employ a modification of the classic wave coordinate condition. Our modified version is
$\Gamma^{\mu}=\tilde{\Gamma}^{\mu},$ where
$\tilde{\Gamma}^{\mu}=3\omega\delta^{\mu}_0$, $(\mu=0,1,2,3),$ are
the contracted Christoffel symbols of the FLRW background metric
\eqref{E:babyFLRW} and $\omega = \omega(t) = a^{-1}(t) \frac{d}{dt} a(t) \sim H$
(see Lemma \ref{L:backgroundaoftestimate}). 
Our choice of gauge is closely related 
to the one used by Ringstr\"om~\cite{hR2008}, and it belongs to a general class of 
modified wave coordinate gauges introduced
by Friedrich and Rendall~\cite{hFaR2000}. 
As we will see, the gauge $\Gamma^{\mu}=\tilde{\Gamma}^{\mu}$ 
leads to the presence of dissipative terms in the modified equations and hence is
well-suited for studying the global structure of near-FLRW solutions
(for details see Sect.~\ref{S:modified}).

Let us now recall some basic facts concerning the initial data for the dust-Einstein system.
An initial data set consists of a $3-$dimensional Riemannian manifold $\mathring{\Sigma}$
(throughout this article $\mathring{\Sigma} = \mathbb{T}^3$)
and the following tensorfields on $\mathring{\Sigma}:$ a Riemannian metric $\mathring{\underline{g}}_{jk},$ 
$(j,k = 1,2,3),$ 
a symmetric covariant two-tensor $\mathring{\underline{K}}_{jk},$ a function $\mathring{\rho},$ and a vectorfield $\underline{\mathring{u}}^j.$ 
A solution consists of a $4-$dimensional manifold $\mathcal{M},$ a Lorentzian metric $g_{\mu \nu},$ a function $\rho,$ a future-directed unit-normalized 
vectorfield $u^{\mu},$ $(\mu, \nu = 0,1,2,3),$ on $\mathcal{M}$ satisfying \eqref{E:metricge}-\eqref{E:massshell1}, and an 
embedding $\mathring{\Sigma} \hookrightarrow \mathcal{M}$ such that $\mathring{\underline{g}}_{jk}$ is the first fundamental form
\footnote{Recall that $\mathring{\underline{g}}$ is defined at the point $x$ by $\mathring{\underline{g}}(X,Y) = g(X,Y)$ for all $X,Y \in T_x \mathring{\Sigma}.$} 
of $\mathring{\Sigma},$ $\mathring{\underline{K}}_{jk}$ is the second fundamental form
\footnote{Recall that $\mathring{\underline{K}}$ is 
defined at the point $x$ by $\mathring{\underline{K}}(X,Y) \eqdef g(D_{X} \hat{N},Y)$ for all $X,Y \in T_x \mathring{\Sigma},$ where $\hat{N}$ is the 
future-directed unit normal to $\mathring{\Sigma}$ at $x.$} 
of $\mathring{\Sigma},$ the restriction of $\rho$ to $\mathring{\Sigma}$ is $\mathring{\rho},$ 
and $(\underline{\mathring{u}}^1, \underline{\mathring{u}}^2, \underline{\mathring{u}}^3)$ is the $g-$orthogonal projection of $(u^0,u^1,u^2,u^3)$ 
onto $\mathring{\Sigma}$. 
%see Sect. \ref{SS:IDENTIFICATIONS} for a summary of the conventions we use for identifying tensors inherent 
%to $\mathring{\Sigma}$ with spacetime tensors. 

It is well known that the initial value problem for the Einstein equations is overdetermined
and that additional constraint equations on
the initial data must be imposed; see Sect.~\ref{SS:dustEinstein}.
In this article, we do not address the problem
of proving the existence of solutions the constraint equations for the dust-Einstein system.

In 1969, Choquet-Bruhat and Geroch~\cite{cBgR1969} proved a fundamental result for Einstein-matter systems. They
showed that every initial data set satisfying the constraint equations
launches a unique {\em maximal globally hyperbolic development}, which roughly corresponds
to the largest possible spacetime solution uniquely determined by the data. However,
this abstract existence result does not provide any quantitative information about the nature of the
maximal solution. An important consequence
of the main theorem of this paper is our characterization 
of the maximal globally hyperbolic developments of near-FLRW data
as {\em future causally geodesically complete}. In particular, their future halves are 
free of singularities in both the metric and the dust.

\subsection{Previous results and related work}
Roughly speaking, there are only two solution regimes for the $1+3$ dimensional 
Einstein equations in which global results have been proved without any symmetry assumptions.
The first regime corresponds to $\Lambda = 0$ and 
concerns the stability of the Minkowski spacetime solution.
The groundbreaking, fully covariant proof 
of the stability of Minkowski spacetime as a solution to the Einstein-vacuum equations 
was first provided by Christodoulou and Klainerman \cite{dCsK1993} relative to a
foliation of spacetime by outgoing null cones and maximal constant-time hypersurfaces
$\Sigma_t.$
A shorter, less precise proof, which
also applies to the Einstein-scalar field system, was provided
by Lindblad-Rodnianski \cite{hLiR2010} relative to a wave coordinate gauge.
The following extensions of these results have also been obtained: 
\textbf{i)} Bieri's improvement of Christodoulou-Klainerman's
regularity and decay assumptions on the data \cite{lB2007}, 
\textbf{ii)}
Zipser's \cite{nZ2000} and 
Loizelet's \cite{jL2008, jL2009}
allowing for coupling to the Maxwell equations,
\textbf{iii)} the second author's allowing for coupling 
to a large family of nonlinear electromagnetic equations \cite{jS2010b}, 
and \textbf{iv)} extensions to higher spatial dimension \cite{yCBpCjL2006}.
All of these results except for \cite{yCBpCjL2006} are based on the availability of
a complicated hierarchy of \emph{dispersive-type} estimates for nearly-Minkowskian
solutions to the Einstein equations. In this regime, the Einstein equations can be
roughly viewed as dispersive wave equations on $\mathbb{R}^{1+3}$ with some very delicate
quadratic nonlinearities that are on the border of what allow for global existence.

The second regime in which global results have been obtained requires $\Lambda > 0$
and concerns the future stability of FLRW-like solutions, such as the ones considered in the present
article. As we will see, in contrast to the first class, these results can be obtained
by choosing a gauge such that $\Lambda > 0$ induces
\emph{energy dissipation} in the solutions.
In particular, the mathematical analysis used to analyze this regime differs markedly from
the dispersive analysis used to prove the stability of Minkowski spacetime.
In particular, it is possible to prove future stability results even when the constant-time
slices are diffeomorphic to a compact manifold, as is the case in the present article.
The first global stability results for the Einstein equations with $\Lambda>0$ were established by Friedrich for the
Einstein-vacuum equations~\cite{hF1986a} and then for the Einstein-Maxwell and Einstein-Yang-Mills equations~\cite{hF1991}. 
These results rely on the so-called {\em conformal method}, which 
was developed by Friedrich and which
reduces the question of global future stability 
to a question of local-in-time solvability for an equivalent problem obtained via a change of state-space variables.
Friedrich's stability results for vacuum spacetimes were later extended by Anderson 
\cite{mA2005} to apply to $1 + n$ dimensional spacetimes, where $n$ is odd. 
Although the conformal method is well suited for analyzing matter models with trace-free
energy momentum tensors, it unfortunately seems to be less well suited for studying general matter models.

A more robust method for addressing the question of stability in the presence of a positive cosmological
constant was introduced by Ringstr\"om~\cite{hR2008} in his study of  
Einstein-nonlinear scalar field systems. 
In particular, he introduced a version of the wave coordinate condition that is essentially equivalent
to the one used in this paper. The coordinate system allowed him to 
expose the strong dissipative effect associated with the positive cosmological constant, 
and this dissipation played a crucial role in his work. The scalar field in \cite{hR2008}
was assumed to verify the evolution equation $g^{\alpha \beta}D_{\alpha} D_{\beta} \Phi=V'(\Phi),$ 
and the corresponding energy-momentum tensor is
$T_{\mu\nu}=\g_{\mu}\Phi\g_{\nu}\Phi-[\frac{1}{2}g^{\alpha\beta}\g_{\alpha}\Phi\g_{\beta}\Phi+V(\Phi)]g_{\mu\nu}$.
The ``potential'' $V$ was assumed to verify $V(0)= V'(0)=0$ and $V''(0)>0$. 
Although Ringstr\"om assumed that $\Lambda=0$, the presence of the potential $V$, when $\Phi$ is small, effectively
emulates the presence of a positive cosmological constant. Such models are closely related to the $\Lambda$CDM models mentioned above.
Ringstr\"om's main result was a proof of the global future stability of a large family of FLRW-like solutions to 
Einstein-nonlinear scalar field systems. 
We remark that prior to Ringstr\"om's work, 
the long time asymptotics of spatially homogeneous solutions 
to Einstein-nonlinear scalar field systems
had been studied by Rendall~\cite{aR2004c}.
Ringstr\"om also proved global stability results without symmetry assumptions
for the Einstein-Vlasov system with a positive cosmological constant 
in his recent monograph ~\cite{hR2013}. 
Prior to the monograph, stability results with various symmetry assumptions 
had been proved in~\cite{aRsbT2003,sbTnN}.

Motivated by~\cite{hR2008}, Rodnianski and the second author initiated a systematic 
study of the future stability properties of the FLRW solutions to the Euler-Einstein system
with a positive cosmological constant under the equation of state $p=c_s^2\rho$. 
The main new difficulty that the authors had to
overcome is that unlike the the scalar field models studied
by Ringstr\"om \cite{hR2008},
the fluid models admit solutions that develop shock singularities in finite time.
We will elaborate on this issue just below.
Under the assumptions that the fluid is irrotational, 
that $0<c_s^2<1/3,$ and that the initial (uniform) FLRW fluid pressure is strictly positive,
they proved~\cite{iRjS2012} the global future stability of the FLRW family.
The assumption of irrotationality was later removed by the second author~\cite{jS2012}.
The main new contribution of those articles was showing that 
in the parameter regime $0<c_s^2<1/3,$ the rapid spacetime expansion 
provides enough dissipation to suppress the formation of shocks in the near-FLRW regime.

We now describe some related future stability results.
The global future stability of the FLRW family in the pure radiation case $c_s^2=1/3$ 
with a positive cosmological constant
was recently established by Valiente Kroon and L\"ubbe~\cite{cLjVK} 
using Friedrich's conformal method. As we mentioned above, the conformal method is often applicable when the energy-momentum tensor
of the matter is trace-free. The important point is that the energy-momentum tensor of the fluid is trace-free if and only if 
$p = (1/3) \rho.$ 
A global future stability result for the dust-Einstein system with a positive cosmological constant under the assumption of planar symmetry
was proved in \cite{sT2008}. The case of planar symmetry is much simpler than the $1+3$ dimensional
case treated in the present article. In particular, the plane-symmetric case can be treated without
the elliptic estimates that play a fundamental role in the present article (see below). 
Similarly, a global future stability result for the
the plane symmetric Einstein-stiff fluid system 
($c_s = 1$) with a positive cosmological constant
was proved in \cite{pLsT2011}.

We also mention the global stability result \cite{lAvM2004}, which does not fit into either of  
the previously mentioned classes. In \cite{lAvM2004}, Andersson and Moncrief 
proved a global stability result in the expanding direction 
for a compactified version of the vacuum FLRW solution with $\Lambda = 0.$
The spatial slices of the spacetimes are hyperboloidal (that is, they have constant negative sectional curvature),
and the perturbed solutions were shown to be future geodesically complete and to decay towards the background
solution.

An important conclusion of the above sequence of works is that nearly-quiet fluids with 
$0 < c_s^2 \leq 1/3$ are {\em stabilized} in the presence of a positive cosmological constant.
More precisely, if one were to fix an FLRW background metric 
$\tilde{g}$ and to analyze the relativistic Euler equations
on such a fixed background with $0<c_s^2<1/3$, then the methods of~\cite{jS2012} could easily be adapted to show that the 
FLRW-type fluid solutions are globally future-stable. This
stands in stark contrast to the analogous situation on the Minkowski spacetime background, 
in which Christodoulou~\cite{dC2007b,dC2007} showed that under a general physical equation of state\footnote{Christodoulou identified one exceptional equation of state for the relativistic Euler equations that in the irrotational case leads to the minimal surface equation in Minkowski spacetime. The corresponding quasilinear wave equation verifies the well-known \emph{null condition} and hence admits 
global small-data solutions.}, 
shocks singularities will develop in solutions launched by smooth data belonging to 
an open set that contains data arbitrarily close to the uniform quiet fluid states\footnote{The results of \cite{dC2007} do not apply to the dust.}. 
In a related article that addressed fluids 
verifying $0 \leq c_s^2 \leq 1/3$ 
on a {\em fixed} spacetime background 
$\big([0,\infty) \times \mathbb{T}^3, - dt^2 + a^2(t) \sum_{i=1}^3 (dx^i)^2 \big),$
the fluid's future behavior has recently been characterized~\cite{jS2012b} 
by the second author in a near-FLRW-type fluid solution regime.
In particular, the second author identified a sharp time-integrability criterion for the reciprocal expansion factor $a^{-1}(t)$ that characterizes the fluid's future behavior. If the condition is violated, then in the case 
$c_s^2=1/3,$ it was shown that arbitrarily small perturbations of the FLRW-type fluid data can lead to the formation
of a shock singularity in finite time. The main idea of the proof was to use the conformal invariance of the relativistic Euler equations (which holds if and only if $c_s^2=1/3$)
to reduce the result to the well-known shock-formation result of Christodoulou~\cite{dC2007}. On the other hand, if the 
condition on $a^{-1}(t)$ is fulfilled, then when $c_s^2 = 1/3,$ 
the FLRW-type fluid solution was shown to be globally future-stable. In the remaining cases
$0 \leq c_s^2 < 1/3,$ it was shown that if the integrability
condition on $a^{-1}(t)$ is fulfilled, and a few additional mild technical assumptions on $a(t)$ also hold,
then the fluid is globally future-stable.

A Newtonian version of the future stability theorem of~\cite{jS2012} was proved in 1994 by Brauer, Rendall, and Reula~\cite{uBaRoR1994}.
They studied Newtonian cosmological models equipped with a positive cosmological constant and containing a perfect fluid verifying the equation of state $p = C \rho^{\gamma},$ where $\rho \geq 0$ is the Newtonian mass density, and $C > 0, \gamma > 1$ are constants. They proved that the uniform quiet fluid states of constant positive density are globally future-stable. 

\subsection{Methodology and a model problem}
One of the key insights of the series of works~\cite{hR2008, iRjS2012, jS2012} is that the presence of the positive cosmological constant 
induces a strong dissipative effect that stabilizes solutions. 
Such an effect becomes visible upon reformulating the system in a special version 
of wave coordinates; this reformulation is a starting point 
for our analysis as well.

In this article, we will use the following modification of the classic wave coordinate condition:
\begin{align} \label{E:gaugeintro}
\Gamma^{\mu} & =\tilde{\Gamma}^{\mu} &&(\mu=0,1,2,3),
\end{align}
where $\tilde{\Gamma}^{\mu}=\tilde{g}^{\alpha\beta}\tilde{\Gamma}^{\ \mu}_{\alpha \ \beta}$
are the contracted Christoffel symbols of the background FLRW metric and in particular are known functions of $t.$
Under the gauge condition~\eqref{E:gaugeintro}, the components $g_{00},$ $g_{0i}$ of the metric tensor, as well 
as the rescaled metric components $h_{ij}=a^{-2}(t)g_{ij}$, $(1\leq i,j\leq3),$ satisfy a 
{\em nonlinear damped wave-type equation} [recall that the scale factor $a(t)$ is introduced in~\eqref{E:babyFLRW}].
More precisely, the scalar unknowns $\varphi \in \lbrace g_{00},\,g_{0i},\,h_{ij} \rbrace_{1\le i,j\le 3},$ 
satisfy wave equations of the following form
(see Prop. \ref{P:Decomposition} for the precise form):
\be\label{E:schematic0}
\hat{\square}_g \varphi= A \,\g_t\varphi+ B \, \varphi+\mathcal{N}(\varphi,\g\varphi,u,\rho),
\ee
where $A > 0$ and $B \geq 0$ are constants (depending on the particular metric component), 
$\mathcal{N}(\varphi,\g\varphi,u,\rho)$ is a nonlinearity, 
$\hat{\square}_g \eqdef g^{\alpha\beta}\g_{\alpha}\g_{\beta}$ is the \emph{reduced} wave-operator of $g$,
and $\g$ denotes the spacetime coordinate gradient. 
We stress that the constants $A$ and $B,$ which generate the dissipative effects
that lead to our main future stability theorem, are present because 
\textbf{i)} the cosmological constant is positive
and 
\textbf{ii)} we are using the gauge \eqref{E:gaugeintro}, which allows us to replace the Einstein field
equations with equivalent equations of the form \eqref{E:schematic0}.
We also remark that the nonlinear term 
$\mathcal{N}$ depends on all of the metric components and thus Eq. \eqref{E:schematic0}
is only schematically correct. In order to illustrate the dissipative effect mentioned above, for simplicity, 
we replace $\hat{\square}_g$ in~\eqref{E:schematic0} with the reduced wave operator $\hat{\square}_{g_{(Model)}}$
of the pre-specified Lorentzian metric
\[
	g_{(Model)}=-dt^2+e^{2t}\sum_{i=1}^3(dx^i)^2,
\]
(that is, $\hat{\square}_{g_{(Model)}} \eqdef -\g_{tt}+e^{-2t}\triangle,$
where $\triangle$ denotes the Laplacian corresponding to the standard Euclidean metric on $\mathbb{T}^3$). 
The main point is that $g_{(Model)}$ is a good model for the perturbed FLRW solution metrics that we will encounter in
our actual problem of study.
For simplicity, we set $B=0$ in \eqref{E:schematic0}. 
We then obtain the following model semilinear problem that we will, for the sake of illustration, study in the remainder of this section:
\be\label{E:schematic}
	\hat{\square}_{g_{(Model)}} \varphi = A \partial_t \varphi + \mathcal{N}(\varphi,\g\varphi,u,\rho).
\ee
If we multiply~\eqref{E:schematic} by $-\g_t\varphi$ and integrate by parts over $\T^3$, we obtain
\[
\frac{d}{dt}E^2(t)+\int_{\T^3} \left\lbrace 
	A(\g_t\varphi)^2
		+ |e^{-t} \underpartial \varphi|^2
	\right\rbrace	
	\,dx
	= -\int_{\T^3}
			\mathcal{N}(\varphi,\g\varphi,u,\rho)
		\g_t\varphi \,dx,
\]
where here and throughout, $\underpartial$ denotes the spatial coordinate gradient, 
$dx$ denotes the volume form of the standard Euclidean metric on $\mathbb{T}^3,$ 
and the energy $E$ is defined by
\begin{align} \label{E:MODELENERGY}
E^2(t)\eqdef\frac{1}{2}\int_{\T^3}
	\left\lbrace
		(\g_t\varphi)^2+|e^{-t} \underpartial \varphi|^2
	\right\rbrace \,dx.
\end{align}
Applying the Cauchy-Schwarz inequality, we deduce that there exist universal constants $C_1 > 0$ and $C_2 > 0$ such that
\[
\frac{d}{dt}E^2(t)+C_1 E^2(t) \le C_2 \, \|\mathcal{N}\|_{L^2}(t) \, E(t).
\]
Such a coercive energy structure is ``stable" under differentiation, which means that for any integer 
$N\geq0,$ we can form a high-order energy functional
\[
E_{N}^2(t)\eqdef\frac{1}{2}\sum_{|\vec{\alpha}|\leq N}\int_{\T^3}\{(\g_t\a\varphi)^2 + |e^{-t} \underpartial \a\varphi|^2\}\,dx
\] 
that satisfies an analogous inequality
\[
\frac{d}{dt}E_{N}^2(t)+C_1E_N^2(t) \le C_2 \sum_{|\vec{\alpha}|\leq N} \|\a \mathcal{N}\|_{L^2}(t) E_N(t).
\]
Of course, in the full dust-Einstein problem, we also have to derive energy inequalities for the fluid variables; we will
return to this issue shortly.

The technical core of our approach is to effectively estimate the ``error'' terms $\|\a\mathcal{N}\|_{L^2}(t)$ in terms 
of the energies. That is, suitable estimates for the error terms 
$\|\a \mathcal{N}\|_{L^2}(t)$ in terms of the energies will
lead to the availability of an integral inequality for 
the energies that is amenable to Gronwall's inequality;
such analysis provides us with suitable a priori estimate for the energies.
As we will see, in the full dust-Einstein problem, we will 
define a family of energies for the metric and fluid variable components, and we will
derive a somewhat intricate \emph{hierarchy} of integral inequalities for the various solution variable energies. 
Moreover, in order to derive suitable a priori estimates for the energies,
\emph{we will have to understand the precise manner in which these inequalities are coupled.} 
The complete structure is revealed in Props. \ref{P:integralinequalities} and \ref{P:integralinequalitiesmetric}.
We stress that \emph{the derivation of suitable a priori estimates for the various energies constructed out of the metric
and fluid variables is the main step in our proof of future-global existence}. 
For once we have shown that the energies cannot blow-up in finite time, 
only a few other simple estimates are needed to completely rule out the possibility of singularity formation.
We give a precise statement of this ``continuation principle" in Sect. \ref{SS:LWPANDCONTINUATION}; 
see Prop. \ref{P:continuationprinciple}.
We remark that in the proof of our main future stability theorem, we will work with rescaled energies
that are approximately constant in time.

We stress that the most important technical estimates in this article, which are derived in Sect.~\ref{S:sobolev}, 
involve Sobolev estimates of the error terms analogous to $\mathcal{N}$ above; 
these error term estimates are the ones that allow us to derive
Gronwall-amenable estimates for our collection of solution energies. 
Our derivation of these error term estimates is based on 
Sobolev-Moser type estimates (which are laid out in Appendix~\ref{B:SobolevMoser})
that take into account the various growth/decay rates of the metric/fluid components. To
facilitate our analysis of the growth/decay rates of many of the error terms, we introduce a Counting Principle,
which we explain in detail below.

There are two major caveats concerning this approach to estimating the metric.
The {\bf first} caveat is connected to a dangerous term  
in the wave equation~\eqref{E:metric0j} for $g_{0j};$ the right-hand side contains a linearly large 
error term of the form $-2Hg^{ab}\Gamma_{ajb}$ that does not decay sufficiently fast to immediately allow 
closure of our energy estimate for $g_{0j};$ see the ``dangerous'' integrand on the right-hand side of \eqref{E:mathfrakENg0*integral}.
This is a potentially damaging problem, but the saving grace is the observation that $-2Hg^{ab}\Gamma_{ajb}$ 
can be bounded by the energies of the {\em purely spatial} metric components $g_{ij}$. Fortunately, an effectively independent 
estimate for the $g_{ij}$ energy can be derived from its wave equation~\eqref{E:metricjk}, which does not contain 
any dangerous linear terms on the right-hand side. This is one manifestation of the energy hierarchy mentioned above.
Such a {\em partial decoupling} phenomenon was already observed in~\cite{hR2008, iRjS2012, jS2012}.  
In the present article, we will, out of necessity, uncover and exploit a much more sophisticated form of 
partial decoupling of the energy estimates. We will return to this issue later in the introduction.

The {\bf second} caveat is potentially even more dangerous, and it is intrinsically tied to the degenerate nature of the dust model. 
In short, to overcome the degenerate derivative structure of the dust model, 
we expend a great deal of effort to avoid losing derivatives.
To elaborate, we will pursue the high-order energy method as laid out above and then point out the difficulties that arise.
To begin, we note that the above discussion already provides a natural $L^2-$based energy
framework for controlling the high-order derivatives of the metric components
$g_{\mu \nu}$ (which are caricatured by the variable $\varphi$).
However, there is one possible obstruction to implementing this strategy: since the metric equations are coupled to the fluid variables [expressed schematically through the presence of the term $\mathcal{N}(\varphi,\g\varphi,u,\rho)$ 
in Eq. \eqref{E:schematic}],
we must understand the energy structure corresponding to the fluid unknowns $\rho,u^{\mu}$, $(\mu=0,1,2,3)$. 
As we will see, this structure degenerates in the case of the dust. 

For the sake of contrast, let us briefly discuss the relativistic Euler equations under the equation of state $p=c_s^2\rho,$ 
$c_s^2 > 0.$ At the moment, we are only concerned with the issue of avoiding the loss of derivatives.
When $c_s^2 > 0,$ there are no obstacles.
Specifically, one can associate \emph{energy currents} $\dot{J}^{\mu}$ to the fluid solutions. 
The energy currents are vectorfields that can be used via the divergence theorem to control the evolution of the $L^2$ norms of the fluid variables and their derivatives. The energy current method is essentially a geometrically inspired version of integration by parts.
This approach to analyzing the Euler equations differs from the well-known \emph{symmetric hyperbolic} framework 
(see, for example, \cite{rCdH1962, cD2010, kF1954}) in that it allows the analysis to take place directly in the \emph{Eulerian} variables $(p,u)$ rather than an artificially introduced collection of state-space variables.
This framework, which was first applied to the relativistic Euler equations by Christodoulou in \cite{dC2007},
has been previously applied by the second author in various fluid contexts~\cite{jS2008b,jS2008a,jSrS2011}. 
A general framework for the energy current method was developed by Christodoulou in ~\cite{dC2000}, 
who showed that it can be applied to a class of systems known as \emph{regularly hyperbolic} PDEs.
In the particular case of the relativistic Euler equations, the \emph{full coerciveness} 
of the energies generated by the currents\footnote{Actually, the relativistic Euler equations fall just outside of the scope of the regularly hyperbolic PDEs studied in \cite{dC2000}.
Nonetheless, in \cite{dC2007}, Christodoulou showed that the energy current framework can be extended to apply to the relativistic Euler equations in Eulerian coordinates.}
requires the \emph{strict positivity} of $c_s^2.$ 
Hence, this method does not directly apply to the dust model, for in this case, the method only yields \emph{positive semi-definite} energies. Specifically, the energy currents used in \cite{jS2012} do not provide control of the energy density $\rho$ when $c_s = 0;$ 
this is the main {\em new} difficulty that we encounter in comparison to~\cite{jS2012}. 

\subsubsection{Fighting to avoid derivative loss}
We now illustrate this new difficulty in greater detail.
To begin, we first note that the dust equations for $u^j$ and $\rho$ can be viewed as transport-type equations of the following schematic form [see \eqref{E:revol}-\eqref{E:velevol} for the precise form in terms of the rescaled energy density $\dens = e^{3 \Omega} \rho$]
\footnote{We sum over the repeated upstairs-downstairs spatial indices.}:
\begin{subequations}
\begin{align} \label{E:velocityintro}
\g_tu^j + v^{a}\g_au^j &=-2\omega u^j+\mathcal{S}^j(\varphi, \g \varphi,u), & & (j=1,2,3),
\\
\g_t\rho + v^a\g_a\rho& = - 3 \omega \rho + \mathcal{S}(\varphi, \g\varphi, u, \partial u, \rho), & & 
\label{E:densityintro}
\end{align}
\end{subequations}
where $\omega = a^{-1}(t) \frac{d}{dt} a(t),$
the $\mathcal{S}^j$ and $\mathcal{S}$ are nonlinear source terms,  
the $v^a$ are functions of the metric and $u,$
and $\g$ denotes the spacetime coordinate gradient. 
We note that $\omega(t) \sim H = \sqrt{\Lambda/3}$ 
(see Lemma \ref{L:backgroundaoftestimate})
is a known function of time
and that the dissipative terms 
$-2\omega u^j$ 
and
$- 3 \omega \rho$
on the right-hand sides
of 
\eqref{E:velocityintro}
and \eqref{E:densityintro}
\emph{are precisely what stabilizes the dust.} 
In particular, the term $-2\omega u^j$ leads to the 
rapid exponential decay of $u^j$ 
towards the quite state $u^j \equiv 0.$
We stress that the dissipative terms,
which are analogs of
the dissipative terms present in the model metric component wave Eq. \eqref{E:schematic0},
are available only because $\Lambda > 0.$
For simplicity, at the moment, we will not concern ourselves
with issues connected to time decay or the issue of putting the correct $t-$weights in the norms. 
Instead, we will only focus on the number and \emph{kinds} of derivatives involved in the estimates.
Note that the $\mathcal{S}^j$ do not depend on $\rho$ or $\g u$
and that $\mathcal{S}$ depends on $\g u;$
these observations are crucially important for understanding the energy structure that we now discuss. 
Assume now that our high-order energy framework is set up in such a way that 
$\| \partial \varphi \|_{H^{N}}$
is the top-order Sobolev norm of the metric caricature quantity $\varphi$
that is controlled by the energies corresponding to the model Eq. \eqref{E:schematic}.
Then we can bound the right-hand side of \eqref{E:velocityintro} only in $H^N$-norm
because it depends on $\partial \varphi.$
It is therefore straightforward to use Eq. \eqref{E:velocityintro} to derive energy estimates 
that yield control of $\sum_{j=1}^3\|u^j\|_{H^N},$ and this is the highest Sobolev norm of $u^j$ that we can 
expect to control.
Next, we note that we can bound the right-hand side of~\eqref{E:densityintro} only in $H^{N-1}$-norm
because of its dependence on $\g u.$
Using this bound and equation ~\eqref{E:densityintro}, a straightforward $L^2$-type energy argument involving integration by parts 
over $\mathbb{T}^3$
allows us to control $\rho,$ \emph{but only in $H^{N-1}$-norm.} 
This ``loss of one derivative" is the degenerate energy structure mentioned above.
The essential difficulty is now apparent: \emph{in order to bound the norm $\|\g \varphi\|_{H^N}$
for solutions to~\eqref{E:schematic} by implementing the $L^2$ based energy method described above,
we must control its right-hand side in the norm
$\|\mathcal{N}(\varphi,\g \varphi,\rho, u)\|_{H^N}$}. 
This in turn requires us to estimate $\|\rho\|_{H^N}$ in terms of the available energy quantities,
\emph{which is one more derivative of $\rho$ than we expect to have control of}.

We now sketch a proof of how to resolve these difficulties 
and in particular how to obtain control over both
$\| \partial \varphi \|_{H^{N}}$
and $\sum_{j=1}^3\|u^j\|_{H^N}.$
Our resolution requires several ingredients.
First, we note that Eqs.\eqref{E:velocityintro}-\eqref{E:densityintro}
for be rewritten as follows [see \eqref{E:dustalternative1}-\eqref{E:dustalternative2}]:
\begin{align} \label{E:velintro}
	\u u^j 
	&	= \tilde{S}^j(u,\varphi,\partial \varphi), \\
	\u \rho
	&=\tilde{S}(\rho,\partial u,\varphi,\partial\varphi), 
	\label{E:dustintro}
\end{align}
where $\u$ is the first-order differential operator defined by
\be\label{E:deltau}
\u \eqdef u^{\mu}\g_{\mu}.
\ee
%and $\Gamma_{\alpha \ \beta}^{\ \mu}$ is a Christoffel symbol of the metric and hence 
%schematically depends on the metric caricature quantities $\varphi$ and $\partial \varphi.$
As we mentioned above, it is straightforward to use Eq. \eqref{E:dustintro} to derive energy estimates 
that yield control of $\|\rho\|_{H^{N-1}}.$ 
%(we will comment on how to derive the corresponding Sobolev estimates for 
%$\sum_{j=1}^3\|u^j\|_{H^N},$ which involves additional subtleties, below). 
We in fact need to derive such estimates for $\rho$ in order to prove our main future stability theorem.
However, as we will see, 
\emph{we also need to treat \eqref{E:dustintro} as a ``constraint" for $\rho$ in order to close some of the estimates.} 
Similarly, we can treat \eqref{E:velintro} as a ``constraint'' for $u.$
That is, we can use \eqref{E:velintro}-\eqref{E:dustintro}
to also obtain control of the norms
\[
\|\u \rho \|_{H^{N-1}},
\, \|\u u \|_{H^{N-1}}
\]
in terms of 
$\sum_{j=1}^3\|u^j\|_{H^N},$
$\| \rho \|_{H^{N-1}},$
and 
$\| \partial \varphi \|_{H^N}.$
Actually, using \eqref{E:dustintro}, we could obtain control over
$ \|\u u \|_{H^N},$ but as we will see, we do not need this top-order norm to close our estimates.
Hence, it is natural to try to close the estimates using the norms
\be
\label{E:controlintro}
\| \varphi \|_{H^{N-1}}
+
\| \g \varphi \|_{H^N}
+ \sum_{j=1}^3\|u^j\|_{H^N}
+ \sum_{j=1}^3\|\u u^j\|_{H^{N-1}}
+\| \rho \|_{H^{N-1}}
+ \| \u \rho \|_{H^{N-1}}.
\ee 
However, we have still not resolved the aforementioned problem of how estimate 
$\|\g \varphi\|_{H^N}.$ 
To resolve this problem, we organize and build our high-order energy estimates around two central insights.

%\noindent
\textbf{i}) First, we commute~\eqref{E:schematic} with the operator $\u$.
As a consequence, we obtain an equation that is essentially of the same type as~\eqref{E:schematic},
that is, of the schematic form
\be\label{E:waveintro}
\hat{\square}_{g_{(Model)}} \u \varphi = A \,\g_t \u \varphi \ +  \text{ ``error term,"}
\ee
where the ``error term'' above, by the Leibniz rule and some basic commutator estimates, has the structure
\begin{align} \label{E:ANNOYINGMODELPARTIALUDEPENDENTERRORTERM}
\text{ ``error term" }  & =
\mathcal{N}_2(\varphi, \g \varphi, \partial \u \varphi, \rho, \u \rho, u, \u u, \underpartial u)
\end{align}
for some nonlinear inhomogeneous terms $\mathcal{N}_2(\cdots).$
We remind the reader that here and throughout, $\underpartial$ denotes the \emph{spatial} coordinate gradient.
By applying to equation~\eqref{E:waveintro}  
the same energy methods strategy that we first applied to~\eqref{E:schematic},
we can derive energy estimates for
$\| \g_t \u \varphi\|_{H^{N-1}}$
and 
$\| \underpartial \u \varphi\|_{H^{N-1}}$
(we again stress that we are currently ignoring the issue of tracking the correct $t-$weights). 
Hence, assuming that we have control of the norms listed in \eqref{E:controlintro},
we also gain control of the norms
$\| \g_t \u \varphi \|_{H^{N-1}}$
and
$\| \underpartial \u \varphi \|_{H^{N-1}}.$
This argument requires in particular that we have control
over $\sum_{j=1}^3\| u^j\|_{H^N}$
because the right-hand side of \eqref{E:ANNOYINGMODELPARTIALUDEPENDENTERRORTERM}
depends on $\underpartial u.$
We already described how to derive such a bound for $\sum_{j=1}^3\|u^j\|_{H^N}:$ 
we  use equation ~\eqref{E:velocityintro} 
and a straightforward $L^2$-type energy argument involving integration by parts 
over $\mathbb{T}^3,$ 
but we again stress that \emph{this approach is viable only if we are able to control the right-hand side of 
\eqref{E:velocityintro} in the norm} $H^N.$
In view of the fact that the right-hand side of
\eqref{E:velocityintro} depends on $\partial \varphi,$
we see that in order to control $\sum_{j=1}^3\|u^j\|_{H^N},$
we in particular have to control $\| \partial^{(2)} \varphi\|_{H^{N-1}}$
(here and throughout, $ \partial^{(2)} \varphi$ denotes the array of second-order
spacetime coordinate derivatives).
Thus, the main difficulty that remains,
from the point of view of avoiding derivative loss,
is how to control  $\| \partial^{(2)} \varphi\|_{H^{N-1}}.$

\textbf{ii)} In order to estimate the top-order spacetime derivatives 
$\| \partial^{(2)} \varphi\|_{H^{N-1}}$ and thus avoid top-order
derivative loss, we first use the decomposition $\g_t = \frac{1}{u^0}\u - \frac{u^a}{u^0} \g_a$
(and for sake of illustration we completely ignore the lower-order terms $\frac{1}{u^0}$ and $u^a$)
to deduce the schematic relation 
$|\partial^{(2)} \varphi| \lesssim |\g_t \u \varphi| + |\underpartial \u \varphi| + |\underpartial^{(2)} \varphi| + l.o.t.,$
where the terms $l.o.t$ involve at most one derivative of $u$ and $\varphi$ and hence are not top-order.
As we explained in \textbf{i)}, the quantities $\| \g_t \u \varphi \|_{H^{N-1}}$ and $\| \underpartial \u \varphi \|_{H^{N-1}}$
are controlled by the energies corresponding to Eq. \eqref{E:waveintro}. 
Hence, it only remains for us to explain how we control
the top-order pure spatial derivatives $\| \underpartial^{(2)} \varphi\|_{H^{N-1}}.$
To this end, we will prove a \emph{key elliptic estimate} (see Lemma~\ref{L:elliptic}), 
which takes the following
form in the model semilinear problem:
\begin{align} \label{E:MODELELLIPTICRECOVERY}
\|\underpartial^{(2)} \varphi\|_{H^{N-1}}
\lesssim \, 
\| \hat{\square}_{g_{(Model)}} \varphi\|_{H^{N-1}}
+\| \g_t \u \varphi\|_{H^{N-1}}
+ \| \underpartial \u \varphi\|_{H^{N-1}}
+ \mbox{lower order terms},
\end{align}
where we again stress that we have, for the time being, 
completely ignored the issue of including the correct $t-$weights
in the estimates.
The first term on the right-hand side of \eqref{E:MODELELLIPTICRECOVERY}
is controllable because we can use
Eq. \eqref{E:schematic} to substitute 
$\hat{\square}_{g_{(Model)}} \varphi$ 
with terms that are lower-order
in the sense that they depend on non-top-order
derivatives and hence can be bounded in terms of energies that we have already shown how to estimate.
The second and third
terms on the right-hand side of \eqref{E:MODELELLIPTICRECOVERY}
are controlled by the energies corresponding to Eq. \eqref{E:waveintro}.
The remaining terms are also lower-order in terms of the number of derivatives.
We have thus given a schematic outline of how to close the estimates without losing derivatives.
This is roughly the outline that we will follow in our analysis of the full dust-Einstein
problem, although many additional complications will arise because we prove a global result.

\subsubsection{Remarks on $t-$behavior}
We now make some important remarks concerning the time behavior of some important terms
that exhibit a new feature of the present problem compared to \cite{jS2012}.
First, a careful analysis of the right-hand side of \eqref{E:waveintro}
reveals the presence of the dangerous commutator term 
$- \left(\u g_{(Model)}^{ab} \right) \g_a \g_b \varphi$ which is {\em linear} to the leading order because 
$- \u g_{(Model)}^{ab} = 2 g_{(Model)}^{ab} 
+ \mbox{harmless error terms}$ 
(here we are assuming that $u^j$ and $u^0 - 1$ are rapidly decaying error terms, 
which are estimates that we will in fact derive during our analysis of the dust-Einstein problem).
This is precisely the second caveat referred to 
earlier in the introduction, as the presence of a linear term with an unfavorable growth rate
can in principle lead to a breakdown of our method. 
The main point is that if we were to directly bound
the $L^2$ norm of $- \left(\u g_{(Model)}^{ab}\right) \g_a \g_b \varphi$
when carrying out the energy estimates, then we would be unable to prove our 
future-global existence result because the expected time behavior of this product is borderline.
As is explained in detail in Lemma~\ref{L:COMMUTEDBASICSTRUCTURE}
and Remark~\ref{R:DANGEROUSTOPORDERCOMMUTATORTERM}, 
to circumvent this difficulty,
we first use the definition of the operator $\hat{\Box}_{g_{(Model)}}$
to algebraically replace
$2 g_{(Model)}^{ab} \g_a \g_b\varphi$ with 
$2(\hat{\Box}_{g_{(Model)}}\varphi - g_{(Model)}^{00} \g_{tt}\varphi)$. 
Next, we replace $\hat{\Box}_{g_{(Model)}}\varphi$ with the right-hand side of
\eqref{E:schematic}, while the 
decomposition $\g_t = \frac{1}{u^0}\u - \frac{u^a}{u^0} \g_a$
and the relation $g_{(Model)}^{00} = -1$ 
together allow us to replace the term
$-2 g_{(Model)}^{00}\g_{tt} \varphi$ 
with $+2 \g_t \u\varphi$ modulo small harmless error terms.
In total, we can derive the following equivalent version of~\eqref{E:waveintro}
(see Lemma \ref{L:COMMUTEDBASICSTRUCTURE} for the precise version):
\begin{align*}
\hat{\square}_{g_{(Model)}} \u \varphi = (A+2) \,\g_t \u \varphi 
\ + \underbrace{C \g_t\varphi}_{\mbox{\textnormal{dangerous}}}  \ + \text{harmless error terms}.
\end{align*}
We stress that the term $(A + 2) \,\g_t \u \varphi$ 
provides a {\em coercive} quadratic contribution to the energies
corresponding to $\u \varphi,$
and it is thus good for our purposes because it can only enhance the decay of $\u \varphi.$ 
The term 
$C \g_t \varphi$ is linear, but of {\em lower} order of differentiation. 
Hence $\|\g_t \varphi\|_{H^{N-1}}$ can effectively be independently controlled by
first deriving energy estimates for Eq. \eqref{E:schematic}. 
Thus, for the second time, we have
encountered an effective {\em partial decoupling} of the energy estimates.
This structure is essential for proving our future-global result.

Another important issue involving $t-$behavior is connected
to the elliptic estimate \eqref{E:MODELELLIPTICRECOVERY}.
The important point is that when we derive the correct elliptic estimate, that is,
the one with the actual $t-$weights that occur in the problem,
we will not obtain 
$\|\underpartial^{(2)} \varphi\|_{H^{N-1}}$ on the left-hand side of \eqref{E:MODELELLIPTICRECOVERY},
but rather only 
$e^{-2t} \|\underpartial^{(2)} \varphi\|_{H^{N-1}};$
see Lemma \ref{L:elliptic} for the full details.
Note that this is a \emph{worse} $t-$weight by a factor 
of $e^{-t}$ than the factor $e^{-t}$ found in the model
energy \eqref{E:MODELENERGY}.
That is, in reality, we are able to prove \emph{only that}
$\|\underpartial^{(2)} \varphi\|_{H^{N-1}}$
is bounded by $\lesssim$ $e^{2t}$ times 
the right-hand side of \eqref{E:MODELELLIPTICRECOVERY}
[we also need to put the correct $t-$weights into the right-hand side of \eqref{E:MODELELLIPTICRECOVERY}].
Roughly, the reason that we
only obtain $e^{-2t} \|\underpartial^{(2)} \varphi\|_{H^{N-1}}$ on the left-hand side 
is that the inverse spatial metric components 
$g_{(Model)}^{jk}$ are decaying like $e^{-2t},$
and these components are the main coefficients that govern
the behavior of the top-order spatial derivatives.
These top-order spatial derivatives estimates 
with unfavorable $t-$weights
are another {\em new} feature relative to the works
\cite{iRjS2012,jS2012}, where elliptic estimates did not play a role.
Our solution norms, which are rescaled by time factors 
so as to remain  
approximately constant (see Sect.~\ref{S:norms}), 
reflect this worsened top-order behavior.
In total, we are able to control all of the norms necessary to close the bootstrap scheme
at the expense of additional exponentially-in-time growing factors at the top-order of differentiation
in our norm hierarchy. 
Roughly speaking, if the total derivative count for  $u^j$ in our problem is $N$, then our proof reveals that 
typical size-$\epsilon$ norms of a given metric tensor component $\varphi$ are of the form
\[
	e^{rt} (\|\g_t\varphi\|_{H^{N-1}}+\|\u\g_t\varphi\|_{H^{N-1}})+e^{(r-1)t}\|\underpartial \g_t \varphi\|_{H^{N-1}}
\]
and
\[
	e^{s t}(\|\underline{\partial}\varphi\|_{H^{N-1}}+\|\u\underline{\partial}\varphi\|_{H^{N-1}})+e^{(s-1)t}\| \underpartial^{(2)} \varphi\|_{H^N},
\]
where $r,s \in \mathbb{R}$ are rates that capture the decay/growth properties of 
the various derivatives of $\varphi$ (we recall that $\underpartial$ denotes the spatial coordinate gradient). 
A similar hierarchy holds for $u^j.$

A remarkable feature of the dissipation generated by the positive cosmological constant is \emph{its ability to 
overcome the possible time growth caused by the unfavorable $t-$weights in the elliptic
estimates.} Roughly speaking, the reason that the unfavorable $t-$weights in the elliptic estimates
are harmless is the following: 
as our above discussion has suggested, the
only time that we need to use the elliptic estimates for the metric components is when we need
to estimate the top-order spatial derivatives of $u^j.$ That is, we need to 
control $\sum_{j=1}^3\|u^j\|_{H^N}$ via energy estimates 
and the right-hand side of \eqref{E:velocityintro} depends on $\partial \varphi.$
The main point is that in the near-FLRW solution regime, the coupling of 
the metric quantity $\partial \varphi$ to $u$
[as expressed through the nonlinear term $\mathcal{S}^j(\varphi, \g \varphi,u)$ in \eqref{E:velocityintro}]
is ``sufficiently weak." 
More precisely, in our main problem of interest, the analogs of the model nonlinearity 
$\mathcal{S}^j$ are the error terms $\triangle^j$ given by
\eqref{AE:trianglejdef}, and the main top-order spatial derivative Sobolev
estimate of $\triangle^j$ is \eqref{E:PARTIALIFLUIDTRIANGLEJHNMINUSONE}
(the norms ``$\totalellipticnorm{N-1}$'' on the right-hand side of \eqref{E:PARTIALIFLUIDTRIANGLEJHNMINUSONE}
are norms that are controlled with the help of elliptic estimates).
Our proof of \eqref{E:PARTIALIFLUIDTRIANGLEJHNMINUSONE}
reveals that the terms in $\triangle^j$ 
involving the derivatives of the metric
(and that therefore need to be bounded with elliptic estimates at the top order)
are quadratically small and have factors $u^a$ that 
[thanks to the dissipation provided by the cosmological constant and in particular 
the term $-2\omega u^j$ on the right-hand side of \eqref{E:velocityintro}]
are \emph{exponentially decaying so rapidly that they more than counter the bad top-order metric component growth} 
that can arise because of the elliptic estimates with unfavorable $t-$weights.
That is, roughly speaking, the terms $\triangle^j$ are of the form
$(\mbox{growing metric term}) \times (\mbox{even more rapidly decaying terms}).$
This is what we mean by ``sufficiently weak" coupling.

\subsection{On ``dangerous linear terms" and decoupling phenomena}
One of the recurring themes in the preset article is the presence of the so-called ``dangerous linear terms"
with borderline $t-$behavior, 
which were explained above in the case of the simplified model problem. 
In the full dust-Einstein problem under consideration, the remarkable fact is that
in each case where such a linear term arises, we find that the linear term can be controlled by energies that
we can \emph{first effectively bound by independent estimates.} 
This crucial partial decoupling phenomenon, 
which is present due to the particular tensorial structure of the dust-Einstein equations, 
is the main reason that our global estimates close.
We stress that the full structure is somewhat intricate and it is revealed in 
Props. \ref{P:integralinequalities}
and \ref{P:integralinequalitiesmetric}. 
Moreover, to prove the main theorem we must use the Gronwall estimates in the {\em correct} order and this is best reflected in the proof of the a-priori estimates
in Prop.~\ref{P:auxiliary}.

\subsection{Counting Principle for the error terms}
The most difficult aspects of our proof of future-global existence 
are our derivation of energy identities for 
the metric and fluid variables and 
our derivation 
of suitable Sobolev estimates
for the error terms present in the integrals on the right-hand sides of the 
energy identities. We carry out this analysis in
Sects. ~\ref{S:differentialinequalities} and ~\ref{S:sobolev} respectively.
In order to facilitate an otherwise lengthy and technical process of estimating the error terms, 
we introduce a Counting Principle (see Lemma~\ref{L:countingprinciple} and Cor. \ref{C:COUNTINGPRINCIPLE}). 
The Counting Principle has been designed to reduce the verification of many of the $H^{N-1}$ estimates
of Sect.~\ref{S:sobolev} to \emph{counting net downstairs spatial indices in a product}. 
Our Counting Principle naturally incorporates the ``loss of one derivative"-phenomenon explained above
as well as the aforementioned potential rapid $t-$growth in the top-order spatial derivatives compared to \cite{jS2012}
(which is connected to the unfavorable $t-$weights in the elliptic estimates).
The ``counting" refers to the difference of the total number of downstairs and upstairs spatial indices in a given
error term product, and this integer will roughly correspond to the total growth/decay rate of the product.
For instance, if we are trying to bound the products
\[
\|(\g_tg^{a0})\u g_{ab}\|_{H^{N-1}},\quad \|(\g_tg^{a0})\g_i\u g_{ab}\|_{H^{N-1}},
\]
we note that the differences of the total number of downstairs and upstairs spatial indices are $1$ and $2$ respectively. 
A single application of the operator
$\g_t$ or the operator $\u$ is
neutral from the point of view of index counting.
The Counting Principle will then imply that the first norm above
can grow at most like $\epsilon e^{\Omega(t)},$ while the second term
potentially grows at a rate $\epsilon e^{2\Omega(t)}.$
Here $e^{\Omega(t)}=a(t) \sim e^{Ht}$ and $\epsilon$ is a positive number corresponding to  
the small size of our solution norms. In fact, it turns out that we can organize the terms into two sets 
(called $\mathcal{G}_{N-1}$ and $\mathcal{H}_{N-1}$) so that the number 
$M\in\mathbb{N}_0$ of factors from the set $\mathcal{G}_{N-1}$ in a given product to be bounded 
will collectively contribute an \emph{additional} decay factor of $e^{-Mq\Omega(t)}$, where $q>0$ is a small number. 
The extra decay, which is present in many products, is an essential ingredient 
in our derivation of suitable a priori energy estimates 
(see the important factors $e^{-q H \tau}$ present 
in some terms on the right-hand sides of the energy 
inequalities of Props. \ref{P:integralinequalities} and \ref{P:integralinequalitiesmetric}). 
The exact definitions of $\mathcal{G}_{N-1}$ and $\mathcal{H}_{N-1}$ are provided in Def.~\ref{D:GandH}. 
We remark that the Counting Principle does not provide {\em sharp} decay/growth rates for the lower-order derivatives of the unknowns,
nor is it very helpful for estimating commutator-type error terms or terms involving the derivatives 
$\g_{tt},$ $\g_{ttt},$ or $\g_t \u;$ 
we handle such terms separately, ``by hand." 
We remark that more precise rates of decay/growth for the lower-order derivatives of
various quantities can be obtained as in the Asymptotics section of~\cite{jS2012}, 
but we will not concern ourselves with the sharp rates in this paper.

\subsection{Plan of the paper}
In Sect.~\ref{S:Notation}, we explain various notational conventions used throughout the paper.
In Sect.~\ref{S:dustEinstein}, we introduce the dust-Einstein system, the FLRW solutions, and the modified 
equations (which reflect our particular wave coordinate gauge). 
We also discuss the local well-posedness of the initial value problem for the modified equations 
and a closely related continuation principle.
In Sect.~\ref{S:norms}, we define the $t-$weighted norms and the energies that are crucial to our analysis.
In Sect.~\ref{S:differentialinequalities}, we provide the basic differential inequalities for our energies and 
formulate the bootstrap assumptions. Sect.~\ref{S:sobolev} is devoted to a series of
error term estimates that are essential for closing our high-order energy estimates. In Sect.~\ref{S:equivalence}, 
we prove the equivalence of the norms and the energies. We 
derive a priori energy estimates for near-FLRW solutions and
prove our main future stability theorem in Sect.~\ref{S:global}. 

\section{Notation} \label{S:Notation}
Our notation and conventions in this article are the same as those used in \cite{iRjS2012,jS2012}; we repeat them for convenience.

\subsection{Index conventions}
Greek ``spacetime'' indices $\alpha, \beta, \cdots$ take on the values $0,1,2,3,$ while Latin ``spatial'' indices $a,b,\cdots$ 
take on the values $1,2,3.$ Repeated indices are summed over (from $0$ to $3$ if they are Greek, and from $1$ to $3$ if they are Latin). Indices are lowered and raised with the spacetime metric $g_{\mu \nu}$ and its inverse $g^{\mu \nu}.$ Exceptions to this rule include the constraint Eqs. \eqref{E:Gauss}-\eqref{E:Codazzi}, in which we use the $3-$metric $\mathring{\underline{g}}_{jk}$ and its inverse $\mathring{\underline{g}}^{jk}$ to lower and raise indices.

\subsection{Coordinate systems and differential operators} \label{SS:COORDINATES}
We often work in a fixed standard local coordinate system $(x^1,x^2,x^3)$ on $\mathbb{T}^3.$ The vectorfields $\partial_j \eqdef \frac{\partial}{\partial x^{j}}$ are globally well-defined even though the coordinates themselves are not. This coordinate system extends to a local coordinate system $(x^0,x^1,x^2,x^3)$ on manifolds-with-boundary of the form $[0,T) \times \mathbb{T}^3,$ and we often write $t$ instead of $x^0.$ Relative to this coordinate system, the FLRW metric $\widetilde{g}$ is of the form \eqref{E:backgroundmetricform}. The symbol $\partial_{\mu}$ denotes the coordinate partial derivative $\frac{\partial}{\partial x^{\mu}},$ and we often write $\partial_t$ instead of $\partial_0.$ All of our estimates are derived in the fixed frame $\big\lbrace \partial_{\mu} \big\rbrace_{\mu = 0,1,2,3}.$ 
We also use the following shorthand notation for two and three repeated applications of the time differentiation operator $\g_t:$
\[
\g_{tt}\eqdef\g_t\g_t,\quad \g_{ttt}\eqdef\g_t\g_t\g_t.
\]
If $\vec{\alpha} = (n_1,n_2,n_3)$ is a triple of non-negative integers, then we define the spatial multi-index coordinate differential operator $\partial_{\vec{\alpha}}$ by $\partial_{\vec{\alpha}} \eqdef \partial_1^{n_1} \partial_2^{n_2} \partial_3^{n_3}.$ The symbol $|\vec{\alpha}| \eqdef n_1 + n_2 + n_3$ denotes the order of $\vec{\alpha}.$ 
If $\vec{\beta}$ is another triple of non-negative integers $\vec{\beta}=(m_1,m_2,m_3)$, then we define the
multi-index version of the binomial coefficient
\[
	{\vec{\alpha} \choose \vec{\beta}} \eqdef \prod_{i=1}^3{n_i \choose m_i}.
\]
We write 
\begin{align}
	D_{\mu} T_{\mu_1 \cdots \mu_s}^{\nu_1 \cdots \nu_r} = 
		\partial_{\mu} T_{\mu_1 \cdots \mu_s}^{\nu_1 \cdots \nu_r} + 
		\sum_{a=1}^r \Gamma_{\mu \ \alpha}^{\ \nu_{a}} T_{\mu_1 \cdots \mu_s}^{\nu_1 \cdots \nu_{a-1} \alpha \nu_{a+1} \nu_r} - 
		\sum_{a=1}^s \Gamma_{\mu \ \mu_{a}}^{\ \alpha} T_{\mu_1 \cdots \mu_{a-1} \alpha \mu_{a+1} \mu_s}^{\nu_1 \cdots \nu_r} \nonumber
\end{align} 
to denote the components of the covariant derivative of a tensorfield $T_{\mu_1 \cdots \mu_s}^{\nu_1 \cdots \nu_r}$ defined on
$\mathcal{M}.$ The Christoffel symbol $\Gamma_{\mu \ \nu}^{\ \alpha}$ is defined in \eqref{E:EMBIChristoffeldef}.

The notation $\partial^{(N)} T_{\mu_1 \cdots \mu_s}^{\nu_1 \cdots \nu_r}$ denotes the array containing all $N^{th}$ order \emph{spacetime} coordinate derivatives (including time derivatives) of the component $T_{\mu_1 \cdots \mu_s}^{\nu_1 \cdots \nu_r}.$ $\underpartial^{(N)} T_{\mu_1 \cdots \mu_s}^{\nu_1 \cdots \nu_r}$ denotes the array containing all $N^{th}$ order \emph{spatial coordinate} derivatives of the component $T_{\mu_1 \cdots \mu_s}^{\nu_1 \cdots \nu_r}.$ When $N=1,$ we omit the superscript.

\subsection{Identification of spacetime tensors and spatial tensors} 
\label{SS:IDENTIFICATIONS} 
It will often be convenient to view $\mathbb{T}^3$ as an embedded submanifold of the spacetime under an embedding
$\iota_t: \mathbb{T}^3 \hookrightarrow \lbrace t \rbrace \times \mathbb{T}^3 \subset \mathcal{M},$ 
$\iota_t(x^1,x^2,x^3) \eqdef (t,x^1,x^2,x^3).$ We often suppress the embedding by identifying $\mathbb{T}^3$ with its image $\iota_t(\mathbb{T}^3),$ which is a constant-time hypersurface in $\mathcal{M}.$ If $T_{k_1 \cdots k_s}^{j_1 \cdots j_r}$ is a $\mathbb{T}^3-$inherent ``spatial'' tensorfield, then there is a unique ``spacetime'' tensorfield  $T_{\mu_1 \cdots \mu_s}^{'\nu_1 \cdots \nu_r}$ defined along $\iota_t (\mathbb{T}^3) \simeq \mathbb{T}^3$ such that $\iota_t^* T' = T$ and such that $T'$ is tangent to $\iota_t(\mathbb{T}^3).$ Here $\iota_t^*$ denotes the pullback by $\iota_t,$ and we recall that $T_{\mu_1 \cdots \mu_s}^{'\nu_1 \cdots \nu_r}$ is tangent to $\iota_t (\mathbb{T}^3)$ if any contraction of any upstairs (downstairs) index with the unit normal covector $\hat{N}_{\mu}$ (unit normal vector $\hat{N}^{\mu}$) results in $0;$ for downstairs indices, this notion depends on the spacetime metric $g_{\mu \nu}.$ 
To pullback the upstairs indices of $T'$, we first lower them, then pull them back, and then we raise them again. We sometimes identify $T$ with $T'$ and use the same symbol to denote both, e.g. $T_{k_
1 \cdots k_s}^{j_1 \cdots j_r} \simeq T_{\mu_1 \cdots \mu_s}^{\nu_1 \cdots \nu_r}.$ We often use this identification along the initial data Cauchy hypersurface $\mathring{\Sigma} \simeq \mathbb{T}^3.$ For example, we alternate between viewing $\mathring{\underline{g}}$ as a $\mathring{\Sigma}-$inherent Riemannian metric $\mathring{\underline{g}}_{jk},$ and as a spacetime tensorfield $\mathring{\underline{g}}_{\mu \nu}$ defined along the embedded hypersurface $\mathring{\Sigma} \subset \mathcal{M}$ 
[that is, viewing $\mathring{\underline{g}}_{\mu \nu}$ as the first fundamental form of $\mathring{\Sigma}$ relative to $(\mathcal{M},g)$]. All of these standard identifications should be clear in context.

\subsection{Norms} \label{SS:NORMS}
All of the Sobolev norms we use are defined relative to the local coordinate system $(x^1,x^2,x^3)$ on $\mathbb{T}^3$ introduced in Sect. \ref{SS:COORDINATES}. Our norms are not coordinate invariant quantities, since we work with the norms of the scalar-valued \emph{components} of tensorfields relative to this coordinate system. If $f$ is a function defined on the hypersurface $\lbrace x \in \mathcal{M} \ | \ t = const \rbrace \simeq \mathbb{T}^3,$ then (relative to this coordinate system), we define the standard Sobolev norm $\big\| f \big\|_{H^N}$ as follows:

\begin{align} 
%\label{E:SobolevNormDef}
	\big\| f \big\|_{H^N} \eqdef
		\bigg( \sum_{|\vec{\alpha}| \leq N} 
		\int_{\mathbb{T}^3} \big|\partial_{\vec{\alpha}} f(t,x^1,x^2,x^3) \big|^2 
		dx \bigg)^{1/2}. \nonumber
\end{align}
Above, the notation $``\int_{\mathbb{T}^3} f \, dx"$ denotes the integral of $f$ over $\mathbb{T}^3$ with respect to the measure corresponding to the volume form of the \emph{standard Euclidean metric} on $\mathbb{T}^3.$
We denote the $N^{th}$ order homogeneous Sobolev norm of $f$ by
\begin{align}
	\big\| \underpartial^{(N)} f \big\|_{L^2} 
		\eqdef
		\bigg( \sum_{|\vec{\alpha}| = N} 
		\int_{\mathbb{T}^3} \big|\partial_{\vec{\alpha}} f(t,x^1,x^2,x^3) \big|^2 
		dx \bigg)^{1/2}. \nonumber
\end{align}
We often omit the superscript when $N=1.$ We also use the common notation:
    \begin{align}
    	\| F \|_{L^{\infty}} & \eqdef \mbox{ess} \sup_{x \in \mathbb{T}^3} |F(x)|, \nonumber \\
    	\| F \|_{C_b^N} & \eqdef \sum_{|\vec{\alpha}| \leq N} \big\| \partial_{\vec{\alpha}} F \|_{L^{\infty}}. \nonumber
    \end{align} 		
		If $I \subset \mathbb{R}$ is an interval and $X$ is a normed function space, then $C^N(I,X)$ denotes the set of 
		$N$-times continuously differentiable maps from $I$ into $X.$
	
%\subsection{Constants} \label{SS:runningconstants}
We use $C$ to denote a generic positive constant that may change from line to line. It may depend on $N$ and $\Lambda$, but never on $(g_{\mu \nu}, \rho, u^j),$ $(\mu, \nu = 0,1,2,3),$ $(j=1,2,3).$

Given two quantities $A$ and $B,$ we write
\begin{align*}
	A \lesssim B
\end{align*}	
to mean that there exists a constant $C > 0$ such that $A \leq C B.$
We write 
\begin{align*}
	A \approx B
\end{align*}
whenever $A \lesssim B$ and $B \lesssim A.$

\setcounter{equation}{0}
\section{The dust-Einstein system}\label{S:dustEinstein}
\subsection{The initial value problem for the dust-Einstein system}\label{SS:dustEinstein}
In this section we formulate the initial value problem for
and provide the definition of a solution to the dust-Einstein system.
\footnote{
For more detailed information on the physical derivation
of the dust-Einstein system and a general introduction to the perfect fluids in the context of
general relativity, we refer the reader to Sect. 3.1 of \cite{jS2012}, the
standard references~\cite{sHgE1973,rW1984}, as well as the
Christodoulou's survey article~\cite{dC2008}.}
The Einstein-field equation~\eqref{E:metricge} can be rewritten in the following manner:
\begin{align} \label{E:metricgealternative}
	\text{Ric}_{\mu\nu}-\Lambda g_{\mu\nu}+\frac{1}{2}T g_{\mu\nu}-T_{\mu\nu} & = 0, & & (\mu,\nu=0,1,2,3),
\end{align}
where $T_{\mu\nu}=\rho u_{\mu}u_{\nu}$. To see that~\eqref{E:metricgealternative} holds, we take
the trace of each side of \eqref{E:metricge} and easily obtain
the relationship $R=4\Lambda-T$, where $T$ denotes the trace of the tensor $T_{\mu\nu}$. 
Inserting this relation into \eqref{E:metricge}, we deduce~\eqref{E:metricgealternative}.
It follows that the dust-Einstein system comprises equation
\eqref{E:metricgealternative} 
together with the equations of motion for a perfect fluid:
\begin{align} \label{E:dustalternative}
	D_{\alpha}T^{\alpha\mu} &=0, && (\mu=0,1,2,3),
\end{align}
and the normalization condition $g_{\alpha\beta}u^{\alpha}u^{\beta}=-1$.
By first contracting \eqref{E:dustalternative} against $u^{\mu}$
and then projecting \eqref{E:dustalternative} onto 
the $g-$orthogonal complement of $u$ via the projection
\[
\Pi_{\mu\nu} \eqdef u_{\mu}u_{\nu}+g_{\mu\nu},
\]
we obtain the following equivalent formulation of \eqref{E:dustalternative}:
\begin{subequations}
\begin{align}
u^{\alpha} D_{\alpha}u^j & = 0, &&  (j=1,2,3), \label{E:dustalternative1}\\
u^{\alpha}D_{\alpha}\rho+\rho D_{\alpha}u^{\alpha} & = 0, && \label{E:dustalternative2}
\end{align}
\end{subequations}
where 
\be\label{E:massshellalternative}
g_{\alpha\beta}u^{\alpha}u^{\beta}=-1.
\ee
Regarding the gravitational quantities, $\mbox{Ric}_{\mu \nu}$ is the \emph{Ricci curvature tensor}, $R$ is the \emph{scalar curvature}, and $\Lambda$ is the \emph{cosmological constant}. $\mbox{Ric}_{\mu \nu}$ and $R$ can be expressed in terms of the \emph{Riemann curvature tensor}\footnote{Under our sign convention, $D_{\mu} D_{\nu} X_{\alpha} - D_{\nu} D_{\mu} X_{\alpha} = \mbox{Riem}_{\mu \nu \alpha}^{\ \ \ \ \beta} X_{\beta}.$} $\mbox{Riem}_{\mu \alpha \nu}^{\ \ \ \ \beta}$, which is expressible in terms of the \emph{Christoffel symbols} $\Gamma_{\mu \ \nu}^{\ \alpha}$ of the \emph{spacetime metric} $g_{\mu \nu}.$ 
Relative to an arbitrary local coordinate system, these quantities can be expressed as follows:
\begin{subequations}
\begin{align}
\mbox{Riem}_{\mu \alpha \nu}^{\ \ \ \ \beta} & \eqdef 
	\partial_{\alpha} \Gamma_{\mu \ \nu}^{\ \beta}
	- \partial_{\mu} \Gamma_{\alpha \ \nu}^{\ \beta}
  + \Gamma_{\alpha \ \lambda}^{\ \beta} \Gamma_{\mu \ \nu}^{\ \lambda}
	- \Gamma_{\mu \ \lambda}^{\ \beta} \Gamma_{\alpha \ \nu}^{\ \lambda},	
	\nonumber \\
	%\label{E:Riemanndef} \\
\mbox{Ric}_{\mu \nu} & \eqdef \mbox{Riem}_{\mu \alpha \nu}^{\ \ \ \ \alpha} 
	= \partial_{\alpha} \Gamma_{\mu \ \nu}^{\ \alpha}
	- \partial_{\mu} \Gamma_{\alpha \ \nu}^{\ \alpha}
  + \Gamma_{\alpha \ \lambda}^{\ \alpha} \Gamma_{\mu \ \nu}^{\ \lambda}
	- \Gamma_{\mu \ \lambda}^{\ \alpha} \Gamma_{\alpha \ \nu}^{\ \lambda},
	\nonumber \\
	%\label{E:Riccidef} \\
R & \eqdef g^{\alpha \beta} \mbox{Ric}_{\alpha \beta}, \nonumber\\
%\label{E:Rdef} \\
\Gamma_{\mu \ \nu}^{\ \alpha} & \eqdef \frac{1}{2} g^{\alpha \lambda}(\partial_{\mu} g_{\lambda \nu} 
	+ \partial_{\nu} g_{\mu \lambda} - \partial_{\lambda} g_{\mu \nu}). \label{E:EMBIChristoffeldef}
\end{align}
\end{subequations}

An initial data set for the dust-Einstein Eqs. \eqref{E:metricgealternative} + \eqref{E:dustalternative1}-\eqref{E:dustalternative2}
consists of the quintuple 
$(\mathring{\Sigma}, \mathring{\underline{g}}_{jk}, \mathring{\underline{K}}_{jk}, \mathring{\rho}, \underline{\mathring{u}}^j),$ $(j,k = 1,2,3)$.
Here $\mathring{\Sigma}$ is the initial data hypersurface, $\mathring{\underline{g}}_{jk}$ is the restriction 
of the spacetime metric $g$ to $\mathring{\Sigma}$, $\mathring{\underline{K}}_{jk}$ is the
prescribed second fundamental form of $\mathring{\Sigma}$ relative to $g$,
$\mathring{\rho}:\mathring{\Sigma}\to\R_+$ is the initial dust-density, and 
$\underline{\mathring{u}}^j$ is the $g-$orthogonal projection of the initial 
four-velocity onto $\mathring{\Sigma}.$ Solving the initial value problem means constructing the solution launched by the data.
The solution consists of a $4-$dimensional manifold $\mathcal{M},$ a Lorentzian metric $g_{\mu \nu},$ a function $\rho,$ a future-directed unit-normalized vectorfield $u^{\mu},$ $(\mu, \nu = 0,1,2,3),$ on $\mathcal{M}$ satisfying 
\eqref{E:metricgealternative} + \eqref{E:dustalternative1}-\eqref{E:dustalternative2}, 
and an embedding $\mathring{\Sigma} \hookrightarrow \mathcal{M}$ such that $\mathring{\underline{g}}_{jk}$ is the first fundamental form\footnote{Recall that $\mathring{\underline{g}}$ is defined at the point $x$ by $\mathring{\underline{g}}(X,Y) = g(X,Y)$ for all $X,Y \in T_x \mathring{\Sigma}.$} of $\mathring{\Sigma},$ $\mathring{\underline{K}}_{jk}$ is the second fundamental form\footnote{Recall that $\mathring{\underline{K}}$ is defined at the point $x$ by $\mathring{\underline{K}}(X,Y) \eqdef g(D_{X} \hat{N},Y)$ for all $X,Y \in T_x \mathring{\Sigma},$ where $\hat{N}$ is the future-directed unit normal to $\mathring{\Sigma}$ at $x.$} of $\mathring{\Sigma},$ the restriction of $\rho$ to $\mathring{\Sigma}$ is $\mathring{\rho},$ and $(\underline{\mathring{u}}^1, \underline{\mathring{u}}^2, \underline{\mathring{u}}^3)$ is the $g-$orthogonal projection of $(u^0,u^1,u^2,u^3)$ onto $\mathring{\Sigma};$ see Sect. \ref{SS:IDENTIFICATIONS} for a summary of the conventions we use for identifying tensors inherent to $\mathring{\Sigma}$ with spacetime tensors.

In order for $(\mathring{\Sigma}, \mathring{\underline{g}}_{jk}, \mathring{\underline{K}}_{jk}, \mathring{\rho}, \underline{\mathring{u}}^j)$ to be viable data for the Einstein equations,
the following constraint equations have to be satisfied (see for example, \cite[Ch. 10]{rW1984}):
\begin{subequations}
\begin{align}
	\mathring{\underline{R}} - \mathring{\underline{K}}_{ab} \mathring{\underline{K}}^{ab} + (\mathring{\underline{g}}^{ab} \mathring{\underline{K}}_{ab})^2 - 2 \Lambda & = 2 T(\hat{N},\hat{N})|_{\mathring{\Sigma}}, \label{E:Gauss} \\
	\mathring{\underline{D}}^a \mathring{\underline{K}}_{aj} - \mathring{\underline{g}}^{ab}  \mathring{\underline{D}}_j \mathring{\underline{K}}_{ab}  & = 
		T(\hat{N},\frac{\partial}{\partial x^j})|_{\mathring{\Sigma}}. \label{E:Codazzi}
\end{align}
\end{subequations}
The above equations are known as the {\em Gauss} and the {\em Codazzi} equation respectively. Above, $\mathring{\underline{D}}$
denotes the Levi-Civita connection of $\mathring{\underline{g}},$ while $\hat{N}$ denotes the future-directed unit normal to $\mathring{\Sigma}$.

\subsection{FLRW Background Solutions} \label{S:backgroundsolution}
We now briefly recall some standard facts concerning the FLRW background solutions to the Euler-Einstein equations; 
readers can consult \cite[Ch. 5]{rW1984} or \cite{iRjS2012} for more details. The FLRW solutions are derived under the ansatz that 
the background metric $\widetilde{g}$ has the following warped product form relative to a standard local coordinate system on 
$(-\infty,\infty) \times \mathbb{T}^3:$
\begin{align} \label{E:backgroundmetricform}
	\widetilde{g} = -dt^2 + a^2(t) \sum_{i=1}^3 (dx^i)^2,
\end{align}
where $a(t) \geq 0$ is the scale factor. For simplicity, we will assume for the remainder of the article that
$	a(0) = 1.$
It is then easy to verify [using definition \eqref{E:EMBIChristoffeldef}] that the only non-vanishing Christoffel symbols of $\widetilde{g}$ are
\begin{align*} 
%\label{E:BackgroundChristoffel}
	\widetilde{\Gamma}_{j \ k}^{\ 0} = \widetilde{\Gamma}_{k \ j}^{\ 0}  = a(t) \frac{d}{dt} a(t) \delta_{jk}, 
		&& \widetilde{\Gamma}_{j \ 0}^{\ k} = \widetilde{\Gamma}_{0 \ j}^{\ k}  = \omega(t) \delta_j^k, && (j,k=1,2,3), 
\end{align*}
where 
\begin{align}
	\omega(t) \eqdef \frac{1}{a(t)} \frac{d}{dt} a(t). \label{E:omegadef}
\end{align}
We then make the following assumptions about the background fluid solution variables: $\widetilde{\rho} = \widetilde{\rho}(t)$ and $\widetilde{u}^{\mu} \equiv (1,0,0,0).$ Inserting these assumptions 
into the dust-Einstein Eqs. \eqref{E:metricgealternative} +~\eqref{E:dustalternative1}-~\eqref{E:dustalternative2} and performing routine computations, one derives the \emph{Friedmann equations}:
\begin{subequations}
\begin{align}
	\widetilde{\rho}(t) [a(t)]^{3} & \equiv \bar{\dens},
		\label{E:backgroundrhoafact} \\
	\frac{d}{dt} a(t) = a(t) \sqrt{\frac{\Lambda}{3} + \frac{\widetilde{\rho}(t)}{3}} & = a(t) \sqrt{\frac{\Lambda}{3} 
		+ \frac{\bar{\dens}}{3[a(t)]^{3}}}, \label{E:backgroundaequation}
\end{align} 
\end{subequations}
where the constant $\bar{\dens} \geq 0$ denotes the initial energy density.  

\begin{remark}[\textbf{Compactly supported data}]
	We stress that in the present article, because of the special structure
	of the dust model, our methods apply in particular to case $\bar{\dens} = 0.$
	This stands in contrast to the cases $0 < c_s^2 \leq 1/3$ studied in \cite{iRjS2012,jS2012,cLjVK},
	where it was assumed that $\bar{\dens} > 0.$
\end{remark}

Eq.~\eqref{E:backgroundrhoafact} is a consequence of~\eqref{E:dustalternative2}, while~\eqref{E:backgroundaequation}
follows from plugging the above ansatz into the $00$-component of the Einstein-field equations. Furthermore, all the $0j$-components are equal to zero. To satisfy the $jk$-components of the field equations
we discover (see, for example, ~\cite[Sect. 4.1]{iRjS2012}) that the following second-order ODE must hold:
\[
2a\frac{d^2}{dt^2}a + \left(\frac{d}{dt}a\right)^2 -\Lambda a^2 = 0.
\]
However, it can be checked that the above relationship automatically follows from~\eqref{E:backgroundrhoafact} and~\eqref{E:backgroundaequation}.
Thus, the dust-Einstein system reduces to the Eqs. \eqref{E:backgroundrhoafact}-\eqref{E:backgroundaequation}. 
We now note that the ODE \eqref{E:backgroundaequation} suggests the asymptotic behavior $a(t) \sim e^{Ht}, H \eqdef \sqrt{\Lambda/3}.$ 
This behavior of $a(t),$ which is analyzed in greater detail in Lemma \ref{L:backgroundaoftestimate}, is the source of the rapid spacetime expansion.

For aesthetic reasons, we also introduce the quantity
\begin{align}
	\Omega(t) & \eqdef \ln a(t), \label{E:BigOmega}
\end{align}
which implies that
\begin{align}
	a(t)  = e^{\Omega(t)},  \ \
	\omega(t)  = \frac{d}{dt} \Omega(t). \nonumber
\end{align}
For future use, we also note the following identities, which follow from the above discussion and from straightforward computations; we leave it to the reader to verify the details:
\begin{subequations}
\begin{align}
	%3 \omega^2(t) - \Lambda & = \frac{1}{\speed^2} \widetilde{p}(t), \label{E:3omegasquaredminusLambdaidentity} \\
	\frac{d}{dt} \omega(t) & = - \frac{1}{2} \widetilde{\rho}(t), \label{E:omegadotidentity} \\
	3 \frac{d}{dt} \omega(t) + 3 \omega^2(t) - \Lambda & = -\frac{1 }{2} \widetilde{\rho}(t) \label{E:omegadotidentity2}.
\end{align}
\end{subequations}

\subsubsection{Friedmann's equation}
The following lemma summarizes the asymptotic behavior of solutions to the ODE \eqref{E:backgroundaequation}. 
It will be used in derivation of error estimates in
Sect.~\ref{S:sobolev}.
\begin{lemma} [\textbf{Analysis of Friedmann's equation}] \cite[Sect. 4.2]{iRjS2012} \label{L:backgroundaoftestimate}
	Let $\Lambda > 0,$ $\bar{\dens} \geq 0$ be constants, and let $a(t)$ be the solution to the following ODE: 
	\begin{align*}
		\frac{d}{dt}a(t) & = a(t) \sqrt{\frac{\Lambda}{3} + \frac{\bar{\dens}}{3[a(t)]^3}}, && a(0) = 1.
	\end{align*}
	Then with $H \eqdef \sqrt{\Lambda/3},$ the solution $a(t)$ is given by
	\begin{align*}
		a(t) & = \bigg\lbrace\sinh \Big(\frac{3 H t}{2}\Big) 
			\sqrt{\frac{\bar{\dens}}{3H^2} + 1}
		 	+	\cosh \Big(\frac{3 H t}{2} \Big) \bigg\rbrace^{2/3}, 
	\end{align*}
	and for all integers $N \geq 0,$ there exists a constant $C_N > 0$ such that for all $t \geq 0,$ with 
	$A \eqdef \bigg\lbrace \frac{1}{2} \Big(\sqrt{\frac{\bar{\dens}}{3H^2} + 1} + 1 \Big) \bigg\rbrace^{2/3},$
	we have that
	\begin{subequations}
	\begin{align*}
		(1/2)^{2/3} e^{Ht} \leq a(t) & \leq A e^{Ht}, \\
		\Big| e^{-Ht} \frac{d^N}{dt^N}a(t) - A H^N \Big| & \leq C_Ne^{-3 Ht}.
	\end{align*}
	\end{subequations}
	Furthermore, for all integers $N \geq 0,$ there exists a constant $\widetilde{C}_N > 0$ such that
	for all $t \geq 0,$ with 
	\begin{align}  \label{E:LITTLEOMEGADEFINED}
		\omega(t) \eqdef \frac{1}{a(t)} \frac{d}{dt} a(t) = \frac{d}{dt} \Omega(t), 
	\end{align}	
	 we have that
	\begin{align*}
		H \leq \omega(t)  \leq \sqrt{H^2 + \frac{\bar{\dens}}{3}}, \ \ \ \
		\Big| \frac{d^N}{dt^N}\big(\omega(t) - H\big) \Big|  \leq \widetilde{C}_N e^{-3 Ht}.
	\end{align*}
\end{lemma}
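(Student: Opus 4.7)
The Friedmann ODE is autonomous and separable, so my plan is to integrate it explicitly and then read off all the stated bounds from the resulting formula. First I would substitute $b(t) \eqdef a^{3/2}(t)$, which turns the nonlinear factor $a\sqrt{\Lambda/3 + \bar{\dens}/(3a^3)}$ into $(3/2)\sqrt{(\Lambda/3)b^2 + \bar{\dens}/3}$. With $c \eqdef \sqrt{\bar{\dens}/(3H^2)}$ this reduces to $\dot{b} = (3H/2)\sqrt{b^2 + c^2}$, which integrates via $\sinh^{-1}$ to give $b(t) = c\sinh((3Ht/2)+\phi_0)$ with $\sinh \phi_0 = 1/c$. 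Expanding the $\sinh$ with the addition formula yields
\[
a^{3/2}(t) = \sqrt{1+c^2}\,\sinh\!\Big(\tfrac{3Ht}{2}\Big) + \cosh\!\Big(\tfrac{3Ht}{2}\Big),
\]
which is the claimed closed form.

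Next I would rewrite this in exponential form as $a^{3/2}(t) = B e^{3Ht/2} - D e^{-3Ht/2}$ with $B \eqdef \tfrac{1}{2}(\sqrt{1+c^2}+1)$ and $D \eqdef \tfrac{1}{2}(\sqrt{1+c^2}-1) \geq 0$; note $B = A^{3/2}$ and $B - D = 1$. The upper bound $a(t) \leq A e^{Ht}$ is then immediate from discarding the negative term, while for the lower bound I would use $-De^{-3Ht/2} \geq -De^{3Ht/2}$ (valid for $t \geq 0$) to conclude $a^{3/2}(t) \geq (B-D)e^{3Ht/2} = e^{3Ht/2}$, so $a(t) \geq e^{Ht} \geq (1/2)^{2/3} e^{Ht}$. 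This already gives both two-sided bounds on $a$.

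For the sharp asymptotic estimates on $\frac{d^N}{dt^N} a$, I would factor $a(t) = A e^{Ht}\, f(t)$ with $f(t) \eqdef (1 - (D/B) e^{-3Ht})^{2/3}$. Since $D/B \in [0,1)$ and $e^{-3Ht} \leq 1$ for $t \geq 0$, the function $f$ is smooth on $[0,\infty)$ with $f(t) = 1 + O(e^{-3Ht})$, and by the chain rule every derivative $f^{(k)}(t)$ is a finite sum of terms of the form (constant) $\cdot\, e^{-3jHt}(1 - (D/B)e^{-3Ht})^{2/3 - k}$ with $j \geq 1$, hence $|f^{(k)}(t)| \leq C_k e^{-3Ht}$ uniformly. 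Leibniz gives
\[
\frac{d^N}{dt^N} a(t) = A H^N e^{Ht} f(t) + A e^{Ht}\sum_{k=1}^{N} \binom{N}{k} H^{N-k} f^{(k)}(t),
\]
so multiplying by $e^{-Ht}$ and using $f(t) - 1 = O(e^{-3Ht})$ produces $|e^{-Ht} \frac{d^N}{dt^N} a(t) - A H^N| \leq C_N e^{-3Ht}$.

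Finally, the claims about $\omega$ follow by two short observations. From the ODE itself, $\omega(t) = \sqrt{H^2 + \bar{\dens}/(3 a^3(t))}$; since $a(t) \geq 1$ on $[0,\infty)$ by the lower bound already proved, the two-sided bound $H \leq \omega(t) \leq \sqrt{H^2 + \bar{\dens}/3}$ is immediate. For the derivative estimate, I would write $\omega(t) = \frac{d}{dt}\ln a(t) = H + \frac{d}{dt}\ln f(t)$, and since $\ln f(t) = O(e^{-3Ht})$ with all derivatives enjoying the same pointwise bound (again by the chain rule and the fact that $(D/B)e^{-3Ht}$ is uniformly bounded away from $1$ on $[0,\infty)$), we obtain $|\frac{d^N}{dt^N}(\omega - H)| \leq \widetilde{C}_N e^{-3Ht}$. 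There is no genuine obstacle here since the ODE linearizes under $a \mapsto a^{3/2}$; the only care needed is in the routine but slightly tedious bookkeeping of derivatives of composed functions to make the constants $C_N, \widetilde{C}_N$ uniform in $t \geq 0$, which is handled by the explicit product structure of $f$.
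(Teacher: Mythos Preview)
Your proof is correct. The paper does not actually prove this lemma here; it merely cites \cite[Sect.~4.2]{iRjS2012} and states the result. Your self-contained argument via the substitution $b = a^{3/2}$ (which linearizes the ODE into $\dot b = \tfrac{3H}{2}\sqrt{b^2+c^2}$) is the standard route and matches what the cited reference does. One small remark: your derivation of the closed form via $\sinh^{-1}(b/c)$ implicitly assumes $c>0$, i.e., $\bar\dens>0$; the degenerate case $\bar\dens=0$ gives $\dot b = \tfrac{3H}{2}b$ and $a(t)=e^{Ht}$ directly, and your final formula reproduces this in the limit, so there is no real gap. Also note that your lower bound $a(t)\geq e^{Ht}$ is in fact sharper than the $(1/2)^{2/3}e^{Ht}$ stated in the lemma, which is fine.
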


%The background FLRW solution is of the form:
%\[
%\tilde{g}=-dt^2+e^{2\Omega(t)}\sum_{i=1}^3(dx^i)^2
%\]
%In the background solution the initially quiet density $\bar{\dens}$ evolves via
%\[
%\tilde{\dens}a^3=\bar{\dens}.
%\]
%Furthermore, 
%\[
%C_1e^{Ht}\leq e^{\Omega(t)}\leq C_2e^{Ht},
%\]
%where $H=\sqrt{\frac{\Lambda}{3}}$ is the Hubble constant. 
%%
%The modified dust-Einstein system takes the form
%\begin{subequations}
%\label{E:de}
%\begin{alignat}{2}
%\hat{\text{Ric}}_{\mu\nu}-\Lambda g_{\mu\nu}-T_{\mu\nu}+\frac{1}{2}Tg_{\mu\nu}+I_{\mu\nu}&=0&&\,, \label{E:metric}\\
%u^{\alpha}D_{\alpha}\dens+\dens D_{\alpha}(u^{\alpha})&=0&&\,,\label{E:velocity}\\
%u^{\alpha}D_{\alpha}u^j&=0&&\,.\label{E:continuity}
%\end{alignat}
%\end{subequations}
%%
%
\subsection{Modified Dust-Einstein equations}\label{S:modified}
In this section we reformulate the dust-Einstein system \eqref{E:metricgealternative} + 
\eqref{E:dustalternative1}-\eqref{E:massshellalternative} relative to a {\em wave coordinate}
system. More precisely, we impose the gauge condition
\be\label{E:gauge}
\Gamma^{\mu}=\tilde{\Gamma}^{\mu},
\ee
where $\Gamma^{\mu} \eqdef g^{\alpha\beta}\Gamma_{\alpha \ \beta}^{\ \mu}$ is a contracted Christoffel symbol
of the spacetime metric $g$ and
$\tilde{\Gamma}^{\mu} \eqdef \tilde{g}^{\alpha\beta}\tilde{\Gamma}^{\mu}_{\alpha\beta}$
is a contracted Christoffel symbol of the FLRW background-metric $\tilde{g}=-dt^2+e^{2\Omega(t)}\sum_{i=1}^3(dx^i)^2$.
The wave coordinate condition \eqref{E:gauge} has several remarkable features.
First, it allows one to replace the dust-Einstein equations with a manifestly hyperbolic ``modified'' system of quasilinear wave equations coupled to the dust equations. The following fact is essential: when the gauge condition holds, the modified system is equivalent to the original system.
Second, the condition \eqref{E:gauge} leads to the presence of 
\emph{dissipative} terms in the modified equations (see Sect. \ref{SS:MODIFIEDEQNS}). This dissipation is the primary
reason that we are able to prove our main future stability theorem.
Finally, the gauge condition is preserved by the flow of the modified equations 
if it is satisfied initially. The latter observation allows us to infer that the solutions of the modified problem are indeed the solutions to the original dust-Einstein system. 

Without presenting the complete details of this calculation 
(for which we refer the reader to Sect. 5 of~\cite{jS2012}) 
we now sketch the derivation of the modified system. 
To this end, we first introduce the
{\em modified Ricci tensor} $\widehat{\text{Ric}}_{\mu\nu}:$ 
\begin{align} \label{E:modifiedRicci}
	\widehat{\mbox{Ric}}_{\mu \nu} 
	& \eqdef \mbox{Ric}_{\mu \nu} 
	+ \frac{1}{2} \left\lbrace D_{\mu} (3 \omega(t) g_{\nu 0} - \Gamma_{\nu}) + D_{\nu} (3 \omega(t) g_{\mu 0} - \Gamma_{\mu}) \right\rbrace  			\\
	& = -\frac{1}{2} \hat{\square}_g g_{\mu \nu} 
		+ \frac{3}{2} \omega(t) \big(D_{\mu} g_{\nu 0} + D_{\nu} g_{\mu 0} \big) 
		+ g^{\alpha \beta} g^{\gamma \delta} (\Gamma_{\alpha \gamma \mu} \Gamma_{\beta \delta \nu}
		+ \Gamma_{\alpha \gamma \mu} \Gamma_{\beta \nu \delta}
		+ \Gamma_{\alpha \gamma \nu} \Gamma_{\beta \mu \delta}), \notag
\end{align}
where $3 \omega(t) g_{\nu 0} - \Gamma_{\nu}$ and $g_{\nu 0}$ are treated as one-forms 
for the purposes of covariant differentiation and
\begin{align*}
%\label{E:reducedwaveoperator}
\hat{\square}_g \eqdef g^{\alpha \beta} \partial_{\alpha} \partial_{\beta}
\end{align*}
is the {\em reduced wave operator} corresponding to $g$.
Roughly speaking, the modified Ricci tensor is defined by making well-chosen replacements of
$\Gamma^{\mu}$ with $\tilde{\Gamma}^{\mu}$ in the coordinate frame expansion of
$\mbox{Ric}_{\mu \nu}.$

We now simply state our definition of the modified dust-Einstein equations,
which, roughly speaking, is defined by making well-chosen replacements of
$\Gamma^{\mu}$ with $\tilde{\Gamma}^{\mu}$ 
in the dust-Einstein Eqs.
\eqref{E:metricgealternative} + \eqref{E:dustalternative1}-\eqref{E:massshellalternative}
(see Sect. 5 of~\cite{jS2012} for a detailed derivation):
\begin{subequations}
\label{E:modified}
\begin{align}
	\widehat{\mbox{Ric}}_{00} + 2 \omega \Gamma^0 - 6 \omega^2 - \Lambda g_{00} -\rho \Big(u_0^2
	+\frac{1}{2}g_{00}\Big)& = 0, \label{E:Rhat00} \\
	\widehat{\mbox{Ric}}_{0j} - 2 \omega (\Gamma_j - 3 \omega g_{0j}) - \Lambda g_{0j} - \rho \Big(u_0u_j+\frac{1}{2}g_{0j} \Big) & = 0, \\
	\widehat{\mbox{Ric}}_{jk} - \Lambda g_{jk} - \rho \Big(u_ju_k+\frac{1}{2}g_{jk}\Big) & = 0, \label{E:Rhatjk} \\
	u^{\alpha} \partial_{\alpha} \rho +\rho\,\partial_{\alpha}u^{\alpha}+\rho\, \Gamma_{\alpha \ \beta}^{\ \alpha}u^{\beta} & = 0, \label{E:firstEulersummary} \\
	u^{\alpha} \partial_{\alpha} u^{j} + \Gamma_{\alpha \ \beta}^{\ j}u^{\alpha}u^{\beta} & = 0, \label{E:secondEulersummary} 
\end{align}
\end{subequations}
where $g_{\alpha \beta} u^{\alpha} u^{\beta} = -1$ 
and $\omega$ is defined in~\eqref{E:omegadef}. We stress that 
under the gauge condition \eqref{E:gauge}, the modified equations are equivalent to the original dust-Einstein equations.

\begin{remark} [\textbf{Hyperbolic equations}] \label{E:MODIFIEDSYSTEMROUGHCLASSIFICATION}
The relation \eqref{E:modifiedRicci} shows that the modified system is a system of quasilinear 
wave equations for the metric components coupled to the dust equations.
\end{remark}

\subsection{Initial data for the modified system}
As we explained in Sect.~\ref{SS:dustEinstein}, an initial data set 
for the dust-Einstein Eqs. \eqref{E:metricgealternative} + \eqref{E:dustalternative1}-\eqref{E:massshellalternative} consists of the quintuple 
$(\mathring{\Sigma}, \mathring{\underline{g}}_{jk}, \mathring{\underline{K}}_{jk}, \mathring{\rho}, \underline{\mathring{u}}^j),$ $(j,k = 1,2,3)$.
We assume that the data have
the properties described at the end of Sect. \ref{SS:dustEinstein} and that
in particular, they verify the constraint Eqs. \eqref{E:Gauss}-\eqref{E:Codazzi}.
Our present goal is to use these data to construct suitable data for the modified 
Eqs. \eqref{E:Rhat00}-\eqref{E:secondEulersummary}.
In particular, we demand that the modified data verify the gauge condition $\Gamma^{\mu}=\tilde{\Gamma}^{\mu}$ 
$(\mu = 0,1,2,3)$ at $t=0.$ In order to satisfy all conditions, we must specify the full spacetime metric along the initial 
time slice $\mathring{\Sigma}$ as well as the transversal derivatives $\g_tg_{\mu\nu}|_{t=0}.$
More precisely, standard calculations imply (see, for example \cite[Sect. 5.3]{jS2012})
that in order to meet all of our requirements including the gauge condition, it suffices to set
\begin{subequations}
\label{E:initialdata}
\begin{align}
	g_{00}|_{t=0} & = -1, \ \ g_{0j}|_{t=0} = 0, \ \ g_{jk}|_{t=0} = \mathring{\underline{g}}_{jk}, \label{E:initialdata1}\\
	\rho|_{t=0} & = \mathring{\rho}, \ \ u^j |_{t=0} = \underline{\mathring{u}}^j, \ \ u^0|_{t=0} = \sqrt{1 + 
		\mathring{\underline{g}}_{ab}\underline{\mathring{u}}^a 
		\underline{\mathring{u}}^b}, \notag\\
	(\partial_t g_{jk})|_{t=0} & = 2 \mathring{\underline{K}}_{jk}.\notag
\end{align}
\end{subequations}
\begin{subequations}
\label{E:initialconstraint}
\begin{align}
	(\partial_{t} g_{00})|_{t=0} & =  2 \big(3 \omega(0)  - \mathring{\underline{g}}^{ab} 
		\mathring{\underline{K}}_{ab}\big),  \\
	(\partial_t g_{0j})|_{t=0} & = 
		\mathring{\underline{g}}^{ab}( \partial_a \mathring{\underline{g}}_{bj} - \frac{1}{2} \partial_j 
		\mathring{\underline{g}}_{ab}).
\end{align}
\end{subequations}
Above, $\mathring{\underline{g}}^{jk}$ denotes the inverse of $\mathring{\underline{g}}_{jk}.$ 
The fields $(\mathring{\Sigma}, g_{\mu \nu}|_{t=0}, \partial_t g_{\mu \nu}|_{t=0}, \rho|_{t=0}, u^j|_{t=0}),$ 
$(\mu,\nu = 0,1,2,3),$ $(j=1,2,3)$ as defined above form a complete set of data for the modified Eqs.
\eqref{E:Rhat00}-\eqref{E:secondEulersummary}, which have the basic structure described in Remark \ref{E:MODIFIEDSYSTEMROUGHCLASSIFICATION}.
As mentioned in the introduction, in order to ensure that the solutions to the modified 
system are also solutions to the original dust-Einstein equations, we have to show that the gauge condition \eqref{E:gauge}
is verified for all times and not merely initially. The proof is by now 
considered to be a standard result
that follows from the methods of \cite{CB1952}. 
The main idea is to show that for solutions to the modified equations, the quantities
$\Gamma_{\mu} - \tilde{\Gamma}_{\mu} \eqdef g_{\mu \alpha}(\Gamma^{\alpha} - \tilde{\Gamma}^{\alpha})$ 
verify a system of homogeneous wave equations with trivial initial data. From the standard uniqueness theorem
for wave equations, one concludes that $\Gamma_{\mu} - \tilde{\Gamma}_{\mu}$ completely vanishes.

\subsection{Decomposition of the modified equations}  \label{SS:MODIFIEDEQNS}
As we mentioned in the introduction, the basic mechanism that will allow us to prove the stability of the FLRW solutions is 
the dissipative effect induced by the positive cosmological constant.
To fully expose this effect, we will now decompose the modified dust-Einstein system \eqref{E:modified}.
We will work with the following rescaled version of the dust density $\rho:$
\be\label{E:r}
\dens \eqdef e^{3\Omega}\rho,
\ee
where we recall that $\Omega$ is defined in~\eqref{E:BigOmega}. We expect the quantity $\dens-\bar{\dens}$ to be small since we are trying to prove the stability of the background solution.
Recall that $\bar{\dens}$ is a constant associated with the background solution, introduced in~\eqref{E:backgroundrhoafact}.
Furthermore, we also rescale the spatial components $g_{jk}$ of the spacetime metric in the following fashion:
\be\label{E:h}
h_{jk}\eqdef e^{-2\Omega}g_{jk}.
\ee
Note that we expect $h_{jk}-\delta_{jk}$ to be small for the same reason as above.
The corresponding rescaled FLRW background solution variables verify $\dens \equiv \bar{\dens}$ and $h_{jk} \equiv \delta_{jk}$. 
In the remainder of the article, various error terms are denoted as indexed versions of the symbol $\triangle$.
\begin{proposition} [\textbf{Decomposition of the modified equations}] \label{P:Decomposition} 
Let $(g,u,\rho)$ be a solution to the modified dust-Einstein system~\eqref{E:Rhat00}-\eqref{E:secondEulersummary} and let the 
rescaled density $\dens$ and the rescaled metric components $h_{jk}$ be defined by~\eqref{E:r} and~\eqref{E:h} respectively.
Then the unknowns $(g_{00},g_{0j}, h_{jk}, u^j, \dens),$ $(j,k=1,2,3),$ 
satisfy the following system of equations:
\begin{subequations} 
\label{E:modifiedsystem}
\begin{alignat}{2}
\hat{\square}_g g_{00}&=&&5H\g_t g_{00}+6H^2(g_{00}+1)+\triangle_{00}\,,\label{E:metric00}\\
\hat{\square}_g g_{0j}&=&&3H\g_tg_{0j}+2H^2g_{0j}-2Hg^{ab}\Gamma_{ajb}+\triangle_{0j}\,,\label{E:metric0j}\\
\hat{\square}_g h_{jk}&=&&3H\g_th_{jk}+\triangle_{jk}\,,\label{E:metricjk} \\
u^{\alpha}\g_{\alpha}(\dens-\bar{\dens})
&=&&\triangle\,,\label{E:revol}\\
u^{\alpha}\g_{\alpha}u^j&=&&-2 u^0 \omega u^j - u^0 \triangle^{\ j}_{0 \ 0} +\triangle^j\,,\label{E:velevol}
\end{alignat}
\end{subequations}
where the harmless error terms $\triangle_{00}$, $\triangle_{0j}$, and $\triangle_{jk}$
are given by~(\ref{AE:triangle00}),~(\ref{AE:triangle0j}), and~(\ref{AE:trianglejk}) respectively,
the harmless error terms $\triangle$ and $\triangle^j$
are given by~(\ref{AE:triangledef}) and~(\ref{AE:trianglejdef}) respectively,
and the important linearly small Christoffel symbol error term $\triangle^{\ j}_{0 \ 0}$ is given by \eqref{AE:TRIANGLE0JUPPER0DEF}.
\end{proposition}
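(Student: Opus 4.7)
The plan is to derive each equation in \eqref{E:modifiedsystem} by starting from the corresponding modified equation in \eqref{E:Rhat00}--\eqref{E:secondEulersummary}, substituting the expansion \eqref{E:modifiedRicci} of the modified Ricci tensor, rewriting the result in terms of the rescaled variables $\dens = e^{3\Omega}\rho$ and $h_{jk} = e^{-2\Omega}g_{jk}$, and then isolating the linear dissipative terms generated by the positive cosmological constant while absorbing every remaining contribution into the error tensors $\triangle_{\mu\nu}$, $\triangle$, $\triangle^j$. Since the constants $H$ appearing on the right-hand sides are exactly $\sqrt{\Lambda/3}$ rather than $\omega(t)$, the key conceptual device is to systematically replace each occurrence of $\omega(t)$ by $H + (\omega(t)-H)$, using Lemma~\ref{L:backgroundaoftestimate} to guarantee that $\omega(t)-H = O(e^{-3Ht})$ is harmless and can be grouped into the error terms.

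For the metric equations I would proceed component by component. Starting from \eqref{E:Rhat00} and using \eqref{E:modifiedRicci}, I would isolate $-\tfrac{1}{2}\hat{\square}_g g_{00}$ on one side and move everything else to the other. The Friedmann identities \eqref{E:omegadotidentity}--\eqref{E:omegadotidentity2} together with $\Lambda = 3H^2$ allow me to re-express the algebraic combination $-2\omega\Gamma^0 + 6\omega^2 + \Lambda g_{00} + \rho(u_0^2 + \tfrac{1}{2}g_{00})$ as $-6H^2(g_{00}+1)$ modulo three kinds of admissible remainders: terms that are quadratic in the perturbations $(g_{00}+1, u^j, \varrho - \bar\varrho)$, terms carrying a factor $\omega - H$, and background-density pieces of the form $\tilde\rho$ which are themselves $O(e^{-3Ht})$ in view of \eqref{E:backgroundrhoafact} and Lemma~\ref{L:backgroundaoftestimate}. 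The $3\omega(D_0 g_{00} + D_0 g_{00})$ term from \eqref{E:modifiedRicci}, expanded using $D_0 g_{00} = \g_t g_{00} - 2\Gamma_{0\,0}^{\ \alpha}g_{\alpha 0}$ and linearized against the background, supplies the coefficient $5H\g_t g_{00}$ (the reduction $6H \leadsto 5H$ comes from a background Christoffel cross-term), with all other Christoffel products dumped into $\triangle_{00}$. The derivation for $g_{0j}$ is essentially the same, but now the term $-2\omega(\Gamma_j - 3\omega g_{0j})$ in \eqref{E:Rhat00} (more precisely its $g_{0j}$-equation analogue), once $\Gamma_j$ is expanded as $g^{\alpha\beta}\Gamma_{\alpha j \beta}$ and the $(0,0)$ and $(0,*)$ parts are extracted, leaves behind the dangerous piece $-2H g^{ab}\Gamma_{ajb}$, which must be displayed rather than hidden inside $\triangle_{0j}$. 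For $h_{jk}$, I would first compute
\begin{equation*}
\hat{\square}_g h_{jk} = e^{-2\Omega}\hat{\square}_g g_{jk} - 4\omega g^{00} e^{-2\Omega}\g_t g_{jk} + \big[4\omega^2 g^{00} - 2 g^{00}\g_t\omega - 4\omega^2 g^{00}\big] e^{-2\Omega}g_{jk} + \text{h.o.t.},
\end{equation*}
substitute in the equation for $g_{jk}$ derived from \eqref{E:Rhatjk}, and use $g^{00} = -1 + O(\epsilon)$ together with $\omega = H + O(e^{-3Ht})$ to pull out $3H\g_t h_{jk}$ while collecting the rest into $\triangle_{jk}$; note the absence of a $h_{jk}$-zeroth-order linear term reflects the cancellation between $-\Lambda g_{jk}$ and $-\tfrac{1}{2}\tilde\rho g_{jk}$, which is precisely the content of \eqref{E:omegadotidentity2}.

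For the fluid equations, I would start from \eqref{E:firstEulersummary} and compute $u^\alpha\g_\alpha \dens = e^{3\Omega}u^\alpha\g_\alpha\rho + 3\omega u^0 \dens$. Substituting the right-hand side of \eqref{E:firstEulersummary} and expanding $\rho\,\Gamma_{\alpha\,\beta}^{\ \alpha}u^\beta$ around the background contraction $3\omega u^0$ (which equals the value of this contraction for the FLRW solution) yields a cancellation of the leading $3\omega u^0 \dens$ terms, and replacing $\bar\dens$ by a constant on the left uses only that $u^\alpha\g_\alpha\bar\dens = 0$; all remaining contributions collapse into $\triangle$. For the velocity equation I would rewrite \eqref{E:secondEulersummary} as $u^\alpha\g_\alpha u^j = -u^\alpha u^\beta \Gamma_{\alpha\ \beta}^{\ j}$ and split the right-hand side according to whether $(\alpha,\beta)=(0,0)$ or not. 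The $(0,0)$ part is exactly $-(u^0)^2 \Gamma_{0\ 0}^{\ j}$; writing $(u^0)^2\Gamma_{0\ 0}^{\ j} = u^0 \triangle_{0\ 0}^{\ j}$ plus admissible error (here $\triangle_{0\ 0}^{\ j}$ is the linearly small symbol of \eqref{AE:TRIANGLE0JUPPER0DEF}) delivers the $-u^0 \triangle_{0\ 0}^{\ j}$ term, while the cross-terms $-2 u^0 u^a \Gamma_{0\ a}^{\ j}$ linearize against the background value $\tilde\Gamma_{0\ a}^{\ j} = \omega \delta_a^j$ to produce exactly $-2 u^0 \omega u^j$, with the remaining $(a,b)$-Christoffel contractions collected into $\triangle^j$.

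The main obstacle is the bookkeeping for the metric equations: getting the coefficients to emerge exactly as $5H$, $6H^2$, $3H$, $2H^2$ rather than as $\omega$-dependent combinations requires simultaneously invoking $\Lambda = 3H^2$, the Friedmann identities \eqref{E:omegadotidentity}--\eqref{E:omegadotidentity2} to eliminate the background density $\widetilde{\rho}$, and the fact that several covariant-derivative terms in \eqref{E:modifiedRicci} generate \emph{additional} linear contributions through their background Christoffel pieces. The only nontrivial judgment call is deciding which terms are genuinely linear-and-dissipation-inducing (and so must be displayed) versus higher-order or fast-decaying (and so belong inside $\triangle_{\bullet}$); I would adopt the convention, motivated by the dissipation analysis of later sections, that any factor of $\omega - H$, any $O(e^{-3Ht})$ factor coming from the background, any product of at least two perturbative factors, and any $\Gamma$-product not already displayed explicitly belongs in the error terms, matching the precise definitions deferred to \eqref{AE:triangle00}, \eqref{AE:triangle0j}, \eqref{AE:trianglejk}, \eqref{AE:triangledef}, \eqref{AE:trianglejdef}.
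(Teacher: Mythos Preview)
Your approach is essentially the same as the paper's: expand the modified Ricci tensor, use the Friedmann identities \eqref{E:omegadotidentity}--\eqref{E:omegadotidentity2} together with $\Lambda=3H^2$ to simplify the algebraic terms, replace each $\omega$ by $H+(\omega-H)$, and decompose the Christoffel symbols into background plus perturbation. The paper merely packages the metric-side computation through a sequence of auxiliary lemmas (Lemmas~\ref{AL:modifiedRiccidecomposition}--\ref{AL:Christoffeldecomposition}, largely quoted from Ringstr\"om \cite{hR2008}) that isolate the quadratic Christoffel piece $A_{\mu\nu}$ and its principal part, whereas you describe the same manipulations more directly.

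One small inaccuracy: your explanation that the covariant-derivative term $3\omega(D_0 g_{00}+D_0 g_{00})$ by itself produces the $5H\partial_t g_{00}$ via a ``reduction $6H\leadsto 5H$ from a background Christoffel cross-term'' is not quite the mechanism. In the paper's organization, the $\tfrac{3}{2}\omega\partial_t g_{\mu\nu}$ from Lemma~\ref{AL:modifiedRiccidecomposition} contributes $3\omega\partial_t g_{00}$ (after the factor of $2$), and the remaining $2\omega\partial_t g_{00}$ comes from the principal part of $A_{00}+2\omega\Gamma^0-6\omega^2$ in Lemma~\ref{AL:AmunuplusImunudecomposition}; the background Christoffel symbols $\tilde\Gamma_{0\ 0}^{\ \alpha}$ all vanish, so no cross-term arises there. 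This does not affect the correctness of your overall plan, only the attribution of where the coefficient originates.
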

\begin{proof}
A detailed formulation and proof of the above proposition are provided in Prop.~\ref{AP:Decomposition}
of Appendix~\ref{S:APPENDIXA}.
\end{proof}
\subsection{Local well-posedness and the continuation principle}
\label{SS:LWPANDCONTINUATION}
We now state a local well-posedness theorem that is specialized to the data of interest. 
In particular, the small perturbations of the standard FLRW data, which we will consider in our global stability analysis, 
satisfy the assumptions of the following theorem.
\begin{theorem}[\textbf{Local well-posedness for the modified system}]\label{T:localwellposedness}
Let $N \ge 4$. Assume that the initial data $(\mathring{\Sigma}, \mathring{\underline{g}}_{jk}, \mathring{\underline{K}}_{jk}, \mathring{\rho}, \underline{\mathring{u}}^j)$ $(j,k = 1,2,3)$ verify the Einstein constraints \eqref{E:Gauss}-\eqref{E:Codazzi}.
Consider the corresponding initial data set for the modified equations 
\eqref{E:Rhat00}-\eqref{E:secondEulersummary}
as defined in Sect.~\ref{SS:dustEinstein}.
In particular, assume the relations ~\eqref{E:initialdata} and~\eqref{E:initialconstraint}, which imply that
the wave coordinate condition \eqref{E:gauge} holds at $t=0.$
Assume that the data have the following regularity properties:
\begin{subequations}
	\begin{align*}
		\underpartial \mathring{g}_{jk} & \in H^{N}; \ 	\mathring{K}_{jk} - \omega(0)\mathring{g}_{jk}  \in H^{N}; \\
	 \mathring{\rho} - \bar{\dens} & \in H^{N-1};
	 % \ \ \u\mathring{\dens} \in H^{N-1}, \footnote{Must explain what $\u\mathring{\dens}$ means!}
		 \ \mathring{u}^j \in H^N,
	\end{align*}
	\end{subequations}
	and assume further that there is a constant $C > 1$ such that
	\begin{align} \label{E:localexistencemathringgjklowerequivalenttostandardmetricagain}
		C^{-1} \delta_{ab} X^a X^b \leq \mathring{g}_{ab}X^a X^b \leq C \delta_{ab} X^a X^b,  \ \
		\forall(X^1,X^2,X^3) \in \mathbb{R}^3.
	\end{align}
Then these data launch a unique classical solution $(g_{\mu\nu}, u^{\mu}, \rho)$ 
$(\mu,\nu = 0,1,2,3)$
to the modified system on a slab of the form $(T_-,T_+)\times \T^3$, $T_-<0<T_+$, such that $(g_{jk})_{j,k=1,2,3}$
is uniformly positive definite and $g_{00}<0$ on $(T_-,T_+)\times\T^3$. Moreover, 
the solution has the following regularity properties:
%
%{\color{red} PERHAPS IN THE NEXT SET OF EQUATIONS WE SHOULD INCLUDE STATEMETNS ABOUT THE $\partial_{\mathbf{u}}$ DERIVATIVE OF MORE QUANTITIES??}
%
\begin{subequations}
	\begin{align*}
		g_{00} + 1, \ g_{0j} \in C^0((T_-, T_+),H^{N+1}); \, \underpartial g_{jk} & \in C^0((T_-, T_+),H^{N}), \\
		\partial_t g_{00}, \ \partial_t g_{0j}, \, \partial_t g_{jk} - 2\omega(t) g_{jk} & \in C^0((T_-, T_+),H^{N}),  \\ 
	   \partial_{tt}g_{00}, \ \partial_{tt}g_{0j}, \ \partial_t (\partial_t g_{jk} - 2\omega(t) g_{jk}) & \in C^0((T_-, T_+),H^{N-1}), \\
	   \partial_t \u g_{00}, \ \partial_t \u g_{0j}, \ \u (\partial_t g_{jk} - 2\omega(t) g_{jk}) & \in C^0((T_-, T_+),H^{N-1}), \\
		e^{3 \Omega} \rho - \bar{\dens}, \ \u (e^{3 \Omega} \rho) & \in C^0((T_-, T_+),H^{N-1}), \\
		 u^0 - 1, \ u^j & \in C^0((T_-, T_+),H^{N}), \\
		 	\u u^j & \in C^0((T_-, T_+),H^{N-1}). 
	\end{align*}
	\end{subequations}	
Furthermore,  $g_{\mu\nu}$ is a Lorentzian metric on $(T_-,T_+)\times\T^3$, and the sets $\{t\}\times\T^3$ are Cauchy hypersurfaces
in the Lorentzian manifold $(\mathcal{M}\eqdef (T_-,T_+)\times\T^3,\,g)$ for any $t\in(T_-,T_+).$
In addition, the solution depends continuously on the initial data relative to the above norms. Furthermore, 
the wave coordinate condition~\eqref{E:initialconstraint} is verified on $(T_-,T_+)\times\T^3.$
In particular, the solution $(g_{\mu\nu},u^{\mu},\rho)$, ($\mu,\nu=0,1,2,3$) 
\underline{also solves the unmodified system}~\eqref{E:metricgealternative} + \eqref{E:dustalternative1} - \eqref{E:massshellalternative}.
\end{theorem}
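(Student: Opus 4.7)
The plan is to establish local well-posedness of the modified system \eqref{E:Rhat00}--\eqref{E:secondEulersummary} by exploiting its natural split into quasilinear wave equations for $g_{\mu\nu}$ and quasilinear transport equations for $u^j$ and $\dens$, and then to verify that the wave coordinate condition \eqref{E:gauge} propagates so that the constructed object also solves the original dust-Einstein system.

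First I would set up a Picard-type iteration. Given an iterate $(g_{\mu\nu}^{(n)}, u^{(n)j}, \dens^{(n)})$ satisfying the claimed regularity and the hyperbolicity/positivity conditions $g_{00}^{(n)}<0$ and $(g_{jk}^{(n)})$ positive definite, define the next iterate by: (i) freezing the principal coefficients of \eqref{E:metric00}--\eqref{E:metricjk} at the previous iterate to obtain linear wave equations for $g_{\mu\nu}^{(n+1)}$, solvable with the Cauchy data \eqref{E:initialdata}--\eqref{E:initialconstraint} by standard theory on $\T^3$; (ii) transporting $u^{(n+1)j}$ using \eqref{E:velevol} along the integral curves of the material derivative associated to the new metric; (iii) finally transporting $\dens^{(n+1)}$ via \eqref{E:revol}. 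The ordering is crucial: the one-derivative loss for $\dens$, which is the central structural obstruction emphasized in the introduction, is absorbed into the last step and only feeds back into the metric step through the \emph{undifferentiated} source $\rho u_\mu u_\nu$ appearing in \eqref{E:Rhat00}--\eqref{E:Rhatjk}, which sits strictly below the top-order principal part of the wave equations.

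Next I would derive $n$-independent energy estimates on a common slab $(T_-,T_+)\times\T^3$ in the Sobolev hierarchy stated in the theorem. Commuting the metric wave equations with spatial multi-indices $\a$ of order up to $N$ and applying standard $L^2$ energy estimates gives control of $g_{00}+1$ and $g_{0j}$ in $H^{N+1}$, of $\underpartial g_{jk}$ in $H^N$, together with the corresponding time derivatives. Characteristic estimates applied to \eqref{E:velevol} give $H^N$-control of $u^j$, with $\u u^j\in H^{N-1}$ read directly off the equation; analogously, \eqref{E:revol} yields $\dens-\bar\dens\in H^{N-1}$ and $\u\dens\in H^{N-1}$. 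The main obstacle is that this regularity of $\dens$ naively falls one derivative short of what is needed to treat $\rho u_\mu u_\nu$ as a top-order source in the metric wave equation. The resolution is precisely the partial-decoupling mechanism outlined in the introduction: rather than differentiating the wave equation once more at top order, I would use an elliptic estimate in the spirit of \eqref{E:MODELELLIPTICRECOVERY} to recover $\underpartial^{(2)}$-derivatives of the metric from $\hat{\square}_g$ (which is replaced using the wave equation itself, where $\dens$ appears only undifferentiated) together with $\g_t\u$- and $\underpartial\u$-derivatives coming from the $\u$-commuted wave equation. The source of the latter depends on $\u\dens$ rather than on $\underpartial\dens$, and $\u\dens\in H^{N-1}$ is already available. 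Contraction in a lower-order norm then yields existence, uniqueness, and continuous dependence, while an Aubin--Lions-type argument upgrades the weak limits to the $C^0$-in-time classes listed in the statement; the hyperbolicity and positivity conditions are preserved on a possibly shorter interval by continuity.

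Finally, propagation of the wave coordinate condition is the standard argument going back to \cite{CB1952}. The defect $\Gamma_\mu-\tilde{\Gamma}_\mu\eqdef g_{\mu\alpha}(\Gamma^\alpha-\tilde{\Gamma}^\alpha)$ satisfies a homogeneous linear wave system obtained by taking divergences of \eqref{E:Rhat00}--\eqref{E:Rhatjk}, invoking the twice-contracted Bianchi identity, and using \eqref{E:dustalternative1}--\eqref{E:dustalternative2}. The data prescriptions \eqref{E:initialdata}--\eqref{E:initialconstraint}, combined with the Einstein constraints \eqref{E:Gauss}--\eqref{E:Codazzi}, force both $\Gamma_\mu-\tilde{\Gamma}_\mu$ and its time derivative to vanish on $\mathring{\Sigma}$, so uniqueness for linear wave systems yields $\Gamma^\mu\equiv\tilde{\Gamma}^\mu$ on $(T_-,T_+)\times\T^3$, and hence the constructed solution also satisfies \eqref{E:metricgealternative}~+~\eqref{E:dustalternative1}--\eqref{E:massshellalternative}.
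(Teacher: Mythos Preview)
Your proposal is correct and takes essentially the same approach as the paper. The paper's own proof is deliberately brief, stating only that it ``relies on a high-order energy scheme heuristically described in the introduction and carried out in detail in the next section'' and citing \cite{CB1952} and \cite[Theorem~51]{jS2012} for the preservation of the wave coordinate condition; your outline fleshes out exactly this scheme (Picard iteration, the elliptic-recovery/$\u$-commutation mechanism to close the estimates despite the derivative loss in $\dens$, and the standard Bianchi-based propagation of the gauge), so the two are in agreement.
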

\begin{remark} [\textbf{Initial metric is Lorentzian}]
The assumptions~\eqref{E:initialdata1} and~\eqref{E:localexistencemathringgjklowerequivalenttostandardmetricagain} 
guarantee that the initial spacetime metric $g_{\mu\nu}|_{t=0}$ is Lorentzian.
\end{remark}
\begin{remark} [\textbf{The regularity of the solution}]
The regularity properties of the solution stated in the conclusions of the theorem
are consistent with the heuristics argument for avoiding derivative loss that was explained in the introduction.
We will derive these estimates in detail in Sects.~\ref{S:norms}-\ref{S:equivalence}.
\end{remark}

\begin{remark} [\textbf{On data that do not verify the constraints}]
It is possible to formulate a local well-posedness theorem for the modified equations with general initial data sets that
\textbf{do not} necessarily satisfy the Einstein constraint equations. The solutions thus obtained do not necessarily
verify the unmodified dust-Einstein equations. For brevity,
we choose not to state such a general result. However, we point out that, 
as shown by Ringstr\"om ~\cite{hR2008}, such a theorem would be useful for studying 
future stability in settings where the initial $3-$manifold is topologically more complicated than 
$\mathbb{T}^3.$ Roughly, for certain topologies, one can locally reduce the problem
to the study of the problem on $\mathbb{T}^3$ and then patch together the different regions.
The point is that the Einstein constraint equations might not be verified in part the overlapping regions
(these unphysical regions are of course ignored because they have no connection to 
the maximal globally hyperbolic development of the physical data.)
\end{remark}
\begin{proof}
The proof of the theorem relies on a high-order energy scheme heuristically described in the introduction and
carried out in detail in the next section. For this reason, we shall leave out the proof to avoid repetition.
The preservation of wave coordinates is a now standard result that was first proved in the fundamental work~\cite{CB1952}.
For the proof of the analogous statement in the context of Euler-Einstein system, we refer to ~\cite[Theorem 51.]{jS2012}.
\end{proof}
We now state a continuation principle for the modified system \eqref{E:modifiedsystem}. This is 
a rather standard result that exhaustively classifies the scenarios that could lead to
singularity formation in a solution to the modified dust-Einstein equations.
See, for example, \cite[Ch. VI]{lH1997}, \cite[Ch. 1]{cS2008}, \cite{jS2008b} for the main ideas behind a proof.
The main point is that in order to prove future-global existence, we only have to rule out the 
three breakdown scenarios stated in the continuation principle.
\begin{proposition}[\textbf{Continuation Principle}]\label{P:continuationprinciple}
Let $T_{\text{max}}>0$ be the supremum over all times $T_+$ such that the solution 
$(g_{\mu\nu},u^{\mu},\rho)$ to \eqref{E:Rhat00}-\eqref{E:secondEulersummary} 
exists on the the time interval
$[0,T_+)$ and has the properties stated in Theorem \ref{T:localwellposedness}. If $T_{\text{max}}<\infty$, then
as $t\to T_{\text{max}}^-,$ one of the following three breakdown scenarios must occur:
\begin{enumerate}
\item
There exists a sequence $(t_n,x_n) \in [0,T_{max}) \times \mathbb{T}^3$ such that 
$\lim_{n \to \infty} g_{00}(t_n,x_n) = 0.$
\item
There exists a sequence $(t_n,x_n) \in [0,T_{max}) \times \mathbb{T}^3$ such that the smallest eigenvalue of
$g_{jk}(t_n,x_n)$ converges to $0$ as $n \to \infty.$
\item
$	\lim_{t \to T_{max}^-} \sup_{0 \leq \tau \leq t} 
			\Bigg\lbrace
			\sum_{\mu,\nu =0}^3 
					\| g_{\mu \nu}\|_{C_b^2}(\tau) 
					+ \| \partial_t g_{\mu \nu}\|_{C_b^1}(\tau)  
			+ \| \rho \|_{C_b^1}(\tau)  
					+ \sum_{j=1}^3 \| u^j\|_{C_b^1}(\tau)  
			\Bigg\rbrace
			 = 
				\infty.$
\end{enumerate}
\end{proposition}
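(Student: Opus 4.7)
My plan is to argue by contradiction using a standard bootstrap-plus-extension scheme. Suppose $T_{\max} < \infty$ and that none of the three breakdown scenarios (1), (2), (3) occur as $t \to T_{\max}^-$. I will show that the solution can then be extended past $T_{\max}$, contradicting maximality. The negations of (1) and (2) give, by continuity on the compact set $[0, T_{\max}] \times \mathbb{T}^3$, a constant $c > 0$ such that $g_{00} \leq -c$ and the smallest eigenvalue of $(g_{jk})$ is at least $c$ on $[0, T_{\max}) \times \mathbb{T}^3$. Via Cramer's rule this forces all components of $g^{\mu\nu}$ to be uniformly bounded, and in particular keeps $\hat{\square}_g$ uniformly hyperbolic (in both the temporal and spatial directions) on the slab. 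The negation of (3) then supplies a uniform pointwise bound $B < \infty$ on $\sum_{\mu,\nu} \| g_{\mu\nu}\|_{C_b^2}(\tau) + \|\partial_t g_{\mu\nu}\|_{C_b^1}(\tau) + \|\rho\|_{C_b^1}(\tau) + \sum_j \|u^j\|_{C_b^1}(\tau)$ for all $\tau \in [0, T_{\max})$.

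Next, I would upgrade these low-regularity bounds to uniform Sobolev bounds on $[0, T_{\max})$ matching the regularity class of Theorem~\ref{T:localwellposedness}. The strategy is exactly the one outlined in the introduction, only adapted to a bounded time interval. For the metric components, I commute \eqref{E:metric00}--\eqref{E:metricjk} with $\partial_{\vec{\alpha}}$ for $|\vec{\alpha}| \leq N$ and with the operator $\u$, derive $L^2$ energy identities for the resulting wave equations, and bound the error terms by Sobolev--Moser-type inequalities whose constants depend only on $c$ and $B$. For the fluid variables, I treat \eqref{E:revol}--\eqref{E:velevol} as transport equations to obtain $H^N$ bounds on $u^j$ and $H^{N-1}$ bounds on $\dens - \bar{\dens}$, and then use the pointwise relations $\u u^j = - u^0 \triangle^{\ j}_{0 \ 0} + \triangle^j$ and \eqref{E:revol} as algebraic constraints to control $\u u^j$ and $\u \dens$ in $H^{N-1}$. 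To bootstrap the top-order pure-spatial derivatives of the metric, I invoke the elliptic estimate (Lemma~\ref{L:elliptic}) whose coefficients again depend only on $c$ and $B$. All dissipative $t$-weights that play a central role in Sections~\ref{S:differentialinequalities}--\ref{S:sobolev} are irrelevant here: on the finite slab $[0, T_{\max})$, exponential factors in $t$ are absorbed into constants, so a single Gronwall application on $[0, T_{\max})$ yields uniform $H^N$ bounds in terms of the initial data, $c$, $B$, and $T_{\max}$.

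With uniform Sobolev bounds in hand, standard weak-compactness and the continuity-in-time of the norms appearing in Theorem~\ref{T:localwellposedness} imply that $(g_{\mu\nu}, u^{\mu}, \dens)$ extends continuously to $t = T_{\max}$ with the same regularity and still satisfying $g_{00} \leq -c$, the eigenvalue bound for $(g_{jk})$, and the wave coordinate condition. Applying Theorem~\ref{T:localwellposedness} to this data at $\{t = T_{\max}\}$ produces a classical solution on an interval $[T_{\max}, T_{\max} + \delta)$, which by uniqueness glues onto the original one, contradicting the definition of $T_{\max}$. The main obstacle is the upgrade step, and specifically the same derivative-loss issue that drives the entire paper: naive energy estimates would require $H^N$ control of $\dens$, which is not available. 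The resolution is the derivative-avoidance device described in the introduction, namely commuting the wave equations with $\u$ (to trade a spatial derivative of $\dens$ for a time derivative of $u$) and invoking the elliptic estimate to recover the missing pure-spatial top-order metric derivatives. The remaining bookkeeping follows the Moser-type estimates of Appendix~\ref{B:SobolevMoser} applied with constants rather than decaying $t$-weights.
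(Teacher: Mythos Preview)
The paper does not actually give a proof of this proposition: it states the continuation principle as a ``rather standard result'' and points to \cite[Ch.~VI]{lH1997}, \cite[Ch.~1]{cS2008}, and \cite{jS2008b} for the ideas, so there is no in-paper argument to compare against. Your sketch is the standard contradiction-plus-extension argument those references develop, and it is correct in outline. In particular, you correctly identify that the only nonroutine ingredient here---the derivative loss in $\rho$ that would block a naive $H^N$ energy estimate---is handled by the same $\u$-commutation device and the elliptic recovery of Lemma~\ref{L:elliptic} that underlie the local theory in Theorem~\ref{T:localwellposedness}; on a finite slab the $t$-weights are indeed irrelevant and a single Gronwall closes the loop.

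One minor phrasing issue: you write that the negations of (1) and (2) yield lower bounds ``by continuity on the compact set $[0,T_{\max}]\times\mathbb{T}^3$,'' but the solution is not yet defined at $t=T_{\max}$. The correct statement is simply that the negation of (1) is exactly $\inf_{[0,T_{\max})\times\mathbb{T}^3}|g_{00}|>0$ (if the infimum were zero you could extract a sequence witnessing (1)), and likewise for (2). This does not affect the validity of your argument.
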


\section{Norms and energies}\label{S:norms}
In this section, we introduce suitable Sobolev norms and energies for the metric components and
fluid components; we therefore fix our functional-analytic framework for addressing future-global existence for near-FLRW dust-Einstein solutions. Our use of the word ``energy" is reserved for the $L^2-$based quantities that 
arise dynamically from the dust-Einstein system as the natural coercive quadratic forms associated with 
equations themselves. In Sect.~\ref{S:equivalence}, we will show that the energies in fact control the 
Sobolev norms introduced in Sect.~\ref{SS:norms}. The proof of the equivalence is
somewhat non-trivial and is based in part on elliptic estimates. The main point is that the norms control 
some additional quantities, such as the top-order spatial derivatives of the metric components, 
that are not manifestly controlled by the energies.
%We define suitable $t-$weighted Sobolev spaces in Sect.~\ref{SS:norms}. 
%Next, in Sect.~\ref{SS:metricenergies}, we define the metric energies, which are $L^2-$type quantities that will be derived by integrating by parts in the metric evolution equations~\eqref{E:metric00} -~\eqref{E:metricjk}. 
%
\subsection{Norms} \label{SS:norms}
We now define the norms that we use to study near-FLRW solutions.

\begin{remark}[\textbf{On the large number of norms}]
Below we define a large number of norms. This is important because we need to
fully understand the structure of the coupling of the solution variables in the
evolution equations.
This understanding is essential in order for us to avoid losing derivatives 
(that is, from the point of view of local well-posedness)
and also for us to close our global existence argument.
This will become apparent during the proofs of 
Props. \ref{P:energynormcomparison}
and \ref{P:integralinequalitiesmetric}.
\end{remark}

\begin{remark}	[\textbf{The norms remain small}]
	The proof of our main future stability theorem will show that all of the norms 
	introduce below are bounded by $\lesssim \epsilon$ for $t \in [0,\infty)$ 
	if their initial size is $\leq \epsilon$ (and $\epsilon$ is sufficiently small).
\end{remark}	

\begin{remark} [\textbf{Norms for the elliptic quantities}]
In the following definitions, 
quantities indexed with a superscript ``$e$'' will  
later be shown to be bounded in terms of the energies with the aid of 
elliptic estimates.
\end{remark}

\begin{definition} [\textbf{Norms for the metric components}]
\label{D:METRICCOMPONENTNORMS}
Let $N$ be a positive integer, and let $q$ be the small positive constant defined in~\eqref{E:qdef}.
We define
\begin{subequations}
\begin{align}
\gzerozeronorm{N-1}
& \eqdef e^{q\Omega} \|\g_t g_{00}\|_{H^{N-1}}
	+ e^{q\Omega} \|g_{00} + 1\|_{H^{N-1}}
	+ \sum_{i=1}^3 e^{(q-1) \Omega} \| \partial_i g_{00}\|_{H^{N-1}},
	\label{E:G00NORM} \\
\gzerozerounorm{N-1}	
& \eqdef 
	e^{q\Omega} \| \u \g_t g_{00}\|_{H^{N-1}}
	+ e^{q\Omega} \| \u g_{00}\|_{H^{N-1}}
	+  \sum_{i=1}^3 e^{(q-1)\Omega} \| \u \partial_i g_{00}\|_{H^{N-1}},
		\label{E:G00UNORM} \\
\gzerozeroellipticnorm{N-1}
&	\eqdef \sum_{i=1}^3 e^{(q-1)\Omega} \| \g_i \g_t g_{00}\|_{H^{N-1}}
		+ \sum_{i,j=1}^3 e^{(q-2)\Omega}\| \g_i \g_j g_{00}\|_{H^{N-1}}.
		\label{E:G00ELLIPTICNORM}
\end{align}
\end{subequations}
We define
\begin{subequations}
\begin{align}
\gzerostarnorm{N-1}
& \eqdef \sum_{j=1}^3 e^{(q-1)\Omega} \|\g_t g_{0j}\|_{H^{N-1}}
	+ \sum_{j=1}^3 e^{(q-1)\Omega} \|g_{0j} \|_{H^{N-1}}
	+ \sum_{i,j=1}^3 e^{(q-2) \Omega} \| \partial_i g_{0j}\|_{H^{N-1}},
		\label{E:G0STARNORM}\\
\gzerostarunorm{N-1}
& \eqdef \sum_{j=1}^3 e^{(q-1)\Omega} \| \u \g_t g_{0j}\|_{H^{N-1}}
+ \sum_{j=1}^3 e^{(q-1)\Omega} \| \u g_{0j}\|_{H^{N-1}}
+ \sum_{i,j=1}^3 e^{(q-2)\Omega} \| \u \partial_i g_{0j}\|_{H^{N-1}},
\label{E:G0STARUNORM} \\
\gzerostarellipticnorm{N-1}
& \eqdef \sum_{i,j=1}^3 e^{(q-2)\Omega}\| \partial_i \g_t g_{0j}\|_{H^{N-1}}
		+ \sum_{i,j,k=1}^3 e^{(q-3)\Omega}\| \partial_i \partial_j g_{0k}\|_{H^{N-1}}.
		\label{E:G0STARELLIPTICNORM}
\end{align}
\end{subequations}
We define
\begin{subequations}
\begin{align}
\hstarstarnorm{N-1}
& \eqdef 
	\sum_{j,k=1}^3 e^{q\Omega} \|\g_t h_{jk}\|_{H^{N-1}}
		+ \sum_{i,j,k=1}^3 \|\partial_i h_{jk}\|_{H^{N-2}}
		+ \sum_{i,j,k=1}^3 e^{(q-1)\Omega} \|\partial_i h_{jk}\|_{H^{N-1}},
				\label{E:HSTARSTARNORM} \\
%\hstarstarunorm{N-1}
%& \eqdef 
%	\sum_{j,k=1}^3 e^{q\Omega} \| \u h_{jk} \|_{H^{N-1}},
%	\label{E:HSTARSTARUNORM} \\	
\combinedpartialupartialhstarstarnorm{N-1}
& \eqdef
	\sum_{j,k=1}^3 e^{q\Omega} \|\u \g_t h_{jk}\|_{H^{N-1}}
	+ \sum_{j,k=1}^3 e^{q\Omega} \| \u h_{jk} \|_{H^{N-1}}
		\label{E:DERIVATIVEHSTARSTARUNORM} \\
& \ \ + \sum_{i,j,k=1}^3 e^{q\Omega} \| \u \partial_i h_{jk} \|_{H^{N-2}}
	+ \sum_{i,j,k=1}^3 e^{(q-1)\Omega} \| \u \partial_i h_{jk}\|_{H^{N-1}},
	 \notag  \\	
\hstarstarellipticnorm{N-1}
& \eqdef \sum_{i,j,k=1}^3 e^{(q-1)\Omega} \|\g_i \g_t h_{jk}\|_{H^{N-1}}
		+ \sum_{i,j,k,l=1}^3 e^{(q-2)\Omega}\| \g_i \g_j h_{kl}\|_{H^{N-1}}.
		\label{E:HSTARSTARELLIPTICNORM}
\end{align}
\end{subequations}
The operator $\u$ from above is defined in~\eqref{E:deltau}.
\end{definition}

\begin{remark} [\textbf{Norms do not control $h_{jk}$}]
	None of the norms control the components $h_{jk}$ themselves. Rather, they only control the  derivatives of $h_{jk}.$
	We will make separate bootstrap assumptions to control the components $h_{jk}$
	[see Eq. \eqref{E:gjkBAvsstandardmetric}]. We remark that we will improve these bootstrap assumptions by 
	integrating $\g_t h_{jk}$ in time.
\end{remark}

\begin{remark} [\textbf{The top-order $t-$weights}]
Note, for example, that the exponential weights are ``worse" by a factor of $e^{\Omega}$
in the elliptic norm $\gzerozeroellipticnorm{N-1}$ compared to the lower order norm $\gzerozeronorm{N-1}.$
That is, the top-order spatial derivatives of $g_{00}$ are allowed to be larger
by a factor of $e^{\Omega}$ compared to its just-below-top-order spatial derivatives.
This worsened behavior reflects the degenerate hyperbolic nature of our modified dust-Einstein equations 
as explained in the introduction.
Similar remarks apply to the other norms defined above.
\end{remark}
\begin{definition}[\textbf{Norms for the fluid variable components}]
\label{D:FLUIDVARIABLENORMS}
Let $N$ be a positive integer, and let $q$ be the small positive constant defined in~\eqref{E:qdef}.
We define
\begin{subequations}
\begin{align} 
%\label{E:velocitynormdefinition}
	\velocitynorm{N-1}
	& \eqdef 
		\sum_{j=1}^3 e^{(1+q)\Omega} \|u^j\|_{H^{N-1}},
		\\
	\topvelocitynorm{N-1}
	& \eqdef 
		\sum_{i,j=1}^3 e^{q\Omega} \| \partial_i u^j\|_{H^{N-1}},
			 \label{E:VELOCITYNORMTOPORDER}
			 \\
	\velocityunorm{N-1}
	& \eqdef 
		\sum_{j=1}^3 e^{(1+q)\Omega} \| \partial_{\mathbf{u}} u^j\|_{H^{N-1}},
			\label{E:VELOCITYNORMLOWERORDER} 
			\\
	%\velocityunorm{N}
	%& \eqdef 
	%	\sum_{i,j=1}^3 e^{(1+q)\Omega} \| \partial_{\mathbf{u}} \partial_i u^j\|_{H^{N-1}},
	%		\label{E:VELOCITYUNORMTOPORDER} 
	%		\\
	\densnorm{N-1}
	& \eqdef \|\dens - \bar{\dens}\|_{H^{N-1}},
		\\
	\densunorm{N-1}
	& \eqdef e^{q \Omega} \|\u \dens \|_{H^{N-1}}.
\end{align}
\end{subequations}
\end{definition}
\begin{definition}[\textbf{Aggregate metric norms}]
\label{D:TOTALMETRICNORM}
We define
\begin{subequations}
\begin{align}
	\gnorm{N-1}
	& \eqdef
		\gzerozeronorm{N-1}
		+ \gzerostarnorm{N-1}
		+ \hstarstarnorm{N-1},
			\label{E:TOTALBELOWTOPSPATIALDERIVATIVEMETRICNORM} \\
	\gunorm{N-1}
	& \eqdef
		\gzerozerounorm{N-1}
		+	\gzerostarunorm{N-1}
		+ \combinedpartialupartialhstarstarnorm{N-1},
			\label{E:TOTALUDERIVATIVEMETRICNORM} \\
	\totalellipticnorm{N-1}
	& \eqdef
		\gzerozeroellipticnorm{N-1}
		+
		\gzerostarellipticnorm{N-1}
		+
		\hstarstarellipticnorm{N-1}.
\end{align}
\end{subequations}
\end{definition}

\begin{definition} [\textbf{Aggregate metric $+$ below-top-order fluid norms}]
\label{D:TOTALBELOWTOPSOLUTIONNORM}
We define
\begin{subequations}
\begin{align} \label{E:totalbelowtopnorm}
	\totalbelowtopnorm{N-1}
	& \eqdef \gnorm{N-1} 
		+ \velocitynorm{N-1}
		+ \densnorm{N-1},
			\\
	\totalbelowtopunorm{N-1}
	& \eqdef 
		\gunorm{N-1}
		+ \velocityunorm{N-1}
		+ \densunorm{N-1}.
\end{align}
\end{subequations}
\end{definition}

\begin{definition} [\textbf{Up-to-top-order metric $+$ fluid total solution norm}]
\label{D:TOTALSOLUTIONNORM}
We define
\begin{align} \label{E:totalnorm}
	\totalnorm
& \eqdef \totalbelowtopnorm{N-1}
		+ \totalbelowtopunorm{N-1}
		+ \totalellipticnorm{N-1}
		+ \topvelocitynorm{N-1}.
\end{align}
	
\end{definition}

\begin{remark}
Our global existence proof will show that if the above norms
are initially of size $\epsilon,$ then they never grow larger than $C \epsilon$
(for $\epsilon$ sufficiently small).
These norm bounds imply, for example,
that if $K$ spatial derivatives are applied to the quantities $g_{00}$, $g_{0j}$, and $h_{jk}$, 
where $0 \leq K \leq 2,$
then their $H^{N-1}$-norms grow at most like $e^{K\Omega}$, $e^{(K-1)\Omega}$, and $e^{K\Omega}$ 
(times a possible ``additional decay factor'' of the form $e^{- q \Omega}$) respectively. This structure hints at a systematic way of organizing our estimates in later sections, and we will formalize it in our {\bf Counting Principle} 
(see Lemma~\ref{L:countingprinciple} and Cor. \ref{C:COUNTINGPRINCIPLE}).
\end{remark}

\subsection{Energies}
\label{SS:metricenergies}
In this section, we define the energies that we use to study near-FLRW solutions.
The energies are coercive and will be used to control the norms from the previous sections.
This is a somewhat non-trivial fact that will be proved in Prop. \ref{P:energynormcomparison}.
The main point is that the time derivatives of the energies can be directly 
estimated with the help of integration by parts because their structure is carefully chosen 
to complement the structure of the evolution equations;
the same statement is not generally true for the norms introduced in the previous section.

\subsubsection{The building block energy for $g_{\mu \nu}$}

We now define the building blocks of the energies that we will use to estimate solutions to Eqs. 
\eqref{E:metric00}-\eqref{E:metricjk}.

\begin{lemma} [\textbf{Properties of the building blocks of energies for the metric}] \cite[Lemma 15]{hR2008} \label{L:buildingblockmetricenergy} 
Let $v$ be a solution to the scalar equation
\begin{align} \label{E:vscalar}
	\hat{\square}_g v = \upalpha H \partial_t v + \upbeta H^2 v + F,
\end{align}
where $\hat{\square}_g = g^{\lambda \kappa} \partial_{\lambda} \partial_{\kappa},$ 
$\upalpha > 0$ and $\upbeta \geq 0$ are constants, and define $\mathcal{E}_{(\upgamma,\updelta)}[v, \partial v] \geq 0$ by
\begin{align}  \label{E:mathcalEdef}
		\mathcal{E}_{(\upgamma,\updelta)}^2[v,\partial v] \eqdef \frac{1}{2} \int_{\mathbb{T}^3} 
		\Big\lbrace -g^{00} (\partial_t v)^2 + g^{ab}(\partial_a v)(\partial_b v) - 2 \upgamma H g^{00} v \partial_t v
		+ \updelta H^2 v^2 \Big\rbrace \, dx.
\end{align}
Then there exist constants 
$\upeta > 0,$ $C >0,$ $C_{(\upbeta)} \geq 0,$ $\updelta \geq 0,$ and $\upgamma \geq 0$ such that 
\begin{align*}
	|g^{00} + 1| \leq \upeta
\end{align*}
implies that
\begin{align} \label{E:mathcalEfirstlowerbound}
	\mathcal{E}_{(\upgamma,\updelta)}^2[v,\partial v] \geq C \int_{\mathbb{T}^3} \Big\lbrace (\partial_t v)^2 + g^{ab}(\partial_a 
		v)(\partial_b v) + C_{(\upbeta)} v^2 \Big\rbrace \, dx.
\end{align} 
The constants $\updelta$ and $\upgamma$ depend on $\upalpha$ and $\upbeta,$ while $\upeta,$ $C,$ and $C_{(\upbeta)}$ depend on $\upalpha,$ $\upbeta,$ $\upgamma$ and $\updelta.$ Furthermore, $C_{(\upbeta)} = 0$ if $\upbeta = 0$ and 
$C_{(\upbeta)} = 1$ if $\upbeta > 0.$ In addition, if $\upbeta = 0,$ then $\upgamma = \updelta = 0,$ while if $\upbeta > 0,$ then we can arrange for $\upgamma > 0$ and $\updelta > 0.$ Finally, we have that
\begin{align*}
% \label{E:mathcalEtimederivativebound}
	\frac{d}{dt} (\mathcal{E}_{(\upgamma,\updelta)}^2[v,\partial v]) & \leq - \upeta H \mathcal{E}_{(\upgamma,\updelta)}^2[v,\partial v] 
		+ \int_{\mathbb{T}^3} \Big\lbrace - (\partial_t v + \upgamma H v)F + \triangle_{\mathcal{E};(\upgamma, 
			\updelta)}[v,\partial v] \Big\rbrace \, dx,
\end{align*}
where
\begin{align} \label{E:trianglemathscrEdef}
	\triangle_{\mathcal{E};(\upgamma, \updelta)}[v,\partial v] & = - \upgamma H (\partial_a g^{ab}) v \partial_b v
		- 2 \upgamma H (\partial_a g^{0a}) v \partial_t v - 2 \upgamma H g^{0a}(\partial_a v)(\partial_t v) \\
	& \ \ - (\partial_a g^{0a})(\partial_t v)^2 - (\partial_a g^{ab})(\partial_b v)(\partial_t v)
		- \frac{1}{2}(\partial_t g^{00})(\partial_t v)^2 \notag \\
	& \ \ + \bigg(\frac{1}{2} \partial_t g^{ab} + H g^{ab} \bigg) (\partial_a v) (\partial_b v)
		- \upgamma H (\partial_t g^{00}) v \partial_t v  \notag \\ 
	& \ \ - \upgamma H (g^{00} + 1)(\partial_t v)^2. \notag
\end{align}

\end{lemma}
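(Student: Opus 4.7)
The plan is to carry out a standard multiplier argument with the multiplier $-(\partial_t v + \upgamma H v)$, chosen precisely so that the resulting bookkeeping produces the asserted energy $\mathcal{E}^2_{(\upgamma,\updelta)}$ as the coercive boundary-free quantity, and the positive cosmological-constant terms $\upalpha H \partial_t v + \upbeta H^2 v$ supply the dissipation. Writing $\hat{\square}_g v = g^{00}\partial_t^2 v + 2g^{0a}\partial_t\partial_a v + g^{ab}\partial_a \partial_b v$, the $-\partial_t v$ piece of the multiplier converts the $\partial_t^2 v$ contribution into $\frac{d}{dt}[-\frac{1}{2}g^{00}(\partial_t v)^2]$ modulo $\partial_t g^{00}$-errors, and the $g^{ab}\partial_a\partial_b v$ term, after one integration by parts in $x^a$, produces $\frac{d}{dt}[\frac{1}{2}g^{ab}\partial_a v \partial_b v]$ plus metric-derivative errors. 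The $-\upgamma H v$ piece similarly, upon integrating by parts in time against $g^{00}\partial_t^2 v$, reproduces the cross term $\frac{d}{dt}[-\upgamma H g^{00} v \partial_t v]$ of \eqref{E:mathcalEdef} together with $\upgamma H g^{00}(\partial_t v)^2$ and harmless lower-order pieces. Collecting all the $\frac{d}{dt}$ contributions recovers $\frac{d}{dt}\mathcal{E}_{(\upgamma,\updelta)}^2$ up to the error terms catalogued in $\triangle_{\mathcal{E};(\upgamma,\updelta)}$ defined by \eqref{E:trianglemathscrEdef}, while the non-$\frac{d}{dt}$ pieces are exactly $\int\{-(\partial_t v + \upgamma H v)F\}\,dx$ together with a dissipative quadratic form coming from $\upalpha H \partial_t v + \upbeta H^2 v$.

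For the coercivity claim \eqref{E:mathcalEfirstlowerbound}, I would observe that when $|g^{00}+1|\le \upeta$ is small and $g^{ab}$ is the inverse of a near-Euclidean Riemannian metric (which will hold in our near-FLRW regime), the quadratic form $-g^{00}(\partial_t v)^2 + g^{ab}\partial_a v\partial_b v$ is uniformly equivalent to $(\partial_t v)^2 + g^{ab}\partial_a v \partial_b v$. When $\upbeta = 0$ we take $\upgamma = \updelta = 0$ and the energy reduces to exactly this quadratic form, so coercivity with $C_{(\upbeta)} = 0$ is immediate. When $\upbeta > 0$ we take $\upgamma,\updelta > 0$ and absorb the cross term $-2\upgamma H g^{00} v \partial_t v$ by Cauchy–Schwarz, $|2\upgamma H v \partial_t v|\le \tfrac{1}{2}(\partial_t v)^2 + 2\upgamma^2 H^2 v^2$, so that choosing $\updelta > 4\upgamma^2$ leaves a strictly positive $H^2 v^2$ contribution and yields coercivity with $C_{(\upbeta)} = 1$.

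The subtle step is choosing $\upgamma,\updelta,\upeta$ so that the dissipation is strict on the full energy. After the multiplier computation, the principal non-error part of $\frac{d}{dt}\mathcal{E}^2$ is
\begin{align*}
-\upalpha H \int (\partial_t v)^2\,dx - \upgamma \upbeta H^3 \int v^2\,dx - (\upalpha \upgamma + \upbeta) H^2 \int v \partial_t v\,dx + \upgamma H \int g^{00}(\partial_t v)^2\,dx
\end{align*}
together with an $H g^{ab}\partial_a v \partial_b v$ term produced from $\partial_t g^{ab} \approx 2H g^{ab}$ (absorbed into $\triangle_{\mathcal{E}}$). Since $g^{00} \approx -1$, the $\upgamma H g^{00}(\partial_t v)^2$ term is negative, reinforcing the $\upalpha H$ damping, and one can then apply Cauchy–Schwarz to the cross term $v\partial_t v$ and match constants so that the combination dominates $\upeta H \mathcal{E}^2$ for some $\upeta > 0$. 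In the case $\upbeta = 0$, one only needs the $-\upalpha H\int(\partial_t v)^2$ damping plus the $H g^{ab}\partial_a v \partial_b v$ term, and the choice $\upgamma=\updelta=0$ suffices. For $\upbeta > 0$, a short linear-algebra argument on the $2\times 2$ quadratic form in $(\partial_t v, v)$ identifies an open range of $(\upgamma,\updelta)$ pairs, depending only on $\upalpha,\upbeta$, making the form negative definite; with these choices fixed one then shrinks $\upeta$ further, if necessary, to secure both coercivity and uniform dissipation under the assumption $|g^{00}+1|\le \upeta$.

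The main obstacle is purely algebraic: simultaneously arranging the parameters $\upgamma, \updelta$ so that the coercive quadratic form in \eqref{E:mathcalEfirstlowerbound} and the dissipative quadratic form produced by the multiplier argument are \emph{both} strictly positive definite, with a common small tolerance $\upeta$ for the perturbation $g^{00}+1$. Everything else (integration by parts, identification of the $\triangle_{\mathcal{E};(\upgamma,\updelta)}$ error, and isolation of the forcing contribution $-(\partial_t v + \upgamma H v)F$) is bookkeeping driven by the structure of $\hat{\square}_g$ in coordinates.
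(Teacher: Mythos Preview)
Your approach is exactly the paper's: multiply \eqref{E:vscalar} by $-(\partial_t v + \upgamma H v)$, integrate by parts, and choose $\upgamma,\updelta$ to secure coercivity and strict dissipation. The paper simply states this and quotes the resulting identity \eqref{E:mathcalEtimederivativeformula}, referring to Ringstr\"om for details, so your outline is in fact more explicit than the paper's own proof.

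That said, two bookkeeping slips in your dissipation analysis deserve correction. First, the term $\upgamma H g^{00}(\partial_t v)^2$ that you isolate from the $-\upgamma H v \cdot g^{00}\partial_t^2 v$ piece sits on the left side of the multiplied equation; when you solve for $\frac{d}{dt}\mathcal{E}^2$ it appears with the \emph{opposite} sign, contributing $-\upgamma H g^{00}(\partial_t v)^2 \approx +\upgamma H(\partial_t v)^2$. The correct principal $(\partial_t v)^2$ coefficient is therefore $-(\upalpha - \upgamma)H$, not $-(\upalpha + \upgamma)H$ as you implicitly have, and the $\upgamma$-term \emph{competes} with the $\upalpha$-damping rather than reinforcing it. This forces the constraint $\upgamma < \upalpha$, which you should record. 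Second, you omit the contribution $\updelta H^2 v\,\partial_t v$ coming from $\frac{d}{dt}\bigl[\tfrac{1}{2}\updelta H^2 v^2\bigr]$; the actual cross-term coefficient is $(\updelta - \upbeta - \upgamma\upalpha)H^2$, not $-(\upalpha\upgamma + \upbeta)H^2$. The freedom in $\updelta$ is precisely what lets you tune (or kill) this cross term, which is why $\updelta$ is in the energy at all. With these two fixes the paper's identity \eqref{E:mathcalEtimederivativeformula} drops out, and your $2\times 2$ linear-algebra argument on $(\partial_t v, v)$ then goes through with the correct constraints $0<\upgamma<\upalpha$ and $\updelta$ chosen appropriately relative to $\upbeta,\upgamma,\upalpha$.
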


\begin{proof}
	
	The proof is a standard integration by parts argument that begins with the multiplication of both sides of Eq.
	\eqref{E:vscalar} by $- (\partial_t v + \upgamma H v);$ see \cite[Lemma 15]{hR2008} for the details. For later use, we
	quote the following identity from the proof:
	\begin{align} 
	\label{E:mathcalEtimederivativeformula}
		\frac{d}{dt} & (\mathcal{E}_{(\upgamma,\updelta)}^2[v,\partial v]) \\ 
		& = \int_{\mathbb{T}^3} \Big\lbrace -(\upalpha - \upgamma) H (\partial_t v)^2 
		+ (\updelta - \upbeta - \upgamma \upalpha) H^2 v \partial_t v - \upbeta \upgamma H^3 v^2 \notag \\ 
	& \ \ \ \ \ - (1 + \upgamma) H g^{ab}(\partial_a v)(\partial_b v) - (\partial_t v + \upgamma H v)F 
		+ \triangle_{\mathcal{E};(\upgamma, \updelta)}[v,\partial v] \Big\rbrace \, dx. \notag
	\end{align}
\end{proof}

\subsubsection{Energies for the metric components}
We now use the building block energies $\E$ to define energies for the metric components.

\begin{definition}[\textbf{Energies for the metric components}] \label{D:energiesforg}
Let $N$ be a positive integer, and let $q$ be the small positive constant defined in~\eqref{E:qdef}.
Let $\mathcal{E}_{\cdots}[\cdots]$ be the building block energy \eqref{E:mathcalEdef}.
We define the positive definite energies
$\gzerozeroenergy{N-1},$
$\gzerozeropartialuenergy{N-1},$
$\gzerostarenergy{N-1},$ 
$\gzerostarpartialuenergy{N-1},$
$\hstarstarenergy{N-2},$ 
$\partialhstarstarenergy{N-1},$ 
$\hstarstarpartialuenergy{N-1},$
and
$\partialhstarstarpartialuenergy{N-1}$
as follows:
\begin{subequations}
\begin{align}
\gzerozeroenergy{N-1}^2 
& \eqdef
	\sum_{|\vec{\alpha}|\leq N-1} 
		e^{2q\Omega}\E^2_{\upgamma_{00},\updelta_{00}} [\a(g_{00}+1),\g\a g_{00}], 
		\label{E:en00} \\
\gzerozeropartialuenergy{N-1}^2 
& \eqdef 
	\sum_{|\vec{\alpha}| \leq N-1} 
		e^{2q\Omega} 
		\E^2_{\widetilde{\upgamma}_{00},\widetilde{\updelta}_{00}}[\u \a g_{00},\g \u \a g_{00}],
	\label{E:enpartialu00}
\end{align}
%
%\be\label{E:box00}
%E_{\square,00}^2=
%\sum_{|\vec{\alpha}|=N-1}e^{2q\Omega}\|\square\a g_{00}\|_{L^2}^2.
%\ee
%
\begin{align}
\gzerostarenergy{N-1}^2 
& \eqdef\sum_{|\vec{\alpha}|\leq N-1} \sum_{j=1}^3 e^{2(q-1)\Omega} 
	\E^2_{\upgamma_{0*},\updelta_{0*}} [\a g_{0j},\g \a g_{0j}],
	\label{E:en0*} \\
\gzerostarpartialuenergy{N-1}^2 
& \eqdef \sum_{|\vec{\alpha}| \leq N-1}\sum_{j=1}^3 
				 e^{2(q-1)\Omega}
				 \E^2_{\widetilde{\upgamma}_{0*},\widetilde{\updelta}_{0*}}[\u \a g_{0j} ,\g \u \a g_{0j}],
\label{E:enpartialu0*}
\end{align}
%\be\label{E:box0*}
%E_{\square,0*}^2
%\sum_{|\vec{\alpha}|=N-1}\sum_{j=1}^3e^{2(q-1)\Omega}\|\square\a g_{0j}\|_{L^2}^2.
%\ee
\begin{align}
\hstarstarenergy{N-2}^2 
& \eqdef 
		\frac{1}{2}
		\sum_{1 \leq |\vec{\alpha}|\leq N-1} \sum_{j,k=1}^3
		\int_{\T^3} 
			H^2(\a h_{jk})^2  
		\,dx,
	\label{E:en**} \\
\partialhstarstarenergy{N-1}^2 
& \eqdef \sum_{|\vec{\alpha}|\leq N-1} \sum_{j,k=1}^3
		e^{2q\Omega}
		\E^2_{\upgamma_{**},\updelta_{**}}[0,\g \a h_{jk}],
	\label{E:partialen**} \\
%\hstarstarpartialuenergy{N-1}^2
%& \eqdef \frac{1}{2} 
%					\sum_{1 \leq |\vec{\alpha}| \leq N-1} \sum_{j,k=1}^3 
%					\int_{\T^3} 
%						H^2(\u \a h_{jk})^2
%						\,dx, 
%					\label{E:enpartialu**} \\
\partialhstarstarpartialuenergy{N-1}^2
& \eqdef \sum_{|\vec{\alpha}| \leq N-1} \sum_{j,k=1}^3 
					e^{2q\Omega} \E^2_{\widetilde{\upgamma}_{**},\widetilde{\updelta}_{**}}[0,\g \u \a h_{jk}].
				\label{E:partialenpartialu**}   
\end{align}
\end{subequations}
%
%\be\label{E:box**}
%E_{\square,**}^2\eqdef
%\sum_{|\vec{\alpha}|=N-1}\sum_{j,k=1}^3e^{2q\Omega}\|\square\a h_{jk}\|_{L^2}^2.
%\ee
%
Above, 
\begin{align*}
h_{jk} & \eqdef  e^{-2\Omega}g_{jk} && (j,k=1,2,3).
\end{align*}

The number pairs $(\upgamma_{00},\updelta_{00})$, $(\upgamma_{0,*},\updelta_{0,*})$, and $(\upgamma_{**},\updelta_{**})$ 
are the constants generated by applying Lemma \ref{L:buildingblockmetricenergy} 
to the right-hand side of~\eqref{E:metric00}-\eqref{E:metricjk} respectively, while
the number pairs $(\widetilde{\upgamma}_{00},\widetilde{\updelta}_{00})$, $(\widetilde{\upgamma}_{0,*},\widetilde{\updelta}_{0,*})$, and $(\widetilde{\upgamma}_{**},\widetilde{\updelta}_{**})$ 
are the constants generated by applying Lemma \ref{L:buildingblockmetricenergy} 
the right-hand side of~\eqref{E:metric00}-\eqref{E:metricjk} with $\upalpha$ replaced by $\upalpha + 2$
[see equation  \eqref{E:vscalar2} for a justification of the constants $\widetilde{\upgamma}, \widetilde{\updelta}$].
In particular, we have that $(\upgamma_{**},\updelta_{**}) = (\widetilde{\upgamma}_{**},\widetilde{\updelta}_{**}) = (0,0).$
\end{definition}

\begin{remark}[\textbf{Justification of the necessity of} $\hstarstarenergy{N-2}$]
	We need the energy $\hstarstarenergy{N-2}$ defined in \eqref{E:en**}  
	in order to control the second sum in the norm \eqref{E:HSTARSTARNORM};
	this second
	sum is not controlled by the energy $\partialhstarstarenergy{N-1}$ 
	defined in \eqref{E:partialen**}
	because the constant $C_{(\upbeta)}$ from 
	\eqref{E:mathcalEfirstlowerbound} is $0$ for the energies corresponding to the wave equations verified by $h_{ij}.$
\end{remark}

\begin{remark}[\textbf{Order of} $\u$ \textbf{differentiation matters}]
	We stress that our norms control $\a \u$ derivatives of various quantities
	while our energies control the $\u \a$ derivatives of various quantities.
	These operators agree up to commutation terms that we are able to control.
	The reason that we do not define an energy for the quantity 
	$\a \u$ is that in order to derive $L^2$ energy estimates for the derivative 
	$\partial \a \u$ of the metric components, we would have to commute the wave
	equations \eqref{E:metric00}-\eqref{E:metricjk}
	with $\a \u.$ The important point is that the 
	top-order operator
	$\a \u$ (that is, $|\vec{\alpha}| = N-1$)
	when commuted with the reduced wave operator $\hat{\square}_g,$ would generate a commutator error term
	that involves too many derivatives of the components $u^{\mu}$ 
	(that is, $N+1$ spatial derivatives of these components)
	to close even a local well-posedness argument.
\end{remark}

\begin{definition}[\textbf{Aggregate metric energies}]
\label{D:TOTALMETRICENERGIES}
We define
\begin{subequations}
\begin{align}
	\genergy{N-1}
	& \eqdef
		\gzerozeroenergy{N-1}
		+ \gzerostarenergy{N-1}
		+ \partialhstarstarenergy{N-1}
		+  \hstarstarenergy{N-2},
			\label{E:METRICTOTALBELOWTOPORDERSPATIALDERIVATIVEENERGY} \\
	\guenergy{N-1}
	& \eqdef
		\gzerozeropartialuenergy{N-1}
		+ \gzerostarpartialuenergy{N-1}
		+ \partialhstarstarpartialuenergy{N-1}.
		\label{E:TOTALMETRICUDERIVATIVEENERGY}
\end{align}
\end{subequations}
\end{definition}

\subsubsection{Fluid energies}

We now define energies for the fluid variables.

\begin{definition}[\textbf{Energies for the fluid variable components}] \label{D:FLUIDENERGIES}
Let $N$ be a positive integer, and let $q$ be the small positive constant defined in~\eqref{E:qdef}.
We define the positive definite energies 
$\velocityenergy{N-1},$
$\topordervelocityenergy{N-1},$ 
and $\densenergy{N-1}$ as follows:
\begin{subequations}
\begin{align}
	\velocityenergy{N-1}^2
	& \eqdef \sum_{j=1}^3 e^{2 (1 + q)\Omega}  \left\| u^j  \right\|_{H^{N-1}}^2,
		  \label{E:VELOCITYENERGYDEF} \\
	\topordervelocityenergy{N-1}^2
	& \eqdef \sum_{i,j=1}^3 e^{2 q \Omega} \left\| \partial_i u^j  \right\|_{H^{N-1}}^2,  
			\label{E:TOPORDERVELOCITYENERGYDEF} \\
	\densenergy{N-1}^2
	& \eqdef \|\dens-\bar{\dens}\|_{H^{N-1}}^2.
		\label{E:DENSITYENERGYDEF}
\end{align}
\end{subequations}

\end{definition}

%\end{definition}

\subsubsection{Aggregate total energies}
We begin by defining a solution energy that 
\emph{does not control the top-order spatial derivatives of $\partial g$ or $u^j.$}

\begin{definition} [\textbf{Below-top-order aggregate metric $+$ fluid energies}]
\label{D:TOTALBELOWTOPSOLUTIONENERGY}
We define
\begin{align} \label{E:totalbelowtopenergy}
	\totalbelowtopenergy{N-1}
	& \eqdef 
		\genergy{N-1} 
		+ \velocityenergy{N-1}
		+ \densenergy{N-1}.
\end{align}
\end{definition}

Finally, we define the total energy of the solution.
It controls all quantities that we able to dynamically estimate
via integration by parts, but it does not directly control
the top-order spatial derivatives of $\partial g$
(we will use elliptic estimates to control these latter quantities).
The energy is $0$ for the background FLRW solution.
\begin{definition}[\textbf{Total solution energy}]
\label{D:TOTALENERGY}
We define
\begin{align} \label{E:TOTALENERGYDEF}
	\totalenergy
	& \eqdef
		\totalbelowtopenergy{N-1}
		+ \guenergy{N-1}
 		+ \topordervelocityenergy{N-1}. 
	\end{align}
\end{definition}

\section{Preliminary differential inequalities for the energies} \label{S:differentialinequalities}

\subsection{Preliminary differential inequalities for the metric energies}
We will soon apply Lemma~\ref{L:buildingblockmetricenergy} to derive 
our basic differential inequalities for the metric energies
$\gzerozeroenergy{N-1},$
$\gzerozeropartialuenergy{N-1},$
$\cdots$
defined in
Def. \ref{D:energiesforg}. 
However, we first need to understand
the structure of the inhomogeneous terms in the commuted equations.
To this end, we commute~\eqref{E:vscalar} with the operators $\a$ 
and $\u \a,$ and obtain the following lemma.

\begin{lemma}[\textbf{The basic structure of the commuted metric equations}] \label{L:COMMUTEDBASICSTRUCTURE}
Assume that the scalar-valued function $v$ verifies equation~\eqref{E:vscalar}. Then
the differentiated quantities $\a v$ and $\u \a v$ verify the following equations:
\begin{subequations}
\begin{align} \label{E:vscalar1}
\hat{\square}_g \a v 
& = \upalpha H \g_t \a v 
	+ \upbeta H^2\a v
	+ \a F 
	+ \{\hat{\square}_g,\a\} v,
	\\
\label{E:vscalar2}
\hat{\square}_g (\u \a v) 
& = (\upalpha + 2) H (\g_t \u \a v)
	+ \upbeta H^2 \u \a v
	 \underbrace
			{ +2 \upalpha H^2 \g_t \a v 
				+ 2 \upbeta H^3 \a v
			}_{\mbox{\textnormal{dangerous}}}
		\\
& \ \ 
			+ \a \u F 
			+ \{\u, \a \}F 
			+ 2 H \a F
			+  \upalpha H \{\partial_{\mathbf{u}}, \partial_t \} \a v
			+ \partial_{\mathbf{u}} (\{\hat{\square}_g,\a\} v) 
			+ \triangle_{\text{Ell}}[\partial^{(2)} \a v],
	\notag
\end{align}
\end{subequations}
where $\{ A,B \} \eqdef AB - BA$ denotes the commutator of the operators $A$ and $B.$
Furthermore, the error term
$\triangle_{\text{Ell}}[\partial^{(2)} \a v]$
from the right-hand side of \eqref{E:vscalar2}
can be decomposed as
\begin{align} \label{E:NEWQUADRATICERROR}
\triangle_{\text{Ell}}[\partial^{(2)} \a v]	
& \eqdef  - (\u g^{00})\g_{tt}\a v 
	- 2 (\u g^{0a}) \g_t\g_a\a v
	+ 2 (g^{\mu \nu} \g_{\mu} u^{\delta}) \g_{\nu} \g_{\delta} \a v
	+ (g^{\mu \nu} \g_{\mu} \g_{\nu} u^{\delta}) \partial_{\delta} \a v
	\\
& \ \ - \left(
					\partial_{\mathbf{u}} g^{ab} + 2 \omega g^{ab}
				\right) 
				\g_a \g_b \a v
			+ 2 (\omega - H) g^{ab} \g_a \g_b \a v
	\notag \\
& \ \ + 2H 
		\left(
			- 2 g^{0a} \g_a \g_t \a v
			+ \{\hat{\square}_g,\a\} v
		\right)
	\notag \\
& \ \ - 2H (g^{00} + 1) \g_{tt} \a v
	+ 2H \left(\frac{1}{u^0} - 1 \right) \g_t \partial_{\mathbf{u}} \a v
	- 2H\frac{(\g_t u^{\delta})}{u^0}\g_{\delta}\a v 
	- 2H \frac{u^a}{u^0} \g_t \g_a \a v.
	\notag
\end{align}

\end{lemma}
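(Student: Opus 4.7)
The proof of \eqref{E:vscalar1} requires only that $\a = \partial_1^{n_1}\partial_2^{n_2}\partial_3^{n_3}$ is a product of coordinate partial derivatives, so it commutes with $\g_t$ and with multiplication by any function of $t$ alone. Applying $\a$ to both sides of \eqref{E:vscalar} and writing $\a \hat{\square}_g v = \hat{\square}_g \a v + \{\hat{\square}_g, \a\}v$ produces \eqref{E:vscalar1} immediately.

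For \eqref{E:vscalar2} my plan is to apply $\u$ to \eqref{E:vscalar1} and then carefully repackage the right-hand side so as to expose the dissipative coefficient $(\upalpha+2)H$. A direct computation from $\hat{\square}_g = g^{\mu\nu}\g_\mu\g_\nu$ and $\u = u^\alpha\g_\alpha$ yields the commutator identity
\[
[\u,\hat{\square}_g]\a v = (\u g^{\mu\nu})\g_\mu\g_\nu \a v - (g^{\mu\nu}\g_\mu\g_\nu u^\delta)\g_\delta \a v - 2 g^{\mu\nu}(\g_\mu u^\delta)\g_\nu\g_\delta \a v.
\]
Likewise, $\u(\upalpha H \g_t \a v) = \upalpha H \g_t\u\a v + \upalpha H \{\u,\g_t\}\a v$, $\u(\upbeta H^2 \a v) = \upbeta H^2 \u \a v$, and $\u(\a F + \{\hat{\square}_g,\a\}v) = \a\u F + \{\u,\a\}F + \u(\{\hat{\square}_g,\a\}v)$. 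These identities account for every term in \eqref{E:vscalar2} except those inside $\triangle_{\text{Ell}}$ and the dangerous linear lower-order pair.

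The only term that resists being absorbed in this way is the spatial piece $-(\u g^{ab})\g_a\g_b\a v$ of $-(\u g^{\mu\nu})\g_\mu\g_\nu \a v$, since $\u g^{ab}$ behaves at the linear level like $-2\omega g^{ab}$ rather than decaying. I would therefore rewrite this term as
\[
-(\u g^{ab}+2\omega g^{ab})\g_a\g_b\a v + 2(\omega - H) g^{ab}\g_a\g_b\a v + 2H g^{ab}\g_a\g_b\a v,
\]
placing the first two pieces directly into $\triangle_{\text{Ell}}$, and converting the third using the definition of the reduced wave operator,
\[
2H g^{ab}\g_a\g_b \a v = 2H\bigl(\hat{\square}_g\a v - g^{00}\g_{tt}\a v - 2 g^{0a}\g_t\g_a\a v\bigr).
\]
Substituting \eqref{E:vscalar1} for $\hat{\square}_g \a v$ produces the ``dangerous'' but lower-order linear terms $2\upalpha H^2 \g_t\a v + 2\upbeta H^3 \a v$, together with $2H\a F$ and $2H\{\hat{\square}_g,\a\}v$ (all of which already appear on the right-hand side of \eqref{E:vscalar2}), and the residual pieces $-2H g^{00}\g_{tt}\a v - 4H g^{0a}\g_t\g_a\a v$.

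The final step is to rescale $\g_{tt}\a v$ by a factor of $u^0$. Splitting $g^{00} = -1 + (g^{00}+1)$ gives $-2H g^{00}\g_{tt}\a v = 2H\g_{tt}\a v - 2H(g^{00}+1)\g_{tt}\a v$, and the latter piece feeds into $\triangle_{\text{Ell}}$. Using $\g_t = (u^0)^{-1}\u - (u^a/u^0)\g_a$ together with the algebraic identity $\{\u,\g_t\}\a v = -(\g_t u^\delta)\g_\delta\a v$ converts $2H\g_{tt}\a v$ into $2H\g_t\u\a v$ modulo exactly the three remaining terms in the last line of \eqref{E:NEWQUADRATICERROR}. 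Combining the $2H\g_t\u\a v$ thus generated with the $\upalpha H\g_t\u\a v$ already present yields the asserted coefficient $(\upalpha+2)H$. The only real difficulty in executing the proof is organizational rather than analytic: one must ensure a careful accounting so that every generated term lands in exactly one of the four categories appearing on the right-hand side of \eqref{E:vscalar2} (the coercive/dissipative part, the ``dangerous'' lower-order linear terms, the $F$- and commutator-with-$\a$ contributions, and $\triangle_{\text{Ell}}$), and a term-by-term comparison with \eqref{E:NEWQUADRATICERROR} then completes the verification.
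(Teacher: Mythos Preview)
Your proof is correct and follows essentially the same route as the paper's own argument: commute \eqref{E:vscalar1} with $\u$, split off the spatial piece $-(\u g^{ab})\g_a\g_b\a v$, add and subtract $2\omega g^{ab}$ and then $2H g^{ab}$, use the wave equation to replace $2H g^{ab}\g_a\g_b\a v$, and finally decompose $-2Hg^{00}\g_{tt}\a v$ via $\g_t = (u^0)^{-1}\u - (u^a/u^0)\g_a$ to extract the extra $2H\g_t\u\a v$. The only (harmless) imprecision is that $2H\{\hat{\square}_g,\a\}v$ is not a separately listed term in \eqref{E:vscalar2} but sits inside $\triangle_{\text{Ell}}$; otherwise your bookkeeping matches the paper's term-by-term.
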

\begin{proof}
The lemma follows from a series of somewhat tedious calculations.
First, we note that identity \eqref{E:vscalar1} follows easily from commuting the equation~\eqref{E:vscalar} with the operator $\a$. 

In order to prove~\eqref{E:vscalar2}, we commute~\eqref{E:vscalar1} with the operator $\u$ and immediately obtain
\begin{align}
\hat{\Box}_g(\u\a v) 
 = & \underbrace{\u(\hat{\Box}_g \a v)}_{ = I} + 2g^{\mu\nu}(\g_{\mu}u^{\delta}) \g_{\nu}\g_{\delta}\a v \nonumber \\
& + (\hat{\Box}_g u^{\delta}) \g_{\delta} \a v  \underbrace{- \left(\u g^{ab}\right)\g_a\g_b \a v}_{ = II} 
- \left(\u g^{00} \right) \g_{tt}\a v - 2 (\u g^{0a}) \g_t\g_a\a v. \label{E:vscalarproof1}
\end{align}
We now substitute $\hat{\Box}_g \a v$ in the term $I$ in~\eqref{E:vscalarproof1} with
the right-hand side of~\eqref{E:vscalar1}. This will account for the presence of the terms
$\upalpha H \g_t \u \a v$, $\upalpha H \{\g_{\bf u},\g_t\} \a v$, $\upbeta H^2 \u \a v$, 
$\u\a F = \a \u F + \{\u, \a \}F$,
and $\g_u(\{\hat{\Box}_g, \a\}v)$
on the right-hand side of~\eqref{E:vscalar2}. The right-hand side of~\eqref{E:vscalarproof1} except for the terms $I$ and $II$ accounts for the first line of~\eqref{E:NEWQUADRATICERROR}.
We now analyze the term $II$ in~\eqref{E:vscalarproof1}. We rewrite it as
\begin{align}
- (\u g^{ab}) \g_a\g_b \a v
=-(\u g^{ab}+2\omega)\g_a\g_b \a v +(2\omega-2H)g^{ab}\g_a\g_b \a v \underbrace{+2Hg^{ab}\g_a\g_b \a v}_{=III}. \label{E:vscalarproof2}
\end{align}
The first two terms on the right-hand side of~\eqref{E:vscalarproof2} are present in~\eqref{E:NEWQUADRATICERROR}.
To handle the term $III$ in~\eqref{E:vscalarproof2}, we re-express $g^{ab}\g_a\g_b\a v$ as
$
\hat{\Box}_g\a v - g^{00}\g_{tt} \a v - 2g^{0a} \g_t \g_a \a v.
$
Using~\eqref{E:vscalar1}, we deduce that
\begin{align}
2H g^{ab}\g_a\g_b \a v = & 2\upalpha H^2 \g_t\a v + 2\upbeta H^3 \a v + 2H \a F + 2H \{\hat{\Box}_g, \a\} v 
- 4 H g^{0a} \g_t \g_a \a v \nonumber\\
&  \underbrace{- 2 H g^{00} \g_{tt} \a v}_{=IV}. \label{E:vscalarproof3}
\end{align}
The first two terms on the right-hand side of~\eqref{E:vscalarproof3} are accounted for in~\eqref{E:vscalar2} as the ``dangerous linear terms", as well as $2H \a F$. 
Term $ 2H \{\hat{\Box}_g, \a\} v 
-4 H g^{0a} \g_t \g_a \a v$ constitutes an error term present in~\eqref{E:NEWQUADRATICERROR}.
Finally, we re-express the term $IV$ in~\eqref{E:vscalarproof3} with
the help of the relation 
$\partial_{tt} 
= \frac{1}{u^0} \g_t \g_{\mathbf{u}} 
- \frac{1}{u^0} (\g_t u^{\delta})\g_{\delta}
- \frac{1}{u^0} u^a \g_a \g_t 
$
to obtain
\begin{align}
-2H g^{00}\g_{tt} \a v & =  -2H (g^{00}+1)\g_{tt}\a v + 2H\g_{tt} \a v \nonumber \\
& = -2H (g^{00}+1)\g_{tt}\a v +2H \left(\frac{1}{u^0}-1 \right)\g_t \u \a v- 2H\frac{(\g_t u^{\delta})}{u^0}\g_{\delta}\a v - 2H\frac{u^a}{u^0}\g_a\g_t \a v \nonumber \\
& \ \ + 2H \g_t \u \a v. \label{E:vscalarproof4}
\end{align}
The first four terms on the right-most side of~\eqref{E:vscalarproof4} are present in the error term~\eqref{E:NEWQUADRATICERROR}, whereas
the term $+ 2H \g_t \u \a v$ appears as the ``additional two copies of $H \g_t \u \a v$" 
present in the first term on the right-hand side of~\eqref{E:vscalar2}.
This finishes the proof of the lemma.
%	{\color{red} INSERT A CONCISE SKETCH OF A PROOF}
\end{proof}
\begin{remark} [\textbf{The structure of the $\u$-commuted wave equation}] \label{R:DANGEROUSTOPORDERCOMMUTATORTERM}
The structure of the terms in the $\u$-commuted wave equation is important. 
Specifically, equation~\eqref{E:vscalar2} captures a new technical feature,
which was not present in the earlier work \cite{jS2012}. 
The main issue of concern is that the term
$(\u g^{ab}) \g_a\g_b\a v$ is not quadratic, 
but rather a {\bf linear} contribution to the right-hand side and thus has to be handled with great care.
To expand upon this point, we note that 
$- (\u g^{ab}) \g_a\g_b\a v$ was carefully replaced by $2H g^{ab}\g_a\g_b\a v$ modulo harmless error terms, cf.~\eqref{E:vscalarproof2}. 
In the next step [cf.~\eqref{E:vscalarproof3}] we used the wave equation to replace $ 2H g^{ab}\g_a \g_b \a v$ with $- 2H g^{00}\g_{tt} \a v - 2 H g^{0a} \g_t \a v$ 
plus inhomogeneous terms, some of which are ``dangerous linear error terms.'' Our proof
of Theorem \ref{T:maintheorem} will show that these dangerous linear terms 
can be controlled by a hierarchy of decoupled energies that can effectively be independently bounded.
The structure of this hierarchy is a consequence of
the special structure of the dust-Einstein equations in our wave coordinate gauge.
Finally,~\eqref{E:vscalarproof4} allows us to write $- 2H g^{00}\g_{tt} \a v$  as $2H \g_t \u \a v$ plus harmless quadratic error terms. We stress that the term $2H \g_t \u \a v$ is 
a {\bf positive} multiple of $\g_t \u \a v$ and thus \underline{can only enhance} 
[see \eqref{E:mathcalEtimederivativeformula}]
the decay of the building block energy \eqref{E:mathcalEdef} associated with~\eqref{E:vscalar2}.
\end{remark}

In the next lemma, we state preliminary differential inequalities for the metric energies.
These inequalities will be used in Sect.~\ref{S:global} to deduce future-global stability.
\begin{lemma}[\textbf{A first differential inequality for the metric energies}]	
	\label{L:metricfirstdifferentialenergyinequality}
	Assume that $g_{00},$ $g_{0j},$ $h_{jk} = e^{-2 \Omega} g_{jk},$ $(j,k=1,2,3),$ are solutions to the modified Eqs.
	\eqref{E:metric00}-\eqref{E:metricjk}, and let 
	$\gzerozeroenergy{N-1},$
	$\gzerozeropartialuenergy{N-1},$
	$\cdots$
	be the metric energies from Def.~\ref{D:energiesforg}. 
	Then under the assumptions of Lemma \ref{L:buildingblockmetricenergy},
	the following differential inequalities are satisfied, where 
	$\triangle_{\mathcal{E};(\gamma, \delta)}[\cdot, \partial(\cdot )]$ is defined in \eqref{E:trianglemathscrEdef}, the
	constants $(\upgamma_{00}, \updelta_{00}),$ $(\upgamma_{0*}, \updelta_{0*}),$ $(\upgamma_{**}, \updelta_{**}),$
	$(\widetilde{\upgamma}_{00}, \widetilde{\updelta}_{00}),$ $(\widetilde{\upgamma}_{0*}, \widetilde{\updelta}_{0*}),$ and 
	$(\widetilde{\upgamma}_{**}, \widetilde{\updelta}_{**})$
	[note that $(\upgamma_{**}, \updelta_{**}) = (\widetilde{\upgamma}_{**}, \widetilde{\updelta}_{**}) = (0,0)$]
	are the constants from Def.~\ref{D:energiesforg}, and 
	$\upeta_{00},$ $\upeta_{0*},$ $\upeta_{**},$ 
	$\widetilde{\upeta}_{00},$ $\widetilde{\upeta}_{0*},$ $\widetilde{\upeta}_{**}$
	are the positive constants ``$\upeta$'' produced by applying Lemma \ref{L:buildingblockmetricenergy} to each of Eqs. 
	\eqref{E:metric00}-\eqref{E:metricjk} and their $\partial_{\mathbf{u}}-$commuted version 
	[see~\eqref{E:vscalar2}] respectively:
\begin{subequations}
\begin{align}
\frac{d}{dt}\gzerozeroenergy{N-1}^2
& \leq (2q - \upeta_{00})H\gzerozeroenergy{N-1}^2
	+2q(\omega-H)\gzerozeroenergy{N-1}^2 
	\label{E:00est} \\
& \ \ - \sum_{|\vec{\alpha}|\leq N-1}\int_{\T^3}e^{2q\Omega}
		\left(
			\g_t \a g_{00} + \upgamma_{00} H \a (g_{00}+1)
		\right)
		\left(
			\a\triangle_{00}
			+ \{\hat{\square}_g, \a\} (g_{00}+1)
		\right)
		\,dx 
		\nonumber \\
& \ \
	+ \sum_{|\vec{\alpha}|\leq N-1} 
		\int_{\T^3} 
			e^{2q\Omega} 
			\triangle_{\E;(\upgamma_{00},\updelta_{00})}[\a(g_{00}+1),\g(\a g_{00})]
		\, dx, \notag
\end{align}
\begin{align}
\frac{d}{dt} \gzerozeropartialuenergy{N-1}^2
& \leq (2q - \widetilde{\upeta}_{00})H E^2_{\partial_{\mathbf{u}} g_{00}}
	+ 2q (\omega-H)E^2_{\partial_{\mathbf{u}} g_{00}} 
	\label{E:partialu00est} \\
& \ \
	- \sum_{|\vec{\alpha}| \leq N-1}\int_{\T^3}
	e^{2q\Omega}
	\left(
		\g_t\u \a g_{00}
		+ \widetilde{\upgamma}_{00} H \u \a g_{00}
	\right)
	\left(
		\underbrace{
			10 H^2 \g_t \a g_{00} 
			+ 12 H^3 \a (g_{00} + 1)
		}_{\mbox{\textnormal{dangerous}}}
	\right)
	\, dx
		\nonumber \\
& \ \
	- \sum_{|\vec{\alpha}| \leq N-1}\int_{\T^3}
	e^{2q\Omega}
	\left(
		\g_t\u \a g_{00}
		+ \widetilde{\upgamma}_{00} H \u \a g_{00}
	\right)
	\left(
		\a \u \triangle_{00} 
		+ 2 H \a \triangle_{00}
	\right)
	\, dx
		\nonumber \\
& \ \
	- \sum_{|\vec{\alpha}| \leq N-1}\int_{\T^3}
	e^{2q\Omega}
	\left(
		\g_t\u \a g_{00}
		+ \widetilde{\upgamma}_{00} H \u \a g_{00}
	\right)
	\left(
		5 H \{\partial_{\mathbf{u}}, \partial_t \} \a g_{00} 
		+ \{ \u, \a \} \triangle_{00}
	\right)
	\, dx
		\nonumber \\
& \ \
	- \sum_{|\vec{\alpha}| \leq N-1}\int_{\T^3}
	e^{2q\Omega}
	\left(
		\g_t\u \a g_{00}
		+ \widetilde{\upgamma}_{00} H \u \a g_{00}
	\right)
	\left(
	 \partial_{\mathbf{u}} (\{\hat{\square}_g,\a\} g_{00})
		+ \triangle_{\text{Ell}}[\partial^{(2)} \a g_{00}]
	\right)
	\, dx
		\nonumber \\
& \ \
	+ \sum_{|\vec{\alpha}|\leq N-1} 
		\int_{\T^3} 
		e^{2q\Omega} 
		\triangle_{\E;(\widetilde{\upgamma}_{00}, \widetilde{\updelta}_{00})}[\u\a g_{00},\g(\u\a g_{00})]
		\, dx, \notag
\end{align}
%%%
%\be\label{E:box00est}
%\frac{d}{dt}E_{\square,00}^2=2\int_{\T^3}\Big(\g_i\big(\frac{u^i}{u^0}\big)|\square\a g_{00}|^2
%+\frac{1}{u^0}\u\big([\square,\a]g_{00}+5H\a\g_t g_{00}+6H^2\a g_{00}+\a\triangle_{00}\big)\square\a g_{00}\Big).
%\ee
%
\begin{align}
\frac{d}{dt}\gzerostarenergy{N-1}^2
& \leq \left(
				2(q-1)-\upeta_{0*}
		 \right)
		 H\gzerostarenergy{N-1}^2 
		+ 2 (q-1) (\omega-H) \gzerostarenergy{N-1}^2 
		\label{E:0*est} \\
& \ \ - \sum_{|\vec{\alpha}|\leq N-1}
			\sum_{j=1}^3
			\int_{\T^3}e^{2(q-1)\Omega}
		\left(
			\g_t \a g_{0j} + \upgamma_{0*} H \a g_{0j}
		\right)
		\left(
			\underbrace{- 2 H \a (g^{ab} \Gamma_{ajb})}_{\mbox{\textnormal{dangerous}}}
			+ \a \triangle_{0j}
			+ \{\hat{\square}_g, \a\} g_{0j}
		\right)
		\,dx 
		\nonumber \\
& \ \
	+ \sum_{|\vec{\alpha}|\leq N-1} 
		\sum_{j=1}^3
		\int_{\T^3} 
			e^{2(q-1)\Omega}
			\triangle_{\E;(\upgamma_{0*},\updelta_{0*})}[\a g_{0j}, \g(\a g_{0j})]
			\, dx, \notag
\end{align}
\begin{align}
\frac{d}{dt}\gzerostarpartialuenergy{N-1}^2
& \leq \left(
				2(q-1)-\upeta_{0*}
		 \right)
		 H E^2_{ \u  g_{0*}} 
		+ 2 (q-1) (\omega-H) E^2_{ \u  g_{0*}} 
		\label{E:partialu0*est} \\
& \ \
	- \sum_{|\vec{\alpha}| \leq N-1}
		\sum_{j=1}^3
		\int_{\T^3}
	e^{2 (q-1) \Omega}
	\left(
		\g_t \u \a g_{0j}
		+ \widetilde{\upgamma}_{0*} H \u \a g_{0j}
	\right)
		\nonumber \\
&  \ \ \ \ \ \ \ \ \ \ \ \ \ \ \ \ \ \ \ \ \ \ \ 
	\times
	\left(
		 \underbrace{
		 	- 2 H  \a \u (g^{ab} \Gamma_{ajb})
		 	- 4 H^2  \a (g^{ab} \Gamma_{ajb})
		 	+ 6 H^2 \g_t \a g_{0j}
			+ 4 H^3 \a g_{0j}
			}_{\mbox{\textnormal{dangerous}}}
	\right)
	\, dx
		\nonumber \\
& \ \
	- \sum_{|\vec{\alpha}| \leq N-1}
		\sum_{j=1}^3
		\int_{\T^3}
	e^{2 (q-1) \Omega}
	\left(
		\g_t \u \a g_{0j}
		+ \widetilde{\upgamma}_{0*} H \u \a g_{0j}
	\right)
	\left(
		\a \u \triangle_{0j}
		 + 2 H \a \triangle_{0j}
	\right)
	\, dx
		\nonumber \\
& \ \
	- \sum_{|\vec{\alpha}| \leq N-1}
		\sum_{j=1}^3
		\int_{\T^3}
	e^{2 (q-1) \Omega}
	\left(
		\g_t \u \a g_{0j}
		+ \widetilde{\upgamma}_{0*} H \u \a g_{0j}
	\right)
		\nonumber \\
& \ \ \ \ \ \ \ \ \ \ \ \ \ \ \ \ \ \ \ \ \ \ \ 
	\times
	\left(
		3 H \{ \partial_{\mathbf{u}}, \partial_t \} \a g_{0j}  
		- 2 H  \{ \u, \a \} (g^{ab} \Gamma_{ajb})
	\right)
		\, dx
	\nonumber \\
& \ \
	- \sum_{|\vec{\alpha}| \leq N-1}
		\sum_{j=1}^3
		\int_{\T^3}
	e^{2 (q-1) \Omega}
	\left(
		\g_t \u \a g_{0j}
		+ \widetilde{\upgamma}_{0*} H \u \a g_{0j}
	\right)
		\nonumber \\
&  \ \ \ \ \ \ \ \ \ \ \ \ \ \ \ \ \ \ \ \ \ \ \
		\times
		\left(
		\{ \u, \a \} \triangle_{0j}
		+ \partial_{\mathbf{u}} ( \{\hat{\square}_g, \a\} g_{0j})
		+ \triangle_{\text{Ell}}[\partial^{(2)} \a g_{0j}]
	\right)
		\, dx
	\nonumber \\
& \ \
	+ \sum_{|\vec{\alpha}|\leq N-1} 
		\sum_{j=1}^3
		\int_{\T^3} 
			e^{2(q-1)\Omega}
			\triangle_{\E;(\upgamma_{0*},\updelta_{0*})}[\u \a g_{0j},\g(\u\a g_{0j})]
			\, dx, \notag
\end{align}
\begin{align}
	\frac{d}{dt} \hstarstarenergy{N-2}^2
	& \leq 
		\sum_{1 \leq|\vec{\alpha}|\leq N - 1} \sum_{j,k=1}^3 
			\int_{\T^3} 
			H^2(\a\g_th_{jk})(\a h_{jk}) 
		\, dx,
	\label{**est}
\end{align}
\begin{align} \label{E:partial**est}
\frac{d}{dt} \partialhstarstarenergy{N-1}^2
	& \leq (2q-\upeta_{**}) H \partialhstarstarenergy{N-1}^2
		+ 2q(\omega-H) \partialhstarstarenergy{N-1}^2
		\\
& \ \ - \sum_{|\vec{\alpha}|\leq N-1}\sum_{j,k=1}^3\int_{\T^3}
			e^{2q\Omega} 
			(\g_t \a h_{jk})
			\left(
				\a \triangle_{jk}
				+ \{
						\hat{\square}_g, 
						\a
					\} 
					h_{jk}
			\right )\, dx 
	\nonumber \\
& \ \ + \sum_{|\vec{\alpha}|\leq N-1}\sum_{j,k=1}^3
				\int_{\T^3} 
				e^{2q\Omega} 
				\triangle_{\E;(\upgamma_{**},\updelta_{**})}[0,\g(\a h_{jk})] \, dx,
	\notag
\end{align}
%
%
%\begin{align} \label{E:partialu**est}	
%\frac{d}{dt} \partialhstarstarpartialuenergy{N-1}^2
%	& \leq 
%		\sum_{1 \leq|\vec{\alpha}|\leq N - 1} \sum_{j,k=1}^3 
%		\int_{\T^3} 
%			H^2(\g_t \u \a h_{jk})(\u \a h_{jk}) 
%		\, dx,
%\end{align} 
%
\begin{align}
\frac{d}{dt} \partialhstarstarpartialuenergy{N-1}^2
	& \leq (2q-\upeta_{**}) H \partialhstarstarpartialuenergy{N-1}^2
			+ 2q(\omega-H) \partialhstarstarpartialuenergy{N-1}^2
		\label{E:partialpartialu**est} \\
& \ \
	- \sum_{|\vec{\alpha}| \leq N-1} \sum_{j,k=1}^3
	\int_{\T^3}
	e^{2q\Omega}
	\left(
		\g_t \u \a h_{jk}
	\right)
	\left(
		 \underbrace{
		 		6 H^2 \g_t \a h_{jk}
			}_{\mbox{\textnormal{dangerous}}}
		+ \a \u \triangle_{jk}
		+ 2 H \a \triangle_{jk}
	\right)
	\, dx
	\nonumber \\
& \ \
	- \sum_{|\vec{\alpha}| \leq N-1} \sum_{j,k=1}^3
	\int_{\T^3}
	e^{2q\Omega}
	\left(
		\g_t\u \a h_{jk}
	\right)
	\left(
		3H \{\u, \partial_t \} \a h_{jk}
		+ \{\u, \a \} \triangle_{jk}
	\right)
	\, dx
		\nonumber \\
& \ \
	- \sum_{|\vec{\alpha}| \leq N-1} \sum_{j,k=1}^3
	\int_{\T^3}
	e^{2q\Omega}
	\left(
		\g_t\u \a h_{jk}
	\right)
	\left(
		\partial_{\mathbf{u}} (\{\hat{\square}_g,\a\} h_{jk})
		+ \triangle_{\text{Ell}}[\partial^{(2)} \a h_{jk}]
	\right)
	\, dx
		\nonumber \\
& \ \	+ 
		\sum_{|\vec{\alpha}|\leq N-1}\sum_{j,k=1}^3
		\int_{\T^3} ~\
			e^{2q\Omega} \triangle_{\E;(\upgamma_{**},\updelta_{**})} [0,\g(\u\a h_{jk})]
			\, dx.
	\notag
\end{align} 
\end{subequations}
\end{lemma}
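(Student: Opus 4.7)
The plan is to derive each of the seven differential inequalities by combining the commuted wave equations from Lemma~\ref{L:COMMUTEDBASICSTRUCTURE} with the fundamental energy identity contained in Lemma~\ref{L:buildingblockmetricenergy}, and then multiplying by the appropriate $t$-weight and summing over multi-indices. The $t$-weights (e.g.\ $e^{2q\Omega}$ for $g_{00}$ and $h_{jk}$, $e^{2(q-1)\Omega}$ for $g_{0j}$) are built directly into the energies of Def.~\ref{D:energiesforg}, so differentiating them contributes a boundary factor $2q\omega$ or $2(q-1)\omega$ which I split as $2qH+2q(\omega-H)$ (resp.\ $2(q-1)H+2(q-1)(\omega-H)$). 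Combined with the dissipative $-\upeta H\,\E^2$ produced by Lemma~\ref{L:buildingblockmetricenergy}, this is exactly what produces the leading coefficients $(2q-\upeta_{00})H$, $(2(q-1)-\upeta_{0*})H$, $(2q-\upeta_{**})H$, etc., in the statement.

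For the three non-$\u$-commuted estimates \eqref{E:00est}, \eqref{E:0*est}, \eqref{E:partial**est}, I first commute the wave equations \eqref{E:metric00}--\eqref{E:metricjk} with $\a$, which by \eqref{E:vscalar1} yields the form $\hat{\square}_g(\a v)=\upalpha H\,\g_t\a v+\upbeta H^2\a v+F_{\vec\alpha}$ with forcing
\begin{equation*}
F_{\vec\alpha}=\a\triangle_{\mu\nu}+\{\hat{\square}_g,\a\}v,
\end{equation*}
where for $\eqref{E:metric0j}$ the forcing also contains the dangerous linear piece $-2H\a(g^{ab}\Gamma_{ajb})$ that is explicitly kept in \eqref{E:0*est}. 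I then apply Lemma~\ref{L:buildingblockmetricenergy} with the constants $(\upgamma_{00},\updelta_{00})$, $(\upgamma_{0*},\updelta_{0*})$, $(\upgamma_{**},\updelta_{**})=(0,0)$ fixed in Def.~\ref{D:energiesforg}, which delivers the $-(\g_t\a v+\upgamma H\a v)F_{\vec\alpha}$ and $\triangle_{\E;(\upgamma,\updelta)}$ terms, and finally multiply by the weight and sum over $|\vec\alpha|\leq N-1$.

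For the three $\u$-commuted estimates \eqref{E:partialu00est}, \eqref{E:partialu0*est}, \eqref{E:partialpartialu**est}, I invoke \eqref{E:vscalar2} instead of \eqref{E:vscalar1}. The crucial observation is that this equation has damping coefficient $(\upalpha+2)H$ in front of $\g_t\u\a v$; therefore Lemma~\ref{L:buildingblockmetricenergy} is to be applied with $\upalpha$ replaced by $\upalpha+2$, generating the constants $(\widetilde\upgamma_{00},\widetilde\updelta_{00})$ etc.\ used in the statement and the positive dissipation rates $\widetilde\upeta$. The remaining terms on the right of \eqref{E:vscalar2}, namely the linearly ``dangerous'' block $2\upalpha H^2\g_t\a v+2\upbeta H^3\a v$, together with $\a\u F+\{\u,\a\}F+2H\a F$, $\upalpha H\{\u,\g_t\}\a v$, $\u(\{\hat{\square}_g,\a\}v)$, and $\triangle_{\text{Ell}}[\partial^{(2)}\a v]$, are paired against $-(\g_t\u\a v+\widetilde\upgamma H\u\a v)$ and integrated against the weight $e^{2q\Omega}$ or $e^{2(q-1)\Omega}$; this is precisely the origin of the four integral lines displayed on the right-hand side of each of \eqref{E:partialu00est}, \eqref{E:partialu0*est}, \eqref{E:partialpartialu**est}, with the instantiations $\upalpha=5,\upbeta=6$ for $g_{00}$ (giving $10H^2$ and $12H^3$), $\upalpha=3,\upbeta=2$ for $g_{0j}$ (giving $6H^2$ and $4H^3$ along with the $-2H\a\u(g^{ab}\Gamma_{ajb})-4H^2\a(g^{ab}\Gamma_{ajb})$ dangerous transport of the linear term from \eqref{E:metric0j}), and $\upalpha=3,\upbeta=0$ for $h_{jk}$ (giving $6H^2\g_t\a h_{jk}$ and no $H^3$-term).

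The remaining estimate \eqref{**est} for the auxiliary energy $\hstarstarenergy{N-2}^2$ is direct: differentiating \eqref{E:en**} in time yields
\begin{equation*}
\frac{d}{dt}\hstarstarenergy{N-2}^2=\sum_{1\le|\vec\alpha|\le N-1}\sum_{j,k=1}^{3}\int_{\T^3}H^2(\a\g_t h_{jk})(\a h_{jk})\,dx,
\end{equation*}
with no contributions from any wave equation, since this energy is built purely as the $L^2$ norm of $\a h_{jk}$ with a constant coefficient $H^2$. The main bookkeeping obstacle will be in \eqref{E:partialu0*est}: one must correctly track the $\u$ acting on the dangerous term $-2H g^{ab}\Gamma_{ajb}$ on the right of \eqref{E:metric0j}, which by the Leibniz rule produces the $-2H\a\u(g^{ab}\Gamma_{ajb})$ contribution of principal order together with the commutator $-2H\{\u,\a\}(g^{ab}\Gamma_{ajb})$ and the additional $-4H^2\a(g^{ab}\Gamma_{ajb})$ coming from the $2H\a F$ term in \eqref{E:vscalar2}. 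All other contributions are routine rearrangements of Lemma~\ref{L:buildingblockmetricenergy}, and the actual smallness/decay estimates for the error integrals are deferred to Sect.~\ref{S:sobolev}.
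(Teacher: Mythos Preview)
Your proposal is correct and follows exactly the approach of the paper's proof, which simply states that \eqref{**est} comes from differentiating under the integral in the definition of $\hstarstarenergy{N-2}^2$, while the remaining inequalities are a direct consequence of applying Lemma~\ref{L:buildingblockmetricenergy} to the commuted equations \eqref{E:vscalar1} and \eqref{E:vscalar2} (multiplying by the appropriate weight and summing over $|\vec\alpha|\le N-1$). You have merely spelled out the bookkeeping that the paper leaves implicit, including the correct instantiations $(\upalpha,\upbeta)=(5,6),(3,2),(3,0)$ that produce the specific dangerous coefficients.
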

\begin{proof}
The estimate \eqref{**est} is very elementary and follows from differentiating in time under the integral 
inherent in the definitions of $\hstarstarenergy{N-2}^2.$
The remaining relations are a direct consequence of definitions
~\eqref{E:en00}-\eqref{E:partialenpartialu**} 
and Lemma~\ref{L:buildingblockmetricenergy} applied to the commuted metric evolution equations,
which are of the form \eqref{E:vscalar1} and~\eqref{E:vscalar2}.
\end{proof}

\subsection{Preliminary differential inequalities for the fluid energies}
We would now like to provide the fluid analog of Lemma \ref{L:metricfirstdifferentialenergyinequality}, that is,
preliminary differential inequalities for the fluid energies. 
We first provide the following lemma, which captures the structure of the commuted fluid equations.

\begin{lemma}[\textbf{The basic structure of the commuted fluid equations}] \label{L:FLUIDCOMMUTEDBASICSTRUCTURE}
Assume that the rescaled energy density $\dens=e^{3 \Omega} \rho$ and the fluid velocity components $u^1,u^2,u^3$ verify the fluid Eqs. 
\eqref{E:revol}-\eqref{E:velevol}. Then the $\a-$differentiated quantities $\a \dens, \a u^1, \a u^2, \a u^3$
verify the following identities:
\begin{subequations}
\begin{align}
	u^{\alpha}\g_{\alpha} \a \dens
	&= u^0 \a \left(\frac{1}{u^0} \triangle \right) 
		+ u^0 \{ \frac{u^a}{u^0} \partial_a, \a \} \dens
		\,,\label{E:revolcommuted} \\
	u^{\alpha}\g_{\alpha} \a u^j 
	& = - 2 u^0 \omega \a u^j 
		-  u^0 \a \triangle_{0 \ 0}^{\ j}
		+ u^0 \a \left(\frac{1}{u^0} \triangle^j \right)
		+ u^0 \{ \frac{u^a}{u^0} \partial_a, \a \} u^j \,. \label{E:velevolcommuted}
\end{align}	
\end{subequations}
Above, $\{A,B\}=AB-BA$ denotes a commutator. 
\end{lemma}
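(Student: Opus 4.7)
The plan is to reduce both identities to a straightforward commutator computation after rewriting the transport operator $u^{\alpha}\partial_{\alpha}$ in a form in which the multi-index spatial operator $\partial_{\vec{\alpha}}$ commutes with the principal time derivative. Since $\partial_{\vec{\alpha}}$ is a purely spatial operator, it commutes with $\partial_t$ but does not commute with multiplication by functions, and in particular not with the operator $\frac{u^a}{u^0}\partial_a$. All the ``work'' will be tracking this one commutator.

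First I would divide the density equation \eqref{E:revol} by $u^0$ (which is positive by the bootstrap assumptions implicit in Theorem \ref{T:localwellposedness}) to obtain $\partial_t \dens + \frac{u^a}{u^0}\partial_a \dens = \frac{1}{u^0}\triangle$, using that $\bar{\dens}$ is constant. Applying $\partial_{\vec{\alpha}}$ to both sides, using $[\partial_t,\partial_{\vec{\alpha}}]=0$, and then using the definition of the commutator to write $\partial_{\vec{\alpha}}\bigl(\tfrac{u^a}{u^0}\partial_a \dens\bigr) = \tfrac{u^a}{u^0}\partial_a \partial_{\vec{\alpha}} \dens - \{\tfrac{u^a}{u^0}\partial_a, \partial_{\vec{\alpha}}\}\dens$, I would rearrange to obtain $(\partial_t + \tfrac{u^a}{u^0}\partial_a)\partial_{\vec{\alpha}}\dens = \partial_{\vec{\alpha}}(\tfrac{1}{u^0}\triangle) + \{\tfrac{u^a}{u^0}\partial_a, \partial_{\vec{\alpha}}\}\dens$. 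Multiplying through by $u^0$ and reassembling $u^{\alpha}\partial_{\alpha} = u^0\partial_t + u^a\partial_a$ on the left yields \eqref{E:revolcommuted} exactly.

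The velocity identity \eqref{E:velevolcommuted} follows in exactly the same manner. Dividing \eqref{E:velevol} by $u^0$ gives $\partial_t u^j + \tfrac{u^a}{u^0}\partial_a u^j = -2\omega u^j - \triangle_{0\ 0}^{\ j} + \tfrac{1}{u^0}\triangle^j$. Here I use the crucial point that $\omega = \omega(t)$ is a function of $t$ alone, so $\partial_{\vec{\alpha}}(\omega u^j) = \omega \partial_{\vec{\alpha}} u^j$ without generating any extra terms; this is what allows the $-2u^0\omega\, \partial_{\vec{\alpha}} u^j$ term to appear cleanly in the final expression. Applying $\partial_{\vec{\alpha}}$ and invoking the same commutator rewriting as before, then multiplying by $u^0$, produces \eqref{E:velevolcommuted}.

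There is no serious obstacle here; the only thing that requires care is the sign convention for $\{A,B\} = AB - BA$ so that the commutator term appears with the correct sign on the right-hand side, and the bookkeeping that ensures $\partial_{\vec{\alpha}}\bigl(\tfrac{1}{u^0}\triangle\bigr)$ and $\partial_{\vec{\alpha}}\bigl(\tfrac{1}{u^0}\triangle^j\bigr)$ are left as composite quantities (rather than expanded via Leibniz) so the identities match the stated form. The structural payoff of this lemma—namely that the ``dangerous'' factor $\tfrac{1}{u^0}$ is kept inside $\partial_{\vec{\alpha}}$ and that the commutator $\{\tfrac{u^a}{u^0}\partial_a, \partial_{\vec{\alpha}}\}$ lowers the order of differentiation of $\dens$ and $u^j$ by at least one—will be exploited in the subsequent Sobolev estimates of Sect. \ref{S:sobolev}, but the lemma itself is purely algebraic.
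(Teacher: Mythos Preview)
Your proof is correct and follows exactly the same approach as the paper's: divide \eqref{E:revol}--\eqref{E:velevol} by $u^0$, apply $\a$, multiply back by $u^0$, and organize. The paper states this in a single sentence; you have simply spelled out the commutator bookkeeping and the observation that $\omega=\omega(t)$ commutes with $\a$, both of which are correct.
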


\begin{proof}
	We divide Eqs. \eqref{E:revol}-\eqref{E:velevol} by $u^0,$ 
	apply the operator $\a$ to each side,
	multiply by $u^0,$ and then organize the terms as in \eqref{E:revolcommuted}-\eqref{E:velevolcommuted}.
\end{proof}

With the help of Lemma \ref{L:FLUIDCOMMUTEDBASICSTRUCTURE}, we now derive the desired
preliminary differential inequalities for the fluid energies.
\begin{lemma}[\textbf{Energy identity for the fluid energies}] 
\label{L:fluidfirstdifferentialenergyinequality}
Assume that the rescaled energy density $\dens$ and the fluid velocity components $u^1,u^2,u^3$ verify the fluid Eqs. 
\eqref{E:revol}-\eqref{E:velevol}. Then fluid energies 
$\velocityenergy{N-1},$ 
$\topordervelocityenergy{N-1},$
and $\densenergy{N-1}$ 
from Def.~\ref{D:FLUIDENERGIES} verify the following differential identities:
\begin{subequations}
\begin{align} \label{E:VELOCITYDIFFINEQ}
	\frac{d}{dt} \velocityenergy{N-1}^2
	& = -2(1-q) \omega  
			\sum_{j=1}^3 e^{2 (1 + q)\Omega} \left\| u^j  \right\|_{H^{N-1}}^2
			\\
	& \ \ 
			- 2 
			\sum_{|\vec{\alpha}|\leq N-1} 
			\sum_{j=1}^3	
				\int 	e^{2 (1 + q)\Omega}
					(\a u^j)
					\underbrace{\a \triangle_{0 \ 0}^{ \ j}}_{\mbox{\textnormal{dangerous}}}
				\, dx
				\notag \\
	& \ \ 
			+ 2 
			\sum_{|\vec{\alpha}|\leq N-1} 
			\sum_{j=1}^3
				\int	 
					e^{2 (1 + q)\Omega} (\a u^j)
					\a \left(\frac{1}{u^0} \triangle^j\right)
				\, dx
				\notag \\
	& \ \ 
			+ 2 
			\sum_{j=1}^3
			\sum_{|\vec{\alpha}|\leq N-1} 
				\int
					e^{2 (1 + q)\Omega}
					(\a u^j)
					\{ \frac{u^a}{u^0} \partial_a, \a \} u^j
			\, dx
			\notag \\
	& \ \ + 
		\sum_{|\vec{\alpha}|\leq N-1} 
		\sum_{j=1}^3
				\int	
					e^{2(1 + q)\Omega}
					\left[\partial_a \left(\frac{u^a}{u^0} \right)\right]
					(\a u^j)^2
				\, dx,
		\notag   \\
 \frac{d}{dt} \topordervelocityenergy{N-1}^2
	& = - 2(2 - q) \omega 
				\sum_{i,j=1}^3 e^{2 q \Omega} \left\| \g_i u^j  \right\|_{H^{N-1}}^2
			\label{E:TOPORDERVELOCITYDIFFINEQ} \\
	& \ \ 
			- 2 
			\sum_{|\vec{\alpha}|\leq N-1} 
			\sum_{j=1}^3	
				\int 
					e^{2 q \Omega} 
					(\a \partial_i u^j) 
					\underbrace{\a \partial_i \triangle_{0 \ 0}^{ \ j}}_{\mbox{\textnormal{dangerous}}}
				\, dx
								\notag \\
	& \ \ 
			+ 2 
			\sum_{|\vec{\alpha}|\leq N-1} 
			\sum_{i,j=1}^3
				e^{2 q \Omega}	
					(\a \partial_i u^j) 
					\a \partial_i \left(\frac{1}{u^0} \triangle^j \right) 
			\, dx
				\notag \\
	& \ \ 
			+ 2 
			\sum_{|\vec{\alpha}|\leq N-1} 
			\sum_{i,j=1}^3
				\int
				e^{2 q \Omega}
					(\a \partial_i u^j) 
					\{ \frac{u^a}{u^0} \partial_a, \a \partial_i \} u^j
				\, dx
				\notag \\
	& \ \ + 
		\sum_{|\vec{\alpha}|\leq N-1} 
		\sum_{i,j=1}^3
				\int
					e^{2 q\Omega}
					\left[\partial_a \left(\frac{u^a}{u^0} \right)\right]
					(\a \partial_i u^j)^2 
				\, dx,
				\notag
\end{align}
\begin{align}  \label{E:DENSITYDIFFINEQ}
	\frac{d}{dt} \densenergy{N-1}^2
	& = \sum_{|\vec{\alpha}|\le N-1}\int_{\T^3}
		\left[\g_a\left(\frac{u^a}{u^0}\right)\right]
	|\a (\dens-\bar{\dens})|^2 \\
	& \ \ + 2 \sum_{|\vec{\alpha}| \leq N - 1}
				\int_{\mathbb{T}^3}
					\left[\a(\dens - \bar{\dens})\right]
          \a\left(\frac{1}{u^0} \triangle +\{\frac{u^a}{u^0}\g_a,\,\a\}\dens\right)
					\, dx. \notag
\end{align}
\end{subequations}
\end{lemma}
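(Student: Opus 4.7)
The plan is to treat each of the three identities \eqref{E:VELOCITYDIFFINEQ}, \eqref{E:TOPORDERVELOCITYDIFFINEQ}, and \eqref{E:DENSITYDIFFINEQ} by the same three-step scheme: differentiate under the integral sign, use the commuted fluid equations from Lemma~\ref{L:FLUIDCOMMUTEDBASICSTRUCTURE} to eliminate $\partial_t$-derivatives of the highest-order unknowns, and integrate the remaining spatial-transport piece by parts on $\mathbb{T}^3$. Throughout I will use that the weights $e^{2(1+q)\Omega(t)}$, $e^{2q\Omega(t)}$, and $1$ depend only on $t$, so $\frac{d}{dt}e^{2r\Omega} = 2r\omega\, e^{2r\Omega}$ with $\omega = \frac{d}{dt}\Omega$ as in \eqref{E:LITTLEOMEGADEFINED}, and all boundary terms from spatial integrations by parts vanish because $\mathbb{T}^3$ has no boundary.

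The key algebraic input is the pointwise identity
\begin{equation*}
  \partial_t \Phi \;=\; \frac{1}{u^0}\,u^{\alpha}\partial_{\alpha}\Phi \;-\; \frac{u^a}{u^0}\,\partial_a \Phi,
\end{equation*}
applied to $\Phi = \partial_{\vec{\alpha}} u^j$, $\Phi = \partial_{\vec{\alpha}}\partial_i u^j$, and $\Phi = \partial_{\vec{\alpha}}\dens$. Combined with \eqref{E:revolcommuted}--\eqref{E:velevolcommuted}, this rewrites the $u^{\alpha}\partial_{\alpha}\Phi$-piece in terms of the dissipation $-2\omega\,\Phi$ (present only in the velocity equations), the linearly large term $-\partial_{\vec{\alpha}}\triangle_{0\ 0}^{\ j}$, the nonlinear terms $\partial_{\vec{\alpha}}(\frac{1}{u^0}\triangle^j)$ or $\partial_{\vec{\alpha}}(\frac{1}{u^0}\triangle)$, and the commutator $\{\frac{u^a}{u^0}\partial_a,\,\partial_{\vec{\alpha}}\}\Phi$ (in the top-order velocity case, the multi-index is $\partial_{\vec{\alpha}}\partial_i$).

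For the velocity energy \eqref{E:VELOCITYDIFFINEQ}, I differentiate $e^{2(1+q)\Omega}\int (\partial_{\vec{\alpha}} u^j)^2\,dx$. The weight contributes $2(1+q)\omega$ times the energy; the integrand contributes $2\int e^{2(1+q)\Omega}(\partial_{\vec{\alpha}} u^j)\partial_t\partial_{\vec{\alpha}} u^j\,dx$. Splitting $\partial_t$ as above, the $\frac{1}{u^0}u^{\alpha}\partial_{\alpha}$-piece, together with the commuted velocity equation \eqref{E:velevolcommuted}, yields the $-4\omega$ (coming from $-2u^0\omega\cdot \frac{1}{u^0}$ paired with the factor $2$), the dangerous $\partial_{\vec{\alpha}}\triangle_{0\ 0}^{\ j}$ term, the nonlinear error term, and the commutator. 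The $-\frac{u^a}{u^0}\partial_a$-piece is $-\int e^{2(1+q)\Omega}(\partial_{\vec{\alpha}} u^j)\frac{u^a}{u^0}\partial_a \partial_{\vec{\alpha}} u^j\,dx = -\tfrac12\int e^{2(1+q)\Omega}\frac{u^a}{u^0}\partial_a(\partial_{\vec{\alpha}} u^j)^2\,dx$; integrating by parts on $\mathbb{T}^3$ and using the spatial constancy of $e^{2(1+q)\Omega}$ turns this into $\tfrac12\int e^{2(1+q)\Omega}\partial_a(\frac{u^a}{u^0})(\partial_{\vec{\alpha}} u^j)^2\,dx$. Combining, the linear coefficient from the weight and dissipation assembles as $2(1+q)\omega - 4\omega = -2(1-q)\omega$, producing \eqref{E:VELOCITYDIFFINEQ}. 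The identity \eqref{E:TOPORDERVELOCITYDIFFINEQ} follows by the same argument with $\partial_{\vec{\alpha}}$ replaced by $\partial_{\vec{\alpha}}\partial_i$ and weight $e^{2q\Omega}$, giving the coefficient $2q\omega - 4\omega = -2(2-q)\omega$. For \eqref{E:DENSITYDIFFINEQ}, there is no weight (so no term from the weight's time derivative) and no dissipation in \eqref{E:revolcommuted}, so the same scheme applied to $\int(\partial_{\vec{\alpha}}(\dens-\bar{\dens}))^2\,dx$ yields the two terms displayed on the right-hand side.

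There is no genuine obstacle in this lemma: it is an energy identity, and the computation is driven entirely by the commuted equations of Lemma~\ref{L:FLUIDCOMMUTEDBASICSTRUCTURE} and integration by parts. The only point that requires care is the bookkeeping of the sign and coefficient produced by the combination of the weight's time derivative with the $-2\omega$ dissipation coming from the rescaled velocity equation; getting this right is what produces the precise coercive coefficients $-2(1-q)\omega$ and $-2(2-q)\omega$ that will later drive the decay estimates for the velocity energies.
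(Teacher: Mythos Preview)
Your proposal is correct and follows exactly the approach of the paper: differentiate the weighted $L^2$ norms in time, replace $\partial_t\partial_{\vec{\alpha}} u^j$ (resp.\ $\partial_t\partial_{\vec{\alpha}}\partial_i u^j$, $\partial_t\partial_{\vec{\alpha}}\dens$) using the commuted fluid equations \eqref{E:revolcommuted}--\eqref{E:velevolcommuted}, and integrate the spatial-transport piece by parts on $\mathbb{T}^3$. One minor bookkeeping remark: when you write the transport piece as producing $\tfrac12\int e^{2(1+q)\Omega}\partial_a(\frac{u^a}{u^0})(\partial_{\vec{\alpha}} u^j)^2\,dx$, remember to restore the overall factor $2$ coming from differentiating the square, so that the final coefficient is $1$, matching the last line of \eqref{E:VELOCITYDIFFINEQ}.
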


\begin{proof}
To prove \eqref{E:VELOCITYDIFFINEQ}, we take the time derivative of the right-hand side of \eqref{E:VELOCITYENERGYDEF}
and bring $\g_t$ under the integral over $\mathbb{T}^3.$ We then use 
\eqref{E:velevol} and \eqref{E:velevolcommuted} to replace all $\partial_t \a u^j$ terms with spatial
derivatives of $\a u^j$ plus the inhomogeneous terms on the right-hand sides. 
Finally, we integrate by parts over $\mathbb{T}^3$ to remove the spatial derivatives
off of the terms $\a u^j.$ The proof of \eqref{E:TOPORDERVELOCITYDIFFINEQ} is based
on definition \eqref{E:TOPORDERVELOCITYENERGYDEF} and is similar.

The proof of \eqref{E:DENSITYDIFFINEQ}, which is based on the energy definition \eqref{E:DENSITYENERGYDEF}
and the evolution Eqs. \eqref{E:revol} and \eqref{E:revolcommuted}, is similar but much simpler.
\end{proof}

\subsection{Linear-Algebraic Estimates of $g_{\mu \nu}$ and $g^{\mu \nu}$} 

In this section, we provide basic linear-algebraic estimates for the metric and its inverse.
In particular, they will ensure that $g_{\mu \nu}$ is a Lorentzian metric under assumptions 
compatible with our future stability theorem. 
We remark that the conclusions of the local well-posedness
Theorem \ref{T:localwellposedness} above were based in part on Lemma \ref{L:ginverseformluas}.

\begin{lemma} [\textbf{The Lorentzian nature of $g^{\mu \nu}$}] \cite[Lemmas 1 and 2]{hR2008}\label{L:ginverseformluas}
	Let $g_{\mu \nu}$ be a symmetric $4 \times 4$ matrix of real numbers. 
	Let $(g_{\flat})_{jk}$ be the $3 \times 3$ matrix defined by $(g_{\flat})_{jk} = g_{jk},$ and let $(g_{\flat}^{-1})^{jk}$
	be the $3 \times 3$ inverse of $(g_{\flat})_{jk}.$ Assume that $g_{00} < 0$ and that $(g_{\flat})_{jk}$ is positive definite. 
	Then $g_{\mu \nu}$ is a Lorentzian metric with inverse $g^{\mu \nu},$ $g^{00} < 0,$ and the $3 \times 3$ matrix 
	$(g^{\#})^{jk}$ defined by $(g^{\#})^{jk} \eqdef g^{jk}$ is positive definite. Furthermore, the following relations hold:
	\begin{subequations}
	\begin{align*}
		g^{00} & = \frac{1}{g_{00} - d^2}, \\
		\frac{g_{00}}{g_{00} - d^2} (g_{\flat}^{-1})^{ab} X_{a}X_{b} 
			& \leq (g^{\#})^{ab} X_{a}X_{b} \leq (g_{\flat}^{-1})^{ab} X_{a}X_{b}, && \forall (X_1,X_2,X_3) \in \mathbb{R}^3, \\
		g^{0j} & = \frac{1}{d^2 - g_{00}} (g_{\flat}^{-1})^{aj} g_{0a}, && (j=1,2,3),
	\end{align*}
	\end{subequations}
	where 
$		d^2 = (g_{\flat}^{-1})^{ab} g_{0a} g_{0b}.
$	%
\end{lemma}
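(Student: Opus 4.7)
The proof is a pure linear algebra exercise; the strategy is to decompose $g_{\mu\nu}$ into a $1+3$ block structure and invert using the Schur complement formula. Write
\[
g = \begin{pmatrix} g_{00} & g_{0b} \\ g_{a0} & (g_\flat)_{ab} \end{pmatrix},
\]
where $a,b$ run from $1$ to $3$. The Schur complement of the $(g_\flat)_{ab}$-block is precisely $g_{00} - g_{0a}(g_\flat^{-1})^{ab}g_{0b} = g_{00} - d^2$. Since $(g_\flat)_{ab}$ is positive definite its inverse is too, so $d^2 \geq 0$; combined with the hypothesis $g_{00} < 0$, this gives $g_{00} - d^2 < 0$ and in particular this scalar is nonzero, which means the Schur complement is invertible and so $g$ itself is invertible.

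Next I would read off the components of $g^{\mu\nu}$ directly from the standard block inversion identity. The $(0,0)$-entry is the reciprocal of the Schur complement, yielding $g^{00} = 1/(g_{00}-d^2)$, which is manifestly negative. The off-diagonal block is $-(g_{00}-d^2)^{-1}(g_\flat^{-1})^{aj}g_{0a}$, giving the claimed expression for $g^{0j}$. The spatial block is
\[
(g^{\#})^{jk} \;=\; (g_\flat^{-1})^{jk} \;-\; \frac{1}{d^2 - g_{00}} (g_\flat^{-1})^{ja} g_{0a} (g_\flat^{-1})^{kb} g_{0b}.
\]
From here both directions of the sandwich estimate follow. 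The upper bound is immediate: the subtracted term is nonnegative because $d^2 - g_{00} > 0$.

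For the lower bound, which is the only mildly nontrivial step, I would apply Cauchy--Schwarz in the inner product defined by the positive definite bilinear form $(g_\flat^{-1})^{ab}$. With $X_a$ viewed as a covector and $g_{0a}$ viewed similarly, this yields
\[
\bigl[(g_\flat^{-1})^{ja} g_{0a} X_j\bigr]^2 \;\leq\; \bigl[(g_\flat^{-1})^{ab} g_{0a} g_{0b}\bigr] \bigl[(g_\flat^{-1})^{cd} X_c X_d\bigr] \;=\; d^2 (g_\flat^{-1})^{cd} X_c X_d.
\]
Substituting this into the formula for $(g^{\#})^{jk} X_j X_k$ produces
\[
(g^{\#})^{ab} X_a X_b \;\geq\; \left(1 - \frac{d^2}{d^2 - g_{00}}\right) (g_\flat^{-1})^{ab} X_a X_b \;=\; \frac{g_{00}}{g_{00} - d^2} (g_\flat^{-1})^{ab} X_a X_b,
\]
which is the claimed lower bound. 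Since $g_{00}/(g_{00}-d^2) > 0$, the matrix $(g^{\#})^{jk}$ is positive definite, and together with $g^{00} < 0$ this shows that $g^{\mu\nu}$ (and hence $g_{\mu\nu}$) has Lorentzian signature $(-,+,+,+)$. The only place where any care is needed is the direction of the Cauchy--Schwarz application and tracking the sign of the Schur complement; everything else is bookkeeping with the block inversion formula.
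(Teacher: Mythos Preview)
Your proof is correct. The paper does not give its own proof of this lemma; it simply cites \cite[Lemmas 1 and 2]{hR2008}, so there is nothing in the paper to compare against. The Schur complement / block inversion argument you outline is the standard approach and is almost certainly what Ringstr\"om does as well: the formulas for $g^{00}$ and $g^{0j}$ drop out of the block inverse, the upper bound on $(g^{\#})^{ab}X_aX_b$ is immediate from positivity of the subtracted rank-one term, and the lower bound is exactly the Cauchy--Schwarz inequality in the $(g_\flat^{-1})$ inner product applied to the covectors $g_{0a}$ and $X_a$. One small remark: your final sentence asserts that $g^{00}<0$ together with $(g^{\#})^{jk}$ positive definite yields Lorentzian signature, but strictly speaking the off-diagonal $g^{0j}$ terms require the same Schur complement signature reasoning you already used on $g_{\mu\nu}$ (or, more simply, note that the Schur complement of $(g_\flat)_{ab}$ in $g_{\mu\nu}$ is the negative scalar $g_{00}-d^2$, so Haynsworth's inertia additivity gives signature $(-,+,+,+)$ directly). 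This is a cosmetic point and does not affect the validity of your argument.
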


The estimates in the next lemma are based on the following rough assumptions, 
which we will improve during our global existence argument.
\begin{center}
	\large{Rough Bootstrap Assumptions for $g_{\mu \nu}:$}
\end{center}
We assume that there are constants $\upeta > 0$ and $c_1 \geq 1$ such that
\begin{subequations}
\begin{align}
	|g_{00} + 1| & \leq \upeta, && \label{E:metricBAeta} \\
	c_1^{-1} \delta_{ab} X^a X^b 
		& \leq e^{-2 \Omega} g_{ab}X^{a}X^{b} 
		\leq c_1 \delta_{ab} X^a X^b, && \forall (X^1,X^2,X^3) \in \mathbb{R}^3, \label{E:gjkBAvsstandardmetric}   \\
	\sum_{a=1}^3 |g_{0a}|^2 & \leq \upeta c_1^{-1} e^{2(1 - q) \Omega}. && \label{E:g0jBALinfinity}
\end{align}
\end{subequations}
For our global existence argument, we will assume that $\upeta \leq q,$ where $q$ is defined in Sect. \ref{SS:bootstrapassumptions}.
\begin{lemma} [\textbf{First estimates of $g^{\mu \nu}$}] \cite[Lemma 7]{hR2008} \label{L:ginverseestimates}
	Let $g_{\mu \nu}$ be a symmetric $4 \times 4$ matrix of real numbers satisfying 
	\eqref{E:metricBAeta}-\eqref{E:g0jBALinfinity}, where $\Omega \geq 0$ and $0 \leq q < 1.$ 
	Then $g_{\mu \nu}$ is a Lorentzian metric, and there exists a constant $\upeta_0 > 0$ 
	such that $0 \leq \upeta \leq \upeta_0$ implies that the following estimates hold for 
	its inverse $g^{\mu \nu}:$ 
	\begin{subequations}
	\begin{align}
		|g^{00} + 1| & \leq 4 \upeta, && \label{E:g00upperplusoneroughestimate} \\
		\sqrt{\sum_{a=1}^3 |g^{0a}|^2} & \leq 2 c_1 e^{-2 \Omega} \sqrt{\sum_{a=1}^3 |g_{0a}|^2}, \\
		|g^{0a} g_{0a}| & \leq 2 c_1 e^{-2 \Omega} \sum_{a=1}^3 |g_{0a}|^2, && \\
		\frac{2}{3c_1} \delta^{ab}X_{a} X_{b}
		& \leq e^{2 \Omega} g^{ab}X_{a}X_{b} 
			\leq \frac{3c_1}{2}\delta^{ab}X_{a}X_{b}, && \forall (X_1,X_2,X_3) \in \mathbb{R}^3. 
				\label{E:gjkuppercomparetostandard}  
	\end{align}
	\end{subequations}
\end{lemma}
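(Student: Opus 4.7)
The plan is to apply Lemma~\ref{L:ginverseformluas} with the explicit formulas for $g^{00}$, $g^{0j}$ and $(g^{\#})^{jk}$, and then bound each factor using the rough bootstrap assumptions \eqref{E:metricBAeta}--\eqref{E:g0jBALinfinity}. First I would verify the hypotheses of Lemma~\ref{L:ginverseformluas}: taking $\upeta_0 < 1/2$, the bound $|g_{00}+1| \le \upeta \le \upeta_0$ forces $g_{00} < -1/2 < 0$, while \eqref{E:gjkBAvsstandardmetric} directly supplies positive definiteness of $(g_\flat)_{jk}$. From \eqref{E:gjkBAvsstandardmetric} the eigenvalues of $e^{-2\Omega} g_\flat$ lie in $[c_1^{-1}, c_1]$, so the eigenvalues of $(g_\flat^{-1})^{jk}$ lie in $[c_1^{-1} e^{-2\Omega}, c_1 e^{-2\Omega}]$; in particular
\begin{equation*}
c_1^{-1} e^{-2\Omega} \delta^{ab} X_a X_b \;\le\; (g_\flat^{-1})^{ab} X_a X_b \;\le\; c_1 e^{-2\Omega} \delta^{ab} X_a X_b.
\end{equation*}

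The key preliminary estimate is on $d^2 \eqdef (g_\flat^{-1})^{ab} g_{0a} g_{0b}$. Using the upper bound above together with \eqref{E:g0jBALinfinity} gives $d^2 \le c_1 e^{-2\Omega} \cdot \upeta c_1^{-1} e^{2(1-q)\Omega} = \upeta e^{-2q\Omega} \le \upeta$, where in the last step I use $\Omega \ge 0$ and $q \ge 0$. Combined with $|g_{00}+1|\le\upeta$, this yields $|g_{00} - d^2| \ge 1 - 2\upeta$ and $|g_{00} - d^2 + 1| \le 2\upeta$. The formula $g^{00} = (g_{00}-d^2)^{-1}$ then gives
\begin{equation*}
|g^{00} + 1| \;=\; \frac{|g_{00}-d^2+1|}{|g_{00}-d^2|} \;\le\; \frac{2\upeta}{1 - 2\upeta} \;\le\; 4\upeta
\end{equation*}
provided $\upeta_0 \le 1/4$, which is \eqref{E:g00upperplusoneroughestimate}.

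For the bound on $g^{0j}$, I apply the formula $g^{0j} = (d^2-g_{00})^{-1} (g_\flat^{-1})^{aj} g_{0a}$. Viewing $A^j \eqdef (g_\flat^{-1})^{aj} g_{0a}$ as a $3$-vector, the operator norm of $g_\flat^{-1}$ gives $\sum_j |A^j|^2 \le c_1^2 e^{-4\Omega} \sum_a |g_{0a}|^2$. Dividing by $(d^2-g_{00})^2 \ge (1-2\upeta)^2$ and taking square roots yields
\begin{equation*}
\sqrt{\textstyle\sum_j |g^{0j}|^2} \;\le\; \frac{c_1 e^{-2\Omega}}{1-2\upeta}\sqrt{\textstyle\sum_a |g_{0a}|^2} \;\le\; 2 c_1 e^{-2\Omega} \sqrt{\textstyle\sum_a |g_{0a}|^2}
\end{equation*}
for $\upeta_0$ small, and the contraction estimate $|g^{0a} g_{0a}|$ then follows from Cauchy--Schwarz. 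Finally, the bounds on $(g^\#)^{jk}$ come from the sandwich in Lemma~\ref{L:ginverseformluas}: the upper bound uses only $(g_\flat^{-1})^{ab} X_a X_b \le c_1 e^{-2\Omega} \delta^{ab}X_a X_b \le \tfrac{3c_1}{2} e^{-2\Omega} \delta^{ab}X_a X_b$, while for the lower bound I write
\begin{equation*}
\frac{g_{00}}{g_{00}-d^2} \;=\; 1 - \frac{d^2}{d^2 - g_{00}} \;\ge\; 1 - \frac{\upeta}{1 - 2\upeta} \;\ge\; \frac{2}{3}
\end{equation*}
for $\upeta_0$ sufficiently small, which combined with the lower bound on $(g_\flat^{-1})^{ab}$ gives the claimed factor $\tfrac{2}{3c_1}$.

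There is no conceptual obstacle here; the entire argument is a careful unpacking of the explicit inverse formulas of Lemma~\ref{L:ginverseformluas}. The main bookkeeping burden is tracking the interaction of the three small/large parameters $\upeta$, $c_1$, and $e^{-2q\Omega}$, and in particular using $q \ge 0$ and $\Omega \ge 0$ at exactly the right place to absorb the factor $e^{2(1-q)\Omega}$ in \eqref{E:g0jBALinfinity} into the decay $e^{-2\Omega}$ coming from $(g_\flat^{-1})^{ab}$; this is what makes $d^2$ small uniformly in $t$ and is the mechanism that propagates the Lorentzian structure of $g$ forward in time.
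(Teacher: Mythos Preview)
Your proof is correct. The paper does not supply its own argument for this lemma, citing instead \cite[Lemma 7]{hR2008}; your derivation via the explicit formulas of Lemma~\ref{L:ginverseformluas} is the natural route and matches what one finds in Ringstr\"om's paper.
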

\subsection{Bootstrap assumptions}\label{SS:bootstrapassumptions}
We shall henceforth assume that $N\ge 4$, which will suffice for all conclusions in the remainder of the paper.
We start by introducing our main Sobolev norm bootstrap assumption:
\begin{align} \label{E:bootstrap}
	\totalnorm \leq \epsilon,
\end{align}
where $\totalnorm$ is the total solution norm defined in \eqref{E:totalnorm} and $\epsilon$ is a sufficiently small positive number, that will be adjusted throughout the course of our analysis in the next sections. 
We now define the constant $q$ appearing in the norms and energies from Sect.~\ref{S:norms}.
\begin{definition}[\textbf{Definition of} $q$]
	Let $\upeta_{00},$ $\upeta_{0*},$ $\upeta_{**}$ be the positive constants appearing in the conclusions of Lemma 
	\ref{L:metricfirstdifferentialenergyinequality} and $\upeta_0$ the constant from Lemma~\ref{L:ginverseestimates}. We set
	 \begin{align}
	 	q & \eqdef \frac{1}{8} \mbox{min} \big\lbrace 1, \upeta_0, \upeta_{00}, \upeta_{0*}, \upeta_{**} \big\rbrace.
	 		\label{E:qdef}
	 		 \end{align}
\end{definition}
The constant $q$ has been chosen to be small enough to close our bootstrap argument for global existence. 
In particular, inequality \eqref{E:g00upperplusoneroughestimate} with $\upeta \leq q$ implies that the metric energy building blocks $\mathcal{E}_{(\upgamma,\updelta)}[\cdot,\partial (\cdot)]$ are coercive in the sense of inequality \eqref{E:mathcalEfirstlowerbound}.
\begin{remark} [\textbf{Some redundancy in the bootstrap assumptions}] \label{R:RoughBootstrapAutomatic}
If $\epsilon$ is sufficiently small, then the inequalities \eqref{E:metricBAeta} and \eqref{E:g0jBALinfinity} (for $\upeta \leq q$) are implied by 
the definition of $\totalnorm,$ the bootstrap assumption $\totalnorm \leq \epsilon,$ and
Sobolev embedding.
\end{remark}
\section{Error term estimates}\label{S:sobolev}
In this section, we use the bootstrap assumptions to
derive $L^{\infty}$ and Sobolev estimates for all of the error terms that arise
in the expressions for the time derivatives of the metric and fluid energies
(that is, for the terms appearing on the right-hand sides of the inequalities in 
Lemmas \ref{L:metricfirstdifferentialenergyinequality} and \ref{L:FLUIDCOMMUTEDBASICSTRUCTURE}).
The main results are stated and proved as Prop. \ref{P:Sobolev}
and Prop. \ref{P:Sobolevtwo}, but
we first prove two preliminary propositions
in which we analyze the metric and the four-velocity components.
In these first two propositions, we build up a collection of quantities
that can be effectively estimated by counting spatial indices; 
the precise statements are contained in the Counting Principle of
Lemma \ref{L:countingprinciple} and
Cor. \ref{C:COUNTINGPRINCIPLE}.
We then use the Counting Principle to help shorten the proof of
many of the error term estimates of 
Prop. \ref{P:Sobolev}
and Prop. \ref{P:Sobolevtwo}.

\begin{remark}[\textbf{Philosophy regarding the norms}]
	In deriving the estimates in this section, our basic philosophy is that whenever
	possible, we ``lazily'' bound quantities in terms of the 
	total solution norm $\totalnorm$ defined in \eqref{E:totalnorm}, 
	which controls all of the relevant derivatives of all of the solution variables.
	However, in many cases, we will have to carefully avoid
	placing the full solution norm $\totalnorm$ on the right-hand side of the estimates and instead
	only place the relevant sub-norms.
	There are two main reasons why we sometimes have to be careful in this fashion:
	\textbf{i)} in order to avoid losing derivatives, we have to make sure that certain norms
	are not involved in some of the estimates, and
	$\textbf{ii)}$ to prove our future stability theorem, we have to uncover a hierarchy of
	effectively decoupled estimates.
\end{remark}

\subsection{Estimates for $g_{\mu\nu},$ $g^{\mu\nu}$ $u^{\mu},$ and their $\u$-derivatives}
In the next proposition, we derive Sobolev estimates for the metric, its inverse, and 
their non-$\u$ derivatives in terms of the solution norms.
%The proposition is an analog of Prop. 9.1.1 in~\cite{jS2012}.
%
\begin{proposition}[\textbf{Metric estimates}] \label{P:metric}
Let $N \geq 4$ be an integer and assume that the bootstrap assumption \eqref{E:gjkBAvsstandardmetric} holds 
on the spacetime slab $[0,T) \times \mathbb{T}^3$ for some constant $c_1 \geq 1.$ 
Then there exists a constant $\epsilon' > 0$ such that if $\totalnorm(t) \leq \epsilon'$ for $t \in [0,T),$ 
then the following estimates also hold on $[0,T)$
(and the implicit constants in the estimates can depend on $N$ and $c_1$):
\begin{subequations}
\begin{alignat}{2}	
\|g_{jk}\|_{L^{\infty}} 
& \lesssim && \,e^{2\Omega},
 	\label{E:GJKLOWERLINFTY} \\
\|\g_t g_{jk} - 2 \omega g_{jk} \|_{H^{N-1}} 
%+ \|\u(\g_t g_{jk} - 2 \omega g_{jk})\|_{H^{N-1}}
& \lesssim &&\, e^{(2-q)\Omega}\gnorm{N-1},
 \label{E:PARTIALTGJKMINUS2OMEGAGJKHNMINUSONE} \\
\|\g_t \g_i g_{jk} \|_{H^{N-1}} 
& \lesssim &&\, e^{(3-q)\Omega}
	\left\lbrace
		\gnorm{N-1}
		+ \hstarstarellipticnorm{N-1}
	\right\rbrace,
 	\label{E:PARTIALTPARTIALIGJKHNMINUSONE} \\
\| g^{00} + 1 \|_{H^{N-1}} 
+ \| \partial_t g^{00} \|_{H^{N-1}} 
+ e^{-\Omega}\|\g_i g^{00}\|_{H^{N-1}} 
%+ e^{-\Omega}\|\g_i\g_t g^{00}\|_{H^{N-1}} 
%+ \|\u g^{00}\|_{H^{N-1}}
& \lesssim &&\, e^{-q\Omega} \gnorm{N-1},
 		\label{E:G00UPPERPLUSONEHNMINUSONE}
 		\\
\| g^{0j} \|_{H^{N-1}} 
+ \| \partial_t g^{0j} \|_{H^{N-1}} 
+ e^{-\Omega} \|\g_i g^{0j}\|_{H^{N-1}} 
%+ e^{-\Omega} \|\g_i\g_tg^{0j}\|_{H^{N-1}}
%+ \|\u g^{0j}\|_{H^{N-1}}
& \lesssim  && \, e^{-(1 + q)\Omega}\gnorm{N-1},	
	\label{E:G0JUPPERHNMINUSONE} \\ 
 \|g^{jk}\|_{L^{\infty}} 
+ \|\partial_t g^{jk}\|_{L^{\infty}} 
%+ \|\partial_{\mathbf{u}} g^{jk}\|_{L^{\infty}} 
& \lesssim && \, e^{-2\Omega}, 
	\label{E:GJKUPPERLINFTY} \\
\| \underpartial g^{jk} \|_{H^{N-2}} 
%+ \| \underpartial \partial_t g^{jk} \|_{H^{N-2}} 
%+ \| \underpartial \partial_{\mathbf{u}} g^{jk} \|_{H^{N-2}}
& \lesssim &&  e^{-2\Omega} \gnorm{N-1},
	\label{E:DERIVATIVESOFGJKUPPERHNMINUS2} \\
 \| \partial_i g^{jk} \|_{H^{N-1}} 
%+ \| \partial_i \g_t g^{jk} \|_{H^{N-1}} 
%+ \sum_{i=1}^3 \| \partial_{\mathbf{u}} \partial_i g^{jk} \|_{H^{N-1}} 
& \lesssim && \, e^{-(1 + q)\Omega} \gnorm{N-1},
	\label{E:partialu2}  \\
\|\g_t g^{jk} + 2 \omega g^{jk} \|_{H^{N-1}}
%+ \|\partial_{\mathbf{u}} g^{jk} + 2 \omega g^{jk} \|_{L^{\infty}}
%+ \| \partial_{\mathbf{u}} (\g_t g^{jk} + 2 \omega g^{jk}) \|_{H^{N-1}}
& \lesssim && \, e^{-(2+q)\Omega}\gnorm{N-1}, 
	\label{E:SUPERIMPORTANTCOMMUTATORFACTOR} \\
\|g^{aj}\g_tg_{ak}-2\omega\delta^j_k\|_{H^{N-1}} 
%+ \| \u (g^{aj}\g_tg_{ak}-2\omega\delta^j_k) \|_{H^{N-1}}
& \lesssim && e^{-q\Omega}\gnorm{N-1}.
 \label{E:partialu5} 
\end{alignat}
\end{subequations}
The norm $\gnorm{N-1}$ is defined in \eqref{E:TOTALBELOWTOPSPATIALDERIVATIVEMETRICNORM}.
\end{proposition}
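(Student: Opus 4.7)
\medskip

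\noindent \textbf{Proof proposal.} The plan is to reduce every estimate to a bound on the rescaled metric quantities $h_{jk}=e^{-2\Omega}g_{jk}$, $g_{00}+1$, $g_{0j}$, for which the norm $\gnorm{N-1}$ and the elliptic norm $\hstarstarellipticnorm{N-1}$ encode the decay/growth rates by construction; the estimates for the contravariant components $g^{\mu\nu}$ will then be read off from the explicit formulas in Lemma \ref{L:ginverseformluas} by expanding in the smallness of $g_{00}+1$, $g_{0j}$ and $h_{jk}-\delta_{jk}$ and applying the Sobolev-Moser product and composition estimates of Appendix \ref{B:SobolevMoser}. Throughout, Sobolev embedding $H^{N-1}\hookrightarrow L^\infty$ combined with the bootstrap $\totalnorm(t)\leq \epsilon'$ ensures the relevant $C_b^0$-smallness assumptions for the composition estimates.

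For the direct bounds \eqref{E:GJKLOWERLINFTY}--\eqref{E:PARTIALTPARTIALIGJKHNMINUSONE}, I first note that $\|g_{jk}\|_{L^\infty}\lesssim e^{2\Omega}$ is immediate from the rough bootstrap \eqref{E:gjkBAvsstandardmetric}. Writing $\g_tg_{jk}-2\omega g_{jk}=e^{2\Omega}\g_t h_{jk}$, the bound \eqref{E:PARTIALTGJKMINUS2OMEGAGJKHNMINUSONE} follows from the first term in the norm definition \eqref{E:HSTARSTARNORM}. For \eqref{E:PARTIALTPARTIALIGJKHNMINUSONE}, decompose $\g_t\g_ig_{jk}=2\omega e^{2\Omega}\g_i h_{jk}+e^{2\Omega}\g_i\g_t h_{jk}$; the first piece is controlled by $\hstarstarnorm{N-1}$ with the weight $e^{(3-q)\Omega}$, while the second piece is precisely what $\hstarstarellipticnorm{N-1}$ bounds after accounting for the $e^{2\Omega}$ prefactor.

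For the inverse metric estimates \eqref{E:G00UPPERPLUSONEHNMINUSONE}--\eqref{E:partialu2}, I will use Lemma \ref{L:ginverseformluas}, which represents $g^{00}, g^{0j}, g^{jk}$ as rational functions of the covariant components. Writing $g^{00}+1=(g_{00}+1-d^2)/(g_{00}-d^2)$ with $d^2=(g_\flat^{-1})^{ab}g_{0a}g_{0b}$, the Moser product/quotient estimates and the control of $(g_\flat^{-1})^{ab}$ obtained by expanding around $e^{-2\Omega}\delta^{ab}$ (using the smallness of $h_{jk}-\delta_{jk}$ in $H^{N-1}$, which holds below top order and carries the elliptic weight at top spatial order) give the required weights. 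The bound on $\g_ig^{\mu\nu}$ then follows by the identity $\g_i g^{\mu\nu}=-g^{\mu\alpha}g^{\nu\beta}\g_i g_{\alpha\beta}$ and tracking the number of downstairs spatial indices; this is where one must work carefully to place the extra factor of $e^{-\Omega}$ on the left-hand side of \eqref{E:G00UPPERPLUSONEHNMINUSONE}--\eqref{E:G0JUPPERHNMINUSONE}, reflecting the fact that spatial derivatives at top order require the elliptic norm. The $L^\infty$ bounds \eqref{E:GJKUPPERLINFTY} follow from Lemma \ref{L:ginverseestimates}, combined with $\g_t g^{jk}=-g^{ja}g^{kb}\g_t g_{ab}$ and the $L^\infty$ version of \eqref{E:PARTIALTGJKMINUS2OMEGAGJKHNMINUSONE}.

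The most delicate estimates are \eqref{E:SUPERIMPORTANTCOMMUTATORFACTOR} and \eqref{E:partialu5}, since they require cancellation of the leading $\omega$-terms. For \eqref{E:SUPERIMPORTANTCOMMUTATORFACTOR}, I will use $\g_tg^{jk}=-g^{ja}g^{kb}\g_tg_{ab}=-g^{ja}g^{kb}(2\omega g_{ab}+(\g_tg_{ab}-2\omega g_{ab}))$, split $g^{ja}g^{kb}g_{ab}=g^{jk}-g^{0j}g^{0k}\cdot(\dots)$ using $g^{j\alpha}g_{\alpha b}=\delta^j_b-g^{j0}g_{0b}$, and combine to isolate $\g_tg^{jk}+2\omega g^{jk}$ as a sum of terms each of which is at least linearly small in the perturbation variables; the already-established bounds \eqref{E:G0JUPPERHNMINUSONE} and \eqref{E:PARTIALTGJKMINUS2OMEGAGJKHNMINUSONE}, together with Moser, yield the desired weight $e^{-(2+q)\Omega}$. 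The estimate \eqref{E:partialu5} is proved by the same algebraic trick: $g^{aj}\g_tg_{ak}-2\omega\delta^j_k=-2\omega g^{0j}g_{0k}+g^{aj}(\g_tg_{ak}-2\omega g_{ak})$, where both factors on the right are controlled by $\gnorm{N-1}$ with favorable weights. The main obstacle will be bookkeeping: one must verify in each term that the total weight $\omega$ and $e^{\Omega}$ powers work out correctly and that no factor forces the use of $\hstarstarellipticnorm{N-1}$ at an order where it is not available; the index counting heuristic described in the introduction (and formalized later as the Counting Principle) is precisely the tool that makes this bookkeeping manageable.
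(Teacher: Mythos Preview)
Your approach is essentially the same as the paper's: the paper refers to \cite[Sect.~9.1]{iRjS2012} and \cite[Lemmas 9, 11, 18]{hR2008} for the full details and only illustrates the argument by proving \eqref{E:SUPERIMPORTANTCOMMUTATORFACTOR} via the matrix identity $\partial_t g^{jk}+2\omega g^{jk}=-g^{j\alpha}g^{k\beta}\partial_t g_{\alpha\beta}+2\omega g^{j\alpha}g^{k\beta}g_{\alpha\beta}$, expanding over spacetime indices and bounding each piece with the Sobolev--Moser product estimates---exactly the strategy you outline. One notational slip to fix: your identity $\partial_t g^{jk}=-g^{ja}g^{kb}\partial_t g_{ab}$ is false with Latin (spatial) indices; it must read $\partial_t g^{jk}=-g^{j\alpha}g^{k\beta}\partial_t g_{\alpha\beta}$ with spacetime indices, and the extra $g^{0j}$-terms that appear upon expansion are precisely the small contributions you later recover via $g^{j\alpha}g_{\alpha b}=\delta^j_b$.
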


\begin{proof}
The proof of the proposition, which is based in part on Lemma \ref{L:ginverseformluas}, 
is exactly the same as the proof of the analogous proposition
in \cite[Sect. 9.1]{iRjS2012} and relies on the ideas
of \cite[Lemmas 9,11,18]{hR2008}.
We shall illustrate the basic ideas by
proving \eqref{E:SUPERIMPORTANTCOMMUTATORFACTOR}
under the assumption that 
\eqref{E:G0JUPPERHNMINUSONE},
\eqref{E:GJKUPPERLINFTY},
and
\eqref{E:DERIVATIVESOFGJKUPPERHNMINUS2}
have already been proved;
we direct the reader to the aforementioned references for complete proofs.
To proceed, we first note the following matrix identity (for $j,k = 1,2,3$):
\begin{align}
\g_tg^{jk}+2\omega g^{jk}
& =-g^{j\alpha}g^{k\beta}\g_tg_{\alpha\beta}+2 \omega g^{j\alpha}g^{k\beta}g_{\alpha\beta} \label{E:SUPERIMPORTANT1}\\
& = 
-g^{0j}g^{0k} \g_tg_{00}
-g^{0j}g^{ka} \g_tg_{0a}
-g^{ja}g^{0k} \g_tg_{0a}
- g^{ja}g^{kb}\underbrace{(\g_tg_{ab}-2\omega g_{ab}) }_{=e^{2\Omega}\g_th_{ab}}
	\notag \\
& \ \ + 2 \omega g^{0j}g^{0k} g_{00}  
+ 2 \omega g^{0j}g^{ak} g_{0a} 
+ 2 \omega g^{aj}g^{0k} g_{0a}.
\notag
\end{align}
Hence, by Lemma \ref{L:backgroundaoftestimate}, 
Prop. \ref{P:F1FkLinfinityHN}, 
and Sobolev embedding, we have
\begin{align} \label{E:PARTIALTGJKUPPERFIRSTESTIMATE}
	\| \g_tg^{jk}+2\omega g^{jk} \|_{H^{N-1}}
	& \lesssim 
		\| g^{0j} \|_{H^{N-1}} \| g^{0k} \|_{H^{N-1}} \| \g_t g_{00} \|_{H^{N-1}}
			\\
	& \ \ 
		+ \sum_{j,k=1}^3
			\left\lbrace
				 \| g^{ka} \|_{L^{\infty}}
				 +
				 \| \underpartial g^{ka} \|_{H^{N-2}} 
			\right\rbrace
			\| g^{0j} \|_{H^{N-1}} 
			\| \g_t g_{0a} \|_{H^{N-1}}
				\notag \\
	& \ \ +
		 e^{2 \Omega}
		 \sum_{a,b,j,k=1}^3
			\left\lbrace
				 \| g^{ab} \|_{L^{\infty}}
				 +
				 \| \underpartial g^{ab} \|_{H^{N-2}} 
			\right\rbrace^2
			\| \g_t h_{jk} \|_{H^{N-1}}
			\notag \\
	& \ \ +
		\left\lbrace
			1 + \| g_{00} + 1 \|_{H^{N-1}}
		\right\rbrace
		\| g^{0j} \|_{H^{N-1}} \| g^{0k} \|_{H^{N-1}} 
		\notag \\
	& \ \ 
		+ \sum_{j,k=1}^3
			\left\lbrace
				 \| g^{ak} \|_{L^{\infty}}
				 +
				 \| \underpartial g^{ak} \|_{H^{N-2}} 
			\right\rbrace
			\| g^{0j} \|_{H^{N-1}} 
			\| g_{0a} \|_{H^{N-1}}.
			\notag
\end{align}
Inserting the estimates 
\eqref{E:G0JUPPERHNMINUSONE},
\eqref{E:GJKUPPERLINFTY},
and
\eqref{E:DERIVATIVESOFGJKUPPERHNMINUS2}
and the inequalities
$\| g_{00} + 1 \|_{H^{N-1}} \lesssim e^{-q \Omega} \gnorm{N-1},$
$\| \g_t g_{00} \|_{H^{N-1}} \lesssim e^{-q \Omega} \gnorm{N-1},$
$\| g_{0a} \|_{H^{N-1}} \lesssim e^{(1-q) \Omega} \gnorm{N-1},$
$\| \g_t g_{0a} \|_{H^{N-1}} \lesssim e^{(1-q) \Omega} \gnorm{N-1},$
and
$\| \g_t h_{jk} \|_{H^{N-1}} \lesssim e^{-q \Omega} \gnorm{N-1}$
into \eqref{E:PARTIALTGJKUPPERFIRSTESTIMATE},
we conclude \eqref{E:SUPERIMPORTANTCOMMUTATORFACTOR}.
\end{proof}

The following proposition, in which 
we derive estimates for 
$u,$ 
$\partial_{\mathbf{u}} u,$ 
$\partial_{\mathbf{u}} g,$
and $\partial_{\mathbf{u}}(g^{-1}),$
complements Prop. \ref{P:metric}.

\begin{proposition} [\textbf{Estimates for} $u$ \textbf{and the} $\u$ 
\textbf{derivative of} $u$ \textbf{and the metric}] \label{P:SobolevMetricNEW}
Let $N \geq 4$ be an integer and assume that the bootstrap assumption \eqref{E:gjkBAvsstandardmetric} holds 
on the spacetime slab $[0,T) \times \mathbb{T}^3$ for some constant $c_1 \geq 1.$ 
Then there exists a constant $\epsilon' > 0$ such that if $\totalnorm(t) \leq \epsilon'$ for $t \in [0,T),$ 
then the following estimates also hold on $[0,T)$
(and the implicit constants in the estimates can depend on $N$ and $c_1$):
\begin{subequations}
\begin{alignat}{2}
\| u^0 - 1 \|_{H^{N-1}}
&\lesssim &&\, e^{-q\Omega} \totalbelowtopnorm{N-1},
\label{E:u0MINUSONEHNMINUSONE} 
\\	
 \| \partial_{\mathbf{u}} u^0 \|_{H^{N-1}}
& \lesssim &&\, e^{-q\Omega} 
			\left\lbrace
				\totalbelowtopnorm{N-1}
				+ \totalbelowtopunorm{N-1}
			\right\rbrace, 
		  \label{E:PARTIALUU0HNMINUSONE}   \\
 \| \partial_i u^0 \|_{H^{N-1}}
& \lesssim &&\, e^{(1-q)\Omega} 
	\left\lbrace 
		\totalbelowtopnorm{N-1} 
		+ \topvelocitynorm{N-1}
	\right\rbrace,      \label{E:PARTIALIU0HNMINUSONE}   \\
\| u_0 + 1 \|_{H^{N-1}} 
&\lesssim&& \,e^{-q\Omega}\totalbelowtopnorm{N-1},      
	\label{E:uzerolowerplusone} \\
\| \partial_{\mathbf{u}} u_0 \|_{H^{N-1}}
&\lesssim&& \,e^{-q\Omega}
	\left\lbrace
				\totalbelowtopnorm{N-1}
				+ \totalbelowtopunorm{N-1}
	\right\rbrace,      \label{E:PARTIALUuzerolowerplusone}
	\\
 \| \partial_i u_0 \|_{H^{N-1}}
& \lesssim &&\, e^{(1-q)\Omega} 
	\left\lbrace 
		\totalbelowtopnorm{N-1} 
		+ \topvelocitynorm{N-1}
	\right\rbrace,      \label{E:PARTIALIU0LOWERHNMINUSONE}  \\
\| u_j \|_{H^{N-1}} 
	&\lesssim&& \, e^{(1-q)\Omega}\totalbelowtopnorm{N-1}, \label{E:ujlower}
	\\
\| \partial_{\mathbf{u}} u_j \|_{H^{N-1}}
&\lesssim&& \, e^{(1-q)\Omega}
	\left\lbrace
				\totalbelowtopnorm{N-1}
				+ \totalbelowtopunorm{N-1}
	\right\rbrace, \label{E:PARTIALUujlower}
		\\
\| \partial_i u_j \|_{H^{N-1}}
&\lesssim&& \, e^{(2-q)\Omega}
	\left\lbrace
				\totalbelowtopnorm{N-1}
				+ \topvelocitynorm{N-1}
	\right\rbrace. \label{E:PARTIALIujlower}
\end{alignat}
\end{subequations}
%t
For $\u g_{jk},$
$\u \g_i g_{jk},$
 and $\u \partial_t g_{jk},$
we have the following estimates on $[0,T):$
\begin{subequations}
\begin{align}
	 \|\u g_{jk}\|_{L^{\infty}} 
	& \lesssim \, e^{2\Omega},
		\label{E:PARTIALUGNKLOWERLINFINITY} \\
	 \|\u g_{jk} - 2 \omega g_{jk} \|_{H^{N-1}}
	& \lesssim 
				e^{(2-q)\Omega}
				\totalbelowtopnorm{N-1},
		\label{E:PARTIALUGNKLOWERHNMINUSONE}
		\\
	\|\u \g_i g_{jk} \|_{H^{N-1}}
	& \lesssim e^{(3-q)\Omega} 
		\left\lbrace
				\totalbelowtopnorm{N-1}
				+ \totalbelowtopunorm{N-1}
		\right\rbrace,
		\label{E:partialupartialiglowerjk} 
				\\
	\|\u(\g_t g_{jk} - 2 \omega g_{jk})\|_{H^{N-1}}
	& \lesssim e^{(2-q)\Omega} 
		\left\lbrace
				\totalbelowtopnorm{N-1}
				+ \totalbelowtopunorm{N-1}
		\right\rbrace.
\label{E:partialupartialtglowerjk} 
\end{align}
\end{subequations}
%t
For the $\u$-derivatives of the inverse metric terms, 
we have the following estimates on $[0,T):$
\begin{subequations}
\begin{alignat}{2}
\|\u g^{00}\|_{H^{N-1}} + e^{\Omega} \|\u g^{0j}\|_{H^{N-1}}  & \lesssim &&  
	\, e^{-q\Omega} 
		\left\lbrace
				\totalbelowtopnorm{N-1}
				+ \totalbelowtopunorm{N-1}
		\right\rbrace, 
		\label{E:partialuNEW}\\
\|\u g^{jk}\|_{L^{\infty}} & \lesssim &&\, e^{-2\Omega}, 
		\label{E:partialugupperjk} \\ 
\|\underpartial \u g^{jk}\|_{H^{N-2}} & \lesssim &&\, e^{-2\Omega} 
	\left\lbrace
				\totalbelowtopnorm{N-1}
				+ \totalbelowtopunorm{N-1}
	\right\rbrace, 
		\label{E:partialugupperjk2} \\
\| \partial_{\mathbf{u}} g^{jk} + 2 \omega g^{jk} \|_{H^{N-1}} 
+ \| \partial_{\mathbf{u}} (\g_t g^{jk} + 2 \omega g^{jk}) \|_{H^{N-1}} 
& \lesssim && \, e^{-(2+q)\Omega} 
	\left\lbrace
				\totalbelowtopnorm{N-1}
				+ \totalbelowtopunorm{N-1}
	\right\rbrace, 
	\label{E:SUPERIMPORTANT} \\
\| \u (g^{aj} \g_tg_{ak}-2\omega\delta^j_k) \|_{H^{N-1}}
&\lesssim && e^{-q\Omega} 
			\left\lbrace
				\totalbelowtopnorm{N-1}
				+ \totalbelowtopunorm{N-1}
			\right\rbrace. 
		\label{E:partialu5NEW}
\end{alignat}
\end{subequations}
The norms appearing on the right-hand sides of the above inequalities are defined in Sect.~\ref{S:norms}.

\end{proposition}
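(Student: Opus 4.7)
The plan is to derive the estimates in three stages, exploiting the mass shell constraint $g_{\alpha\beta}u^\alpha u^\beta = -1$, the metric estimates from Prop.~\ref{P:metric}, and Sobolev-Moser type estimates from Appendix~\ref{B:SobolevMoser}, throughout using the bootstrap assumption $\totalnorm(t) \leq \epsilon'$ (so that in particular $|g_{00}+1|$ and $|u^0 - 1|$ are pointwise small).

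\textbf{Stage 1: The estimates for $u^0$, $u_\mu$ and their $\partial_i, \u$ derivatives.} The mass shell relation is a quadratic in $u^0$, $g_{00}(u^0)^2 + 2g_{0a}u^a u^0 + (g_{ab}u^a u^b + 1) = 0$; solving for the future-directed root gives
\[
u^0 = \frac{-g_{0a}u^a - \sqrt{(g_{0a}u^a)^2 - g_{00}(1 + g_{ab}u^a u^b)}}{g_{00}}.
\]
Using Prop.~\ref{P:metric} together with the bound $\|u^j\|_{H^{N-1}} \lesssim e^{-(1+q)\Omega}\velocitynorm{N-1}$ from Def.~\ref{D:FLUIDVARIABLENORMS}, each of $g_{ab}u^a u^b$, $g_{0a}u^a$ is $\lesssim e^{-2q\Omega}\totalbelowtopnorm{N-1}^2$ in $H^{N-1}$; a Moser expansion of the square root (whose argument is close to $1$) and of $1/g_{00}$ (whose argument is close to $-1$) gives \eqref{E:u0MINUSONEHNMINUSONE}. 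The estimates \eqref{E:uzerolowerplusone}--\eqref{E:ujlower} follow by expanding $u_0 = g_{00}u^0 + g_{0a}u^a$ and $u_j = g_{j0}u^0 + g_{ja}u^a$ and invoking Prop.~\ref{P:metric}. For the derivatives, I apply the operator $X \in \{\partial_i, \u\}$ to the mass shell relation to obtain
\[
u_0 (X u^0) = -u_a (X u^a) - \tfrac{1}{2}(X g_{\alpha\beta}) u^\alpha u^\beta,
\]
then divide by $u_0 \approx -1$ (via a Moser bound for $1/u_0$). The crucial point is that when $X = \partial_i$, the norm $\|X u^a\|_{H^{N-1}}$ is controlled by $e^{-q\Omega}\topvelocitynorm{N-1}$ (hence the appearance of $\topvelocitynorm{N-1}$ only in \eqref{E:PARTIALIU0HNMINUSONE}, \eqref{E:PARTIALIU0LOWERHNMINUSONE}, \eqref{E:PARTIALIujlower}), whereas when $X = \u$ it is controlled by the cheaper $e^{-(1+q)\Omega}\velocityunorm{N-1}$ (hence the appearance of $\totalbelowtopunorm{N-1}$ in the $\u$-versions).

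\textbf{Stage 2: The estimates for $\u g_{jk}$ and its derivatives.} I expand $\u = u^0 \partial_t + u^a \partial_a$ and rearrange
\[
\u g_{jk} - 2\omega g_{jk} = u^0 (\partial_t g_{jk} - 2\omega g_{jk}) + 2\omega(u^0 - 1) g_{jk} + u^a \partial_a g_{jk},
\]
bounding the three terms by \eqref{E:PARTIALTGJKMINUS2OMEGAGJKHNMINUSONE}, \eqref{E:u0MINUSONEHNMINUSONE}, and the product estimate $\|u^a\|_{L^\infty}\|\partial_a g_{jk}\|_{H^{N-1}} \lesssim e^{-(1+q)\Omega} \cdot e^{(3-q)\Omega}\gnorm{N-1}$, yielding \eqref{E:PARTIALUGNKLOWERHNMINUSONE}; the $L^\infty$ bound \eqref{E:PARTIALUGNKLOWERLINFINITY} is immediate from Sobolev embedding plus $\|g_{jk}\|_{L^\infty}\lesssim e^{2\Omega}$. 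For \eqref{E:partialupartialiglowerjk} and \eqref{E:partialupartialtglowerjk} I commute $\partial_i$ or $\partial_t$ through $\u$, creating commutator terms of the form $[(\partial u^\alpha) \partial_\alpha g_{jk}]$ which are estimated by the same product scheme. The worse top-order $t$-weight on the right-hand side of \eqref{E:partialupartialiglowerjk} arises exactly because one is forced to invoke \eqref{E:PARTIALTPARTIALIGJKHNMINUSONE}, which brings in the elliptic norm $\hstarstarellipticnorm{N-1} \subset \totalbelowtopnorm{N-1}$.

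\textbf{Stage 3: The inverse metric estimates.} I use the identity $\u g^{\mu\nu} = -g^{\mu\alpha}g^{\nu\beta}\u g_{\alpha\beta}$ and expand exactly as in the derivation of \eqref{E:SUPERIMPORTANTCOMMUTATORFACTOR} (displayed in \eqref{E:SUPERIMPORTANT1}), substituting the Stage 2 bounds for $\u g_{\alpha\beta}$ together with Prop.~\ref{P:metric} for $g^{\mu\nu}$. This yields \eqref{E:partialuNEW}--\eqref{E:partialugupperjk2}. For \eqref{E:SUPERIMPORTANT} I use the algebraic rearrangement
\[
\u g^{jk} + 2\omega g^{jk} = -g^{j\alpha}g^{k\beta}(\u g_{\alpha\beta}) + 2\omega g^{j\alpha}g^{k\beta}g_{\alpha\beta},
\]
which, after splitting off the $h_{ab}$-piece as in \eqref{E:SUPERIMPORTANT1}, turns the principal term into $-e^{2\Omega} g^{ja}g^{kb}\,\u h_{ab}$; the factor $\|\u h_{ab}\|_{H^{N-1}} \lesssim e^{-q\Omega}\combinedpartialupartialhstarstarnorm{N-1}$ combined with $\|g^{ab}\|_{L^\infty} \lesssim e^{-2\Omega}$ gives the claimed $e^{-(2+q)\Omega}$ rate; the $\partial_t$-commuted version follows by the same procedure applied to $\partial_t g^{jk} + 2\omega g^{jk}$. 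Finally, \eqref{E:partialu5NEW} is obtained by applying $\u$ directly to \eqref{E:partialu5} via the Leibniz rule.

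The main obstacle, which no individual estimate exhibits in isolation, is the bookkeeping: one must respect the asymmetric norm hierarchy, ensuring that $\topvelocitynorm{N-1}$ appears exactly on the right-hand side of the $\partial_i$-estimates (and never in the $\u$-estimates), and that the degenerate top-order weights dictated by the elliptic norms in $\totalbelowtopnorm{N-1}$ enter only through the derivatives $\partial_i \partial_a g_{jk}$ coming from Stage 2. Care is also required to avoid prematurely invoking $\topvelocitynorm{N-1}$ in the $\u g^{\mu\nu}$ estimates, since these feed back into the energy inequalities at top order through $\{\hat{\square}_g, \partial_{\vec{\alpha}}\}$ commutators.
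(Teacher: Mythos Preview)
Your overall strategy matches the paper's: the mass-shell relation for Stage~1, the expansion $\u = u^0\partial_t + u^a\partial_a$ for Stage~2, and the identity $\u g^{\mu\nu} = -g^{\mu\alpha}g^{\nu\beta}\u g_{\alpha\beta}$ for Stage~3 are exactly what the paper uses, and your bookkeeping remarks about where $\topvelocitynorm{N-1}$ versus $\totalbelowtopunorm{N-1}$ must appear are on target.

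There is, however, a genuine gap in Stage~2. Your claim that $\hstarstarellipticnorm{N-1} \subset \totalbelowtopnorm{N-1}$ is false: by Definitions~\ref{D:TOTALMETRICNORM} and~\ref{D:TOTALBELOWTOPSOLUTIONNORM}, $\totalbelowtopnorm{N-1} = \gnorm{N-1} + \velocitynorm{N-1} + \densnorm{N-1}$, and none of these contain the elliptic norm; $\hstarstarellipticnorm{N-1}$ sits inside the separate summand $\totalellipticnorm{N-1}$ of $\totalnorm$. Consequently your route to \eqref{E:partialupartialiglowerjk} via \eqref{E:PARTIALTPARTIALIGJKHNMINUSONE} yields only the weaker bound $e^{(3-q)\Omega}\{\totalbelowtopnorm{N-1} + \totalbelowtopunorm{N-1} + \totalellipticnorm{N-1}\}$, which does not establish the estimate as stated. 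The same issue contaminates your approach to \eqref{E:partialupartialtglowerjk} if you expand $\u$ and land on $\|\partial_{tt} h_{jk}\|_{H^{N-1}}$, which again requires the elliptic norm via \eqref{E:dttgHMMINUSONE}.

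The paper avoids the elliptic norm entirely here by first passing to the rescaled variable. Since $\partial_i g_{jk} = e^{2\Omega}\partial_i h_{jk}$, one has
\[
\u\partial_i g_{jk} = e^{2\Omega}\,\u\partial_i h_{jk} + 2\omega u^0 e^{2\Omega}\,\partial_i h_{jk},
\]
and the key observation is that $e^{(q-1)\Omega}\|\u\partial_i h_{jk}\|_{H^{N-1}}$ is \emph{literally one of the summands} in the definition \eqref{E:DERIVATIVEHSTARSTARUNORM} of $\combinedpartialupartialhstarstarnorm{N-1} \leq \totalbelowtopunorm{N-1}$. Thus $\|\u\partial_i h_{jk}\|_{H^{N-1}} \leq e^{(1-q)\Omega}\totalbelowtopunorm{N-1}$ holds by definition, with no appeal to second spatial derivatives of $g$. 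The same trick handles \eqref{E:partialupartialtglowerjk}: write $\partial_t g_{jk} - 2\omega g_{jk} = e^{2\Omega}\partial_t h_{jk}$ and use that $e^{q\Omega}\|\u\partial_t h_{jk}\|_{H^{N-1}}$ is also a summand in \eqref{E:DERIVATIVEHSTARSTARUNORM}. In short, the norms were designed so that these particular $\u$-derivatives of $h$ are controlled \emph{directly}, and you should exploit that rather than unwinding $\u$ back into coordinate derivatives of $g$.
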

\noindent

\prf
Throughout this proof, we will often make use of 
Lemma \ref{L:backgroundaoftestimate}
and the Sobolev embedding result $H^2(\mathbb{T}^3) \hookrightarrow L^{\infty}(\mathbb{T}^3)$
without mentioning it every time. \emph{We also stress that the order in which 
we prove the estimates is important in some cases.} In particular, 
order in which we prove 
the estimates is generally different
than the order in which we have stated them in the proposition.

\noindent
{\em Proof of~\eqref{E:u0MINUSONEHNMINUSONE}-\eqref{E:PARTIALIujlower} and  \eqref{E:PARTIALUGNKLOWERLINFINITY}-\eqref{E:partialupartialtglowerjk}.}
To prove \eqref{E:u0MINUSONEHNMINUSONE}, we first use $g_{\alpha \beta} u^{\alpha} u^{\beta} = - 1$
to isolate $u^0:$
\begin{align} \label{E:U0UPPERISOLATEDagain}
	u^0 = - \frac{g_{0a}u^a}{g_{00}} + \sqrt{1 + \Big(\frac{g_{0a}u^a}{g_{00}}\Big)^2 - \frac{g_{ab}u^a u^b}{g_{00}} 
		- \Big(\frac{g_{00} + 1}{g_{00}}\Big)}.
\end{align}
Applying Cor. \ref{C:SobolevTaylor} to the right-hand side of \eqref{E:U0UPPERISOLATEDagain}, we deduce that
\begin{align} \label{E:u0HNMINUSONEfirstinequality}
	\| u^0 - 1 \|_{H^{N-1}} 
	& \lesssim
		\bigg\lbrace \Big\| \frac{g_{0a}u^a}{g_{00}} \Big\|_{H^{N-1}}
		+ \Big\| \Big(\frac{g_{0a}u^a}{g_{00}}\Big)^2 - \frac{g_{ab}u^a u^b}{g_{00}} - \frac{g_{00} + 1}{g_{00}} \Big\|_{H^{N-1}}
		\bigg\rbrace.
\end{align}
We will show that 
$\left \| \frac{g_{0a}u^a}{g_{00}} \right\|_{H^{N-1}},$
$\left \| \frac{g_{ab}u^a u^b}{g_{00}} \right\|_{H^{N-1}},$
$\left \| \frac{g_{00} + 1}{g_{00}} \right\|_{H^{N-1}} \lesssim e^{-q \Omega} \totalbelowtopnorm{N-1}.$
The desired estimate \eqref{E:u0MINUSONEHNMINUSONE} then follows from these estimates and \eqref{E:u0HNMINUSONEfirstinequality}.

To proceed, we first
use Cor. \ref{C:SobolevTaylor} 
and the definition of
$\totalbelowtopnorm{N-1}$
to deduce that
$\left\| \frac{1}{g_{00}} + 1 \right\|_{H^{N-1}} \lesssim \| g_{00} + 1 \|_{H^{N-1}} \lesssim e^{-q \Omega} \totalbelowtopnorm{N-1}.$
Also using Prop. \ref{P:F1FkLinfinityHN}, 
and the inequalities 
$\| g_{0a} \|_{H^{N-1}} \lesssim e^{(1-q) \Omega} \totalbelowtopnorm{N-1}$
and
$\|u^a \|_{H^{N-1}} \lesssim e^{-(1+q) \Omega} \totalbelowtopnorm{N-1},$
we obtain
\begin{align}
\left \| \frac{g_{0a}u^a}{g_{00}} \right\|_{H^{N-1}} 
& \lesssim 
\left\lbrace 
	1 + \left\| \frac{1}{g_{00}} + 1 \right\|_{H^{N-1}} 
\right\rbrace
\|g_{0a}\|_{H^{N-1}}
\|u^a\|_{H^{N-1}}  \label{E:uestimate1} 
\lesssim e^{-q \Omega} \totalbelowtopnorm{N-1} 
\end{align}
as desired.

Next, from the bootstrap assumption \eqref{E:gjkBAvsstandardmetric},
the inequality $\|\underpartial h_{ab}\|_{H^{N-2}} \le \totalbelowtopnorm{N-1},$ 
and the relation $g_{ab} = e^{2 \Omega} h_{ab},$
we infer that
$\|g_{ab} \|_{L^{\infty}} + \|\underpartial g_{ab}\|_{H^{N-2}} \lesssim e^{2\Omega}$.
Hence, arguing as we did above, we conclude that
\begin{align}   \label{E:uestimate2}
\left\|\frac{g_{ab}u^a u^b}{g_{00}} \right\|_{H^{N-1}} 
& \lesssim 
	\left\lbrace 
	1 + \left\| \frac{1}{g_{00}} + 1 \right\|_{H^{N-1}} 
\right\rbrace
	\left\lbrace
		\|g_{ab} \|_{L^{\infty}} + \|\underpartial g_{ab}\|_{H^{N-2}} 
	\right\rbrace
	\|u^a\|_{H^{N-1}}
	\|u^b\|_{H^{N-1}}
		\\
& \lesssim e^{2\Omega} e^{-(2+2q)\Omega} \totalbelowtopnorm{N-1} \lesssim e^{- q \Omega} \totalbelowtopnorm{N-1}
		\notag
\end{align}
as desired.

Similarly, we deduce that
\begin{align} \label{E:uestimate3}
\left \| \frac{g_{00} + 1}{g_{00}} \right\|_{H^{N-1}}
& \lesssim 
	\left\lbrace 
	1 + \left\| \frac{1}{g_{00}} + 1 \right\|_{H^{N-1}} 
\right\rbrace
	\| g_{00} + 1 \|_{H^{N-1}}
	\lesssim e^{- q \Omega} \totalbelowtopnorm{N-1}
\end{align}
as desired, and the proof of \eqref{E:u0MINUSONEHNMINUSONE} is complete.

To prove \eqref{E:uzerolowerplusone}, we first use the identity 
$u_0 + 1 
= 1 - u^0 
+ (g_{00} + 1) u^0
+ g_{0a} u^a$ 
and Prop. \ref{P:F1FkLinfinityHN}
to deduce that
\begin{align} \label{E:U0LOWERFIRSTESTIMATE}
	\| u_0 + 1 \|_{H^{N-1}} 
	& \lesssim 
		\| u^0  - 1 \|_{H^{N-1}}
		+ \left\lbrace 1 + \| u^0 - 1 \|_{H^{N-1}} \right\rbrace \| g_{00} + 1 \|_{H^{N-1}}
		+  \| g_{0a} \|_{H^{N-1}} \| u^a \|_{H^{N-1}}.
\end{align}
Inserting the estimate \eqref{E:u0MINUSONEHNMINUSONE}
and the inequalities 
$\| g_{00} + 1 \|_{H^{N-1}} \lesssim e^{-q \Omega} \totalbelowtopnorm{N-1},$
$\| g_{0a} \|_{H^{N-1}} \lesssim e^{(1-q) \Omega} \totalbelowtopnorm{N-1},$
and $\| u^a \|_{H^{N-1}} \lesssim e^{-(1+q) \Omega} \totalbelowtopnorm{N-1}$
into the right-hand side of \eqref{E:U0LOWERFIRSTESTIMATE}, we conclude \eqref{E:uzerolowerplusone}.
Similarly, \eqref{E:ujlower} follows from  
Prop. \ref{P:F1FkLinfinityHN},
\eqref{E:u0MINUSONEHNMINUSONE},
and the identity $u_j = g_{j\alpha} u^{\alpha}.$

To prove \eqref{E:PARTIALUGNKLOWERLINFINITY}, we first observe that
$\u g_{jk} = e^{2\Omega}\u h_{jk}+2 u^0 \omega g_{jk}.$ 
From \eqref{E:GJKLOWERLINFTY}, \eqref{E:u0MINUSONEHNMINUSONE}, 
and the inequality $\|\u h_{jk}\|_{H^{N-1}} \lesssim e^{-q \Omega} \totalbelowtopunorm{N-1},$
we deduce that 
$\|\u g_{jk}\|_{L^{\infty}} \lesssim e^{2\Omega}\totalbelowtopunorm{N-1} + e^{2\Omega}\lesssim e^{2\Omega}$ as desired.

To prove \eqref{E:PARTIALUGNKLOWERHNMINUSONE}, we first use the identities
$g_{jk} = e^{2 \Omega} h_{jk}$
and
$
\u g_{jk} - 2\omega g_{jk} = 2 \omega (u^0 - 1) g_{jk}  + e^{2\Omega}u^a\g_ah_{jk} + u^0 (\g_t g_{jk}-2\omega g_{jk})
$ 
and Prop. \ref{P:F1FkLinfinityHN} to deduce that
\begin{align} \label{E:PARTIALUGJKFIRSTESTIMATE}
	\| \u g_{jk} - 2\omega g_{jk} \|_{H^{N-1}}
	& \lesssim \| u^0 - 1 \|_{H^{N-1}} 
		\left\lbrace 
			\| g_{jk} \|_{L^{\infty}} 
			+ e^{2 \Omega} \| \underpartial h_{jk} \|_{H^{N-2}} 
		\right\rbrace
		+ e^{2\Omega}\| u^a \|_{H^{N-1}} \| \g_a h_{jk} \|_{H^{N-1}}
		 \\
	& \ \ 
		 + \left\lbrace 1 + \| u^0 - 1 \|_{H^{N-1}} \right\rbrace
		 	\| \g_t g_{jk} - 2\omega g_{jk} \|_{H^{N-1}}.
		 \notag
\end{align}
We now insert the bounds
\eqref{E:GJKLOWERLINFTY}, 
\eqref{E:PARTIALTGJKMINUS2OMEGAGJKHNMINUSONE},
\eqref{E:u0MINUSONEHNMINUSONE}, 
$\| u^a \|_{H^{N-1}} \lesssim e^{-(1 + q) \Omega} \totalbelowtopnorm{N-1},$
and $\| \g_a h_{jk} \|_{H^{N-1}} \lesssim e^{(1 - q) \Omega} \totalbelowtopnorm{N-1}$
into the right-hand side of \eqref{E:PARTIALUGJKFIRSTESTIMATE}
to conclude \eqref{E:PARTIALUGNKLOWERHNMINUSONE}.

To prove \eqref{E:PARTIALUU0HNMINUSONE}, we first apply $\u$ to the equation $g_{\alpha \beta} u^{\alpha} u^{\beta} = -1$
and solve for $\u u^0:$ 
\begin{align} \label{E:PARTIALUU0FIRSTRELATION}
	\u u^0 
	& = - 
			\frac{1}{u_0}
			\left\lbrace
				\frac{1}{2}(\u g_{00}) (u^0)^2
				+ (\u g_{0a}) u^0 u^a
				+ \frac{1}{2}(\u g_{ab}) u^a u^b
				+ u_a \u u^a
			\right\rbrace.
\end{align}
From Cor. \ref{C:SobolevTaylor} and \eqref{E:uzerolowerplusone},
we deduce that
$\left\| \frac{1}{u_0} + 1 \right\|_{H^{N-1}} \lesssim 1.$
Using this estimate,
\eqref{E:u0MINUSONEHNMINUSONE},
\eqref{E:PARTIALUU0FIRSTRELATION}, and Prop. \ref{P:F1FkLinfinityHN}, 
and decomposing $\u g_{ab} = (\u g_{ab} - 2 \omega g_{ab}) + 2 \omega g_{ab},$
we deduce that
\begin{align} \label{E:PARTIALUU0FIRSTESTIMATE}
	\| 	\u u^0 \|_{H^{N-1}} 
	& \lesssim 
		\| \u g_{00} \|_{H^{N-1}}
		+ \| \u g_{0a} \|_{H^{N-1}} \| u^a \|_{H^{N-1}}
		+ \| \u g_{ab} - 2 \omega g_{ab} \|_{H^{N-1}}  
			\| u^a \|_{H^{N-1}} \| u^b \|_{H^{N-1}}
			\\
	& \ \ + \left\lbrace \| g_{ab} \|_{L^{\infty}} + \| \underpartial g_{ab} \|_{H^{N-2}}  \right\rbrace
			\| u^a \|_{H^{N-1}} \| u^b \|_{H^{N-1}}
		+ \| u_a \|_{H^{N-1}} \| u^a \|_{H^{N-1}}.
		\notag
\end{align}
Inserting the estimates 
\eqref{E:GJKLOWERLINFTY},
\eqref{E:ujlower},
\eqref{E:PARTIALUGNKLOWERHNMINUSONE},
$\| \u g_{00} \|_{H^{N-1}} \lesssim e^{-q\Omega} \totalbelowtopunorm{N-1},$
$\| \u g_{0a} \|_{H^{N-1}} \lesssim e^{(1 - q)\Omega} \totalbelowtopunorm{N-1},$
$\| \underpartial g_{ab} \|_{H^{N-2}} 
= e^{2 \Omega} \| \underpartial h_{ab} \|_{H^{N-2}}
\lesssim e^{2 \Omega} \totalbelowtopnorm{N-1},$
and $\| u^a \|_{H^{N-1}} \lesssim e^{-(1 + q)\Omega} \totalbelowtopnorm{N-1}$
into the right-hand side of \eqref{E:PARTIALUU0FIRSTESTIMATE},
we conclude \eqref{E:PARTIALUU0HNMINUSONE}.

To prove \eqref{E:PARTIALIU0HNMINUSONE}, 
we first note that \eqref{E:PARTIALUU0FIRSTRELATION} holds with $\g_i$ in place of $\u.$
Hence, using essentially the same reasoning used to prove \eqref{E:PARTIALUU0HNMINUSONE},
we conclude \eqref{E:PARTIALIU0HNMINUSONE}.
The main difference is that
the estimate for $\g_i u^0$ involves the top-order spatial derivatives of the $u^j$
and not any $\u$ derivative, so we have the norm $\topvelocitynorm{N-1}$
on the right-hand side of \eqref{E:PARTIALIU0HNMINUSONE} instead of the norm 
$\totalbelowtopunorm{N-1}$ appearing in \eqref{E:PARTIALUU0HNMINUSONE}.
We also note that the application of the operator $\g_i$
results in the extra factor of $e^{\Omega}$ in the estimate for $\| \partial_i u^0 \|_{H^{N-1}}$
compared to $\| \u u^0 \|_{H^{N-1}}.$
This is of course consistent with our definition of the norms as laid out in Sect.~\ref{SS:norms}.
This extra factor is closely tied to the counting principle mentioned in the introduction 
(and rigorously introduced in Sect.~\ref{SS:mainestimates}).

To prove \eqref{E:PARTIALUuzerolowerplusone},
we first use the identity 
$\u u_0 
= (\u g_{00})u^0
+ (\u g_{0a}) u^a
+ g_{00} \u u^0
+ g_{0a} \u u^a$ 
and Prop. \ref{P:F1FkLinfinityHN}
to deduce that
\begin{align} \label{E:PARTIALUU0LOWERFIRSTESTIMATE}
	\| \u u_0  \|_{H^{N-1}} 
	& \lesssim 
		\left\lbrace 1 + \| u^0 - 1 \|_{H^{N-1}} \right\rbrace \| \u g_{00} \|_{H^{N-1}} 
		+ \| \u g_{0a} \|_{H^{N-1}} \| u^a \|_{H^{N-1}} 
			\\
	& \ \
		+ \left\lbrace 1 + \| g_{00} + 1 \|_{H^{N-1}} \right\rbrace \| \u u^0 \|_{H^{N-1}} 
		+ \| g_{0a} \|_{H^{N-1}} \| \u u^a \|_{H^{N-1}}.
		\notag
\end{align}
Inserting the estimates 
\eqref{E:u0MINUSONEHNMINUSONE},
\eqref{E:PARTIALUU0HNMINUSONE},
and the inequalities 
$\| g_{00} + 1 \|_{H^{N-1}} \lesssim e^{-q \Omega} \totalbelowtopnorm{N-1},$
$\| \u g_{00} \|_{H^{N-1}} \lesssim e^{-q \Omega} \totalbelowtopunorm{N-1},$
$\| g_{0a} \|_{H^{N-1}} \lesssim e^{(1-q) \Omega} \totalbelowtopnorm{N-1},$
$\| \u g_{0a} \|_{H^{N-1}} \lesssim e^{(1-q) \Omega} \totalbelowtopunorm{N-1},$
$\| u^a \|_{H^{N-1}} \lesssim e^{-(1+q) \Omega} \totalbelowtopnorm{N-1},$
and $\| \u u^a \|_{H^{N-1}} \lesssim e^{-(1+q) \Omega} \totalbelowtopunorm{N-1}$
into the right-hand side of \eqref{E:PARTIALUU0LOWERFIRSTESTIMATE}, 
we conclude \eqref{E:PARTIALUuzerolowerplusone}.

The proof of \eqref{E:PARTIALIU0LOWERHNMINUSONE} is similar to the proof of 
\eqref{E:PARTIALUuzerolowerplusone}, so we omit the full details. The main difference is that
the estimate for $\g_i u_0$ involves the top-order spatial derivatives of the $u^j$
and not any $\u$ derivative, so we have the norm $\topvelocitynorm{N-1}$
on the right-hand side of \eqref{E:PARTIALIU0LOWERHNMINUSONE} instead of the norm 
$\totalbelowtopunorm{N-1}$ appearing in \eqref{E:PARTIALUuzerolowerplusone}.

To prove \eqref{E:PARTIALUujlower},  
we first use the identity $u_j = g_{j \alpha} u^{\alpha},$
the decomposition $\u g_{ab} = (\u g_{ab} - 2 \omega g_{ab}) + 2 \omega g_{ab},$
and Prop. \ref{P:derivativesofF1FkL2} to deduce that
\begin{align} \label{E:PARTIALUUJLOWERFIRSTESTIMATE}
	\| \u u_j \|_{H^{N-1}}
	& \lesssim \left\lbrace 1 + \| u^0 - 1 \|_{H^{N-1}} \right\rbrace \| \u g_{0j} \|_{H^{N-1}} 
		+ \| \u g_{ja} - 2 \omega g_{ja} \|_{H^{N-1}}
			\| u^a \|_{H^{N-1}} \\
	& \ \ 
		+ \left\lbrace \| g_{ja} \|_{L^{\infty}} + \| \underpartial g_{ja} \|_{H^{N-2}}  \right\rbrace
			\| u^a \|_{H^{N-1}}
		+ \| g_{j0} \|_{H^{N-1}} \| \u u^0 \|_{H^{N-1}}
		\notag \\
	& \ \ 
			+ \left\lbrace \| g_{ja} \|_{L^{\infty}} + \| \underpartial g_{ja} \|_{H^{N-2}}  \right\rbrace
			\| \u u^a \|_{H^{N-1}}.
			\notag
\end{align}
Inserting the estimates
\eqref{E:GJKLOWERLINFTY},
\eqref{E:u0MINUSONEHNMINUSONE},
\eqref{E:PARTIALUU0HNMINUSONE},
\eqref{E:PARTIALUGNKLOWERHNMINUSONE},
and the inequalities
$\| \underpartial g_{ja} \|_{H^{N-2}} = e^{2 \Omega} \| \underpartial h_{ja} \|_{H^{N-2}} \lesssim e^{2 \Omega} \totalbelowtopnorm{N-1},$
$\| g_{0j} \|_{H^{N-1}} \lesssim e^{(1-q) \Omega} \totalbelowtopnorm{N-1},$
$\| \u g_{0j} \|_{H^{N-1}} \lesssim e^{(1-q) \Omega} \totalbelowtopunorm{N-1},$
$\| u^a \|_{H^{N-1}} \lesssim e^{-(1+q) \Omega} \totalbelowtopnorm{N-1},$
and $\| \u u^a \|_{H^{N-1}} \lesssim e^{-(1+q) \Omega} \totalbelowtopunorm{N-1}$
into the right-hand side of \eqref{E:PARTIALUUJLOWERFIRSTESTIMATE},
we conclude \eqref{E:PARTIALUujlower}.

To prove \eqref{E:PARTIALIujlower},
we first use the identity $\g_i u_j = (\g_i g_{j \alpha}) u^{\alpha} + g_{j \alpha} \g_i u^{\alpha}$
and Prop. \ref{P:derivativesofF1FkL2} to deduce that
\begin{align}  \label{E:PARTIALIUJLOWERFIRSTESTIMATE}
	\| \g_i u_j \|_{H^{N-1}}
	& \lesssim
		\left\lbrace 1 + \| u^0 - 1 \|_{H^{N-1}} \right\rbrace 
		\| \g_i g_{j0} \|_{H^{N-1}} 
		+ \| \g_i g_{ja} \|_{H^{N-1}} \| u^a \|_{H^{N-1}} 
			\\
	& \ \ + \| g_{j0} \|_{H^{N-1}} \| \g_i u^0 \|_{H^{N-1}}
		+ \left\lbrace \| g_{ja} \|_{L^{\infty}} + \| \underpartial g_{ja} \|_{H^{N-2}} \right\rbrace 
			\| \g_i u^a \|_{H^{N-1}}.
			\notag
\end{align}
Inserting the estimates
\eqref{E:GJKLOWERLINFTY},
\eqref{E:u0MINUSONEHNMINUSONE},
and
\eqref{E:PARTIALIU0HNMINUSONE}
and the inequalities
$\| \underpartial g_{ja} \|_{H^{N-2}} = e^{2 \Omega} \| \underpartial h_{ja} \|_{H^{N-2}} \lesssim e^{2 \Omega} \totalbelowtopnorm{N-1},$
$\| \g_i g_{ja} \|_{H^{N-1}} = e^{2 \Omega} \| \g_i h_{ja} \|_{H^{N-1}} \lesssim e^{(3 - q) \Omega} \totalbelowtopnorm{N-1},$
$\| g_{0j} \|_{H^{N-1}} \lesssim e^{(1-q) \Omega} \totalbelowtopnorm{N-1},$
$\| \g_i g_{0j} \|_{H^{N-1}} \lesssim e^{(2-q) \Omega} \totalbelowtopnorm{N-1},$
$\| u^a \|_{H^{N-1}} \lesssim e^{-(1+q) \Omega} \totalbelowtopnorm{N-1},$
and 
 $\| \g_i u^a \|_{H^{N-1}} \lesssim e^{- q \Omega} \topvelocitynorm{N-1}$
into the right-hand side of \eqref{E:PARTIALIUJLOWERFIRSTESTIMATE}, 
we conclude \eqref{E:PARTIALIujlower}.

To prove \eqref{E:partialupartialiglowerjk},
we first use the identity 
$\u \g_i g_{jk} = e^{2 \Omega} \u \g_i h_{jk} + 2 \omega u^0 \g_i h_{jk}$
and Prop. \ref{P:F1FkLinfinityHN} to deduce
\begin{align} \label{E:PARTIALUPARTIALIGJKFIRSTESTIMATE}
	\| \u \g_i g_{jk} \|_{H^{N-1}}
	& \lesssim e^{2 \Omega} \| \u \g_i h_{jk} \|_{H^{N-1}}
		+ e^{2 \Omega} \left\lbrace 1 + \| u^0 - 1 \|_{H^{N-1}} \right\rbrace \| \g_i h_{jk} \|_{H^{N-1}}.
\end{align}
Inserting the estimate \eqref{E:u0MINUSONEHNMINUSONE}
and the inequalities 
$\| \u \g_i h_{jk} \|_{H^{N-1}} \lesssim e^{(1-q)\Omega} \totalbelowtopunorm{N-1}$
and 
$\| \g_i h_{jk} \|_{H^{N-1}} \lesssim e^{(1-q)\Omega} \totalbelowtopnorm{N-1}$
into the right-hand side of 
\eqref{E:PARTIALUPARTIALIGJKFIRSTESTIMATE}, we conclude \eqref{E:partialupartialiglowerjk}.

Finally, to prove \eqref{E:partialupartialtglowerjk}, we first note that 
$ \u(\g_t g_{jk}-2\omega g_{jk}) = \u (e^{2\Omega}\g_t h_{jk}) = e^{2 \Omega} \u \g_t h_{jk} + 2 u^0 \omega e^{2\Omega} \g_t h_{jk}.$
Hence, by Prop. \ref{P:F1FkLinfinityHN} we have that 
\begin{align} \label{E:PARTIALUPARTIALTGJKFIRSTESTIMATE}
\| \u(\g_t g_{jk}-2\omega g_{jk}) \|_{H^{N-1}} 
\lesssim e^{2 \Omega} \| \u \g_t h_{jk} \|_{H^{N-1}}
+ e^{2 \Omega} \left\lbrace 1 + \| u^0 - 1 \|_{H^{N-1}} \right\rbrace \| \g_t h_{jk} \|_{H^{N-1}}.
\end{align}
Inserting the estimate \eqref{E:u0MINUSONEHNMINUSONE} and the inequalities
$\| \g_t h_{jk} \|_{H^{N-1}} \lesssim e^{-q \Omega} \totalbelowtopnorm{N-1}$
and
$\| \u \g_t h_{jk} \|_{H^{N-1}} \lesssim e^{-q \Omega} \totalbelowtopunorm{N-1}$
into the right-hand side of \eqref{E:PARTIALUPARTIALTGJKFIRSTESTIMATE},
we conclude \eqref{E:partialupartialtglowerjk}.

\vspace{0.1in}
\noindent
{\em Proof of \eqref{E:partialuNEW}-\eqref{E:partialu5NEW}:}
To prove the first estimate in \eqref{E:partialuNEW}, we first use the matrix identity 
$
\u g^{\mu \nu} = -g^{\mu \alpha}g^{\nu \beta} \u g_{\alpha \beta},
$
the decomposition $\u g_{ab} = (\u g_{ab} - 2 \omega g_{ab}) + 2 \omega g_{ab},$
and Prop. \ref{P:F1FkLinfinityHN} to deduce that
\begin{align} \label{E:PARTIALUG00UPPERFIRSTESTIMATE}
	\| \u g^{00} \|_{H^{N-1}}
	& \lesssim 
		(1 + \| g^{00} + 1 \|_{H^{N-1}})^2 \| \u g_{00} \|_{H^{N-1}}
		+ (1 + \| g^{00} + 1 \|_{H^{N-1}}) \| g^{0a} \|_{H^{N-1}} \| \u g_{0a} \|_{H^{N-1}}
		\\
	& \ \ 
		+ \| g^{0a} \|_{H^{N-1}} \| g^{0b} \|_{H^{N-1}} \| \u g_{ab} - 2 \omega g_{ab} \|_{H^{N-1}}
			\notag \\
	& \ \ + \| g^{0a} \|_{H^{N-1}} \| g^{0b} \|_{H^{N-1}} 
		\left\lbrace 
			\| g_{ab} \|_{L^{\infty}} + \| \underpartial g_{ab} \|_{H^{N-2}}  
		\right\rbrace.
		\notag
\end{align}
Inserting the estimates
\eqref{E:GJKLOWERLINFTY},
\eqref{E:G00UPPERPLUSONEHNMINUSONE},
\eqref{E:G0JUPPERHNMINUSONE},
\eqref{E:PARTIALUGNKLOWERHNMINUSONE},
and the inequalities
$\| \u g_{00} \|_{H^{N-1}} \lesssim e^{-q \Omega} \totalbelowtopunorm{N-1},$
$\| \u g_{0j} \|_{H^{N-1}} \lesssim e^{(1-q) \Omega} \totalbelowtopunorm{N-1},$
and $\| \underpartial g_{ab} \|_{H^{N-2}} = e^{2 \Omega} \| \underpartial h_{ab} \|_{H^{N-2}} \lesssim e^{2\Omega} 	\totalbelowtopnorm{N-1}$
into the right-hand side of \eqref{E:PARTIALUG00UPPERFIRSTESTIMATE},
we conclude the first estimate in \eqref{E:partialuNEW}. 
The proofs of the second estimate in \eqref{E:partialuNEW} 
and \eqref{E:partialugupperjk}-\eqref{E:partialugupperjk2} are similar, and we omit the details.

To prove the first estimate in \eqref{E:SUPERIMPORTANT}, 
we first decompose
$\partial_{\mathbf{u}} g^{jk} + 2 \omega g^{jk} 
= \g_t g^{jk} + 2 \omega g^{jk}
	+ (u^0 - 1) (\g_t g^{jk} + 2 \omega g^{jk})
	+ 2 \omega (u^0 - 1) g^{jk}
	+ u^a \g_a g^{jk}.$
Hence, by Prop. \ref{P:F1FkLinfinityHN}, we have
\begin{align} \label{E:FIRSTESTIMATEPARTIALUGJKUPPER}
	\| \partial_{\mathbf{u}} g^{jk} + 2 \omega g^{jk} \|_{H^{N-1}}
	& \lesssim 
		\| \g_t g^{jk} + 2 \omega g^{jk} \|_{H^{N-1}}
		+	\| u^0 - 1 \|_{H^{N-1}}
			\| \g_t g^{jk} + 2 \omega g^{jk} \|_{H^{N-1}}
			\\
	& \ \
		+ \left\lbrace 
				\|  g^{jk} \|_{L^{\infty}}
				+ \| \underpartial g^{jk} \|_{H^{N-2}}
			\right\rbrace 
			\| u^0 - 1 \|_{H^{N-1}}
		+ \| u^a \|_{H^{N-1}}
			\| \g_a g^{jk} \|_{H^{N-1}}.
			\notag
\end{align}
We now insert the estimates
\eqref{E:GJKUPPERLINFTY},
\eqref{E:DERIVATIVESOFGJKUPPERHNMINUS2},
\eqref{E:partialu2},
\eqref{E:SUPERIMPORTANTCOMMUTATORFACTOR},
and
\eqref{E:u0MINUSONEHNMINUSONE}
and the inequality
$\| u^a \|_{H^{N-1}} \lesssim e^{-(1+q) \Omega} \totalbelowtopnorm{N-1}$
into the right-hand side of \eqref{E:FIRSTESTIMATEPARTIALUGJKUPPER}
and thus conclude the first estimate in \eqref{E:SUPERIMPORTANT}.

To prove \eqref{E:partialu5NEW}, we first note the identity
\begin{align} \label{E:PARTIALTGSPATIALONEUPONEDOWNIMPORTANTRELATION}
	g^{aj} \g_tg_{ak}-2\omega\delta^j_k
	& = e^{2 \Omega} g^{ja} \partial_t h_{ak} 
	- 2 \omega g^{0j} g_{0k}.
\end{align}
We now apply $\u$ to \eqref{E:PARTIALTGSPATIALONEUPONEDOWNIMPORTANTRELATION} and use
the relations 
$\u \Omega = u^0 \omega$
and
$\u \omega = u^0 \g_t \omega,$
Lemma \ref{L:backgroundaoftestimate},
and Prop. \ref{P:derivativesofF1FkL2} to deduce that
\begin{align} \label{E:PARTIALUGUPPERPARTIALTGLOWERFIRSTESTIMATE}
	\| \u (g^{aj} \g_t g_{ak} - 2 \omega \delta^j_k) \|_{H^{N-1}}
	& \lesssim 
		e^{2 \Omega} 
		\left\lbrace 
			\| \u g^{ja} \|_{L^{\infty}} 
			+ \| \underpartial \u g^{ja} \|_{H^{N-2}} 
		\right \rbrace 
		\| \partial_t h_{ak} \|_{H^{N-1}}
			\\
	& \ \ 
		+ e^{2 \Omega} 
		\left\lbrace 
			\| g^{ja} \|_{L^{\infty}} 
			+ \| \underpartial g^{ja} \|_{H^{N-2}} 
		\right \rbrace 
		\| \u \partial_t h_{ak} \|_{H^{N-1}}
			\notag \\
	& \ \ 
		+ e^{2 \Omega}
		\left\lbrace
			1 + \| u^0 - 1 \|_{H^{N-1}}
		\right\rbrace
		\left\lbrace 
			\| g^{ja} \|_{L^{\infty}} 
			+ \| \underpartial g^{ja} \|_{H^{N-2}} 
		\right \rbrace 
		\| \partial_t h_{ak} \|_{H^{N-1}}
			\notag \\	
& \ \ + \| \u g^{0j} \|_{H^{N-1}} \|g_{0k} \|_{H^{N-1}}
		+ \| g^{0j} \|_{H^{N-1}} \|\u g_{0k} \|_{H^{N-1}}
			\notag \\
& \ \ + e^{- 3 \Omega}
			\left\lbrace
				1 + \| u^0 - 1 \|_{H^{N-1}}
			\right\rbrace
			\| g^{0j} \|_{H^{N-1}} \|g_{0k} \|_{H^{N-1}}.
				\notag
\end{align}
We now insert the estimates
\eqref{E:G0JUPPERHNMINUSONE},
\eqref{E:GJKUPPERLINFTY},
\eqref{E:DERIVATIVESOFGJKUPPERHNMINUS2},
\eqref{E:u0MINUSONEHNMINUSONE},
\eqref{E:partialugupperjk},
\eqref{E:partialugupperjk2},
and the inequalities
$\| \partial_t h_{ak} \|_{H^{N-1}} \lesssim e^{- q \Omega} \totalbelowtopnorm{N-1},$
$\| \u \partial_t h_{ak} \|_{H^{N-1}} \lesssim e^{- q \Omega} \totalbelowtopunorm{N-1},$
$\|g_{0k} \|_{H^{N-1}} \lesssim e^{(1-q) \Omega} \totalbelowtopunorm{N-1},$
and
$\|\u g_{0k} \|_{H^{N-1}} \lesssim e^{(1-q) \Omega} \totalbelowtopunorm{N-1}$
into the right-hand side of \eqref{E:PARTIALUGUPPERPARTIALTGLOWERFIRSTESTIMATE},
which leads to the desired estimate \eqref{E:partialu5NEW}.

Finally, we note that the proof of the second estimate in \eqref{E:SUPERIMPORTANT}
is similar to the proof of \eqref{E:partialu5NEW}, but is based on 
applying $\u$ to the identity
\begin{align}  \label{E:PARTIALTGSPATIALTWOUPIMPORTANTRELATION}
	\g_t g^{jk} + 2 \omega g^{jk}
	& = - g^{aj}(g^{bk} \g_t g_{ab} - 2 \omega \delta_a^k)
		- g^{0j} g^{ak} \g_t g_{0a}
		- g^{aj} g^{0k} \g_t g_{0a}
		- g^{0j} g^{0k} \g_t g_{00}
\end{align}
instead of the identity \eqref{E:PARTIALTGSPATIALONEUPONEDOWNIMPORTANTRELATION}.
Although we omit the tedious details, we point out that the only slightly subtle
estimates are for the factors
$\u(g^{bk} \g_t g_{ab} - 2 \omega \delta_a^k)$
and 
$g^{bk} \g_t g_{ab} - 2 \omega \delta_a^k,$
which have already been suitably bounded in
\eqref{E:partialu5NEW}
and
\eqref{E:partialu5}.

\subsection{Estimates for the error terms}\label{SS:mainestimates}

Props. \ref{P:metric} and \ref{P:SobolevMetricNEW} provided
Sobolev estimates for the fundamental solution variable components in terms of the 
norms of Sect. \ref{S:norms}. 
We now use these propositions
to prove Props. \ref{P:Sobolev} and \ref{P:Sobolevtwo}, which 
contain the most important analysis in the article: 
Sobolev estimates for various error terms.
These estimates play a central role in our proof of Props. 
\ref{P:integralinequalities}
and \ref{P:integralinequalitiesmetric},
in which we derive our basic integral inequalities for the energies;
we need to show that the error terms are small enough that they do not
cause the energies to blow-up in finite time.
Props. \ref{P:Sobolev} and \ref{P:Sobolevtwo} 
are an analog of Lemma 9.2.1 from \cite{jS2012}, but we  
had to alter many of the statements to reflect the 
more complicated energy hierarchy in the present article. 
In particular, our estimates for many of the top-order derivatives are worse by a factor
of $e^{\Omega}$ compared to the cases $0 < c_s^2 < 1/3$ studied in \cite{jS2012}.

\begin{proposition}[\textbf{Sobolev estimates for the error terms}]\label{P:Sobolev}
	Let $N \geq 4$ be an integer and let $(g_{\mu \nu}, u^{\mu}, \dens),$ $(\mu,\nu = 0,1,2,3),$ be a solution to the modified
	Eqs. \eqref{E:metric00}-\eqref{E:velevol} on the spacetime slab $[0,T) \times \mathbb{T}^3$. Assume that 
	the bootstrap assumption \eqref{E:gjkBAvsstandardmetric} holds on the same slab for some constant $c_1 \geq 
	1.$ Then there exists a constant $\epsilon' > 0$ such that if $\totalnorm(t) \leq \epsilon'$ for $t \in [0,T),$ 
	then the following estimates 
for the error terms $\triangle_{A,\mu\nu}$, $\triangle_{C,\mu\nu}$ 
defined in \eqref{AE:triangleA00def}-\eqref{AE:triangleAjkdef}
and \eqref{AE:triangleC00def}-\eqref{AE:triangleC0jdef}
also hold on $[0,T)$
(and the implicit constants in the estimates can depend on $N$ and $c_1$): 
\begin{subequations}
\begin{align}
\label{E:TRIANGLELAHNMINUSONE}
	\|\triangle_{A,00}\|_{H^{N-1}} 
	+ e^{-\Omega} \|\triangle_{A,0j}\|_{H^{N-1}}
	+ \|\triangle_{A,jk}\|_{H^{N-1}}
	& \lesssim e^{-2q\Omega} \gnorm{N-1},
			\\
	\|\triangle_{C,00}\|_{H^{N-1}}
	+ e^{-\Omega} \|\triangle_{C,0j}\|_{H^{N-1}}
	& \lesssim e^{-2q\Omega} \gnorm{N-1},
		\label{E:TRIANGLELCHNMINUSONE}
\end{align}
\end{subequations}
\begin{subequations}
\begin{align}
	\|\u \triangle_{A,00}\|_{H^{N-1}} 
	+ e^{-\Omega} \|\u \triangle_{A,0j}\|_{H^{N-1}}
	+ \|\u \triangle_{A,jk}\|_{H^{N-1}}
	& \lesssim e^{-2q\Omega} 
		\left\lbrace 
			\gnorm{N-1} 
			+ \gunorm{N-1} 
		\right\rbrace,
			\label{E:PARTIALUTRIANGLELAHNMINUSONE} \\
	\|\u \triangle_{C,00}\|_{H^{N-1}}
	+ e^{-\Omega} \|\u \triangle_{C,0j}\|_{H^{N-1}}
	& \lesssim e^{-2q\Omega} 
		\left\lbrace 
			\gnorm{N-1} 
			+ \gunorm{N-1} 
		\right\rbrace.
		\label{E:PARTIALUTRIANGLELCHNMINUSONE}
\end{align}
\end{subequations}
For the Christoffel symbol error terms defined in
\eqref{AE:triangleGamma000}-\eqref{AE:triangleGammaikj},
we have the following estimates on $[0,T):$
\begin{subequations}
\begin{align}
\|\triangle_{0 \ 0}^{\ 0} \|_{H^{N-1}} 
& \lesssim e^{-q\Omega} \gnorm{N-1},
	\label{E:triangle000} 
	\\
\|\triangle_{j \ 0}^{\ 0} \|_{H^{N-1}} 
&\lesssim e^{(1-q)\Omega}  \gnorm{N-1},
	\label{E:triangle0j0} \\
\|\triangle_{0 \ 0}^{\ j} \|_{H^{N-1}} 
&\lesssim  
	e^{-(1 + q)\Omega} \gnorm{N-1},
	\label{E:VERYIMPORTANTCHRISTOFFELSYMBOLERRORESTIMATE} 
		\\
\|\triangle_{0 \ k}^{\ j} \|_{H^{N-1}} 
&\lesssim e^{-q \Omega} \gnorm{N-1}, 
	\\
\|\triangle_{j \ k}^{\ 0} \|_{H^{N-1}} 
&\lesssim e^{(2-q)\Omega} \gnorm{N-1},
	\\
\|\triangle_{j \ k}^{\ i} \|_{H^{N-1}} 
&\lesssim e^{(1-q)\Omega} \gnorm{N-1},
	\label{E:trianglekij}
\end{align}
\end{subequations}
\begin{subequations}
\begin{align}
\| \partial_i \triangle_{0 \ 0}^{\ 0} \|_{H^{N-1}} 
& \lesssim e^{(1-q)\Omega} 
	\left\lbrace
		\gnorm{N-1}
		+ \totalellipticnorm{N-1}
	\right\rbrace,
	\label{E:PARTIALItriangle000} 
	\\
\| \partial_i \triangle_{j \ 0}^{\ 0} \|_{H^{N-1}} 
&\lesssim e^{(2-q)\Omega}  
	\left\lbrace
		\gnorm{N-1}
		+ \totalellipticnorm{N-1}
	\right\rbrace,
	\label{E:PARITALItriangle0j0} \\
\| \partial_i \triangle_{0 \ 0}^{\ j} \|_{H^{N-1}} 
&\lesssim  
	e^{-q \Omega} 
	\left\lbrace
		\gnorm{N-1}
		+ \totalellipticnorm{N-1}
	\right\rbrace,
	\label{E:PARTIALIVERYIMPORTANTCHRISTOFFELSYMBOLERRORESTIMATE} 
		\\
\| \partial_i \triangle_{0 \ k}^{\ j} \|_{H^{N-1}} 
&\lesssim e^{(1-q) \Omega} 
	\left\lbrace
		\gnorm{N-1}
		+ \totalellipticnorm{N-1}
	\right\rbrace, 
	\label{E:PARITALItriangle0jk} \\
\| \partial_i \triangle_{j \ k}^{\ 0} \|_{H^{N-1}} 
&\lesssim e^{(3-q)\Omega}
	\left\lbrace
		\gnorm{N-1}
		+ \totalellipticnorm{N-1}
	\right\rbrace,
	\label{E:PARITALItrianglej0k} \\
\| \partial_i \triangle_{j \ k}^{\ l} \|_{H^{N-1}} 
&\lesssim e^{(2-q)\Omega} 
		\left\lbrace
		\gnorm{N-1}
		+ \totalellipticnorm{N-1}
	\right\rbrace.
	\label{E:PARTIALItrianglekij}
\end{align}
\end{subequations}
For the time and $\u$ derivatives of the Christoffel symbol error terms,
we have the following estimates on $[0,T):$
\begin{subequations}
\begin{align}
\|\g_t \triangle_{0 \ 0}^{\ 0} \|_{H^{N-1}}
+ 
\| \u \triangle_{0 \ 0}^{\ 0} \|_{H^{N-1}} 
& \lesssim e^{-q\Omega} \totalnorm,
	\label{E:partialtandutriangle000} 
	\\
\|\g_t \triangle_{j \ 0}^{\ 0} \|_{H^{N-1}} 
+ \|\u \triangle_{j \ 0}^{\ 0} \|_{H^{N-1}} 
&\lesssim e^{(1-q)\Omega} \totalnorm,
	\label{E:partialtandutriangle0j0} \\
\|\g_t \triangle_{0 \ 0}^{\ j} \|_{H^{N-1}} 
+ \|\u \triangle_{0 \ 0}^{\ j} \|_{H^{N-1}} 
&\lesssim  
	e^{-(1 + q)\Omega} \totalnorm,
	\label{E:partialtanduVERYIMPORTANTCHRISTOFFELSYMBOLERRORESTIMATE} 
		\\
\|\g_t \triangle_{0 \ k}^{\ j} \|_{H^{N-1}} 
+ \|\u \triangle_{0 \ k}^{\ j} \|_{H^{N-1}} 
&\lesssim e^{-q \Omega} \totalnorm, 
	\\
\|\g_t \triangle_{j \ k}^{\ 0} \|_{H^{N-1}} 
+ \|\u \triangle_{j \ k}^{\ 0} \|_{H^{N-1}}
&\lesssim e^{(2-q)\Omega} \totalnorm,
	\\
\|\g_t \triangle_{j \ k}^{\ i} \|_{H^{N-1}} 
	+ \|\u \triangle_{j \ k}^{\ i} \|_{H^{N-1}}
&\lesssim e^{(1-q)\Omega} \totalnorm.
	\label{E:partialtandutrianglekij}
\end{align}
\end{subequations}
For the linearly small error term
$g^{ab} \Gamma_{ajb}$
from the right-hand side of the wave Eq. \eqref{E:metric0j},
we have the following estimates on $[0,T):$
\begin{subequations}
\begin{align} 
\| g^{ab} \Gamma_{ajb} \|_{H^{N-1}}
& \lesssim e^{(1-q)\Omega} \hstarstarnorm{N-1},
	\label{E:gabupperGammaajblower} \\
\|g^{ab}\Gamma_{ajb}\|_{H^{N-2}}
& \lesssim \hstarstarnorm{N-1},
\label{E:gabupperGammaajblowerHN-2}
	\\
\left\| \partial_t \left(g^{ab} \Gamma_{ajb} \right) \right\|_{H^{N-2}}
& \lesssim e^{-q\Omega} \hstarstarnorm{N-1},
	\label{E:partialtgabupperGammaajblowerHN-2}
	\\
\| \partial_{\mathbf{u}} (g^{ab} \Gamma_{ajb}) \|_{H^{N-1}}
& \lesssim e^{(1-q)\Omega} \left(\combinedpartialupartialhstarstarnorm{N-1}
		+ \hstarstarnorm{N-1}\right).
	\label{E:gabupperGammaajblowerNEW} 
\end{align}
\end{subequations}
For the metric wave equation error terms $\triangle_{\mu\nu}$ defined in
~\eqref{AE:triangle00}-\eqref{AE:trianglejk}, 
we have the following estimates on $[0,T):$
\begin{subequations}
\begin{alignat}{2}
\label{E:triangleTWODOWNHNMINUSONE}
	\|\triangle_{00}\|_{H^{N-1}}
	+ e^{-\Omega}  \|\triangle_{0j}\|_{H^{N-1}}
	+ \|\triangle_{jk}\|_{H^{N-1}}
&\lesssim&&  e^{-2q\Omega} \totalbelowtopnorm{N-1},
	\\
	\|\u \triangle_{00} \|_{H^{N-1}}
	+ e^{-\Omega} \| \u \triangle_{0j}\|_{H^{N-1}}
	+ \| \u \triangle_{jk}\|_{H^{N-1}}
&\lesssim&&  e^{-2q\Omega} \totalnorm.
\label{E:PARTIALUtriangleTWODOWNHNMINUSONE}
\end{alignat}
\end{subequations}
For the fluid evolution equation error terms 
$\triangle$ and $\triangle^j$
defined in \eqref{AE:triangledef} and \eqref{AE:trianglejdef},
we have the following estimates on $[0,T):$
\begin{subequations}
\begin{align} \label{E:FLUIDTRIANGLEHNMINUSONE}
\left\| \triangle \right \|_{H^{N-1}} 
+ \left\| \frac{1}{u^0} \triangle \right \|_{H^{N-1}} 
& \lesssim e^{-q \Omega} \totalbelowtopnorm{N-1},
	\\
\left\| \triangle^j \right\|_{H^{N-1}}  
+ \left\| \frac{1}{u^0} \triangle^j \right\|_{H^{N-1}}
	& \lesssim e^{-(1+2q) \Omega} \totalbelowtopnorm{N-1},
	\label{E:FLUIDTRIANGLEJHNMINUSONE}
\end{align}
\end{subequations}
\begin{align} \label{E:PARTIALIFLUIDTRIANGLEJHNMINUSONE}
\left\| \partial_i \triangle^j \right\|_{H^{N-1}}  
+ \left\| \partial_i \left( \frac{1}{u^0} \triangle^j \right) \right\|_{H^{N-1}}	
	& \lesssim e^{-2q \Omega} 
	\left\lbrace 
		\totalbelowtopnorm{N-1} 
		+ \totalellipticnorm{N-1}
		+ \topvelocitynorm{N-1}
	\right\rbrace.
\end{align}
For the time derivative of the fluid evolution equation error terms 
$\triangle^j,$
we have the following estimates on $[0,T):$
\begin{align}
\left\| \g_t \triangle^j \right\|_{H^{N-1}}  
	& \lesssim e^{-(1+q) \Omega} \totalbelowtopnorm{N-1}.
	\label{E:PARTIALTFLUIDTRIANGLEJHNMINUONE}
\end{align}
For the time derivatives of the fluid four-velocity, we have the following estimates on $[0,T):$
\begin{subequations}
\begin{align}
\|\g_t u^0\|_{H^{N-1}} 
+ \|\g_t u_0\|_{H^{N-1}}
& \lesssim e^{-q\Omega}
	\left\lbrace
		\totalbelowtopnorm{N-1}
		+ \topvelocitynorm{N-1}
	\right\rbrace,             
	\label{E:dtuzeroN-1}
	\\
\|\g_t u^j\|_{H^{N-1}} 
+ e^{-2 \Omega} \|\g_t u_j\|_{H^{N-1}} 
& \lesssim e^{-(1+q)\Omega}
\left\lbrace
	\totalbelowtopnorm{N-1}
	+ \topvelocitynorm{N-1}
\right\rbrace.              
\label{E:dtujN-1}
\end{align}
\end{subequations}
For the second time derivatives of the fluid four-velocity, we have the following estimates on $[0,T):$
\begin{subequations}
\begin{alignat}{2}
	\|\g_{tt} u^0\|_{H^{N-2}} & \lesssim &&  e^{-q\Omega} \totalnorm \, \label{E:dttuzeroHNMINUSTWO},
		\\
	\|\g_{tt} u^j\|_{H^{N-2}} & \lesssim &&  e^{-(1+q)\Omega} \totalnorm \,. \label{E:dttujHNMINUSTWO} 
\end{alignat}
\end{subequations}
For the second time derivatives of the metric components, we have the following estimates on $[0,T):$
\begin{subequations}
\begin{align}
	\| \g_{tt} g_{00}\|_{H^{N-2}}
	+ e^{-\Omega} \| \g_{tt} g_{0i}\|_{H^{N-2}} 
	+ \| \g_{tt} h_{ij}\|_{H^{N-2}}
	& \lesssim  e^{-q \Omega} \totalbelowtopnorm{N-1},  
	\label{E:dttgHMMINUSTWO} 
		\\
\| \g_{tt} g_{00}\|_{H^{N-1}}
	+ e^{-\Omega} \| \g_{tt} g_{0i}\|_{H^{N-1}} 
	+ \| \g_{tt} h_{ij}\|_{H^{N-1}}
	& \lesssim  e^{-q \Omega} 
		\left\lbrace 
			\totalbelowtopnorm{N-1} 
			+ \totalellipticnorm{N-1}
		\right\rbrace.
	\label{E:dttgHMMINUSONE} 
\end{align}
\end{subequations}
For the time derivative of the metric wave equation error terms $\triangle_{\mu\nu},$
we have the following estimates on $[0,T):$
\begin{align}
\| \partial_t \triangle_{00} \|_{H^{N-2}}
+  e^{- \Omega} \| \partial_t \triangle_{0j}\|_{H^{N-2}}
+ \| \partial_t \triangle_{jk}\|_{H^{N-2}}
& \lesssim e^{-2q\Omega}\totalnorm.
	\label{E:dttriangleTWOLOWERHN-2} 
\end{align}
Finally, for the third time derivatives of the metric components, we have the following estimates on $[0,T):$
\begin{align}
\| \g_{ttt} g_{00}\|_{H^{N-2}} 
+ e^{- \Omega} \| \g_{ttt} g_{0i}\|_{H^{N-2}}
+ \| \g_{ttt} h_{ij}\|_{H^{N-2}} 
\lesssim  e^{-q\Omega} \totalnorm. 
	\label{E:dtttgHMMINUSTWO} 
\end{align}
\end{proposition}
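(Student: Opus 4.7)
The plan is to prove the inequalities in roughly the order they are stated, so that each new estimate may invoke the previous ones, and to use as atomic building blocks the bounds already established in Props.~\ref{P:metric} and~\ref{P:SobolevMetricNEW} for $g_{\mu\nu}$, $g^{\mu\nu}$, $u^{\mu}$ and their $\partial_t$ and $\u$ derivatives. The principal technical tools are the Sobolev--Moser product and composition estimates of Appendix~\ref{B:SobolevMoser}, combined with the Counting Principle (Lemma~\ref{L:countingprinciple} and Cor.~\ref{C:COUNTINGPRINCIPLE}), which reduces most of the bookkeeping to tallying the net number of downstairs spatial indices in each product. Wherever the Counting Principle does not apply directly---for the commutators $\{\u,\g_t\}$, for the second and third time derivatives of the metric, and for the elliptic-norm-controlled quantities---I would track the $t$-weights by hand, using the wave and transport equations to trade top-order time derivatives for combinations of $\u$ and spatial derivatives.

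The algebraic error terms $\triangle_{A,\mu\nu}$ and $\triangle_{C,\mu\nu}$, when expanded from~\eqref{AE:triangleA00def}--\eqref{AE:triangleC0jdef}, are at least quadratic in quantities each of which is controlled by $\gnorm{N-1}$ with a spare factor of $e^{-q\Omega}$; Prop.~\ref{P:F1FkLinfinityHN} then yields~\eqref{E:TRIANGLELAHNMINUSONE}--\eqref{E:TRIANGLELCHNMINUSONE}, and applying $\u$ via the Leibniz rule with Prop.~\ref{P:SobolevMetricNEW} yields~\eqref{E:PARTIALUTRIANGLELAHNMINUSONE}--\eqref{E:PARTIALUTRIANGLELCHNMINUSONE}. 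For the Christoffel errors $\triangle_{\alpha\ \gamma}^{\ \beta}$, the identity $\Gamma_{\alpha\ \gamma}^{\ \beta}=\frac{1}{2}g^{\beta\lambda}(\g_\alpha g_{\lambda\gamma}+\g_\gamma g_{\alpha\lambda}-\g_\lambda g_{\alpha\gamma})$ together with the Counting Principle gives~\eqref{E:triangle000}--\eqref{E:trianglekij}; a single $\partial_i$ forces either $\underpartial^{(2)} g$ factors (paid for by $\totalellipticnorm{N-1}$) or $\underpartial g\cdot\partial g$ factors (controlled by $\gnorm{N-1}$), producing~\eqref{E:PARTIALItriangle000}--\eqref{E:PARTIALItrianglekij}, and neutral $\g_t$ or $\u$ differentiations give~\eqref{E:partialtandutriangle000}--\eqref{E:partialtandutrianglekij}. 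The linearly small quantity $g^{ab}\Gamma_{ajb}$ depends only on $h_{**}$-type derivatives, yielding~\eqref{E:gabupperGammaajblower}--\eqref{E:gabupperGammaajblowerNEW}, after which the wave-equation error estimates~\eqref{E:triangleTWODOWNHNMINUSONE}--\eqref{E:PARTIALUtriangleTWODOWNHNMINUSONE} follow by assembly. For the fluid error terms from~\eqref{AE:triangledef}--\eqref{AE:trianglejdef} the crucial observation is that every summand in $\triangle^{j}$ carrying a metric derivative also carries a factor $u^{a}$ or $u^{0}-1$; the $e^{-(1+q)\Omega}$ decay of these factors strictly dominates any $e^{2\Omega}$ growth that the elliptic norm permits at top order, and this ``sufficiently weak coupling'' is what makes~\eqref{E:PARTIALIFLUIDTRIANGLEJHNMINUSONE} close with the rate $e^{-2q\Omega}$.

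For the time derivatives of the velocity I would solve~\eqref{E:revol}--\eqref{E:velevol} using $\g_t=\frac{1}{u^{0}}\u-\frac{u^{a}}{u^{0}}\g_a$; combined with Prop.~\ref{P:SobolevMetricNEW} this gives~\eqref{E:dtuzeroN-1}--\eqref{E:dtujN-1}, and differentiating once more in $t$ and substituting back yields~\eqref{E:dttuzeroHNMINUSTWO}--\eqref{E:dttujHNMINUSTWO}. For the metric second time derivatives I would solve the reduced wave equations~\eqref{E:metric00}--\eqref{E:metricjk} algebraically for $\g_{tt}g_{\mu\nu}$, i.e.
\begin{align*}
\g_{tt}g_{\mu\nu} = -\frac{1}{g^{00}}\Bigl(g^{ab}\g_a\g_b g_{\mu\nu} + 2g^{0a}\g_t\g_a g_{\mu\nu} - (\text{linear and nonlinear RHS})\Bigr).
\end{align*}
The spatial term $g^{ab}\g_a\g_b g_{\mu\nu}$ carries weight $e^{-2\Omega}$ times $\totalellipticnorm{N-1}$ and the mixed term carries weight $e^{-\Omega}$ times it; both are subsumed in~\eqref{E:dttgHMMINUSONE}, while the $H^{N-2}$ version~\eqref{E:dttgHMMINUSTWO} avoids the elliptic norm because $\|\underpartial^{(2)} g\|_{H^{N-2}}\le\|\underpartial g\|_{H^{N-1}}\lesssim\gnorm{N-1}$. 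The bound~\eqref{E:dttriangleTWOLOWERHN-2} then follows by differentiating the explicit formulas for $\triangle_{\mu\nu}$ in $t$ and applying the $\g_t$-estimates just obtained. Finally,~\eqref{E:dtttgHMMINUSTWO} is obtained by differentiating the algebraic formula for $\g_{tt}g_{\mu\nu}$ one more time in $t$; each resulting term---$\g_t(1/g^{00})$, $\g_t g^{ab}\cdot\g_a\g_b g$, $g^{0a}\g_{tt}\g_a g$, and $\g_t\triangle_{\mu\nu}$---has already been bounded, and taking the $H^{N-2}$ norm keeps $\underpartial^{(2)}\g_t g$ just below top order so that $\gnorm{N-1}$ suffices with the advertised weight.

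The principal obstacle is one of precision and discipline: each of the roughly forty inequalities requires an exponential weight tracked to the correct power of $e^{\Omega}$, and for several of them (notably~\eqref{E:PARTIALIFLUIDTRIANGLEJHNMINUSONE} and the top-order parts of~\eqref{E:dttgHMMINUSONE} and~\eqref{E:dtttgHMMINUSTWO}) the estimate closes only because a decay factor from $u^a$ or from $g^{ab}$ exactly compensates an unfavorable growth factor coming from an elliptic-norm-controlled term. The Counting Principle automates most of the accounting, so the genuinely delicate work lies in the handful of terms where it does not apply---the commutators $\{\u,\g_t\}$, the $\g_{ttt}$ estimate, and the mixed $\u\g_t\triangle$ products---and in verifying that these corrections preserve the hierarchy that will later be exploited in Props.~\ref{P:integralinequalities} and~\ref{P:integralinequalitiesmetric}.
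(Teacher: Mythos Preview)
Your proposal is correct and follows essentially the same strategy as the paper: the Counting Principle handles the bulk of the product estimates, the wave equations are solved algebraically for $\g_{tt}g_{\mu\nu}$, and the transport equation~\eqref{E:velevol} is used to express $\g_t u^j$. One small tactical difference worth noting: for the $\g_t$ bounds on the Christoffel errors~\eqref{E:partialtandutriangle000}--\eqref{E:partialtandutrianglekij} and on $\triangle_{\mu\nu}$ in~\eqref{E:dttriangleTWOLOWERHN-2}, the paper does not differentiate the explicit formulas directly (which would introduce $\g_{tt}g$ factors not yet controlled at that stage) but instead uses the decomposition $\g_t=\frac{1}{u^0}\u-\frac{u^a}{u^0}\g_a$ to reduce the $\g_t$ estimate to the already-proven $\u$ and $\partial_a$ estimates; this shortcut avoids having to check that every $\g_t$-differentiated building block remains in $\mathcal{G}_{N-1}\cup\mathcal{H}_{N-1}$. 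Your direct approach would also close, but requires the $\g_{tt}g$ bounds to be in place first, so the paper's ordering (prove the $\u$ and $\partial_i$ Christoffel estimates, then deduce $\g_t$ via the decomposition, and only later establish $\g_{tt}g$) is slightly cleaner.
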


The following proposition is a companion to Prop. \ref{P:Sobolev}.

\begin{proposition} [\textbf{Error term estimates connected to commutations}] \label{P:Sobolevtwo}
	Let $N \geq 4$ be an integer and let $(g_{\mu \nu}, u^{\mu}, \dens),$ $(\mu,\nu = 0,1,2,3),$ be a solution to the modified
	Eqs. \eqref{E:metric00}-\eqref{E:velevol} on the spacetime slab $[0,T) \times \mathbb{T}^3$. Assume that 
	the bootstrap assumption \eqref{E:gjkBAvsstandardmetric} holds on the same slab for some constant $c_1 \geq 
	1.$ Then there exists a constant $\epsilon' > 0$ such that if $\totalnorm(t) \leq \epsilon'$ for $t \in [0,T)$
	and $1 \leq |\vec{\alpha}| \leq N-1,$ then the following estimates also hold on $[0,T)$
	(and the implicit constants in the estimates can depend on $N$ and $c_1$):
\begin{subequations}
	\begin{align}
		\|
			g^{\mu \nu} (\g_{\mu} \g_{\nu} u^{\delta})
			\partial_{\delta} \g_{\vec{\alpha}} g_{00} 
		\|_{L^2}
		+ 
		\|
			g^{\mu \nu} (\g_{\mu} u^{\delta})
				\g_{\vec{\alpha}} \g_{\nu} \partial_{\delta} g_{00} 
		\|_{L^2}
		& \lesssim e^{-2q\Omega}\totalnorm, 
			\label{E:BOXUG00COMM} \\
		\|
			g^{\mu \nu} (\g_{\mu} \g_{\nu} u^{\delta})
			\partial_{\delta} \g_{\vec{\alpha}} g_{0j}
		\|_{L^2}
		+ 
		\|
			g^{\mu \nu} (\g_{\mu} u^{\delta})
				\g_{\vec{\alpha}} \g_{\nu} \partial_{\delta} g_{0j} 
		\|_{L^2}
		& \lesssim e^{(1-2q)\Omega}\totalnorm, 
			%\label{E:BOXUGOJCOMM} 
			\\
		\|
			g^{\mu \nu} (\g_{\mu} \g_{\nu} u^{\delta})
			\partial_{\delta} \g_{\vec{\alpha}} h_{jk}
		\|_{L^2}
		+
		\|
			g^{\mu \nu} (\g_{\mu} u^{\delta})
				\g_{\vec{\alpha}} \g_{\nu} \partial_{\delta} h_{jk} 
		\|_{L^2}
		& \lesssim e^{-2q\Omega}\totalnorm.
		\label{E:BOXUGJKCOMM}
	\end{align}
\end{subequations}
For $1 \leq |\vec{\alpha}| \leq N-1,$
we have the following commutator estimates on $[0,T):$
\begin{subequations}
\begin{align} \label{E:UCOMMUTATORBOUND}
\sum_{|\vec{\alpha}|\le N-1}\left\| \{ \frac{u^a}{u^0} \partial_a, \a \} u^j \right\|_{L^2}	
+ e^{-\Omega} \sum_{|\vec{\alpha}|\le N-1}\left\| \{ \frac{u^a}{u^0} \partial_a, \g_i\a \} u^j \right\|_{L^2}
	& \lesssim e^{-2(1+q)\Omega} \totalnorm, \\
	\sum_{|\vec{\alpha}|\le N-1}\left\| \{ \frac{u^a}{u^0} \partial_a, \a \} \dens \right\| \lesssim 
e^{-(1 + q) \Omega} \totalnorm. \label{E:RHOCOMMUTATORBOUND}
\end{align}
\end{subequations}
For $1 \leq |\vec{\alpha}| \leq N-1,$
we have the following commutator estimates on $[0,T):$
\begin{align} \label{E:COMMUTATORPARTIALTPARTIALUMETRICHNMINUONE}
		\|
			\{\u, \partial_t \} \a g_{00} 
		\|_{L^2}
		+
		e^{-\Omega}
		\|
			\{\u, \partial_t \} \a g_{0j}
		\|_{L^2}
		+
		\|
			\{\u, \partial_t \} \a h_{jk}
		\|_{L^2}
		& \lesssim e^{-2q\Omega} \gnorm{N-1}. 
	\end{align}
For $1 \leq |\vec{\alpha}| \leq N-1,$
we have the following estimates on $[0,T):$
\begin{subequations}
	\begin{align}
		\left\|
			 \left(\u g^{00}\right) \g_{tt} \a g_{00} 
		\right\|_{L^2}
		+
		\left\|
			 \left(\u g^{0a}\right) \g_t \g_a \a g_{00} 
		\right\|_{L^2}
		& \lesssim e^{-2q\Omega}\totalnorm,  \label{E:commutatorserror1}
			\\
		\left\|
			 \left(\u g^{00}\right) \g_{tt} \a g_{0j} 
		\right\|_{L^2}
		+
		\left\|
			 \left(\u g^{0a}\right) \g_t \g_a \a g_{0j} 
		\right\|_{L^2}
		& \lesssim e^{(1-2q)\Omega}\totalnorm, 
			\label{E:commutatorserror2} \\
		\left\|
			 \left(\u g^{00}\right) \g_{tt} \a h_{jk} 
		\right\|_{L^2}
		+
		\left\|
			 \left(\u g^{0a}\right) \g_t \g_a \a h_{jk} 
		\right\|_{L^2}
		& \lesssim e^{-2q\Omega}\totalnorm. \label{E:commutatorserror3}
	\end{align}
\end{subequations}
For $1 \leq |\vec{\alpha}| \leq N-1,$
we have the following commutator estimate on $[0,T):$
\begin{align}
	\left\|
		\{\u, \a \} (g^{ab}\Gamma_{ajb})
	\right\|_{L^2}
	& \lesssim e^{-2q\Omega}\totalnorm. \label{E:commutatorgabGammaajb}
\end{align}
For $1 \leq |\vec{\alpha}| \leq N-1,$
we have the following commutator estimates on $[0,T):$
\begin{align}
	\left\|
		\{ \u, \a \} \triangle_{00} 
	\right\|_{L^2}
	+ e^{-\Omega} 
		\left\| 
			\{ \u,\a \} \triangle_{0j}
		\right\|_{L^2}
	+ \left\| 
			\{\u, \a \} \triangle_{jk}
		\right\|_{L^2} 
	& \lesssim 
		e^{-2q\Omega} \totalnorm.
		\label{E:COMMUTATORPARTIALUtriangleTWODOWNHNMINUSONE}
\end{align}
For $1 \leq |\vec{\alpha}| \leq N-1,$
we have the following commutator estimates on $[0,T):$
\begin{subequations}
 \begin{align}
	\left\|
		\{\hat{\square}_g,\g_{\vec{\alpha}}\} g_{00} 
	\right\|_{L^2}
	+ e^{- \Omega} 
		\left \|
			\{\hat{\square}_g,\g_{\vec{\alpha}}\}g_{0j}
		\right\|_{L^2}
	+ \left\| 
			\{\hat{\square}_g,\g_{\vec{\alpha}}\}h_{jk}
		\right\|_{L^2}
	& \lesssim  e^{-2q\Omega} \totalbelowtopnorm{N-1}, 
	\label{E:gcommutatorL2}
	\\
	\left\| 
		\u \{\hat{\square}_g,\g_{\vec{\alpha}}\} g_{00} 
	\right\|_{L^2}
	+ e^{- \Omega} 
		\left\|
			\u \{\hat{\square}_g,\g_{\vec{\alpha}}\}g_{0j}
		\right\|_{L^2}
	+ \left\|
			\u \{\hat{\square}_g,\g_{\vec{\alpha}}\}h_{jk}
		\right\|_{L^2}
	& \lesssim  e^{-2q\Omega} \totalnorm.
	\label{E:gpartialucommutatorL2}
\end{align}
\end{subequations}
%\begin{subequations}
 %\begin{align}
%\| \u \{\hat{\square}_g,\g_{\vec{\alpha}}\} g_{00} \|_{L^2}
%& \lesssim e^{-2q\Omega}\totalnorm
%	+ \sum_{a,b=1}^3 e^{-q\Omega}\totalnorm \| \g_a \g_b g_{00} \|_{H^{N-1}},
%	\label{E:partialug00commutatorL2}\\
%\| \u \{\hat{\square}_g,\g_{\vec{\alpha}}\}g_{0j}\|_{L^2}
%& \lesssim e^{(1-2q)\Omega}\totalnorm,
%	\label{E:partialug0jcommutatorL2}\\
%\| \u \{\hat{\square}_g,\g_{\vec{\alpha}}\}h_{jk}\|_{L^2}
%& \lesssim  e^{-2q\Omega}\totalnorm. 
%\label{E:partialugjkcommutatorL2}
%\end{align}
%\end{subequations}
For $1 \leq |\vec{\alpha}| \leq N-1,$
we have the following commutator estimates on $[0,T):$
\begin{subequations}
\begin{align}
\label{E:equivalencecommutator1}
 & \left\| 
 		(\g_t\u\a-\a\u\g_t)g_{00}
 	 \right \|_{L^2}  
 	+ \sum_{i=1}^3 
 		\left\|
 			(\g_i \u\a-\a\u \g_i)g_{00}
 		\right\|_{L^2}
	+ \left\|
			\{\u,\a\}g_{00}
		\right\|_{L^2} \\
  &   \ \ \qquad \lesssim  
  \sum_{|\vec{\beta}| \leq N - 2}
		\left\| 
			\g_t \u \g_{\vec{\beta}} g_{00} 
		\right\|_{L^2}
	+ e^{-q\Omega} \gzerozeronorm{N-1}, 
	\notag \\
\label{E:ALTERNATEequivalencecommutator1}
 & \left\|
 		(\g_t\u\a-\a\u\g_t)g_{00}
 	 \right\|_{L^2}  
 	+ \sum_{i=1}^3 \|(\g_i \u\a-\a\u \g_i)g_{00}\|_{L^2}
	+ \left\|
			\{\u,\a\}g_{00}
		\right\|_{L^2} \\
  &   \ \ \qquad \lesssim  
 	e^{-q\Omega} \gzerozerounorm{N-1}
 	+
 	e^{-q\Omega} \gzerozeronorm{N-1}, 
	\notag \\
\label{E:equivalencecommutator2}
& 
\left\|
	(\g_t\u\a-\a\u\g_t)g_{0j}
\right\|_{L^2} 
+ \sum_{i=1}^3 
	\left\|
		(\g_i \u\a-\a\u\g_i)g_{0j}
	\right\|_{L^2} 
+ \left\| 
		\{\u,\a\}g_{0j} 
	\right\|_{L^2}  
	\\
& \ \ \qquad  \lesssim 
		\sum_{|\vec{\beta}| \leq N - 2}
		\left\| 
			\g_t \u \g_{\vec{\beta}} g_{0j} 
		\right\|_{L^2}
		+ e^{-(1 + q)\Omega} \gzerostarnorm{N-1}, \notag \\
\label{E:ALTERNATEequivalencecommutator2}
& \|(\g_t\u\a-\a\u\g_t)g_{0j}\|_{L^2} 
+ \sum_{i=1}^3 
	\left\|
		(\g_i\u\a-\a\u\g_i)g_{0j}
	\right\|_{L^2}
+ \left\|
		\{\u,\a\}g_{0j}
	\right\|_{L^2}   \\
& \ \ \qquad  
\lesssim
	e^{-(1 + q)\Omega} \gzerostarunorm{N-1}
 	+
 	e^{-(1 + q)\Omega} \gzerostarnorm{N-1},
 \notag \\
\label{E:equivalencecommutator3}
& \left\|
		(\g_t\u\a-\a\u\g_t)h_{jk}
	\right\|_{L^2}  
	+ \sum_{i=1}^3 
		\left\|
			(\g_i \u\a-\a\u\g_i)h_{jk}
		\right\|_{L^2}
 \\ 
& \ \ \qquad \lesssim 
		\sum_{|\vec{\beta}| \leq N - 2}
		\left\| 
			\g_t \u \g_{\vec{\beta}} h_{jk} 
		\right\|_{L^2} 
+ e^{-2q\Omega}\hstarstarnorm{N-1},\notag 
	\\
\label{E:ALTERNATEequivalencecommutator3}
& \left\|
		(\g_t\u\a-\a\u\g_t)h_{jk}
	\right\|_{L^2} 
	+ \sum_{i=1}^3 
		\left\|
			(\g_i \u\a-\a\u\g_i)h_{jk}
		\right\|_{L^2}
	\\ 
& \ \ \qquad \lesssim e^{-q\Omega} \combinedpartialupartialhstarstarnorm{N-1} + e^{-q\Omega}\hstarstarnorm{N-1}.
\notag
\end{align}
\end{subequations}
 	For the error terms from Lemma \ref{L:metricfirstdifferentialenergyinequality},
	where $\triangle_{\mathcal{E};(\gamma, \delta)}[v,\partial v]$ is defined in \eqref{E:trianglemathscrEdef},
	we have the following estimates on $[0,T):$
	\begin{subequations}
	\begin{align}
		e^{2q \Omega} \left\| \triangle_{\mathcal{E};(\upgamma_{00}, \updelta_{00})}[\partial_{\vec{\alpha}} (g_{00} + 1)
			,\partial (\partial_{\vec{\alpha}} g_{00})] \right\|_{L^1}
		& \lesssim  e^{-q \Omega} \totalnorm^2,
				\label{E:triangleEgamma00delta00L1} \\
		e^{2q \Omega} \left\| \triangle_{\mathcal{E};(\widetilde{\upgamma}_{00}, \widetilde{\updelta}_{00})}[\u 		
				\partial_{\vec{\alpha}} g_{00}
			,\partial (\u \partial_{\vec{\alpha}} g_{00})] \right\|_{L^1}
		& \lesssim  e^{-q \Omega} \totalnorm^2,    
			\label{E:partialutriangleEgamma00delta00L1} \\
		e^{2(q-1)\Omega} \left\| \triangle_{\mathcal{E};(\upgamma_{0*}, \updelta_{0*})}[\partial_{\vec{\alpha}} g_{0j},
			\partial (\partial_{\vec{\alpha}} g_{0j})] \right\|_{L^1}
		& \lesssim  e^{-q \Omega} \totalnorm^2,	
			\label{E:triangleEgamma0jdelta0jL1} \\
		e^{2(q-1)\Omega} \left\| \triangle_{\mathcal{E};(\widetilde{\upgamma}_{0*}, \widetilde{\updelta}_{0*})}[\u \partial_{\vec{\alpha}} g_{0j},
			\partial (\u \partial_{\vec{\alpha}} g_{0j})] \right\|_{L^1}
		& \lesssim  e^{-q \Omega} \totalnorm^2, 
			\label{E:partialutriangleEgamma0jdelta0*L1} 
			\\
		e^{2q \Omega} \left \| \triangle_{\mathcal{E};(\upgamma_{**},\updelta_{**})}[0,\partial (\partial_{\vec{\alpha}} h_{jk})] \right\|_{L^1}    
		& \lesssim  e^{-q \Omega} \totalnorm^2,
				\label{E:triangleEgamma**delta**L1}
			 \\
		e^{2q \Omega} \left \| \triangle_{\mathcal{E};(\widetilde{\upgamma}_{**},\widetilde{\updelta}_{**})}[0,\partial (\u \partial_{\vec{\alpha}} 
			h_{jk})] \right\|_{L^1}
		& \lesssim  e^{-q \Omega} \totalnorm^2.
			\label{E:partialutriangleEgamma**delta**L1}
	\end{align}
	\end{subequations}
	Finally, for the error terms from Lemma \ref{L:COMMUTEDBASICSTRUCTURE},
	where $\triangle_{\text{Ell}}[\partial^{(2)} v]$ is defined in \eqref{E:NEWQUADRATICERROR},
	we have the following estimates on $[0,T):$
	\begin{align} \label{E:ELLIPTICERRORL2}
		\|
			 \triangle_{\text{Ell}}[\partial^{(2)} \a g_{00}]
		\|_{L^2}
		+
		e^{-\Omega}
		\|
			\triangle_{\text{Ell}}[\partial^{(2)} \a g_{0j}] 
		\|_{L^2}
		+
		\|
			 \triangle_{\text{Ell}}[\partial^{(2)} \a h_{jk}]
		\|_{L^2}
		& \lesssim e^{-2q\Omega}\totalnorm.
	\end{align}
\end{proposition}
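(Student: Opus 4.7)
\medskip

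\noindent\textbf{Proof proposal for Proposition \ref{P:Sobolevtwo}.}

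The plan is to treat each block of estimates as an instance of the same general strategy: expand the product or commutator via the Leibniz rule, reduce to norms of the constituent factors (four-velocity components, metric components, their inverses, and their first/second derivatives), and then invoke the bounds already established in Proposition \ref{P:metric} and Proposition \ref{P:SobolevMetricNEW} together with the Sobolev-Moser product estimates of Appendix \ref{B:SobolevMoser}. The Counting Principle (Lemma \ref{L:countingprinciple} and Corollary \ref{C:COUNTINGPRINCIPLE}) will be the main bookkeeping device: for each product we count the net number of downstairs spatial indices, which pins down the expected $t$-weight, and then we absorb any excess $e^{q\Omega}$ factors using the extra smallness coming from factors that lie in $\mathcal{G}_{N-1}$ (for instance, factors of $u^j$ or $\underpartial u^j$ that carry an extra $e^{-q\Omega}$). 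Throughout, all products will be bounded using the Sobolev-Moser inequality $\|FG\|_{H^{N-1}}\lesssim \|F\|_{H^{N-1}}\|G\|_{L^\infty}+\|F\|_{L^\infty}\|G\|_{H^{N-1}}$ together with the low-$L^\infty$ bounds that follow from Sobolev embedding ($N\ge 4$).

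For the $\hat{\square}_g$-type products \eqref{E:BOXUG00COMM}--\eqref{E:BOXUGJKCOMM} and the commutators \eqref{E:UCOMMUTATORBOUND}--\eqref{E:RHOCOMMUTATORBOUND}, I would write $\{(u^a/u^0)\partial_a,\partial_{\vec\alpha}\}f$ as a sum of products $\partial_{\vec\beta}(u^a/u^0)\partial_a\partial_{\vec\gamma}f$ with $|\vec\beta|\ge 1$ and $|\vec\beta|+|\vec\gamma|=|\vec\alpha|$, so that at least one derivative lands on $u^a/u^0$; the bounds on $u^a$, $u^0$, and $u_0^{-1}$ from Proposition \ref{P:SobolevMetricNEW} then give the claimed decay. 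Similarly, $g^{\mu\nu}\partial_\mu\partial_\nu u^\delta$ is written as $\hat\square_g u^\delta$ plus lower-order pieces and then $\hat\square_g u^j$ is computed by differentiating the transport equation \eqref{E:velevol}; this expresses the offending second derivatives of $u$ in terms of quantities already controlled by $\totalnorm$. The commutators $\{\u,\partial_t\}$ in \eqref{E:COMMUTATORPARTIALTPARTIALUMETRICHNMINUONE} expand as $(\partial_t u^\alpha)\partial_\alpha$, and since $\partial_t u^0,\partial_t u^j$ are controlled by \eqref{E:dtuzeroN-1}--\eqref{E:dtujN-1}, the estimate follows. The terms $(\u g^{00})\partial_{tt}\partial_{\vec\alpha}v$ and $(\u g^{0a})\partial_t\partial_a\partial_{\vec\alpha}v$ in \eqref{E:commutatorserror1}--\eqref{E:commutatorserror3} are treated by placing $\u g^{00}$ and $\u g^{0a}$ in $L^\infty$ (via Sobolev embedding applied to \eqref{E:partialuNEW}) and $\partial_{tt}\partial_{\vec\alpha}v$, $\partial_t\partial_a\partial_{\vec\alpha}v$ in $L^2$; here $|\vec\alpha|\le N-1$, so these two-derivative terms are bounded by $\gunorm{N-1}+\gnorm{N-1}+\totalellipticnorm{N-1}\lesssim\totalnorm$ using \eqref{E:dttgHMMINUSONE}.

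The commutators $\{\u,\partial_{\vec\alpha}\}\triangle_{\mu\nu}$, $\{\hat\square_g,\partial_{\vec\alpha}\}g_{\mu\nu}$, and $\{\u,\partial_{\vec\alpha}\}(g^{ab}\Gamma_{ajb})$ in \eqref{E:commutatorgabGammaajb}--\eqref{E:gpartialucommutatorL2} are the typical ``derivatives hit coefficients'' expansions: $[\hat\square_g,\partial_{\vec\alpha}]v=-(\partial_{\vec\beta}g^{\mu\nu})\partial_\mu\partial_\nu\partial_{\vec\gamma}v$ summed over $|\vec\beta|\ge 1$, $|\vec\beta|+|\vec\gamma|=|\vec\alpha|$. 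In these expansions, when one derivative falls on $g^{\mu\nu}$ the coefficient is already small in $\totalnorm$, so distributing the remaining derivatives between coefficient and $v$ via Sobolev-Moser and placing the lower-derivative factor in $L^\infty$ yields the claim, \emph{provided} we correctly track the index structure; this is exactly where the Counting Principle is essential. The coefficient estimates are precisely those of Proposition \ref{P:metric}, and the application of $\u$ on the outside produces the $\u$-derivative versions of these same coefficients, controlled by Proposition \ref{P:SobolevMetricNEW}. The error terms $\{\u,\partial_{\vec\alpha}\}\triangle_{\mu\nu}$ in \eqref{E:COMMUTATORPARTIALUtriangleTWODOWNHNMINUSONE} are handled identically since $\triangle_{\mu\nu}$ is a polynomial expression in the metric, its inverse, and their derivatives, each factor of which is bounded by Proposition \ref{P:metric}.

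The commutator block \eqref{E:equivalencecommutator1}--\eqref{E:ALTERNATEequivalencecommutator3} is the most delicate: the differences $\partial_t\u\partial_{\vec\alpha}-\partial_{\vec\alpha}\u\partial_t$, $\partial_i\u\partial_{\vec\alpha}-\partial_{\vec\alpha}\u\partial_i$, and $\{\u,\partial_{\vec\alpha}\}$ are expanded into sums of terms involving derivatives of $u^\mu$ acting on lower-order derivatives of the metric component. When $|\vec\alpha|\le N-2$ all factors live comfortably below the top order and can be absorbed into a lower-order norm (yielding the sums of $\|\partial_t\u\partial_{\vec\beta}v\|_{L^2}$ on the right-hand sides of \eqref{E:equivalencecommutator1}, \eqref{E:equivalencecommutator2}, \eqref{E:equivalencecommutator3}), while the alternative bounds \eqref{E:ALTERNATEequivalencecommutator1}, \eqref{E:ALTERNATEequivalencecommutator2}, \eqref{E:ALTERNATEequivalencecommutator3} trade these lower-order $\partial_t\u$ norms for the $\u$-norm at the expense of one derivative and one power of $e^{-q\Omega}$. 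The building-block error terms \eqref{E:triangleEgamma00delta00L1}--\eqref{E:partialutriangleEgamma**delta**L1} and the elliptic error term \eqref{E:ELLIPTICERRORL2} then follow by inspecting the explicit forms \eqref{E:trianglemathscrEdef} and \eqref{E:NEWQUADRATICERROR} and applying Cauchy-Schwarz with one factor bounded in $L^\infty$ via Sobolev embedding.

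\medskip

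\noindent\textbf{Main obstacle.} The hardest point will be the elliptic error term $\triangle_{\text{Ell}}$ in \eqref{E:ELLIPTICERRORL2}. The dangerous constituents there are $(\partial_{\mathbf u}g^{ab}+2\omega g^{ab})\partial_a\partial_b\partial_{\vec\alpha}v$ and $2(\omega-H)g^{ab}\partial_a\partial_b\partial_{\vec\alpha}v$: both involve two derivatives landing on $v$ at the top order $|\vec\alpha|\le N-1$, which forces us to invoke $\totalellipticnorm{N-1}$ and thus to rely on the $e^{2\Omega}$-worse weight in the top-order spatial norms. The gain must come from the factor $\partial_{\mathbf u}g^{ab}+2\omega g^{ab}$ which, by \eqref{E:SUPERIMPORTANT}, decays like $e^{-(2+q)\Omega}$, and from $\omega-H=O(e^{-3H t})$ by Lemma \ref{L:backgroundaoftestimate}; combining these with $e^{-2\Omega}\|\partial_a\partial_b\partial_{\vec\alpha}v\|_{L^2}\lesssim\totalellipticnorm{N-1}$ produces exactly the asserted $e^{-2q\Omega}\totalnorm$ bound. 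Verifying that every such top-order two-spatial-derivative contribution in \eqref{E:NEWQUADRATICERROR} carries a compensating factor that is at least $e^{-2q\Omega}$-small is the nontrivial core of the entire proposition, and it is this compensation — ultimately powered by the cosmological constant through the decay $u^j=O(e^{-(1+q)\Omega})$ and $\omega-H=O(e^{-3Ht})$ — that makes the estimate close.
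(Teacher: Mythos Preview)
Your proposal is largely correct and follows the same overall strategy as the paper: Leibniz expansions of commutators, Sobolev--Moser product inequalities, and the Counting Principle to track $t$-weights. Your identification of the main obstacle in \eqref{E:ELLIPTICERRORL2} and how it is resolved (via \eqref{E:SUPERIMPORTANT} and the $\omega-H$ decay from Lemma~\ref{L:backgroundaoftestimate}) matches the paper exactly.

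There are two places worth flagging where the paper takes a more direct route than what you sketch. First, for the term $g^{\mu\nu}(\partial_\mu\partial_\nu u^\delta)\partial_\delta\partial_{\vec\alpha}g_{00}$ in \eqref{E:BOXUG00COMM}, the paper does \emph{not} assemble $\hat\square_g u^\delta$ and compute it via the transport equation; instead it simply splits into the three cases $(\mu,\nu)\in\{(0,0),(0,j),(j,k)\}$ and invokes the already-proved second-derivative bounds $\|\partial_{tt}u^\delta\|_{H^{N-2}}$ from \eqref{E:dttuzeroHNMINUSTWO}--\eqref{E:dttujHNMINUSTWO} (which were themselves obtained by differentiating the transport equation in Proposition~\ref{P:Sobolev}). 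Your route would work but duplicates that effort. Second, for the commutator block \eqref{E:equivalencecommutator1}--\eqref{E:ALTERNATEequivalencecommutator3}, the key mechanism that produces the $\sum_{|\vec\beta|\le N-2}\|\partial_t\u\partial_{\vec\beta}v\|_{L^2}$ terms on the right is the decomposition $\partial_t=\tfrac{1}{u^0}\u-\tfrac{u^a}{u^0}\partial_a$ (and Lemma~\ref{L:twotimederivatives} for the alternate versions): when the Leibniz expansion of $(\partial_t\u\partial_{\vec\alpha}-\partial_{\vec\alpha}\u\partial_t)v$ throws up a factor $(\partial_{\vec\alpha-\vec\beta}u^0)\partial_{tt}\partial_{\vec\beta}v$, one rewrites $\partial_{tt}$ via this decomposition to land on $\partial_t\u\partial_{\vec\beta}v$ plus harmless spatial-derivative terms. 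You describe the outcome correctly but do not name this step; it is the only nontrivial algebraic manipulation in that block and should be made explicit.
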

\begin{remark} [\textbf{Dangerous error terms}]
The error estimates
\eqref{E:VERYIMPORTANTCHRISTOFFELSYMBOLERRORESTIMATE},
\eqref{E:PARTIALIVERYIMPORTANTCHRISTOFFELSYMBOLERRORESTIMATE},
\eqref{E:gabupperGammaajblower},
and
\eqref{E:gabupperGammaajblowerNEW}
and from Props. 
\ref{P:Sobolev} and \ref{P:Sobolevtwo}
are the bounds used for the ``dangerous linear terms" appearing in the differential energy inequalities of Lemmas~\ref{L:metricfirstdifferentialenergyinequality} and~\ref{L:fluidfirstdifferentialenergyinequality}. 
The precise structure of the right-hand sides of these estimates
plays a critical role in our proof of Prop.~\ref{P:integralinequalitiesmetric}.
\end{remark}
\subsection{Counting Principle}
Before proving Prop.~\ref{P:Sobolev}, we first introduce a {\em Counting Principle} that will largely simplify and facilitate the proof
of many of the estimates. \textbf{The main point is that many of the error term products can be estimated by simply counting the net number of downstairs spatial indices in the product.} However, our proofs of the higher-order time derivative estimates and the 
commutator term estimates require separate arguments that 
do not directly rely on the Counting Principle.
\begin{remark}
	The Counting Principle used in the present article differs from the ones
	used in ~\cite{iRjS2012,jS2012}. The main differences are 
	\textbf{i)} the Counting Principle in the present article only concerns the norms $\| \cdot \|_{H^{N-1}}$
	of various quantities;
	\textbf{ii)} we have to account for the operator $\partial_{\mathbf{u}}$ in the present article; and \textbf{iii)}
	the top-order derivatives of various quantities are allowed to behave worse by a factor of $e^{\Omega}$
	compared to the corresponding top-order derivatives in ~\cite{iRjS2012,jS2012}.
\end{remark}

Our Counting Principle involves the following two classes of terms.

\begin{definition}[\textbf{The sets $\mathcal{G}_{N-1}$ and $\mathcal{H}_{N-1}$}]\label{D:GandH}
Let $N\geq3$ be a given integer and let $q$ be the small positive constant defined in 
~\eqref{E:qdef}. Let $v:\T^3\to\R$ be a given product of tensorfield 
components and let $A$ denote its total number of downstairs spatial indices minus the total number
of its upstairs spatial indices (e.g. if $v=\g_jg^{kl}$, then $A=-1$).
Let $\totalnorm$ denote the total solution norm defined in \eqref{E:totalnorm}.
We say that $v$ belongs to the set 
$\mathcal{G}_{N-1}$ 
if there exists a constant $C > 0$ such that 
under the assumptions of Prop. \ref{P:Sobolev}
and the assumption that $\totalnorm(t)$ is sufficiently small for $t \in [0,T),$
the following estimate holds for $t \in [0,T):$
\[
\|v\|_{H^{N-1}}\leq Ce^{-q\Omega}e^{A\Omega}\totalnorm.
\]
We say that $v$ belongs to the set $\mathcal{H}_{N-1}$ if 
there exists a constant $C > 0$ such that 
under the assumptions of Prop. \ref{P:Sobolev}
and the assumption that $\totalnorm(t)$ is sufficiently small for $t \in [0,T),$
then either inequality \textbf{i)} or \textbf{ii)} below
holds for $t \in [0,T):$
\begin{align} \label{E:mathcalHdefpartone}
	& \textbf{i)} \qquad
	\|v \|_{H^{N-1}}\leq Ce^{A\Omega}\totalnorm
	%\quad\text{ and }\quad
	%\| \partial_{\mathbf{u}} v \|_{H^{N-1}}\leq Ce^{A\Omega}\totalnorm
		\\
	& \textbf{ii)} \qquad
		\| v \|_{L^{\infty}}\leq C e^{A\Omega}
		%\| \partial_{\mathbf{u}} v \|_{L^{\infty}}\leq C e^{A\Omega},
		%\qquad
		\quad\text{ and }\quad
		\| \underpartial v \|_{H^{N-2}}\leq Ce^{A\Omega}\totalnorm.
			%\label{E:mathcalHdefparttwo}
			%& \ \ \text{ and }
		%\sum_{i=1}^3 \| \partial_{\mathbf{u}} \g_i v \|_{H^{N-2}}\leq Ce^{A\Omega}\totalnorm.
		\notag
\end{align}
\end{definition}

\begin{remark}
	Note our use of the spatial coordinate gradient
	notation $\underpartial$ in \textbf{ii)} above.  
	In this context, it is understood that
	``$\underpartial$'' does not contribute an additional downstairs
	spatial index because we have not explicitly displayed 
	any such additional index.
	\end{remark}

%\begin{array}{c}
%\displaystyle
%\textbf{ii)} \qquad
%\|v\|_{L^{\infty}}\leq C e^{A\Omega},
%\displaystyle
%\|\underline{\g}v\|_{H^{N-2}}\leq Ce^{A\Omega}\totalnorm\quad\text{ and }\quad
%\|\underline{\g}v\|_{H^{N-2}}\leq Ce^{(A+1)\Omega}\totalnorm.
%\end{array}
%\]
%
%
\begin{remark} [\textbf{The $\mathcal{G}_{N-1}$ terms experience improved decay}]
	We view elements of $\mathcal{G}_{N-1}$ as being particularly ``good'' terms.
	Such terms have the extra decay factor $e^{-q\Omega}$ 
	compared to the decay one would infer from simply counting spatial indices.
\end{remark}
\begin{remark} [\textbf{A preliminary list of terms in $\mathcal{G}_{N-1}$ and $\mathcal{H}_{N-1}$}] \label{R:list}
For convenience, we now provide preliminary lists of quantities that, under the 
assumptions of Prop. \ref{P:Sobolev}, 
have already been shown to
belong to the sets $\mathcal{G}_{N-1}$ and $\mathcal{H}_{N-1}$
respectively.
\begin{center}
	\underline{\textbf{Already identified elements of $\mathcal{G}_{N-1}$}}
\end{center}
\begin{align*}
	& g_{00} + 1, \ \partial_i g_{00}, \ \partial_i \partial_j g_{00}, 
		\ \g_t g_{00}, \ \partial_i \g_t g_{00}, 
		\ \partial_{\mathbf{u}} g_{00}, \ \partial_{\mathbf{u}} \partial_t g_{00},
		\ \partial_{\mathbf{u}} \g_i g_{00}, 
			\\
	& g^{00} + 1, \ \partial_i g^{00}, 
		\ \g_t g^{00}, %\ \partial_i \g_t g^{00},
		\ \partial_{\mathbf{u}} g^{00}, % \ \partial_{\mathbf{u}} \partial_i g^{00}, 
		% \ \partial_{\mathbf{u}} \g_t g^{00},
		\\
	& g_{0j}, \ \partial_i g_{0j}, \ \partial_i \partial_j g_{0k}, 
		\ \g_t g_{0j}, \ \partial_i \g_t g_{0j}, 
		\ \partial_{\mathbf{u}} g_{0j}, \ \partial_{\mathbf{u}} \partial_t g_{0j},
		\ \partial_{\mathbf{u}} \g_i g_{0j},
			\\
	& g^{0j}, \ \partial_i g^{0j}, 
		\ \g_t g^{0j}, % \ \partial_i \g_t g^{0j},
		\ \partial_{\mathbf{u}} g^{0j}, % \ \partial_{\mathbf{u}} \partial_t g^{0j}, 
		%\ \partial_{\mathbf{u}} \g_i g^{0j}, 
		\\
	& e^{2 \Omega} \partial_{\mathbf{u}} h_{jk},
		\ \partial_i g_{jk}, \ \partial_i \partial_j g_{kl}, 
		\ e^{2 \Omega} \partial_t h_{jk}, \ \partial_t \g_i g_{jk}, 
		\ e^{2 \Omega} \partial_{\mathbf{u}} \partial_t h_{jk},
		\ e^{2 \Omega} \partial_{\mathbf{u}} \partial_i h_{jk}, 
		\ \partial_{\mathbf{u}} \partial_i g_{jk}, 
		\ e^{2 \Omega} \partial_i \partial_j h_{jk}, 
		\ \partial_i \partial_j g_{jk}, 
			\\
	& \partial_i g^{jk},  
		% \ \partial_i \g_t g^{jk},
		% \ \partial_{\mathbf{u}} \partial_i g^{jk},
			\\
	& \Gamma_{000}, \ \Gamma_{j00}, \ \Gamma_{0j0}, \ \Gamma_{ijk},
		\ \partial_i \Gamma_{000}, \ \partial_i \Gamma_{j00}, \ \partial_i \Gamma_{0j0}, 
		\ \partial_i \Gamma_{j0k}, \ \partial_i \Gamma_{jk0}, \ \partial_i \Gamma_{jkl},
			\\
	& \partial_t \Gamma_{000}, \ \partial_t \Gamma_{j00}, \ \partial_t \Gamma_{0j0}, \ \partial_t \Gamma_{ijk},
 		\\
	& \partial_{\mathbf{u}} \Gamma_{000}, \ \partial_{\mathbf{u}} \Gamma_{j00}, \ \partial_{\mathbf{u}} \Gamma_{0j0}, \ \partial_{\mathbf{u}} \Gamma_{ijk},
 		\\
 & \u \dens, 
 	\\
 & u^0 - 1, 
 		\ \partial_i u^0, 
 		\ u_0 + 1,
 		\ \partial_i u_0,
 		\ u^j, 
 		\ \partial_i u^j,
 		\ u_j, 
 		\ \partial_i u_j,
 		\ \partial_{\mathbf{u}} u^0,
 		\ \partial_{\mathbf{u}} u_0,
 		\ \partial_{\mathbf{u}} u^j, 
 		\ \partial_{\mathbf{u}} u_j
 \end{align*}
\begin{center}
	\underline{\textbf{Already identified elements of $\mathcal{H}_{N-1}$}}
\end{center}
\begin{align*}
	& g_{jk}, \ g^{jk}, 
		\ e^{2 \Omega} h_{jk}, 
		\ \partial_{\mathbf{u}} g_{jk}, \ \partial_{\mathbf{u}} g^{jk},
		\ \g_t g_{jk}, \ \g_t g^{jk}, 
		\ \partial_{\mathbf{u}} \g_t g_{jk}, \ \partial_{\mathbf{u}} \g_t g^{jk},
			\\
	& \Gamma_{j0k}, \ \Gamma_{jk0},
		\ \partial_t \Gamma_{j0k}, \ \partial_t \Gamma_{jk0},
		\ \partial_{\mathbf{u}} \Gamma_{j0k}, \ \partial_{\mathbf{u}} \Gamma_{jk0},
		\\
	& \dens - \bar{\dens},
		\\
	& \omega, \ \g_t \omega, \ \u \omega
\end{align*}

\end{remark}

\begin{proof}
The metric terms in the list for $\mathcal{G}_{N-1}$ with lower indices, including the ones with $\u$-derivatives and the $h$-terms, 
belong there by virtue of the definition \eqref{E:totalnorm} of the norm $\totalnorm$ 
and the estimates of Prop.~\ref{P:metric}.
The metric terms with upper indices, including the ones with $\u$-derivatives, are elements of $\mathcal{G}_{N-1}$ 
by the estimates of Props.~\ref{P:metric} and~\ref{P:SobolevMetricNEW}. 
As for the Christoffel symbols of the form $\Gamma_{\mu \nu \kappa}$ in the list for $\mathcal{G}_{N-1}$ and their derivatives, each of them is a sum of the derivatives of metric terms that have already been shown to belong to 
$\mathcal{G}_{N-1}.$
The terms involving the components
$u^{\mu},$ 
$u_{\mu},$ 
and their spatial and $\u$ derivatives
are elements of $\mathcal{G}_{N-1}$ 
by the definition \eqref{E:totalnorm} of the norm $\totalnorm$
and the estimates of Prop. \ref{P:SobolevMetricNEW}.
In a similar fashion, 
Lemma \ref{L:backgroundaoftestimate},
Props.~\ref{P:metric} and~\ref{P:SobolevMetricNEW},
and the definition of the total solution norm $\totalnorm$ 
imply the preliminary list of elements belonging to $\mathcal{H}_{N-1}$ stated above
except for $\u \omega.$
The fact that $\u \omega \in \mathcal{H}_{N-1}$ 
follows from Lemma \ref{L:backgroundaoftestimate} and \eqref{E:u0MINUSONEHNMINUSONE}.
\end{proof}
\begin{lemma}[\textbf{Counting Principle}]\label{L:countingprinciple}
Let 
$N\geq4$
and $l\geq1$ be given integers, and assume that $v_{(i)} \in\mathcal{G}_{N-1}\cup\mathcal{H}_{N-1}$.
Assume that there exists an index $k$ verifying $1\leq k\leq l$ such that $v_{(k)}$ satisfies
the condition $\textbf{i})$ from
\eqref{E:mathcalHdefpartone}.
Then there exists a constant $C>0$ such that 
if $\totalnorm \leq \epsilon$ and $\epsilon$ is sufficiently small, 
then the following inequality holds for $t \in [0,T):$
\be\label{E:cpone}
\Big|\Big|\prod_{i=1}^lv_{(i)}\Big|\Big|_{H^{N-1}}
\leq Ce^{-n_{\mathcal{G}_{N-1}}q\Omega}e^{n_{\text{total}}\Omega}\totalnorm.
\ee
%\be\label{E:cpu}
%\Big|\Big| \partial_{\mathbf{u}} \prod_{i=1}^lv_{(i)}\Big|\Big|_{H^{N-1}}
%\leq Ce^{-n_{\mathcal{G}_{N-1}}q\Omega}e^{n_{\text{total}}\Omega}\totalnorm.
%\ee
In inequality ~\eqref{E:cpone},

\begin{itemize}
 \item $n_{\text{total}}$ is the total number of downstairs spatial indices in the product minus the total
 	number of the upstairs spatial indices in the product.
 \item $n_{\mathcal{G}_{N-1}}$ is the total number of $v_{(i)}$-s that belong to $\mathcal{G}_{N-1}.$
 \item A single application of the operator $\partial_t$ or $\u$ 
 	is neutral from the point of view of counting spatial indices.
 \end{itemize}
\end{lemma}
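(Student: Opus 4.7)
The plan is to apply a Sobolev--Moser product estimate of the type laid out in Appendix~\ref{B:SobolevMoser} (such as Prop.~\ref{P:F1FkLinfinityHN}), which for a product $v_{(1)} \cdots v_{(l)}$ on $\mathbb{T}^3$ controls $\|v_{(1)} \cdots v_{(l)}\|_{H^{N-1}}$ by a sum of terms in which a single $H^{N-1}$-derivative falls on one factor and each other factor is measured either in $L^\infty$ or in terms of $\|\cdot\|_{L^\infty} + \|\underpartial (\cdot)\|_{H^{N-2}}$. By hypothesis, the distinguished factor $v_{(k)}$ satisfies the $H^{N-1}$-bound from condition~\textbf{i)} of~\eqref{E:mathcalHdefpartone}, so the plan is to always place the $H^{N-1}$-derivative on $v_{(k)}$; this is the key structural input, and the reason the lemma requires such a factor to exist.

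For each remaining factor $v_{(i)}$ with $i \neq k$, the plan is to bound it in $L^\infty$ (or in the split norm): if $v_{(i)} \in \mathcal{H}_{N-1}$ via case~\textbf{ii)}, this bound is immediate from the definition, while if $v_{(i)} \in \mathcal{G}_{N-1}$ or if $v_{(i)} \in \mathcal{H}_{N-1}$ via case~\textbf{i)}, one invokes the Sobolev embedding $H^{N-1}(\mathbb{T}^3) \hookrightarrow L^\infty(\mathbb{T}^3)$ (valid since $N - 1 \geq 3$) on the available $H^{N-1}$-bound. Multiplying out, the exponential factors collapse cleanly: each $v_{(i)}$ carries $e^{A_i \Omega}$ and $\sum_i A_i = n_{\text{total}}$ by definition, while each factor lying in $\mathcal{G}_{N-1}$ contributes an extra $e^{-q\Omega}$, producing the advertised $e^{-n_{\mathcal{G}_{N-1}} q \Omega}$ prefactor. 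The neutrality of $\partial_t$ and $\partial_{\mathbf{u}}$ in the index count is automatic because the preliminary lists of Remark~\ref{R:list} already treat derivatives such as $\partial_t g_{jk}$ or $\partial_{\mathbf{u}} g_{jk}$ as elements of $\mathcal{G}_{N-1} \cup \mathcal{H}_{N-1}$ with the same spatial index count $A$ as the underlying tensor component, so no separate bookkeeping for these operators is needed.

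The one subtlety requiring care is that each factor estimated via its $H^{N-1}$-bound and Sobolev embedding (rather than a bare $L^\infty$-bound from case~\textbf{ii)}) carries its own copy of $\totalnorm$, so the naive product bound contains $\totalnorm^m$ for some $m \geq 1$. To reduce to the single power asserted in~\eqref{E:cpone}, I plan to exploit the bootstrap smallness $\totalnorm \leq \epsilon$ and absorb the extra $\totalnorm^{m-1} \leq \epsilon^{m-1}$ into the implicit constant $C$; this is the only role of the smallness hypothesis in the proof. The main obstacle I anticipate is verifying that the Appendix's Moser-type estimate applies in precisely the form needed here when some factors are only available with the split bound $\|\cdot\|_{L^\infty} + \|\underpartial(\cdot)\|_{H^{N-2}}$ rather than a full $H^{N-1}$-bound: the standard proof via the Leibniz expansion and Gagliardo--Nirenberg interpolation accommodates both, but one must be careful that no stray top-order spatial derivative is forced onto a factor for which such a bound is not available, which is precisely why the hypothesis pins $H^{N-1}$-derivatives on the single distinguished factor $v_{(k)}$.
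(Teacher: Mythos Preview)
Your proposal is correct and essentially identical to the paper's proof: the paper relabels $v_{(k)}$ as $v_{(l)}$, applies Prop.~\ref{P:F1FkLinfinityHN} directly, and then invokes Sobolev embedding $H^2(\mathbb{T}^3)\hookrightarrow L^\infty(\mathbb{T}^3)$ together with the definitions of $\mathcal{G}_{N-1}$ and $\mathcal{H}_{N-1}$. The ``main obstacle'' you anticipate is a non-issue, since Prop.~\ref{P:F1FkLinfinityHN} is already stated in the split form $\|v_l\|_{H^{N-1}}\prod_{i\neq l}\|v_{(i)}\|_{L^\infty}+\sum_{j\neq l}\|\underpartial v_{(j)}\|_{H^{N-2}}\prod_{i\neq j}\|v_{(i)}\|_{L^\infty}$, which exactly accommodates the case~\textbf{ii)} factors without further work.
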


\begin{remark}
It is sufficient to assume $N\ge 3$ for the above lemma to be true, but we assume that 
$N\ge 4$ since this assumption will be used in the proof of Prop.~\ref{P:Sobolev} 
to guarantee that $H^{N-2}(\T^3)$ forms an algebra. 
\end{remark}

\begin{proof}
The proof is essentially identical to the proof of \cite[Lemma 9.7]{iRjS2012}, but
we present it for the sake of clarity and completeness. Without loss of generality we may assume that
$v_{(k)}=v_{(l)}\in L^2$. From Prop.~\ref{P:F1FkLinfinityHN}, we deduce that

\[
\Big|\Big|\prod_{i=1}^lv_{(i)}\Big|\Big|_{H^{N-1}}
\lesssim \|v_{(l)}\|_{H^{N-1}}\prod_{i=1}^{l-1}\|v_{(i)}\|_{L^{\infty}}
+\sum_{j=1}^{l-1}\|\underline{\g}v_{(j)}\|_{H^{N-2}}\prod_{i\neq j}\|v_{(i)}\|_{L^{\infty}}.
\]
The estimate~(\ref{E:cpone}) now follows from the above estimate, the Sobolev embedding result
$H^2(\T^3) \hookrightarrow L^{\infty}(\T^3),$ and
the definitions of the sets $\mathcal{G}_{N-1}$ and $\mathcal{H}_{N-1}$.
\end{proof}
\begin{remark} [\textbf{Adjusting the Counting Principle for $h_{jk}$}] \label{R:countingprinciplehjk}
Because of the rescaling $h_{jk}=e^{-2\Omega}g_{jk}$, we have to adjust the Counting Principle by a factor of $e^{-2\Omega}$ whenever we apply it to $h-$dependent
quantities such as $h_{jk},$ $\partial_t h_{jk},$ or $\g_i h_{jk}$. For example, we have
\[
\|\g_ih_{jk}\| \lesssim e^{(1-q)\Omega} \totalnorm,
\]
since $e^{2\Omega}\g_ih_{jk} = \g_ig_{jk}\in\mathcal{G}_{N-1}$. That is, 
even though the net count of downstairs indices in $\g_ih_{jk}$ is three, 
the above inequality involves the factor $e^{(1-q)\Omega}$ 
precisely due to the correction factor $e^{-2\Omega}.$
\end{remark}

In our analysis, we will often make use of the simple corollary of the proof of
Lemma \ref{L:countingprinciple}, which we also refer to as the ``Counting Principle.''
The corollary simply states that on the right-hand side of \eqref{E:cpone},
we can omit the norms inherent in
the definition \eqref{E:totalnorm} $\totalnorm$
that are irrelevant for the terms on the left-hand side.

\begin{corollary}[\textbf{Counting Principle with a more precise use of the relevant norms}]
	\label{C:COUNTINGPRINCIPLE}
	Assume the hypotheses of Lemma \ref{L:countingprinciple}.
	Assume that each $v_{(i)}$ from the hypotheses
	is controlled by a sub-sum 
	of the norms whose total sum is $\totalnorm$ [see definition \eqref{E:totalnorm}].
	Then inequality \eqref{E:cpone} holds with $\totalnorm$
	replaced by the sub-sum of those norms.
\end{corollary}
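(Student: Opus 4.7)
The plan is to trace through the proof of Lemma \ref{L:countingprinciple} and observe that the invocation of the full norm $\totalnorm$ was only a convenient over-estimate; at each step one can substitute the specific sub-norm that actually controls the factor in question. Concretely, I would begin from the same algebra-type inequality used in Lemma \ref{L:countingprinciple}, namely the consequence of Prop. \ref{P:F1FkLinfinityHN} together with the Sobolev embedding $H^2(\mathbb{T}^3) \hookrightarrow L^{\infty}(\mathbb{T}^3):$
\begin{align*}
	\Big\| \prod_{i=1}^l v_{(i)} \Big\|_{H^{N-1}}
	\lesssim \|v_{(l)}\|_{H^{N-1}}\prod_{i=1}^{l-1}\|v_{(i)}\|_{L^{\infty}}
		+ \sum_{j=1}^{l-1} \|\underpartial v_{(j)}\|_{H^{N-2}} \prod_{i \neq j}\|v_{(i)}\|_{L^{\infty}}.
\end{align*}
This inequality is entirely structural and does not refer to any solution norm.

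Next, I would observe that by hypothesis each factor $v_{(i)}$ belongs to $\mathcal{G}_{N-1} \cup \mathcal{H}_{N-1}$ and is controlled by a sub-sum $\mathcal{S}_{(i)}$ of the norms comprising $\totalnorm.$ The point is that the defining inequalities of $\mathcal{G}_{N-1}$ and $\mathcal{H}_{N-1}$ (Def. \ref{D:GandH}) have the form $\|v_{(i)}\|_{H^{N-1}} \leq C e^{-q\Omega}e^{A\Omega} \totalnorm$ or one of the two alternatives in \eqref{E:mathcalHdefpartone}, and these are established in Remark \ref{R:list} by appealing to Props. \ref{P:metric} and \ref{P:SobolevMetricNEW} together with the definition of $\totalnorm.$ An inspection of those propositions shows that each individual estimate in fact uses only a specific sub-collection of sub-norms of $\totalnorm;$ thus one obtains the sharper inequalities $\|v_{(i)}\|_{H^{N-1}} \leq C e^{-q\Omega}e^{A_{(i)}\Omega} \mathcal{S}_{(i)}$ and the analogues for $\|\underpartial v_{(i)}\|_{H^{N-2}}$ and $\|v_{(i)}\|_{L^{\infty}}$ by simply reading off the relevant sub-norms.

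Finally, I would substitute these refined single-factor bounds back into the algebraic inequality above. Exactly as in the proof of Lemma \ref{L:countingprinciple}, the exponential factors multiply to produce $e^{-n_{\mathcal{G}_{N-1}}q\Omega} e^{n_{\text{total}}\Omega}$ (keeping in mind Remark \ref{R:countingprinciplehjk} for the $h_{jk}$ adjustments), while the norm factors on the right produce a single instance of whichever $\mathcal{S}_{(i)}$ was not absorbed into an $L^{\infty}$ bound, all other $\mathcal{S}_{(i)}$ contributions having been dominated by the (bounded) $L^{\infty}$-type constant prefactors coming from $\|v_{(i)}\|_{L^{\infty}} \lesssim e^{A_{(i)}\Omega}$ (clause \textbf{ii}) of \eqref{E:mathcalHdefpartone}) or by the smallness assumption $\totalnorm \leq \epsilon.$ Summing over the finitely many $j$ in the second term of the algebraic inequality yields a bound by the sub-sum $\sum_i \mathcal{S}_{(i)},$ which is precisely the replacement claimed in the statement of the corollary.

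There is no genuine obstacle in this argument: it is pure bookkeeping. The only point requiring a small amount of care is to ensure that when the second sum above produces factors like $\mathcal{S}_{(j)} \cdot \mathcal{S}_{(i)}$ (from bounding both $\underpartial v_{(j)}$ and an $L^{\infty}$ factor that itself uses \textbf{i}) of \eqref{E:mathcalHdefpartone}), one of them is absorbed using $\totalnorm \leq \epsilon \lesssim 1;$ this is the same device already used implicitly in Lemma \ref{L:countingprinciple}, so no new ideas are needed.
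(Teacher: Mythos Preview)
Your proposal is correct and follows exactly the approach the paper intends: the paper does not give a detailed proof at all, merely remarking just before the statement that the corollary ``simply states that on the right-hand side of \eqref{E:cpone}, we can omit the norms inherent in the definition \eqref{E:totalnorm} $\totalnorm$ that are irrelevant for the terms on the left-hand side,'' i.e., it is obtained by rereading the proof of Lemma~\ref{L:countingprinciple} with the sharper single-factor bounds in hand. Your write-up is a faithful and more detailed execution of that same bookkeeping.
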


For example, Cor. \ref{C:COUNTINGPRINCIPLE} yields that 
$\|(\g_t g^{a0}) \u \g_i g_{ab}\|_{H^{N-1}} 
\lesssim e^{(1-q) \Omega} \left\lbrace \gnorm{N-1} + \gunorm{N-1} \right\rbrace $ since this product contains the factor 
$\g_t g^{a0} \in \mathcal{G}_{N-1}$ and since only $g,$ $\partial g,$ and $\u \partial g$ 
are involved.

During our proof of Prop.~\ref{P:Sobolev}, we will have to bound various Sobolev norms of 
twice-in-time differentiated quantities such as the metric components.
Because such terms are not explicitly contained in the definition of our total norm $\totalnorm$~\eqref{E:totalnorm}, 
we will sometimes use the following simple, but important identity to derive the desired estimates.
\begin{lemma}\label{L:twotimederivatives}
For any  $v \in C^2([0,T), \, H^2(\T^3)),$ the following identity holds:
%
%\be\label{E:twotimederivatives}
\begin{align*}
\g_{tt} v 
= \frac{1}{u^0}\u\g_t v
	- \frac{u^a}{(u^0)^2}\u\g_a v 
	+ \frac{u^a u^b}{(u^0)^2}\g_a \g_b v,
\end{align*}
where we recall the definition~\eqref{E:deltau}: $\u=u^{\mu}\g_{\mu}$.
\end{lemma}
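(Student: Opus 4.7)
The plan is to derive the identity purely algebraically by inverting the defining relation $\partial_{\mathbf{u}} = u^0 \partial_t + u^a \partial_a$, with no analytic ingredients beyond the commutativity of mixed partials (which is available since $v \in C^2$).

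First I would solve for $\partial_t$ in terms of $\partial_{\mathbf{u}}$ and the spatial derivatives, writing
\[
\partial_t v = \frac{1}{u^0}\partial_{\mathbf{u}} v - \frac{u^a}{u^0}\partial_a v.
\]
This is legitimate since $u^0$ is strictly positive (the future-directed condition together with $g_{\alpha\beta}u^{\alpha}u^{\beta} = -1$, under the smallness bootstrap, ensures $u^0 \geq 1/2$, say). Applying $\partial_{\mathbf{u}}$ to $\partial_t v$ directly (rather than differentiating the quotient) yields
\[
\partial_{\mathbf{u}} \partial_t v = u^0 \partial_{tt} v + u^a \partial_a \partial_t v,
\]
so that solving for $\partial_{tt}v$ produces the first intermediate identity
\[
\partial_{tt} v = \frac{1}{u^0}\partial_{\mathbf{u}}\partial_t v - \frac{u^a}{u^0}\partial_a \partial_t v.
\]

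Second, I would eliminate the remaining $\partial_t$ on the right-hand side by commuting $\partial_a$ past $\partial_t$ (allowed since $v \in C^2$) and then reapplying the same inversion formula to $\partial_t \partial_a v$:
\[
\partial_a \partial_t v = \partial_t \partial_a v = \frac{1}{u^0}\partial_{\mathbf{u}}\partial_a v - \frac{u^b}{u^0}\partial_b\partial_a v.
\]
Substituting this into the previous display and simplifying gives
\[
\partial_{tt} v = \frac{1}{u^0}\partial_{\mathbf{u}}\partial_t v - \frac{u^a}{(u^0)^2}\partial_{\mathbf{u}}\partial_a v + \frac{u^a u^b}{(u^0)^2}\partial_a \partial_b v,
\]
which is the claimed identity.

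There is no real obstacle here: the proof is a two-line rearrangement using only the definition of $\partial_{\mathbf{u}}$, positivity of $u^0$, and equality of mixed partials. The only small point worth noting is that one applies $\partial_{\mathbf{u}}$ to $\partial_t v$ (and, at the next step, to $\partial_a v$) \emph{before} expanding the coefficient $1/u^0$, so that no derivatives of $u^0$ or $u^a$ appear; this is what makes the right-hand side structurally clean and exactly matches the form needed for the higher-order time derivative estimates (e.g.\ \eqref{E:dttgHMMINUSTWO}--\eqref{E:dtttgHMMINUSTWO}).
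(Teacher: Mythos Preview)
Your proof is correct and essentially identical to the paper's: both apply the inversion $\partial_t = \frac{1}{u^0}\partial_{\mathbf{u}} - \frac{u^a}{u^0}\partial_a$ first to $v$ (yielding the intermediate formula for $\partial_{tt}v$) and then to $\partial_a v$ (to eliminate $\partial_a\partial_t v$), arriving at the same three-term expression. The only cosmetic difference is that you phrase the first step as ``apply $\partial_{\mathbf{u}}$ to $\partial_t v$ and solve for $\partial_{tt}v$,'' whereas the paper phrases it as ``apply the decomposition of $\partial_t$ to $\partial_t v$''; these are the same manipulation.
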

\begin{proof}
We first use the decomposition $\g_t = \frac{1}{u^0}\u - \frac{u^a}{u^0} \g_a$
[see definition~\eqref{E:deltau}]
to deduce that
\begin{align}
\g_t  v 
& = \frac{1}{u^0}\u v - \frac{u^a}{u^0}\g_a  v,	
	\label{E:du1} \\
\g_t\g_i v 
&= \frac{1}{u^0}\u \g_i v - \frac{u^a}{u^0}\g_a \g_i v.
\label{E:du2}
\end{align}
Using~(\ref{E:du1}) and~(\ref{E:du2}), we obtain the desired relation as follows:
\begin{align}
\g_{tt} v 
	= \frac{1}{u^0}\u\g_t v - \frac{u^i}{u^0}\g_i\g_t v \nonumber 
& =\frac{1}{u^0}\u\g_t v-\frac{u^i}{u^0}
	\left(
		\frac{1}{u^0}\u\g_i v
		-\frac{u^j}{u^0}\g_j\g_i v
	\right) \nonumber \\
& =\frac{1}{u^0}\u\g_t v
- \frac{u^i}{(u^0)^2}\u\g_i v
+ \frac{u^iu^j}{(u^0)^2}\g_j\g_i v. \nonumber
\end{align}
\end{proof}
%
%
%\vspace{0.1in}
\noindent
{\bf Proof of Prop.~\ref{P:Sobolev}.}
Throughout this proof, we will often make use of 
Lemma \ref{L:backgroundaoftestimate},
the Sobolev embedding result $H^2(\mathbb{T}^3) \hookrightarrow L^{\infty}(\mathbb{T}^3),$
and the definition of the norms from Sect. \ref{S:norms}
without explicitly mentioning it every time. \emph{We also stress that the order in which 
we prove the estimates is important in some cases.}
In particular, the order in which we prove 
the estimates is generally different
than the order in which we have stated them in the proposition.

\noindent
{\em Proof of \eqref{E:TRIANGLELAHNMINUSONE}-\eqref{E:TRIANGLELCHNMINUSONE},
\eqref{E:PARTIALUTRIANGLELAHNMINUSONE}-\eqref{E:PARTIALUTRIANGLELCHNMINUSONE},
\eqref{E:triangle000}-\eqref{E:trianglekij},
and \eqref{E:PARTIALItriangle000}-\eqref{E:PARTIALItrianglekij}.}
To prove \eqref{E:TRIANGLELAHNMINUSONE}-\eqref{E:PARTIALUTRIANGLELCHNMINUSONE}, 
we make the following key observation:
each of the products on the right-hand side of \eqref{AE:triangleA00def}-\eqref{AE:triangleAjkdef}
and \eqref{AE:triangleC00def}-\eqref{AE:triangleC0jdef}
are quadratic in the elements of the set $\mathcal{G}_{N-1}$ 
both before and after the application of the operator $\u$
(this can easily be verified with the help of Remark~\ref{R:list}). 
Furthermore, all terms involve only the metric and its first derivatives. 
The desired bounds \eqref{E:TRIANGLELAHNMINUSONE}-\eqref{E:TRIANGLELCHNMINUSONE}
thus follow as a direct consequence of Cor. \ref{C:COUNTINGPRINCIPLE}. 
The desired bounds \eqref{E:PARTIALUTRIANGLELAHNMINUSONE}-\eqref{E:PARTIALUTRIANGLELCHNMINUSONE} 
follow similarly, except we now also include the norm 
$\gunorm{N-1}$ on the right-hand side because the terms of interest also involve
the $\u$ derivative of the metric.

In our proof of ~\eqref{E:triangle000}-\eqref{E:PARTIALItrianglekij}, we
only prove the desired bound for the term $\| \partial_i \triangle_{0 \ 0}^{\ 0} \|_{H^{N-1}};$
the proofs of the remaining estimates are essentially the same.
To derive the bound for $\partial_i \triangle_{0 \ 0}^{\ 0},$
we examine Eq. \eqref{AE:triangleGamma000} and observe that each product in
$\partial_i \triangle_{0 \ 0}^{\ 0}$ has a net downstairs spatial index count of $1$ and
is at least \emph{linear} in elements of $\mathcal{G}_{N-1}.$ 
Furthermore, all terms involve only $g,$ $\partial g,$ and $\underpartial \partial g.$
The desired estimate 
$\| \partial_i \triangle_{0 \ 0}^{\ 0} \|_{H^{N-1}} 
\lesssim e^{(1-q) \Omega} 
\left\lbrace
		\gnorm{N-1}
		+ \totalellipticnorm{N-1}
\right\rbrace$
thus follows directly from Cor. \ref{C:COUNTINGPRINCIPLE}. 

\vspace{0.1in}

\noindent
{\em Proof of \eqref{E:partialtandutriangle000}-\eqref{E:partialtandutrianglekij}.}
We prove only \eqref{E:partialtandutriangle000} since the proofs of the remaining estimates
are similar. We first prove the bound for 
$\| \u \triangle_{0 \ 0}^{\ 0} \|_{H^{N-1}}.$
The proof is almost identical to the proof of \eqref{E:PARTIALItriangle000} given above.
The only difference is that the $\u$ derivative is involved instead of the $\g_i$ derivative.
Hence, there is one fewer downstairs spatial index in the present case, and 
we now allow for the norm $\totalnorm,$
which in particular controls $\u$ derivatives, 
on the right-hand side of \eqref{E:partialtandutriangle000}.

We now prove the desired bound for $\|\g_t \triangle_{j \ 0}^{\ 0} \|_{H^{N-1}}.$
To proceed, we use the decomposition $\g_t = \frac{1}{u^0} \g_{\mathbf{u}} - \frac{u^a}{u^0} \g_a,$
Prop. \ref{P:F1FkLinfinityHN},
and
Cor. \ref{C:SobolevTaylor}
to deduce that 
\begin{align} \label{E:PARTIALTCHRISTOFFELERRORINTERMSOFPARTIALUCHRISTOFFELERROR}
	\|\g_t \triangle_{0 \ 0}^{\ 0} \|_{H^{N-1}}
	& \lesssim 
		(1 + \| u^0 - 1 \|_{H^{N-1}}) \| \u \triangle_{0 \ 0}^{\ 0} \|_{H^{N-1}}
		+ (1 + \| u^0-1 \|_{H^{N-1}}) \| u^a \|_{H^{N-1}} \| \g_a \triangle_{0 \ 0}^{\ 0} \|_{H^{N-1}}.
\end{align}
Inserting the estimates
\eqref{E:u0MINUSONEHNMINUSONE}
and
\eqref{E:PARTIALItriangle000}
and the inequality
$\| u^a \|_{H^{N-1}} \lesssim e^{-(1 + q) \Omega} \totalnorm$
into the right-hand side of \eqref{E:PARTIALTCHRISTOFFELERRORINTERMSOFPARTIALUCHRISTOFFELERROR}
and using the bound $\| \u \triangle_{0 \ 0}^{\ 0} \|_{H^{N-1}} \lesssim e^{-q\Omega} \totalnorm$
proved just above, we conclude the desired estimate \eqref{E:partialtandutriangle000}
for $\|\g_t \triangle_{0 \ 0}^{\ 0} \|_{H^{N-1}}.$

\vspace{0.1in}
\noindent
{\em Proof of \eqref{E:gabupperGammaajblower}-\eqref{E:gabupperGammaajblowerNEW}}.
To prove \eqref{E:gabupperGammaajblower}, we note that $g^{ab} \Gamma_{ajb}$ is the product of elements
of $\mathcal{H}_{N-1}$ and furthermore, $\Gamma_{ajb} \in \mathcal{G}_{N-1}$ is controlled by the norm
$\hstarstarnorm{N-1}.$ The desired estimate \eqref{E:gabupperGammaajblower} thus follows from Cor. \ref{C:COUNTINGPRINCIPLE}.
The same reasoning yields \eqref{E:gabupperGammaajblowerNEW}, except we now also include the 
norm $\combinedpartialupartialhstarstarnorm{N-1}$ on the right-hand side because the terms of interest also involve
the $\u$ derivative of the metric.

To prove \eqref{E:gabupperGammaajblowerHN-2},
we first use Prop. \ref{P:F1FkLinfinityHN} and the
definition of $\Gamma_{ajb}$ to deduce that
\begin{align} \label{E:KEYGAMMATERMLOWERORDERFIRSTESTIMATE}
	\|g^{ab} \Gamma_{ajb}\|_{H^{N-2}}
	& \lesssim 
		\sum_{a,b,j,k=1}^3
		\left\lbrace
			\|g^{ab} \|_{L^{\infty}}
			+
			\| \underpartial g^{ab} \|_{H^{N-3}}
		\right\rbrace
		\| \underpartial g_{jk} \|_{H^{N-2}}.
\end{align}
We now insert the estimates
\eqref{E:GJKUPPERLINFTY},
\eqref{E:DERIVATIVESOFGJKUPPERHNMINUS2},
and $\| \underpartial g_{jk} \|_{H^{N-2}} 
= e^{2 \Omega} \| \underpartial h_{jk} \|_{H^{N-2}}
\lesssim e^{2 \Omega} \hstarstarnorm{N-1}$
into the right-hand side of \eqref{E:KEYGAMMATERMLOWERORDERFIRSTESTIMATE}
and thus conclude \eqref{E:gabupperGammaajblowerHN-2}.

To prove \eqref{E:partialtgabupperGammaajblowerHN-2},
we first rewrite $g^{ab} \Gamma_{ajb} = (e^{2 \Omega} g^{ab}) (e^{-2 \Omega} \Gamma_{ajb}).$
Thus, we have
$\g_t (g^{ab} \Gamma_{ajb})
= e^{2 \Omega} g^{ab} \g_t(e^{-2 \Omega} \Gamma_{ajb})
+  e^{2 \Omega}(\g_t g^{ab} + 2 \omega g^{ab}) \Gamma_{ajb}.$
Hence, using the relation 
$2 e^{-2 \Omega} \Gamma_{ajb} = \g_a h_{jb} + \g_b h_{jb} - \g_j h_{ab}$ 
and Prop. \ref{P:F1FkLinfinityHN}, we deduce that
\begin{align} \label{E:KEYPARTIALTGAMMATERMLOWERORDERFIRSTESTIMATE}
	\|\g_t (g^{ab} \Gamma_{ajb}) \|_{H^{N-2}}
	& \lesssim 
		e^{2 \Omega}
		\sum_{a,b,j,k=1}^3
		\left\lbrace
			\| g^{ab} \|_{L^{\infty}}
			+
		\| \underpartial g^{ab} \|_{H^{N-3}}
		\right\rbrace
		\| \g_t h_{jk} \|_{H^{N-1}}.
			\\
	& \ \
		+ e^{2 \Omega}
		\sum_{a,b,j,k=1}^3
		\| \g_t g^{ab} + 2 \omega g^{ab}  \|_{H^{N-1}}
		\| \underpartial g_{jk} \|_{H^{N-2}}.
		\notag
\end{align}
We now insert the estimates
\eqref{E:GJKUPPERLINFTY},
\eqref{E:DERIVATIVESOFGJKUPPERHNMINUS2},
\eqref{E:SUPERIMPORTANTCOMMUTATORFACTOR},
$\| \underpartial g_{jk} \|_{H^{N-2}} 
= e^{2 \Omega} \| \underpartial h_{jk} \|_{H^{N-2}}
\lesssim e^{2 \Omega} \hstarstarnorm{N-1},$
and $\| \g_t h_{jk} \|_{H^{N-1}}
\lesssim e^{- q \Omega} \hstarstarnorm{N-1}$
into the right-hand side of \eqref{E:KEYPARTIALTGAMMATERMLOWERORDERFIRSTESTIMATE}
and thus conclude \eqref{E:partialtgabupperGammaajblowerHN-2}.

\vspace{0.1in}

\noindent
{\em Proof of \eqref{E:triangleTWODOWNHNMINUSONE}-\eqref{E:PARTIALUtriangleTWODOWNHNMINUSONE}.}
We first prove \eqref{E:triangleTWODOWNHNMINUSONE}.
We begin by examining the right-hand sides of \eqref{AE:triangle00}-\eqref{AE:trianglejk}.
The terms $\triangle_{A,\mu \nu}$ and $\triangle_{C,\mu \nu}$ have already been suitably bounded by
\eqref{E:TRIANGLELAHNMINUSONE}-\eqref{E:TRIANGLELCHNMINUSONE}.
Furthermore, 
we see that all of the remaining terms to be estimated in $\| \cdot \|_{H^{N-1}}$
are products of elements of $\mathcal{G}_{N-1} \cup \mathcal{H}_{N-1}.$
Furthermore, the products are either quadratic 
in elements of the set $\mathcal{G}_{N-1}$
or are linear in elements of $\mathcal{G}_{N-1}$ 
and are weighted by $e^{-3 \Omega}$ or 
$|\omega - H| \lesssim e^{- 3 \Omega}.$
In addition, none of the terms depend on the
first spatial derivatives of the fluid,
the second spatial derivatives of the metric,
or any $\u$ derivative of the metric or fluid
(that is, all terms are controlled by the norm $\totalbelowtopnorm{N-1}$).
The desired estimates for the quadratic-in-$\mathcal{G}_{N-1}$ terms 
therefore follow from Cor. \ref{C:COUNTINGPRINCIPLE}, while the 
desired estimates for the linear-in-$\mathcal{G}_{N-1}$ terms
follow from applying Cor. \ref{C:COUNTINGPRINCIPLE} 
and also accounting for the extra $e^{- 3 \Omega}$ factor
[without the extra $e^{-3 \Omega}$ factor, we would only be able to deduce a single factor of
$e^{- q \Omega}$ on the right-hand sides of \eqref{E:triangleTWODOWNHNMINUSONE}].
We stress again, that in the above application of the Counting Principle, we always adjust it by a factor of $e^{-2\Omega}$ whenever estimating terms $h_{jk}$ and its derivatives (see Remark~\ref{R:countingprinciplehjk}).

The proof of \eqref{E:PARTIALUtriangleTWODOWNHNMINUSONE} is similar, but
we now allow for the total norm $\totalnorm$ on the right-hand side because it is sufficient 
for controlling the $\u$ derivatives.

\vspace{0.1in}
\noindent
{\em Proof of \eqref{E:FLUIDTRIANGLEHNMINUSONE}-\eqref{E:FLUIDTRIANGLEJHNMINUSONE} and \eqref{E:PARTIALIFLUIDTRIANGLEJHNMINUSONE}.}
To prove the estimate \eqref{E:FLUIDTRIANGLEJHNMINUSONE}
for the first term on the left-hand side, we use
the formula \eqref{AE:trianglejdef},
\eqref{E:u0MINUSONEHNMINUSONE}, 
\eqref{E:triangle000}-\eqref{E:trianglekij},
and Cor. \ref{C:COUNTINGPRINCIPLE}.
In particular, we are using the fact that
$\triangle_{\alpha \ \beta}^{\ j} \in \mathcal{G}_{N-1},$
[that is, the estimates \eqref{E:triangle000}-\eqref{E:trianglekij}]
and that each of the summands in~\eqref{AE:trianglejdef}
is at least quadratic in the elements of $\mathcal{G}_{N-1},$ 
which accounts for the factor $2q$ in $e^{-(1+2q)\Omega}$ on the right-hand side
of \eqref{E:FLUIDTRIANGLEJHNMINUSONE}.
In addition, none of the terms depend on the
first spatial derivatives of the fluid,
the second spatial derivatives of the metric,
or any $\u$ derivative of the metric or fluid
(that is, all terms are controlled by the norm $\totalbelowtopnorm{N-1}$).
To prove the estimate \eqref{E:FLUIDTRIANGLEJHNMINUSONE}
for the second term on the left-hand side,
we first use Cor. \ref{C:SobolevTaylor} and \eqref{E:u0MINUSONEHNMINUSONE}
to deduce $\|\frac{1}{u^0} - 1 \|_{H^{N-1}} \lesssim e^{-q\Omega} \totalbelowtopnorm{N-1}.$ 
In particular, we have $\frac{1}{u^0} \in \mathcal{H}_{N-1}.$
Hence, with the help of Cor. \ref{C:COUNTINGPRINCIPLE},
we conclude the desired bound
using essentially the same reasoning that we used to derive the bound for the
first term on the left-hand side of \eqref{E:FLUIDTRIANGLEJHNMINUSONE}.

The proof of the two estimates in \eqref{E:FLUIDTRIANGLEHNMINUSONE} is essentially the same
and is based on Eq. \eqref{AE:triangledef}
and the already proven estimate \eqref{E:FLUIDTRIANGLEJHNMINUSONE},
which shows that $\triangle^j \in \mathcal{G}_{N-1}.$
However, some of the products on the 
right-hand side of \eqref{AE:triangledef} 
(for example, the first one)
are only \emph{linear} in the elements of $\mathcal{G}_{N-1},$ 
which explains the presence of the factor $e^{-q \Omega}$ on the right-hand side
of \eqref{E:FLUIDTRIANGLEHNMINUSONE}.

The proof of \eqref{E:PARTIALIFLUIDTRIANGLEJHNMINUSONE} is similar
and relies on the estimates \eqref{E:PARTIALIU0HNMINUSONE} 
and \eqref{E:PARTIALItriangle000}-\eqref{E:PARTIALItrianglekij},
which show that $\g_i \triangle_{\mu \ \nu}^{\ \lambda} \in \mathcal{G}_{N-1}.$
We stress that since none of the terms 
depend on any $\u$ derivative of the metric or fluid,
all terms are controlled by
the norms
$  \totalbelowtopnorm{N-1} 
		+ \totalellipticnorm{N-1}
		+ \topvelocitynorm{N-1}.$

\vspace{0.1in}
\noindent{\em Proof of \eqref{E:dtuzeroN-1}-\eqref{E:dtujN-1}.}
To prove \eqref{E:dtujN-1} for the first term on the left-hand side, 
we first use \eqref{E:velevol} to deduce 
\be\label{E:dtuj}
\g_t u^j = 
	\frac{-u^a\g_au^j}{u^0}
	- 2 \omega u^j
	- \triangle_{0 \ 0}^{\ j}
	+\frac{\triangle^j}{u^0}.
\ee
In our proof of \eqref{E:FLUIDTRIANGLEJHNMINUSONE}, we showed that
$\|\frac{1}{u^0} - 1 \|_{H^{N-1}} \lesssim e^{-q\Omega} \totalbelowtopnorm{N-1}$ 
and hence  $\frac{1}{u^0} \in \mathcal{H}_{N-1}.$
Furthermore, 
\eqref{E:VERYIMPORTANTCHRISTOFFELSYMBOLERRORESTIMATE}
and
\eqref{E:FLUIDTRIANGLEJHNMINUSONE} 
imply that 
$\triangle_{0 \ 0}^{\ j}, \triangle^j \in \mathcal{G}_{N-1}.$
It follows that all terms on the right-hand side of \eqref{E:dtuj} are products of elements of 
$\mathcal{G}_{N-1} \cup \mathcal{H}_{N-1}$ and that each product is at least linear in elements of 
$\mathcal{G}_{N-1}.$ Furthermore, 
no terms depend on any $\u$ derivative of the metric or fluid
or on the second derivatives of the metric.
Hence, all terms are controlled by
the norm
$	\totalbelowtopnorm{N-1} + \topvelocitynorm{N-1}.$
The desired estimate \eqref{E:dtujN-1} for $\g_t u^j$ thus follows from Cor. \ref{C:COUNTINGPRINCIPLE}.

To prove the estimate for $\g_t u^0$ in \eqref{E:dtuzeroN-1}, we first note that
the identity \eqref{E:PARTIALUU0FIRSTRELATION} holds with $\g_t$ in place of $\u.$
We have just shown that $\g_t u^j \in \mathcal{G}_{N-1},$
and hence all terms on the right-hand side of the identity for $\g_t u^0$
are products of elements of $\mathcal{G}_{N-1} \cup \mathcal{H}_{N-1}$ and each 
product is at least linear in elements of $\mathcal{G}_{N-1}.$ 
In addition, all terms involved are controlled
by the norm $\totalbelowtopnorm{N-1} + \topvelocitynorm{N-1}.$
Cor. \ref{C:COUNTINGPRINCIPLE} thus yields the desired estimate 
\eqref{E:dtuzeroN-1} for the first term on the left-hand side.

To prove the desired estimate \eqref{E:dtujN-1} for $\g_t u_j = \g_t (g_{j\alpha} u^{\alpha})$ 
and the desired estimate \eqref{E:dtuzeroN-1}  for $\g_t u_0 = \g_t (g_{0\alpha} u^{\alpha}),$ 
we use the fact that
$\g_t u^j \in \mathcal{G}_{N-1}$ and $\g_t u^0 \in \mathcal{G}_{N-1}$ 
(that is, the bounds we just proved for these quantities),
the fact that $\g_t g_{\alpha \beta} \in \mathcal{H}_{N-1},$
and Cor. \ref{C:COUNTINGPRINCIPLE}.

\vspace{0.1in}

\noindent
{\em Proof of \eqref{E:PARTIALTFLUIDTRIANGLEJHNMINUONE}}.
We first time-differentiate \eqref{AE:trianglejdef} 
and use Prop. \ref{P:F1FkLinfinityHN}, 
Lemma \ref{L:backgroundaoftestimate},
\eqref{E:u0MINUSONEHNMINUSONE},
and
\eqref{E:dtuzeroN-1} 
to deduce that
\begin{align}
\|\g_t \triangle^j\|_{H^{N-1}} 
& \lesssim 
	\|\g_t u^0 \|_{H^{N-1}} \| \triangle^{\ j}_{0 \ 0} \|_{H^{N-1}}
		+ \| \g_t u^0 \|_{H^{N-1}} \| u^a \|_{H^{N-1}} \| \triangle^{\ j}_{0 \ a} \|_{H^{N-1}}
		\label{E:auxdtt} 	\\
& \ \ +  \| \g_t u^a \|_{H^{N-1}} \| \triangle^{\ j}_{0 \ a} \|_{H^{N-1}}
		+ \| \g_t u^a \|_{H^{N-1}} \| u^b \|_{H^{N-1}} \| \triangle^{\ j}_{a \ b} \|_{H^{N-1}} 
		\notag \\
& \ \ + \| u^0 - 1 \|_{H^{N-1}} \| \g_t \triangle^{\ j}_{0 \ 0} \|_{H^{N-1}}
			+  \| u^a \|_{H^{N-1}} \| \g_t \triangle^{\ j}_{0 \ a} \|_{H^{N-1}}
			\notag \\
& \ \ + \| u^a \|_{H^{N-1}} \| u^b \|_{H^{N-1}} \| \g_t \triangle^{\ j}_{a \ b} \|_{H^{N-1}}. 
	\notag
\end{align}
We now use the bounds
\eqref{E:u0MINUSONEHNMINUSONE}, 
\eqref{E:triangle000}-\eqref{E:trianglekij},
\eqref{E:partialtandutriangle000}-\eqref{E:partialtandutrianglekij},
\eqref{E:dtuzeroN-1}-\eqref{E:dtujN-1},
and $\| u^a \|_{H^{N-1}} \lesssim e^{-(1 + q) \Omega} \totalbelowtopnorm{N-1}$
to deduce that the right-hand side of the above inequality is 
$\lesssim e^{-(1 + q) \Omega} \totalbelowtopnorm{N-1}$
as desired.

\noindent {\em Proof of \eqref{E:dttgHMMINUSTWO}-\eqref{E:dttgHMMINUSONE}.}
We will prove the estimates only for $g_{00};$ the estimates for 
$g_{0j}$ and $h_{jk}$ can be proved using very similar arguments.
To prove \eqref{E:dttgHMMINUSONE}, we first use Eq. \eqref{E:metric00}
to deduce
\begin{align} \label{E:PARTIALTTG00ISOLATED}
	\g_{tt} g_{00}
	& = - \frac{1}{g^{00}} g^{ab} \g_a \g_b g_{00}
		- \frac{2}{g^{00}} g^{a0} \g_a \g_t g_{00}
		+ \frac{1}{g^{00}}
			\left\lbrace
				5H\g_t g_{00}+6H^2(g_{00}+1)
				+\triangle_{00}
			\right\rbrace.
\end{align}
From \eqref{E:G00UPPERPLUSONEHNMINUSONE} and Cor. \ref{C:SobolevTaylor},
we deduce $\frac{1}{g^{00}} \in \mathcal{H}_{N-1}.$ Furthermore,
\eqref{E:triangleTWODOWNHNMINUSONE} implies that $\triangle_{00} \in \mathcal{G}_{N-1}.$
Therefore, all terms on the right-hand side of \eqref{E:PARTIALTTG00ISOLATED} 
are products of elements of $\mathcal{G}_{N-1} \cup \mathcal{H}_{N-1}$ and each 
product is at least linear in elements of $\mathcal{G}_{N-1}.$ 
In addition, none of the terms in \eqref{E:PARTIALTTG00ISOLATED} depend on 
the derivatives of the fluid or on the
$\u$ derivatives of the metric or fluid
[that is, all terms are controlled by 
the norms
$\totalbelowtopnorm{N-1} 
		+ \totalellipticnorm{N-1}$].
Cor. \ref{C:COUNTINGPRINCIPLE} thus yields the desired estimate \eqref{E:dttgHMMINUSONE}.

The proof of \eqref{E:dttgHMMINUSTWO} is similar, but there are a few important differences.
The main point is that we want to avoid using the elliptic norms $\totalellipticnorm{N-1}$
that we used in proving \eqref{E:dttgHMMINUSONE}. 
To handle the first term on the right-hand side of \eqref{E:PARTIALTTG00ISOLATED},
we first use Prop. \ref{P:derivativesofF1FkL2} and
the facts that $g^{ab}, \frac{1}{g^{00}} \in \mathcal{H}_{N-1},$
to bound its $H^{N-2}$ norm by
$\lesssim (\| g^{ab} \|_{L^{\infty}} + \| \underpartial g^{ab} \|_{H^{N-2}}) \| \g_a \g_b g_{00} \|_{H^{N-2}}
\lesssim e^{-2 \Omega} \sum_{a,b=1}^3 \| \g_a \g_b g_{00} \|_{H^{N-2}} 
\lesssim e^{-2 \Omega} \sum_{a=1}^3 \| \g_a g_{00} \|_{H^{N-1}}.$
We now note that it follows from the definition of $\totalbelowtopnorm{N-1}$ that 
$e^{-2 \Omega} \sum_{a=1}^3 \| \g_a g_{00} \|_{H^{N-1}} \lesssim e^{-(1 + q)\Omega} \totalbelowtopnorm{N-1}$
as desired. Similarly, we can bound the $H^{N-2}$ norm of
second term on the right-hand side of \eqref{E:PARTIALTTG00ISOLATED}
by $\lesssim e^{-(1 + q)\Omega} \totalbelowtopnorm{N-1}$ as desired.
The remaining terms on the right-hand side of \eqref{E:PARTIALTTG00ISOLATED}
do not depend on 
the derivatives of the fluid, the $\u$ derivatives of the metric or fluid,
or the second derivatives of the metric. Hence, Cor. \ref{C:COUNTINGPRINCIPLE}
and \eqref{E:triangleTWODOWNHNMINUSONE}
yield that their $H^{N-1}$ norms (which is a stronger norm than is needed)
are $\lesssim e^{-(1 + q)\Omega} \totalbelowtopnorm{N-1}$ as desired.

\vspace{0.1in}
\noindent
{\em Proof of~\eqref{E:dttuzeroHNMINUSTWO}-\eqref{E:dttujHNMINUSTWO}.}
To prove \eqref{E:dttujHNMINUSTWO}, we first time-differentiate 
\eqref{E:dtuj} 
and use Prop. \ref{P:derivativesofF1FkL2}, 
Cor. \ref{C:SobolevTaylor},
Lemma \ref{L:backgroundaoftestimate},
\eqref{E:u0MINUSONEHNMINUSONE},
and
\eqref{E:dtuzeroN-1} 
to deduce that
\begin{align*}
\|\g_{tt}u^j\|_{H^{N-2}} 
& \lesssim \sum_{a,j=1}^3 \|\g_t u^a\|_{H^{N-1}} \| u^j \|_{H^{N-1}} 
	+ \|\g_t u^j\|_{H^{N-2}} 
	+ \| \g_t \triangle_{0 \ 0}^{\ j} \|_{H^{N-2}}
	+ \| \g_t \triangle^j \|_{H^{N-2}}
	\\
& \ \ 
		+ \sum_{a,j=1}^3 \| u^a \|_{H^{N-2}} \| u^j \|_{H^{N-1}}
		+ \|u^j\|_{H^{N-2}}
		+ \| \triangle^j  \|_{H^{N-2}}.
\end{align*}
Inserting the estimates
\eqref{E:partialtanduVERYIMPORTANTCHRISTOFFELSYMBOLERRORESTIMATE},
\eqref{E:PARTIALTFLUIDTRIANGLEJHNMINUONE},
and
\eqref{E:dtujN-1}
into the above inequality and also using the
inequality $\| u^j \|_{H^{N-1}} \lesssim e^{-(1 + q) \Omega} \totalnorm,$
we conclude \eqref{E:dttujHNMINUSTWO}.

To prove \eqref{E:dttuzeroHNMINUSTWO}, we
twice time differentiate
the relation
\eqref{E:U0UPPERISOLATEDagain}
and apply similar reasoning.
Although we omit the details, we remark that we
in particular make use of the 
already proven bounds
\eqref{E:dttujHNMINUSTWO}
and
\eqref{E:dttgHMMINUSTWO}.

\vspace{0.1in}
\noindent
{\em Proof of \eqref{E:dttriangleTWOLOWERHN-2}.}
We prove only the estimate for $\| \partial_t \triangle_{00} \|_{H^{N-2}}$
in \eqref{E:dttriangleTWOLOWERHN-2} since the proofs of 
the other two estimates are similar. To simplify the proof, we will use the identity
\begin{align} \label{E:PARTIALTIDTRIANGLE00}
	\partial_t \triangle_{00} = \frac{1}{u^0} \partial_{\mathbf{u}} \triangle_{00} - \frac{u^a}{u^0} \partial_a \triangle_{00}.
\end{align}
From Prop.~\ref{P:F1FkLinfinityHN},
Cor. \ref{C:SobolevTaylor}, 
and the estimates 
\eqref{E:u0MINUSONEHNMINUSONE} and 
\eqref{E:triangleTWODOWNHNMINUSONE}, 
and \eqref{E:PARTIALUtriangleTWODOWNHNMINUSONE},
we deduce that
\begin{align}
	\left\| \frac{1}{u^0} \partial_{\mathbf{u}} \triangle_{00} \right\|_{H^{N-2}} 
	& \lesssim 
		\left\lbrace
			1
			+ \left\| u^0 - 1 \right\|_{^{N-1}}
		\right\rbrace
		\| \partial_{\mathbf{u}} \triangle_{00} \|_{H^{N-1}}
		\lesssim
		e^{-2q\Omega}\totalnorm, \nonumber
		\\
	\left\| \frac{u^a}{u^0} \partial_a \triangle_{00} \right\|_{H^{N-2}}  
	& \lesssim 
		\left\lbrace
			1
			+ \left\| u^0 - 1 \right\|_{^{N-1}}
		\right\rbrace
		\| u^a \|_{H^{N-2}}
		\| \triangle_{00} \|_{H^{N-1}}
		\lesssim
		e^{-2q\Omega}\totalnorm.	 \nonumber
\end{align}
The desired estimate ~\eqref{E:dttriangleTWOLOWERHN-2} now follows easily from the relation \eqref{E:PARTIALTIDTRIANGLE00} and the above two estimates.

\vspace{0.1in}

\noindent {\em Proof of \eqref{E:dtttgHMMINUSTWO}.}
We provide the proof only for the first term on the left-hand side of
\eqref{E:dtttgHMMINUSTWO} since the other estimates 
can be proved in a similar fashion. To proceed,
we first use the definition of $\hat{\Box}_g$ and equation~\eqref{E:metric00} to deduce 
the identity
\begin{align} \label{E:PARTIALTTTG00EXPRESSION}
\g_{ttt}g_{00}
& = \left(
			\g_t 
			\left(
				\frac{1}{g^{00}} 
			\right)
		\right)
		\left\lbrace
			- g^{ab} \partial_a \partial_b g_{00}
			- 2 g^{0a} \partial_a \partial_t g_{00}
			+ 5H\g_t g_{00}
			+ 6H^2(g_{00}+1)
			+ \triangle_{00}
		\right\rbrace
			\\
& \ \ + 
		\frac{1}{g^{00}} 
		\g_t
		\left\lbrace
			- g^{ab} \partial_a \partial_b g_{00}
			- 2 g^{0a} \partial_a \partial_t g_{00}
			+ 5H\g_t g_{00}
			+ 6H^2(g_{00}+1)
			+ \triangle_{00}
		\right\rbrace.
		\notag
\end{align}
Using Cor. \ref{C:SobolevTaylor}, 
Prop.~\ref{P:F1FkLinfinityHN}, 
and the estimate \eqref{E:G00UPPERPLUSONEHNMINUSONE},
we deduce that
$\left \| \g_t \left(\frac{1}{g^{00}} \right) \right\|_{H^{N-1}} \lesssim e^{-q\Omega} \totalnorm.$
Furthermore, the first group of terms in braces on the right-hand side of \eqref{E:PARTIALTTTG00EXPRESSION}
was shown during our proof of \eqref{E:dttgHMMINUSONE}
to be bounded in the norm $H^{N-1}$ by $\lesssim e^{-q\Omega} \totalnorm.$
Thus, by Prop.~\ref{P:F1FkLinfinityHN}, 
the first product on the right-hand side of \eqref{E:PARTIALTTTG00EXPRESSION} is 
$\lesssim e^{-2q\Omega} \totalnorm$ in the $H^{N-1}$ norm, which is a stronger estimate than we need.

We now address the second group of terms in braces on the right-hand side of \eqref{E:PARTIALTTTG00EXPRESSION}.
Since $\frac{1}{g^{00}} \in \mathcal{H}_{N-1},$ by Prop.~\ref{P:F1FkLinfinityHN}, 
we need only to show that
the second group of terms in braces is $\lesssim e^{-q\Omega} \totalnorm$ in the $H^{N-2}$ norm. The terms
$\g_{tt} g_{00}$ and $\g_t \triangle_{00}$ have already been suitably bounded in 
\eqref{E:dttgHMMINUSTWO}  and \eqref{E:dttriangleTWOLOWERHN-2}, while the bound 
$\| \g_t g_{00} \|_{H^{N-1}}\leq e^{-q \Omega} \totalnorm$ follows directly from the definition of $\totalnorm.$
To bound the term $\partial_t(g^{ab} \partial_a \partial_b g_{00}),$ we use 
Prop. \ref{P:F1FkLinfinityHN} and
the estimates \eqref{E:GJKUPPERLINFTY},
\eqref{E:DERIVATIVESOFGJKUPPERHNMINUS2}, 
and \eqref{E:SUPERIMPORTANTCOMMUTATORFACTOR}
to deduce that
\begin{align}
	\| \g_t (g^{ab} \partial_a \partial_b g_{00}) \|_{H^{N-2}}
	& \lesssim 
		\sum_{a,b=1}^3 \| \partial_t g^{ab} + 2 \omega g^{ab} \|_{H^{N-2}} \| \partial_b g_{00} \|_{H^{N-1}} 
		\\
	& \ \ + 
			\sum_{a,b,i=1}^3 
			\left\lbrace
				\| g^{ab} \|_{L^{\infty}} + \| \underpartial g^{ab} \|_{H^{N-3}}
			\right\rbrace 
		\left\lbrace
			\| \partial_i \partial_t g_{00} \|_{H^{N-1}}
			+ \| \partial_i g_{00} \|_{H^{N-1}}
		\right\rbrace
		\notag \\
	& \lesssim e^{- (1 + q) \Omega} \totalnorm. 
		\notag
\end{align}
To bound $\g_t(g^{0a} \partial_a \partial_t g_{00}),$ we use 
Prop. \ref{P:F1FkLinfinityHN}, 
\eqref{E:G0JUPPERHNMINUSONE}, 
and \eqref{E:dttgHMMINUSONE} to deduce that
\begin{align*}
	\| \g_t(g^{0a} \partial_a \partial_t g_{00})\|_{H^{N-2}}
	& \lesssim \sum_{a=1}^3 
											\left\lbrace
												\| \partial_t g^{0a} \|_{H^{N-2}} 
												+ \| g^{0a} \|_{H^{N-2}}
											\right\rbrace
											\left\lbrace 
												\| \partial_t g_{00} \|_{H^{N-1}} + \| \g_{tt} g_{00} \|_{H^{N-1}}
											\right\rbrace
							\\
	& \lesssim e^{-(1 + q) \Omega} \totalnorm.
		\notag
\end{align*}\prfe

\noindent
{\bf Proof of Prop.~\ref{P:Sobolevtwo}.}
Throughout this proof, we will often make use of 
Lemma \ref{L:backgroundaoftestimate},
the Sobolev embedding result $H^2(\mathbb{T}^3) \hookrightarrow L^{\infty}(\mathbb{T}^3),$
and the definition of the norms from Sect. \ref{S:norms}
without explicitly mentioning it every time. \emph{We also stress that the order in which 
we prove the estimates is important in some cases.}
In particular, order in which we prove 
the estimates is generally different
than the order in which we have stated them in the proposition.

\noindent
{\em Proof of~\eqref{E:BOXUG00COMM}-\eqref{E:BOXUGJKCOMM}.}
We prove only the estimate \eqref{E:BOXUG00COMM} since the proof of the other
two estimates is similar.

\vspace{0.1in}
\noindent{\em Step 1: Bounds for the term
			$g^{\mu \nu} (\g_{\mu} \g_{\nu} u^{\delta})
				\partial_{\delta} \g_{\vec{\alpha}} g_{00}.$
}

\noindent{\em Case 1A: $(\mu,\nu) = (0,0).$}				
In this case, we use 
the standard Sobolev calculus and
the estimates
\eqref{E:G00UPPERPLUSONEHNMINUSONE},
\eqref{E:dttuzeroHNMINUSTWO},
and \eqref{E:dttujHNMINUSTWO}
to deduce the desired bound:
\begin{align*}
		\left \|
				g^{00} 
				(\g_{tt} u^{\delta})
				\partial_{\delta} \g_{\vec{\alpha}} g_{00}
		\right\|_{L^2}
		& \lesssim 
			\left\lbrace
				1 +
				\left\|
					g^{00} + 1
				\right\|_{H^{N-1}}
			\right\rbrace
			\left\|
				\g_{tt} u^0
			\right\|_{H^{N-2}}
			\left\|
				\g_t g_{00} 
			\right\|_{H^{N-1}}
			\\
		& \ \ +
			\left\lbrace
				1 +
				\left\|
					g^{00} + 1
				\right\|_{H^{N-1}}
			\right\rbrace
			\left\|
				\g_{tt} u^a
			\right\|_{H^{N-2}}
			\left\|
				\g_a g_{00} 
			\right\|_{H^{N-1}}
	%		\notag \\
	 \lesssim e^{-2q \Omega} \totalnorm.
		\notag
\end{align*}

\noindent{\em Case 1B: $(\mu,\nu) = (0,j).$}				
In this case, we use 
the standard Sobolev calculus and the estimates
\eqref{E:G0JUPPERHNMINUSONE},
\eqref{E:dtuzeroN-1},
and \eqref{E:dtujN-1}
to deduce the desired bound:
\begin{align*}
		\left \|
				g^{0j} 
				(\g_j \g_t u^{\delta})
				\partial_{\delta} \g_{\vec{\alpha}} g_{00}
		\right\|_{L^2}
		& \lesssim 
			\sum_{j=1}^3
			\left\|
					g^{0j}
			\right\|_{H^{N-1}}
			\left\|
				\g_t u^0
			\right\|_{H^{N-1}}
			\left\|
				\g_t g_{00} 
			\right\|_{H^{N-1}}
			\\
		& \ \ +
			\sum_{j=1}^3
			\left\|
				g^{0j}
			\right\|_{H^{N-1}}
			\left\|
				\g_t u^a
			\right\|_{H^{N-1}}
			\left\|
				\g_a g_{00} 
			\right\|_{H^{N-1}}
	%		\notag \\
	 \lesssim e^{-2q \Omega} \totalnorm.
		\notag
\end{align*}

\noindent{\em Case 1C: $(\mu,\nu) = (j,k).$}				
In this case, we use 
the standard Sobolev calculus and the estimates
\eqref{E:GJKUPPERLINFTY}
and \eqref{E:PARTIALIU0HNMINUSONE} 
to deduce the desired bound:
\begin{align*}
		\left \|
				g^{jk} 
				(\g_j \g_k u^{\delta})
				\partial_{\delta} \g_{\vec{\alpha}} g_{00}
		\right\|_{L^2}
		& \lesssim 
			\sum_{i,j,k=1}^3
			\left\|
				g^{jk}
			\right\|_{L^{\infty}}
			\left\|
				\g_i u^0 
			\right\|_{H^{N-1}}
			\left\|
				\g_t g_{00} 
			\right\|_{H^{N-1}}
			\\
		& \ \ +
			\sum_{i,j,k=1}^3
			\left\|
				g^{jk}
			\right\|_{L^{\infty}}
			\left\|
				\g_i u^a
			\right\|_{H^{N-1}}
			\left\|
				\g_a g_{00} 
			\right\|_{H^{N-1}}
	%		\notag \\
	 \lesssim e^{-2q \Omega} \totalnorm.
		\notag
\end{align*}

\noindent{\em Step 2: Bounds for the term
			$g^{\mu \nu} (\g_{\mu} u^{\delta})
				\g_{\vec{\alpha}} \g_{\nu} \partial_{\delta} g_{00}.$
}

\vspace{0.1in}

\noindent{\em Case 2A: $(\mu,\nu) = (0,0).$}				
In this case, we use 
the standard Sobolev calculus and the estimates
\eqref{E:G00UPPERPLUSONEHNMINUSONE},
\eqref{E:dtuzeroN-1},
\eqref{E:dtujN-1},
and
\eqref{E:dttgHMMINUSTWO} 
to deduce the desired bound:
\begin{align*}
		\left \|
			g^{00} (\g_t u^{\delta})
			\g_{\vec{\alpha}} \g_t \partial_{\delta} g_{00}
		\right\|_{L^2}
		& \lesssim 
			\left\lbrace
				1 +
				\left\|
					g^{00} + 1
				\right\|_{H^{N-1}}
			\right\rbrace
			\left\|
				\g_t u^0
			\right\|_{H^{N-1}}
			\left\|
				\g_{tt} g_{00} 
			\right\|_{H^{N-1}}
			\\
		& \ \ +
			\left\lbrace
				1 +
				\left\|
					g^{00} + 1
				\right\|_{H^{N-1}}
			\right\rbrace
			\left\|
				\g_t u^a
			\right\|_{H^{N-1}}
			\left\|
				\g_a \g_t g_{00} 
			\right\|_{H^{N-1}}
	%		\notag \\
	 \lesssim e^{-2q \Omega} \totalnorm.
		\notag
\end{align*}

\noindent{\em Case 2B: $(\mu,\nu) = (0,j).$}				
In this case, we use 
the standard Sobolev calculus and the estimates
\eqref{E:G00UPPERPLUSONEHNMINUSONE}
and \eqref{E:u0MINUSONEHNMINUSONE} 
to deduce the desired bound:
\begin{align*}
		\left \|
			g^{0j} (\g_j u^{\delta})
			\g_{\vec{\alpha}} \g_j \partial_{\delta} g_{00}
		\right\|_{L^2}
		& \lesssim 
			\sum_{j=1}^3
			\left\|
				g^{0j} 
			\right\|_{H^{N-1}}
			\left\|
				u^0 - 1
			\right\|_{H^{N-1}}
			\left\|
				\g_j \g_t g_{00} 
			\right\|_{H^{N-1}}
			\\
		& \ \ +
			\sum_{a,j=1}^3
			\left\|
				g^{0j} 
			\right\|_{H^{N-1}}
			\left\|
				u^a
			\right\|_{H^{N-1}}
			\left\|
				\g_a \g_j g_{00} 
			\right\|_{H^{N-1}}
	%		\notag \\
	 \lesssim e^{-2q \Omega} \totalnorm.
		\notag
\end{align*}

\noindent{\em Case 2C: $(\mu,\nu) = (j,k).$}				
In this case, we use 
the standard Sobolev calculus and the estimates
\eqref{E:GJKUPPERLINFTY}
and \eqref{E:u0MINUSONEHNMINUSONE} 
to deduce the desired bound:
\begin{align*}
		\left \|
			g^{jk} (\g_j u^{\delta})
			\g_{\vec{\alpha}} \g_k \partial_{\delta} g_{00}
		\right\|_{L^2}
		& \lesssim 
			\sum_{j,k=1}^3
			\left\|
				g^{jk}
			\right\|_{L^{\infty}}
			\left\|
				u^0 - 1
			\right\|_{H^{N-1}}
			\left\|
				\g_k \g_t g_{00} 
			\right\|_{H^{N-1}}
			\\
		& \ \ +
			\sum_{a,j,k=1}^3
			\left\|
				g^{jk}
			\right\|_{L^{\infty}}
			\left\|
				u^a
			\right\|_{H^{N-1}}
			\left\|
				\g_a \g_k g_{00} 
			\right\|_{H^{N-1}}
	%		\notag \\
	 \lesssim e^{-2q \Omega} \totalnorm.
		\notag
\end{align*}
\vspace{0.1in}

\noindent
{\em Proof of \eqref{E:UCOMMUTATORBOUND}-\eqref{E:RHOCOMMUTATORBOUND}.}
For any $|\vec{\alpha}|\le N-1,$ we have 
\[
\{\frac{u^a}{u^0}\g_a,\,\a\}u^j = \sum_{\vec{\beta}<\vec{\alpha}}{\vec{\alpha} \choose \vec{\beta}} \left[\g_{\vec{\alpha}-\vec{\beta}} 
\left(\frac{u^a}{u^0}\right)\right]\g_a\g_{\vec{\beta}} u^j.
\]
Hence, using Prop.~\ref{P:derivativesofF1FkL2},
Cor. \ref{C:SobolevTaylor},
and \eqref{E:u0MINUSONEHNMINUSONE}, we obtain that
\begin{align*}
\sum_{|\vec{\alpha}|\le N-1}\left\| \{ \frac{u^a}{u^0} \partial_a, \a \} u^j \right\|_{L^2}
\lesssim 
\sum_{a,j=1}^3
\left\lbrace
	1 +
	\left\|\frac{1}{u^0} - 1 \right\|_{H^{N-1}}
\right\rbrace
\|u^a\|_{H^{N-1}}
\| u^j\|_{H^{N-1}} \lesssim e^{-2(1+q)\Omega} \totalnorm,
\end{align*}
The remaining estimate in \eqref{E:UCOMMUTATORBOUND} and the
estimate \eqref{E:RHOCOMMUTATORBOUND} follow similarly.

\vspace{0.1in}
\noindent
{\em Proof of \eqref{E:COMMUTATORPARTIALTPARTIALUMETRICHNMINUONE}.}
We prove the estimate only for $g_{00}$ since the other two estimates can be proved in a similar fashion.
To proceed, we use Prop. \ref{P:derivativesofF1FkL2},
the identity $\{\u,\g_t\} = -(\g_t u^{\delta})\g_{\delta},$
\eqref{E:dtuzeroN-1},
and
\eqref{E:dtujN-1}
to deduce that for $|\vec{\alpha}| \leq N-1,$ we have
\begin{align*}
\|\{\u,\g_t\}\a g_{00}\|_{L^2} & \lesssim \|\g_tu^0\|_{H^{N-1}} \|\g_t g_{00}\|_{H^{N-1}} + \|\g_t u^a\|_{H^{N-1}}\|\g_a g_{00}\|_{H^{N-1}} 
\lesssim e^{-2q\Omega} \gnorm{N-1}
\end{align*}
as claimed.

\vspace{0.1in}
\noindent
{\em Proof of \eqref{E:commutatorserror1}-\eqref{E:commutatorserror3}.}
To prove \eqref{E:commutatorserror1}, we use 
Prop. \ref{P:derivativesofF1FkL2},
\eqref{E:partialuNEW},
and \eqref{E:dttgHMMINUSTWO} 
to deduce that for $|\vec{\alpha}| \leq N-1,$ we have
\begin{align*}
 \left\|\left(\u g^{00}\right) \g_{tt}\a g_{00} \right \|_{L^2} + \left\| \left(\u g^{0a}\right)\g_t\g_a\a g_{00} \right\|_{L^2}
& \lesssim \|\u g^{00}\|_{H^{N-1}} \|\g_{tt}g_{00}\|_{H^{N-1}} + \|\u g^{0a}\|_{H^{N-1}} \|\g_t\g_a g_{00}\|_{H^{N-1}} \\
& \lesssim e^{-2q\Omega}\totalnorm
\end{align*}
as desired. The estimates \eqref{E:commutatorserror2} and \eqref{E:commutatorserror3} can be proved in a similar fashion.

\vspace{0.1in}
\noindent
{\em Proof of~\eqref{E:commutatorgabGammaajb} and \eqref{E:COMMUTATORPARTIALUtriangleTWODOWNHNMINUSONE}.}
To prove \eqref{E:commutatorgabGammaajb}, 
we use Prop.~\ref{P:derivativesofF1FkL2},
\eqref{E:u0MINUSONEHNMINUSONE},
\eqref{E:gabupperGammaajblower},
and
\eqref{E:partialtgabupperGammaajblowerHN-2}
to deduce that for $|\vec{\alpha}| \leq N-1,$ we have
\begin{align*}
\left\|\{\u,\a\}(g^{ab}\Gamma_{ajb})\right\|_{L^2}
&= \left\|\sum_{\vec{\beta} < \vec{\alpha}} {\vec{\alpha} \choose \vec{\beta}} \left(\g_{\vec{\alpha}-\vec{\beta}} u^{\delta} \right)
\g_{\vec{\beta}}\g_{\delta}(g^{ab}\Gamma_{ajb}) \right\|_{L^2} \\
& \lesssim 
	\sum_{i=1}^3\|u^i \|_{H^{N-1}} 
	\left\| g^{ab}\Gamma_{ajb} \right\|_{H^{N-1}}
	+ \|u^0 - 1 \|_{H^{N-1}} 
		\left\| \partial_t \left(g^{ab}\Gamma_{ajb} \right) \right\|_{H^{N-2}}
		\\
& \lesssim e^{-2q \Omega} \totalnorm
\end{align*}
as desired.

The proof of \eqref{E:COMMUTATORPARTIALUtriangleTWODOWNHNMINUSONE} is similar,
but we use the estimates
\eqref{E:triangleTWODOWNHNMINUSONE} 
and
\eqref{E:dttriangleTWOLOWERHN-2}
in place of
\eqref{E:gabupperGammaajblower}
and
\eqref{E:partialtgabupperGammaajblowerHN-2}.

\vspace{0.1in}

\noindent
{\em Proof of~\eqref{E:equivalencecommutator1}-\eqref{E:ALTERNATEequivalencecommutator3}.}
We first prove \eqref{E:equivalencecommutator1} for the first term on the left-hand side.
To this end, we use the decomposition $\g_t = \frac{1}{u^0} \g_{\mathbf{u}} - \frac{u^a}{u^0} \g_a$ 
to compute that 
\begin{align} \label{E:PARTIALTPARTIALUPARTIALSPATIALCOMMUTATORG00}
(\g_t\u\a-\a\u\g_t)g_{00}
& = 
	(\g_t u^{\mu}) \g_{\mu} \a g_{00}
- \sum_{\vec{\beta}<\vec{\alpha}}
	{\vec{\alpha} \choose \vec{\beta}}
	(\g_{\vec{\alpha} - \vec{\beta}}u^{\mu})
	\g_{\mu}\g_{\vec{\beta}} \g_t g_{00}
		\\
& = 
	(\g_t u^0) \a \g_t g_{00}
	+ (\g_t u^a) \a \g_a g_{00}
		\notag \\
& \ \ 
	- \frac{1}{u^0}
	\sum_{\vec{\beta}<\vec{\alpha}}
	{\vec{\alpha} \choose \vec{\beta}}
	(\g_{\vec{\alpha} - \vec{\beta}}u^0)
	\g_t \g_{\mathbf{u}} 
	\g_{\vec{\beta}}  g_{00}
	+ \frac{u^a}{u^0} 
		\sum_{\vec{\beta}<\vec{\alpha}}
		{\vec{\alpha} \choose \vec{\beta}}
		(\g_{\vec{\alpha} - \vec{\beta}}u^0)
		\g_a
		\g_{\vec{\beta}} \g_t g_{00}
		\notag \\
& \ \ 
	+ \frac{1}{u^0}
	\sum_{\vec{\beta}<\vec{\alpha}}
	{\vec{\alpha} \choose \vec{\beta}}
	(\g_{\vec{\alpha} - \vec{\beta}}u^0)
	(\g_t u^0)
	\g_{\vec{\beta}} \g_t g_{00}
	+ \frac{1}{u^0}
	\sum_{\vec{\beta}<\vec{\alpha}}
	{\vec{\alpha} \choose \vec{\beta}}
	(\g_{\vec{\alpha} - \vec{\beta}}u^0)
	(\g_t u^a)
	\g_a
	\g_{\vec{\beta}} g_{00}
		\notag \\
& \ \ 
	- \sum_{\vec{\beta}<\vec{\alpha}}
	{\vec{\alpha} \choose \vec{\beta}}
	(\g_{\vec{\alpha} - \vec{\beta}}u^a)
	\g_{\vec{\beta}} \g_a \g_t g_{00}.
	\notag
\end{align}
Hence, using Prop.~\ref{P:derivativesofF1FkL2},
\eqref{E:u0MINUSONEHNMINUSONE}, 
\eqref{E:dtuzeroN-1},
and
\eqref{E:dtujN-1}, 
for $|\vec{\alpha}| \leq N - 1,$ 
we have
\begin{align*}
\|(\g_t\u\a-\a\u\g_t)g_{00}\|_{L^2}
& \lesssim	
	e^{-q \Omega}
	\sum_{|\vec{\beta}| \leq N-2}
	\left\| \g_t \u \g_{\vec{\beta}} g_{00} \right\|_{L^2} 
	+ e^{-q \Omega} \| \g_t g_{00} \|_{H^{N-1}}
	+ \sum_{a=1}^3 e^{-(1 + q) \Omega} \| \g_a g_{00} \|_{H^{N-1}}
		\\
& \lesssim 
\sum_{|\vec{\beta}| \leq N-2}
\left\| \g_t \u \g_{\vec{\beta}} g_{00} \right\|_{L^2}
+ e^{-q \Omega} \gzerozeronorm{N-1}
\end{align*}
as desired.

To bound the second term on the left-hand side of
\eqref{E:equivalencecommutator1},
we first note the identity
\begin{align} \label{E:PARTIALIPARTIALUPARTIALSPATIALCOMMUTATORG00}
(\g_i \u\a-\a\u\g_i)g_{00}
& = 
	(\g_i u^0) \a \g_t g_{00}
	+ 	(\g_i u^a) \g_{\mu} \a \g_a g_{00}
- \sum_{\vec{\beta}<\vec{\alpha}}
	{\vec{\alpha} \choose \vec{\beta}}
	(\g_{\vec{\alpha} - \vec{\beta}}u^{\mu})
	\g_{\mu}\g_{\vec{\beta}} \g_i g_{00}.
\end{align}
Therefore, using Prop. \ref{P:derivativesofF1FkL2}
and
\eqref{E:u0MINUSONEHNMINUSONE}, 
for $|\vec{\alpha}| \leq N - 1,$ 
we have
\begin{align}
	\| (\g_i \u\a-\a\u\g_i)g_{00} \|_{L^2}
	& \lesssim 
		\| u^0 - 1 \|_{H^{N-1}}
		\| \g_t g_{00} \|_{H^{N-1}}
		+ \| u^a \|_{H^{N-1}}
			\| \g_a g_{00} \|_{H^{N-1}}
	\lesssim e^{-q\Omega} \gzerozeronorm{N-1}
\end{align}
as desired.
To bound the third term on the left-hand side of
\eqref{E:equivalencecommutator1}, we first note the identity
\begin{align}
\{\u,\a\}g_{00} = 
	- \sum_{\vec{\beta}<\vec{\alpha}}
	{\vec{\alpha} \choose \vec{\beta}}
	(\g_{\vec{\alpha} - \vec{\beta}}u^0)
	\g_{\vec{\beta}} \g_t g_{00}
	- \sum_{\vec{\beta}<\vec{\alpha}}
	{\vec{\alpha} \choose \vec{\beta}}
	(\g_{\vec{\alpha} - \vec{\beta}}u^a)
	\g_{\vec{\beta}} \g_a g_{00}.
\end{align}
Hence, 
by Prop. \ref{P:derivativesofF1FkL2} 
and
\eqref{E:u0MINUSONEHNMINUSONE}, 
for $|\vec{\alpha}| \leq N - 1,$ 
we have
\begin{align}
	\| \{\u,\a\} g_{00} \|_{L^2}
	& \lesssim 
		\| u^0 - 1 \|_{H^{N-1}}
		\| \g_t g_{00} \|_{H^{N-1}}
		+ \| u^a \|_{H^{N-1}}
			\| g_{00} + 1 \|_{H^{N-1}}
	\lesssim e^{-q \Omega} \gzerozeronorm{N-1}
\end{align}
as desired. We have thus proved \eqref{E:equivalencecommutator1}.

We now prove the estimate \eqref{E:ALTERNATEequivalencecommutator1}.
The second and third terms were suitably bounded in the proof of \eqref{E:equivalencecommutator1}.
Hence, we only have to bound the first term on the left-hand side of \eqref{E:ALTERNATEequivalencecommutator1}.
To proceed, we use Lemma \ref{L:twotimederivatives}
to rewrite the first line of 
\eqref{E:PARTIALTPARTIALUPARTIALSPATIALCOMMUTATORG00} as
\begin{align}
	(\g_t u^{\mu}) \g_{\mu} \a g_{00}
	& - \sum_{\vec{\beta}<\vec{\alpha}}
		{\vec{\alpha} \choose \vec{\beta}}
		(\g_{\vec{\alpha} - \vec{\beta}}u^0)
		\g_{\vec{\beta}} 
		\left\lbrace
			\frac{1}{u^0}\u\g_t g_{00}
		- \frac{u^a}{(u^0)^2}\u\g_a g_{00} 
		+ \frac{u^a u^b}{(u^0)^2}\g_a \g_b g_{00}
		\right\rbrace	
		\\
	& \ \ 
		- \sum_{\vec{\beta}<\vec{\alpha}}
		{\vec{\alpha} \choose \vec{\beta}}
		(\g_{\vec{\alpha} - \vec{\beta}}u^a)
		\g_a \g_{\vec{\beta}} \g_t g_{00}.
		\notag
\end{align}
Therefore, by Prop. \ref{P:derivativesofF1FkL2},
Cor. \ref{C:SobolevTaylor},
\eqref{E:u0MINUSONEHNMINUSONE}, 
\eqref{E:dtuzeroN-1},
and
\eqref{E:dtujN-1}, 
for $|\vec{\alpha}| \leq N - 1,$ 
we have
\begin{align*}
& \|(\g_t\u\a-\a\u\g_t)g_{00}\|_{L^2}
\lesssim 
e^{-q\Omega} \gzerozerounorm{N-1}
+ e^{-q\Omega} \gzerozeronorm{N-1}
\end{align*}
as desired.
We have thus proved \eqref{E:ALTERNATEequivalencecommutator1}.

The remaining bounds \eqref{E:equivalencecommutator2}-\eqref{E:ALTERNATEequivalencecommutator3} 
can be proved using similar arguments, and we omit the details.

\vspace{0.1in}
\noindent 
{\em Proof of \eqref{E:gcommutatorL2}.}
We only give the proof for the term
$\| \{\hat{\square}_g,\g_{\vec{\alpha}}\} g_{00} \|_{L^2}$
from the left-hand side of \eqref{E:gcommutatorL2}
since the proofs for the other terms are similar.
We begin by stating the following commutation formula, which is straightforward to verify
by direct computation:
\begin{align} \label{E:PARTIALALPHABOXCOMMUTATORACTINGONGOOEXPRESSION}
\{
	\hat{\square}_g, \g_{\vec{\alpha}}
\} g_{00} 
& = - \sum_{\vec{\beta} < \vec{\alpha}}
			{\vec{\alpha} \choose \vec{\beta}}
			(\g_{\vec{\alpha} - \vec{\beta}} g^{\mu\nu})
			\g_{\vec{\beta}} \g_{\mu} \g_{\nu} g_{00}.
\end{align}
%Note that the right-hand side of \eqref{E:PARTIALALPHABOXCOMMUTATORACTINGONGOOEXPRESSION} 
%is simply equal to zero when $|\vec{\alpha}|=0$.
%
\vspace{0.1in}

\noindent{\em Step 1: $(\mu,\nu) = (0,0).$}
In this case, we use Prop. \ref{P:derivativesofF1FkL2}
and the estimates
\eqref{E:G00UPPERPLUSONEHNMINUSONE}
and \eqref{E:dttgHMMINUSTWO}
to deduce the desired bound:
\begin{align*}
		\left \|
				(\g_{\vec{\alpha} - \vec{\beta}} g^{00})
				\g_{\vec{\beta}} \g_{tt} g_{00}
		\right\|_{L^2}
		& \lesssim 
			\left\|
				g^{00} + 1
			\right\|_{H^{N-1}}
			\left\|
				\g_{tt} g_{00}
			\right\|_{H^{N-2}}
		\lesssim e^{-2q \Omega} \totalbelowtopnorm{N-1}.
		\notag
\end{align*}

\noindent{\em Step 2: $(\mu,\nu) = (0,j).$}				
In this case, we use Prop. \ref{P:derivativesofF1FkL2} 
and the estimate \eqref{E:G0JUPPERHNMINUSONE}
to deduce the desired bound:
\begin{align*}
		\left \|
				(\g_{\vec{\alpha} - \vec{\beta}} g^{0j})
				\g_{\vec{\beta}} \g_t \g_j g_{00}
		\right\|_{L^2}
		& \lesssim 
			\sum_{j=1}^3
			\left\|
				g^{0j}
			\right\|_{H^{N-1}}
			\left\|
				\g_t g_{00}
			\right\|_{H^{N-1}}
		\lesssim e^{-2q \Omega} \totalbelowtopnorm{N-1}.
		\notag
\end{align*}
		
\noindent{\em Step 3: $(\mu,\nu) = (j,k).$}					
In this case, we use Prop. \ref{P:derivativesofF1FkL2} 
and the estimate 
\eqref{E:DERIVATIVESOFGJKUPPERHNMINUS2}
to deduce the desired bound:
\begin{align*}
		\left \|
				(\g_{\vec{\alpha} - \vec{\beta}} g^{jk})
				\g_{\vec{\beta}} \g_j \g_k g_{00}
		\right\|_{L^2}
		& \lesssim 
			\sum_{j,k=1}^3
			\left\|
				\underpartial g^{jk}
			\right\|_{H^{N-2}}
			\sum_{a=1}^3
			\left\|
				\g_a g_{00} 
			\right\|_{H^{N-1}}
		\lesssim e^{-2q \Omega} \totalbelowtopnorm{N-1}.
		\notag
\end{align*}			

\vspace{0.1in}
\noindent 
{\em Proof of \eqref{E:gpartialucommutatorL2}.}
We only give the proof for the term
$\| \u \{\hat{\square}_g,\g_{\vec{\alpha}}\} g_{00} \|_{L^2}$
from the left-hand side of \eqref{E:gpartialucommutatorL2} since the proofs for the other terms
are similar.
We begin by stating the following commutation formula, which is straightforward to verify
by direct computation:
\begin{align} \label{E:commagain}
\u 
\{
	\hat{\square}_g, \g_{\vec{\alpha}}
\} g_{00} 
& = - \sum_{\vec{\beta} < \vec{\alpha}}
			{\vec{\alpha} \choose \vec{\beta}}
			u^{\delta}
			(\g_{\vec{\alpha} - \vec{\beta}} g^{\mu\nu})
			\g_{\vec{\beta}} \g_{\mu} \g_{\nu} \partial_{\delta} g_{00}
			\\
& \ \ - \sum_{\vec{\beta} < \vec{\alpha}}
			{\vec{\alpha} \choose \vec{\beta}}
			(\g_{\vec{\alpha} - \vec{\beta}} \partial_{\mathbf{u}} g^{\mu\nu})
			\g_{\vec{\beta}} \g_{\mu} \g_{\nu} g_{00}
			\notag \\
& \ \ + \mathop{\sum_{\vec{\beta} \leq \vec{\alpha}}}_{\vec{\gamma} < \vec{\alpha} - \vec{\beta}}
				{\vec{\alpha} \choose \vec{\beta}}
				{ \vec{\alpha} - \vec{\beta} \choose \vec{\gamma}}
				(\g_{\vec{\alpha} - \vec{\beta} - \vec{\gamma}} u^{\delta})(\g_{\vec{\gamma}}\g_{\delta} g^{\mu\nu}) 
				\g_{\mu} \g_{\nu} \g_{\vec{\beta}}  g_{00}.
			 \notag
\end{align}
%Note that the right-hand side of~\eqref{E:commagain} is simply equal to zero when $|\vec{\alpha}|=0$.
We will separately bound each of the $3$ sums on the right-hand side of \eqref{E:commagain}.
\vspace{0.1in}

\noindent{\em Step 1: Bounds for the term $- \sum_{\vec{\beta} < \vec{\alpha}}
			{\vec{\alpha} \choose \vec{\beta}}
			u^{\delta}
			(\g_{\vec{\alpha} - \vec{\beta}} g^{\mu\nu})
			\g_{\vec{\beta}} \g_{\mu} \g_{\nu} \partial_{\delta} g_{00}.$}

\noindent{\em Case 1A: $(\mu,\nu) = (0,0).$}				
In this case, we use Prop. \ref{P:derivativesofF1FkL2}
and the estimates
\eqref{E:G00UPPERPLUSONEHNMINUSONE},
\eqref{E:u0MINUSONEHNMINUSONE},
\eqref{E:dttgHMMINUSTWO},
and \eqref{E:dtttgHMMINUSTWO} 
to deduce the desired bound:
\begin{align*}
		\left \|
				u^{\delta}
				(\g_{\vec{\alpha} - \vec{\beta}} g^{00})
				\g_{\vec{\beta}} \g_{tt} \partial_{\delta} g_{00}
		\right\|_{L^2}
		& \lesssim 
			\left\lbrace
				1 + 
				\left\|
					u^0 - 1
				\right\|_{H^{N-1}}
			\right\rbrace
			\left\|
				g^{00} + 1
			\right\|_{H^{N-1}}
			\left\|
				\g_{ttt} g_{00}
			\right\|_{H^{N-2}}
			\\
		& \ \ +
			\sum_{a=1}^3
			\left\|
				u^a
			\right\|_{H^{N-1}}
			\left\|
				g^{00} + 1
			\right\|_{H^{N-1}}
			\left\|
				\g_{tt} g_{00}
			\right\|_{H^{N-1}}
			\notag 
	%		\\
	 \lesssim e^{-2q \Omega} \totalnorm.
		\notag
\end{align*}

\noindent{\em Case 1B: $(\mu,\nu) = (0,j).$}				
In this case, we use Prop. \ref{P:derivativesofF1FkL2} 
and the estimates
\eqref{E:G0JUPPERHNMINUSONE},
\eqref{E:u0MINUSONEHNMINUSONE},
and \eqref{E:dttgHMMINUSTWO}
to deduce the desired bound:
\begin{align*}
		\left \|
				u^{\delta}
				(\g_{\vec{\alpha} - \vec{\beta}} g^{0j})
				\g_{\vec{\beta}} \g_j \g_t \partial_{\delta} g_{00}
		\right\|_{L^2}
		& \lesssim 
			\sum_{j=1}^3
			\left\lbrace
				1 + 
				\left\|
					u^0 - 1
				\right\|_{H^{N-1}}
			\right\rbrace
			\left\|
				g^{0j}
			\right\|_{H^{N-1}}
			\left\|
				\g_{tt} g_{00}
			\right\|_{H^{N-1}}
			\\
		& \ \ +
			\sum_{a,j=1}^3
			\left\|
				u^a
			\right\|_{H^{N-1}}
			\left\|
				g^{0j}
			\right\|_{H^{N-1}}
			\left\|
				\partial_j \g_t g_{00}
			\right\|_{H^{N-1}}
	%		\notag 
	%		\\
	 \lesssim e^{-2q \Omega} \totalnorm.
		\notag
\end{align*}

\noindent{\em Case 1C: $(\mu,\nu) = (j,k).$}				
In this case, we use Prop. \ref{P:derivativesofF1FkL2} 
and the estimates
\eqref{E:DERIVATIVESOFGJKUPPERHNMINUS2}
and \eqref{E:u0MINUSONEHNMINUSONE}
to deduce the desired bound:
\begin{align*}
		\left \|
				u^{\delta}
				(\g_{\vec{\alpha} - \vec{\beta}} g^{jk})
				\g_{\vec{\beta}} \g_j \g_k \partial_{\delta} g_{00}
		\right\|_{L^2}
		& \lesssim 
			\sum_{j,k=1}^3
			\left\lbrace
				1 + 
				\left\|
					u^0 - 1
				\right\|_{H^{N-1}}
			\right\rbrace
			\left\|
				\underpartial g^{jk}
			\right\|_{H^{N-2}}
			\left\|
				\partial_j \g_t g_{00}
			\right\|_{H^{N-1}}
			\\
		& \ \ +
			\sum_{a,j,k=1}^3
			\left\|
				u^a
			\right\|_{H^{N-1}}
			\left\|
				\underpartial g^{jk}
			\right\|_{H^{N-2}}
			\left\|
				\g_j \g_k g_{00}
			\right\|_{H^{N-1}}
	%		\notag 
	%		\\
	 \lesssim e^{-2q \Omega} \totalnorm.
		\notag
\end{align*}			
\vspace{0.1in}

\noindent{\em Step 2: Bounds for the term
			$- \sum_{\vec{\beta} < \vec{\alpha}}
			{\vec{\alpha} \choose \vec{\beta}}
			(\g_{\vec{\alpha} - \vec{\beta}} \partial_{\mathbf{u}} g^{\mu\nu})
			\g_{\vec{\beta}} \g_{\mu} \g_{\nu} g_{00}.$
}

\vspace{0.1in}

\noindent{\em Case 2A: $(\mu,\nu) = (0,0).$}				
In this case, we use Prop. \ref{P:derivativesofF1FkL2}
and the estimates
\eqref{E:partialuNEW}
and \eqref{E:dttgHMMINUSTWO}
to deduce the desired bound:
\begin{align*}
		\left \|
				(\g_{\vec{\alpha} - \vec{\beta}} \partial_{\mathbf{u}} g^{00}) 
				\g_{tt} \g_{\vec{\beta}} g_{00}
		\right\|_{L^2}
		& \lesssim 
			\left\|
				\partial_{\mathbf{u}} g^{00}
			\right\|_{H^{N-1}}
			\left\|
				\g_{tt} g_{00}
			\right\|_{H^{N-2}}
	\lesssim e^{-2q \Omega} \totalnorm.
\end{align*}

\noindent{\em Case 2B: $(\mu,\nu) = (0,j).$}				
In this case, we use Prop. \ref{P:derivativesofF1FkL2}
and the estimate
\eqref{E:partialuNEW}
to deduce the desired bound:
\begin{align*}
		\left \|
				(\g_{\vec{\alpha} - \vec{\beta}} \partial_{\mathbf{u}} g^{0j}) 
				\g_t \g_j \g_{\vec{\beta}} g_{00}
		\right\|_{L^2}
		& \lesssim 
			\left\|
				\partial_{\mathbf{u}} g^{0j}
			\right\|_{H^{N-1}}
			\left\|
				\partial_j \g_t g_{00}
			\right\|_{H^{N-1}}
	\lesssim e^{-2q \Omega} \totalnorm.
\end{align*}

\noindent{\em Case 2C: $(\mu,\nu) = (j,k).$}				
In this case, we use Prop. \ref{P:derivativesofF1FkL2}
and the estimates
\eqref{E:partialugupperjk}
and
\eqref{E:partialugupperjk2}
to deduce the desired bound:
\begin{align*}
		\left \|
				(\g_{\vec{\alpha} - \vec{\beta}} \partial_{\mathbf{u}} g^{jk}) 
				\g_j \g_k \g_{\vec{\beta}} g_{00}
		\right\|_{L^2}
		& \lesssim 
			\sum_{j,k=1}^3
			\left\lbrace
			\left\|
					\partial_{\mathbf{u}} g^{jk}
			\right\|_{L^{\infty}}
			+ 
			\left\|
				\underpartial
				\partial_{\mathbf{u}} g^{jk}
			\right\|_{H^{N-2}}
			\right\rbrace
			\left\|
			 	\g_j g_{00} 
			\right\|_{H^{N-1}}
	\lesssim e^{-2q \Omega} \totalnorm.
\end{align*}
\vspace{0.1in}

\noindent{\em Step 3: Bounds for the term
			$\sum_{\vec{\beta} \leq \vec{\alpha}} \sum_{\vec{\gamma} < \vec{\alpha} - \vec{\beta}}
				{\vec{\alpha} \choose \vec{\beta}}
				{ \vec{\alpha} - \vec{\beta} \choose \vec{\gamma}}
				(\g_{\vec{\alpha} - \vec{\beta} - \vec{\gamma}} u^{\delta})(\g_{\vec{\gamma}}\g_{\delta} g^{\mu\nu}) 
				\g_{\mu} \g_{\nu} \g_{\vec{\beta}} g_{00}.$
}

\vspace{0.1in}
Step $3$ can be treated in essentially the same fashion as Step $1.$

\vspace{0.1in}

\noindent
{\em Proof of~\eqref{E:triangleEgamma00delta00L1}-\eqref{E:partialutriangleEgamma**delta**L1}}.
We cite the following bound \cite[Eq. (9.2.20)]{jS2012}, which can be proved by using
arguments similar to the ones we use to prove \eqref{E:ELLIPTICERRORL2}:
\begin{align} \label{E:trianglegammadeltaL1}
	\| \triangle_{\mathcal{E};(\upgamma, \updelta)}[v,\partial v] \|_{L^1} & \leq C \Big\lbrace e^{-q \Omega}  
		\| \partial_t v \|_{L^2}^2 + e^{-(2 + q) \Omega} 
		\| \underpartial v \|_{L^2}^2 + C_{(\upbeta)} e^{-q \Omega} \| v \|_{L^2}^2 \Big\rbrace,
\end{align}
where $C_{(\upbeta)}$ is defined in \eqref{E:mathcalEfirstlowerbound}. 
We now show how to use \eqref{E:trianglegammadeltaL1} to prove
\eqref{E:partialutriangleEgamma00delta00L1}; the remaining bounds in
\eqref{E:triangleEgamma00delta00L1}-\eqref{E:partialutriangleEgamma**delta**L1} 
can be proved by using essentially the same reasoning. 
To begin, we use \eqref{E:trianglegammadeltaL1} to deduce that
\begin{align} \label{E:NOTYETCOMMUTEDESTIMATEpartialutriangleEgamma00delta00L1}
		e^{2q \Omega} \left\| \triangle_{\mathcal{E};(\widetilde{\upgamma}_{00}, \widetilde{\updelta}_{00})}[\u 		
				\partial_{\vec{\alpha}} g_{00}
			,\partial (\u \partial_{\vec{\alpha}} g_{00})] \right\|_{L^1}
	& \lesssim 
		e^{q \Omega} \| \partial_t \u \partial_{\vec{\alpha}} g_{00} \|_{L^2}^2
			\\
	& \ \ 
		+ \sum_{a=1}^3 e^{(q - 2) \Omega} \| \g_a \u \partial_{\vec{\alpha}} g_{00} \|_{L^2}^2
		+ e^{q \Omega} \| \u \partial_{\vec{\alpha}} g_{00} \|_{L^2}^2.
		\notag
\end{align}
From the first inequality in \eqref{E:ALTERNATEequivalencecommutator1},
we deduce that the first term on the right-hand side of 
\eqref{E:NOTYETCOMMUTEDESTIMATEpartialutriangleEgamma00delta00L1} is equal to
$e^{q \Omega} \| \partial_{\vec{\alpha}} \u  \partial_t g_{00} \|_{L^2}^2$
plus an error term that is $\lesssim e^{-q \Omega} \totalnorm^2.$
Furthermore, from the definition of $\totalnorm,$ we have
$e^{q \Omega} \| \partial_{\vec{\alpha}} \u  \partial_t g_{00} \|_{L^2}^2 
\lesssim 
e^{-q \Omega} \| \u  \partial_t g_{00} \|_{H^{N-1}}^2\lesssim
e^{-q \Omega} \totalnorm^2$
as desired. Similarly, from the second and third inequalities in \eqref{E:ALTERNATEequivalencecommutator1},
we deduce that the second and third terms on
the right-hand side of \eqref{E:NOTYETCOMMUTEDESTIMATEpartialutriangleEgamma00delta00L1}
are $\lesssim e^{-q \Omega} \totalnorm^2$ as desired.

\vspace{0.1in}
\noindent
{\em Proof of \eqref{E:ELLIPTICERRORL2}.}
We prove \eqref{E:ELLIPTICERRORL2} only for the first term on the left-hand side since the remaining inequalities can be proved
by using similar arguments. We begin by referring to the definition \eqref{E:NEWQUADRATICERROR} of $\triangle_{\text{Ell}}$ 
with $v=g_{00}$.
The first four terms on the right-hand side of~\eqref{E:NEWQUADRATICERROR} are bounded by 
\eqref{E:BOXUG00COMM} and \eqref{E:commutatorserror1}.
To bound the next term, 
we use \eqref{E:SUPERIMPORTANT}
to deduce that
$\|(\u g^{ab} + 2\omega g^{ab})\g_{a}\g_b \a g_{00}\|_{L^2} 
\lesssim \|\u g^{ab} + 2\omega g^{ab}\|_{L^{\infty}} \|\g_a \g_b g_{00}\|_{H^{N-1}}
\lesssim  \sum_{a,b=1}^3 e^{-(2 + q)\Omega} \|\g_a \g_b g_{00}\|_{H^{N-1}}
\lesssim e^{-2 q \Omega} \totalnorm$ as desired.
To bound the term $\|(\omega-H)g^{ab}\g_a\g_b\a g_{00}\|_{L^2}$, we use 
a similar argument based on 
the estimates 
$\|\omega-H\|_{L^{\infty}} \lesssim e^{-3\Omega}$ and
\eqref{E:GJKUPPERLINFTY}.
To bound the next term, we use \eqref{E:G0JUPPERHNMINUSONE}
to deduce that 
$\|g^{0a}\g_a\g_t\a g_{00}\|_{L^2} \lesssim \| g^{0a}\|_{L^{\infty}} \| \g_a\g_t \a g_{00} \|_{H^{N-1}}
\lesssim \sum_{a=1}^3 e^{-(1 + q) \Omega} \| \g_a\g_t \a g_{00} \|_{H^{N-1}} 
\lesssim e^{-2q \Omega} \totalnorm$ as desired.
To bound the term $\|\{\hat{\Box}_g,\a\}g_{00}\|_{L^2},$ we use \eqref{E:gcommutatorL2}.
To bound the next term, we use
\eqref{E:G00UPPERPLUSONEHNMINUSONE}
and \eqref{E:dttgHMMINUSONE} to deduce that 
$\|(g^{00}+1)\g_{tt}\a g_{00}\|_{L^2} 
\lesssim \|(g^{00}+1) \|_{L^{\infty}} \| \g_{tt}\a g_{00}\|_{H^{N-1}}
\lesssim e^{-2q\Omega}\totalnorm$ as desired.
To bound the next term, we use Cor. \ref{C:SobolevTaylor}, 
\eqref{E:u0MINUSONEHNMINUSONE}, 
and the commutator estimate \eqref{E:ALTERNATEequivalencecommutator1}
to deduce that
$\|(\frac{1}{u^0}-1)\g_t\u \a g_{00}\|_{L^2}
\lesssim \|(\frac{1}{u^0}-1)\|_{L^{\infty}} \|\g_t \u \a g_{00}\|_{L^2}
\lesssim \| u^0 - 1 \|_{H^{N-1}} \left\lbrace \| \g_t \u g_{00}\|_{H^{N-1}} + e^{-q \Omega} \totalnorm \right\rbrace
\lesssim e^{-2q \Omega} \totalnorm$ as desired.
To bound the next term, we use
\eqref{E:u0MINUSONEHNMINUSONE},
\eqref{E:dtuzeroN-1},
and
\eqref{E:dtujN-1},
to deduce that
$\|\frac{\g_t u^{\delta}}{u^0}\g_{\delta}\a g_{00}\|_{L^2}
\lesssim \|\frac{\g_t u^0}{u^0}\|_{L^{\infty}} \| \g_t g_{00}\|_{H^{N-1}}
+ \|\frac{\g_t u^a}{u^0}\|_{L^{\infty}} \| \g_a g_{00}\|_{H^{N-1}}
\lesssim e^{-q \Omega} \| \g_t g_{00}\|_{H^{N-1}}
	+ \sum_{a=1}^3 e^{-(1 + q) \Omega} \| \g_a g_{00}\|_{H^{N-1}}
\lesssim e^{-2q \Omega} \totalnorm$ as desired.
To bound the last term, we use
\eqref{E:u0MINUSONEHNMINUSONE} to deduce that
$\|\frac{u^a}{u^0}\g_t\g_a\a g_{00}\|_{L^2}
\lesssim \| u^a \|_{H^{N-1}} \| \g_t  \g_a g_{00} \|_{H^{N-1}}
\lesssim e^{- (1 + q) \Omega} \sum_{a=1}^3 \| \g_t  \g_a g_{00} \|_{H^{N-1}}
\lesssim e^{-2q \Omega} \totalnorm$ as desired.

\section{Comparison of norms and energies}\label{S:equivalence}
\setcounter{equation}{0}
In this section, we show that the norms we used in formulating the bootstrap assumption~(\ref{E:bootstrap})
are controlled by the energies. As a preparatory step, we
first prove a crucially important auxiliary lemma that will, by virtue of elliptic estimates, 
allow us to control the elliptic norms 
$\gzerozeroellipticnorm{N-1},$
$\gzerostarellipticnorm{N-1},$
and
$\hstarstarellipticnorm{N-1}$
in terms of
the energies.

\begin{lemma} [\textbf{The key elliptic estimate}] \label{L:elliptic}
Assume the hypotheses and conclusions of Prop. \ref{P:Sobolev},
and let $v \in C^2([0,T),\,H^2(\T^3))$. 
We define the inverse Riemannian metric $H^{ij}$ on $\mathbb{T}^3$ by  
\begin{align} \label{E:Hij}
H^{ij}
& \eqdef e^{2 \Omega} g^{ij} 
	+ e^{2\Omega} \frac{g^{00} u^iu^j}{(u^0)^2}
	- e^{2 \Omega} g^{0i} 
		\frac{u^j}{u^0}
	- e^{2\Omega} g^{0j} 
		\frac{u^i}{u^0}, && (i,j = 1,2,3).
\end{align}
Then the following identity holds:
\begin{align}\label{E:elliptic}
H^{ab}\g_a \g_b v
&=e^{2\Omega} \hat{\square}_g v
	- e^{2\Omega} \frac{g^{00}}{u^0}\u\g_tv
	+ e^{2\Omega}
		\left\lbrace
			\frac{g^{00}u^a}{(u^0)^2}
			- 2 \frac{g^{0a}}{u^0}
		\right\rbrace 
		\u\g_a v.
\end{align}
Furthermore, the following estimate also holds for $t \in [0,T):$
\begin{align}\label{E:ell}
\sum_{a,b=1}^3 \| \g_a \g_b v \|_{L^2} 
& \lesssim 
	e^{2\Omega} \|\hat{\square}_g v\|_{L^2}
	+ e^{2\Omega} \|\g_t \u v\|_{L^2} 
	+  \sum_{a=1}^3 e^{(1-q)\Omega} \| \g_a \u v\|_{L^2}
		\\
	& \ \ + e^{(2-q)\Omega} \| \g_t v\|_{L^2} 
	+  \sum_{a=1}^3 e^{(1-q)\Omega} \| \g_a v\|_{L^2}.
		\notag
\end{align}
\end{lemma}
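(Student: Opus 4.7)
The plan is to derive the algebraic identity \eqref{E:elliptic} by direct computation and then convert it to the quantitative estimate \eqref{E:ell} via a standard $H^2$ elliptic regularity argument on $\mathbb{T}^3$ applied to the operator $H^{ab}\partial_a\partial_b$, whose uniform ellipticity and controlled coefficient size both follow from the bootstrap smallness.

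For the identity, I would expand $\hat{\square}_g v = g^{00}\partial_{tt}v + 2g^{0a}\partial_t\partial_a v + g^{ab}\partial_a\partial_b v$ and replace each of $\partial_{tt}v$ and $\partial_t\partial_a v$ by its expression involving the material derivative $\u$. Specifically, Lemma~\ref{L:twotimederivatives} gives $\partial_{tt}v = \tfrac{1}{u^0}\u\partial_t v - \tfrac{u^a}{(u^0)^2}\u\partial_a v + \tfrac{u^a u^b}{(u^0)^2}\partial_a\partial_b v$, and the decomposition $\partial_t = \tfrac{1}{u^0}(\u - u^b\partial_b)$ gives $\partial_t\partial_a v = \tfrac{1}{u^0}\u\partial_a v - \tfrac{u^b}{u^0}\partial_a\partial_b v$. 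Substituting these, the $\u\partial_t v$ and $\u\partial_a v$ contributions collapse to precisely the coefficients appearing on the right-hand side of \eqref{E:elliptic}, and after symmetrizing the resulting pure second spatial derivative terms in $a\leftrightarrow b$, the remaining coefficient of $\partial_a\partial_b v$ becomes exactly $e^{-2\Omega}H^{ab}$ as defined in \eqref{E:Hij}. Multiplying through by $e^{2\Omega}$ then yields \eqref{E:elliptic}.

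For the estimate, I would first verify that $H^{ab}$ is uniformly positive definite on $[0,T)\times\mathbb{T}^3$ and satisfies $\|\partial_c H^{ab}\|_{L^\infty} \lesssim e^{(1-q)\Omega}$. The leading piece $e^{2\Omega}g^{ab}$ is comparable to $\delta^{ab}$ by \eqref{E:gjkuppercomparetostandard}, while the other three terms in \eqref{E:Hij} have $L^\infty$-size $O(e^{-(1+q)\Omega})$ by Sobolev embedding together with \eqref{E:G0JUPPERHNMINUSONE} and the velocity bound $\|u^j\|_{L^\infty} \lesssim e^{-(1+q)\Omega}$, so they form a small perturbation of the identity; the coefficient-derivative bound follows analogously from \eqref{E:partialu2}. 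I would then rewrite $H^{ab}\partial_a\partial_b v = \partial_a(H^{ab}\partial_b v) - (\partial_a H^{ab})\partial_b v$, subtract the spatial mean of $v$ (which lies in the kernel of $\partial_a\partial_b$), and apply the standard divergence-form $H^2$ regularity estimate on $\mathbb{T}^3$ together with Poincaré's inequality to obtain
\begin{equation*}
\sum_{a,b=1}^3\|\partial_a\partial_b v\|_{L^2} \lesssim \|H^{ab}\partial_a\partial_b v\|_{L^2} + e^{(1-q)\Omega}\sum_{b=1}^3\|\partial_b v\|_{L^2}.
\end{equation*}
Substituting \eqref{E:elliptic} and commuting $\u\partial_t v = \partial_t \u v - (\partial_t u^\alpha)\partial_\alpha v$ and $\u\partial_a v = \partial_a \u v - (\partial_a u^\alpha)\partial_\alpha v$ produces the remaining right-hand side of \eqref{E:ell}: the $L^\infty$ bounds $\|\partial_t u^0\|_{L^\infty} \lesssim e^{-q\Omega}$, $\|\partial_t u^a\|_{L^\infty} \lesssim e^{-(1+q)\Omega}$, $\|\partial_a u^0\|_{L^\infty} \lesssim e^{(1-q)\Omega}$, $\|\partial_a u^b\|_{L^\infty} \lesssim e^{-q\Omega}$ (from \eqref{E:PARTIALIU0HNMINUSONE}, \eqref{E:dtuzeroN-1}--\eqref{E:dtujN-1} combined with Sobolev embedding), multiplied by the prefactors $e^{2\Omega}$ and $e^{(1-q)\Omega}$ coming from \eqref{E:elliptic}, deliver exactly the terms $e^{(2-q)\Omega}\|\partial_t v\|_{L^2}$ and $e^{(1-q)\Omega}\sum_a\|\partial_a v\|_{L^2}$ claimed in \eqref{E:ell}; the subleading commutator contributions of sizes $e^{(2-2q)\Omega}$ and $e^{(1-2q)\Omega}$ are absorbed.

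\textbf{Expected obstacle.} The only real technical subtlety is arranging the variable-coefficient $H^2$ elliptic estimate on $\mathbb{T}^3$ so that the lower-order term involves only $\|\nabla v\|_{L^2}$ rather than a free $\|v\|_{L^2}$, since a stray $\|v\|_{L^2}$ does not appear on the right-hand side of \eqref{E:ell}; this is exactly what the mean-subtraction trick combined with Poincaré's inequality accomplishes. Everything else is careful bookkeeping of the exponential weights dictated by Sect.~\ref{S:norms}.
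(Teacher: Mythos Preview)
Your proposal is correct and follows essentially the same route as the paper: derive \eqref{E:elliptic} by expanding $\hat{\square}_g v$ and substituting the $\u$-based decompositions of $\partial_{tt}v$ (via Lemma~\ref{L:twotimederivatives}) and $\partial_t\partial_a v$, then apply a variable-coefficient $H^2$ elliptic estimate on $\mathbb{T}^3$ after checking uniform ellipticity of $H^{ab}$, and finally commute $\u$ with $\partial_t,\partial_a$ to pass from $\u\partial v$ to $\partial\u v$. The only notable difference is that the paper obtains the sharper coefficient bound $\|\partial_c H^{ab}\|_{L^\infty}\lesssim 1$ from \eqref{E:DERIVATIVESOFGJKUPPERHNMINUS2} (rather than your $e^{(1-q)\Omega}$ from \eqref{E:partialu2}), which lets it cite a standard regularity estimate directly without the mean-subtraction/Poincar\'e step; your weaker bound is still amply covered by the $e^{(1-q)\Omega}\sum_a\|\partial_a v\|_{L^2}$ term on the right-hand side of \eqref{E:ell}, so this is only a cosmetic difference.
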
 
\prf
Using Lemma~\ref{L:twotimederivatives}, we decompose $\hat{\square}_g v$ as follows: 
\beas
\hat{\square}_g v&=&g^{00}\g_{tt}v+ 2 g^{0a} \g_t\g_av+g^{ab}\g_a\g_bv\\
&=&\frac{g^{00}}{u^0}\u\g_tv-\frac{g^{00}u^a}{(u^0)^2}\u\g_av
+\frac{g^{00}u^au^b}{(u^0)^2}\g_a\g_b v
+2 g^{0a}\left\lbrace\frac{1}{u^0}\u\g_av
-\frac{u^b}{u^0}\g_b\g_av\right\rbrace
+g^{ab}\g_a\g_bv\\
&=&\frac{g^{00}}{u^0}\u\g_tv
+\left\lbrace-\frac{g^{00}u^a}{(u^0)^2}
+2 g^{0a} \frac{1}{u^0}\right\rbrace\u\g_av
+\left\lbrace\frac{g^{00}u^au^b}{(u^0)^2}- 2 g^{0a} \frac{u^b}{u^0}+g^{ab}\right\rbrace\g_a\g_bv\\
&=&\frac{g^{00}}{u^0}\u\g_tv
+\left\lbrace-\frac{g^{00}u^a}{(u^0)^2}
+ 2 g^{0a} \frac{1}{u^0}\right\rbrace\u\g_av
+e^{-2\Omega}H^{ab}\g_a\g_bv.
\eeas
The desired relation~\eqref{E:elliptic} follows directly from the above decomposition.

From definition \eqref{E:Hij}, 
\eqref{E:G00UPPERPLUSONEHNMINUSONE}, 
\eqref{E:G0JUPPERHNMINUSONE}, 
\eqref{E:u0MINUSONEHNMINUSONE},
and the inequality $\| u^a \|_{H^{N-1}} \lesssim e^{-(1 + q)\Omega}\totalnorm,$
we deduce that $\|H^{ij}-e^{2\Omega}g^{ij}\|_{H^{N-1}} \lesssim \epsilon e^{-2q \Omega}.$ 
Also using the estimates \eqref{E:gjkuppercomparetostandard}
and \eqref{E:DERIVATIVESOFGJKUPPERHNMINUS2},
we deduce that $H^{ij} \in C^1([0,T) \times \mathbb{T}^3)$
with uniformly bounded spatial derivatives up to first order
and that
\eqref{E:elliptic} can be viewed as a uniformly elliptic PDE in $v.$
Hence, we can apply the standard $H^2$-regularity bound for second-order elliptic PDEs 
(see, for example, \cite[Theorem 8.12]{dGnT1998}) 
to deduce that
\begin{align} \label{E:H2ELLIPTIC}
\sum_{a,b=1}^3 \| \g_a \g_b v \|_{L^2} 
& \lesssim  
	\left\|
		H^{ab} \g_a \g_b v	
	\right\|_{L^2}
	+ \sum_{a,b,i,j} \| \g_i H^{ab} \|_{C_b^1} \| \g_j v \|_{L^2}
		\\
& \lesssim 	
	\left\|	
		H^{ab} \g_a \g_b v	
	\right\|_{L^2}
	+ \sum_{j=1}^3 \| \g_j v \|_{L^2}.
	\notag
\end{align}
Using
\eqref{E:G00UPPERPLUSONEHNMINUSONE}, 
\eqref{E:G0JUPPERHNMINUSONE}, 
\eqref{E:u0MINUSONEHNMINUSONE},
\eqref{E:dtuzeroN-1},
\eqref{E:dtujN-1},
and the inequality $\| u^a \|_{H^{N-1}} \lesssim e^{-(1 + q)\Omega}\totalnorm,$
we bound the right-hand side of \eqref{E:elliptic} in the $L^2$ norm by
$\lesssim$ the right-hand side of \eqref{E:ell}. We stress that this bound involves
the commutators $\{\u, \g_t \} v$ and $\{\u, \g_a \} v,$ which are not difficult to estimate.
Combining this bound with \eqref{E:H2ELLIPTIC}, we conclude \eqref{E:ell}.
\prfe

%{\color{red} ADJUST THE PROOF OF PROPOSITION \ref{P:energynormcomparison} TO ACCOUNT FOR THE NEW ENERGY/NORM DEFINITIONS}

\begin{proposition}[\textbf{Equivalence of the norms and energies}]\label{P:energynormcomparison}
 Let $\E_{(\upgamma,\updelta)}[v,\g v]$ be the metric component building block energy defined in \eqref{E:mathcalEdef},
 and let $C_{(\upbeta)} \geq 0$ be the constant associated to the constants
 $\upgamma,\updelta$ from the statement of Lemma \ref{L:buildingblockmetricenergy}.
 Let 
 $\gzerozeroenergy{N-1},$
 $\gzerozeronorm{N-1},$
 $\cdots$
 be the norms and energies defined in Sect. \ref{S:norms}.
 Assume that the hypotheses and conclusions of 
 Props. \ref{P:metric}-\ref{P:Sobolevtwo} hold on the spacetime slab $[0,T) \times\T^3.$
 Then there exists a constant $\epsilon' > 0$ such that if $\totalnorm(t) \leq \epsilon'$ for $t \in [0,T),$ 
 then the following inequalities also hold for $t \in [0,T):$
 \begin{subequations}
 \begin{align}
 	\label{E:mathcalEcomparison}
  \|\g_tv\|_{L^2}+e^{-\Omega} \sum_{i=1}^3 \|\g_i v\|_{L^2}+C_{(\upbeta)}\|v\|_{L^2} 
  & \approx \E_{(\upgamma,\updelta)}[v,\g v],
  		\\
  \gzerozeronorm{N-1} & \approx \gzerozeroenergy{N-1},  
  	\label{E:gzerozeronormequivalence}
  	\\
  \gzerostarnorm{N-1} & \approx \gzerostarenergy{N-1}, 
  	\label{E:gzerostarnormequivalence}
  	\\
  	\hstarstarnorm{N-1}
  	& \approx
   \hstarstarenergy{N-2} + \partialhstarstarenergy{N-1}, 
   	\label{E:hstarstarnormequivalence}
  	\\
  \gnorm{N-1} & \approx \genergy{N-1},
  	\label{E:GNORMANDENEGYEQUIVALENCE} \\
  \gzerozeronorm{N-1} + \gzerozerounorm{N-1}
  & \approx
  \gzerozeroenergy{N-1} + \gzerozeropartialuenergy{N-1}, 
  	\label{E:partialufirst} \\
  \gzerostarnorm{N-1} + \gzerostarunorm{N-1}
  & \approx
  	\gzerostarenergy{N-1} + \gzerostarpartialuenergy{N-1},
   	\label{E:partialusecond} \\
 \hstarstarnorm{N-1} + \combinedpartialupartialhstarstarnorm{N-1}
  & \approx
 	\hstarstarenergy{N-2} + \partialhstarstarenergy{N-1} %+ \hstarstarpartialuenergy{N-1} 
 	+ \partialhstarstarpartialuenergy{N-1},
		\label{E:partialufourth} \\
 \totalbelowtopnorm{N-1} 
 	& \approx \totalbelowtopenergy{N-1},
		\label{E:BELOWTOPORDERNORMCOMPARABLETOENERGY}
  		\\
 \totalbelowtopnorm{N-1} 
 +
 \totalbelowtopunorm{N-1} 
 	& \approx 
 		\totalbelowtopenergy{N-1} 
 		+ \guenergy{N-1},
		\label{E:BELOWTOPORDERUNORMCOMPARABLETOPRECISEENERGY}
  		\\
 %\gzerozeroellipticnorm{N-1} 
 %& \lesssim 
 %		\totalbelowtopenergy{N-1} 
 %		+ \guenergy{N-1}, 
 %			\label{E:GOOELLIPTICNORMSBOUNDEDBYENERGIES} \\
 %\gzerostarellipticnorm{N-1} 
 %& \lesssim \totalbelowtopenergy{N-1} 
 %		+ \guenergy{N-1}, 
 %			\label{E:GOSTARELLIPTICNORMSBOUNDEDBYENERGIES} \\
 %\hstarstarellipticnorm{N-1} 
 %& \lesssim \totalbelowtopenergy{N-1} 
 %	+ \guenergy{N-1}, 
 % 			\label{E:HSTARSTARELLIPTICNORMSBOUNDEDBYENERGIES} \\
  \totalellipticnorm{N-1} 
 & \lesssim 
 		\totalbelowtopenergy{N-1} 
 		+ \guenergy{N-1}, 
  			\label{E:ELLIPTICNORMSBOUNDEDBYTOTALENERGY} \\
  \topvelocitynorm{N-1} 
  & \approx  \topordervelocityenergy{N-1},
  	\label{E:TOPORDERVELOCITYCOMPARABLETOTOPORDERVELOCITYENERGY} \\
  \totalnorm
  	& \approx
  	\totalenergy.
  	\label{E:TOTALNORMENERGYEQUIVALENCE}
 \end{align}
 \end{subequations}
\end{proposition}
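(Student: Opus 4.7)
The plan is to prove the equivalences in the order they are stated, each building on the previous ones, and then derive the key elliptic recovery estimate \eqref{E:ELLIPTICNORMSBOUNDEDBYTOTALENERGY} at the end by invoking Lemma~\ref{L:elliptic} together with the wave equations of Prop.~\ref{P:Decomposition}. First I would establish \eqref{E:mathcalEcomparison}. The upper bound is immediate from the definition \eqref{E:mathcalEdef} of $\mathcal{E}_{(\upgamma,\updelta)}$, Cauchy-Schwarz, and the inverse-metric estimates \eqref{E:gjkuppercomparetostandard} together with \eqref{E:G00UPPERPLUSONEHNMINUSONE}. The lower bound is exactly the coercivity inequality \eqref{E:mathcalEfirstlowerbound} of Lemma~\ref{L:buildingblockmetricenergy}, whose hypothesis $|g^{00}+1|\leq \upeta$ is guaranteed by \eqref{E:G00UPPERPLUSONEHNMINUSONE} (applied via Remark~\ref{R:RoughBootstrapAutomatic}) once $\totalnorm$ is small and $q$ is chosen as in \eqref{E:qdef}.

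Next, applying \eqref{E:mathcalEcomparison} to $v=\partial_{\vec{\alpha}}(g_{00}+1)$, $\partial_{\vec{\alpha}}g_{0j}$, and $\partial_{\vec{\alpha}} h_{jk}$ summed over $|\vec{\alpha}|\leq N-1$, together with the correct $t$-weights hard-coded into \eqref{E:en00}, \eqref{E:en0*}, \eqref{E:partialen**} and the norm definitions \eqref{E:G00NORM}, \eqref{E:G0STARNORM}, \eqref{E:HSTARSTARNORM}, yields \eqref{E:gzerozeronormequivalence}-\eqref{E:hstarstarnormequivalence} (hence \eqref{E:GNORMANDENEGYEQUIVALENCE} and \eqref{E:BELOWTOPORDERNORMCOMPARABLETOENERGY}). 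The only subtle point is that $\partialhstarstarenergy{N-1}$ does not control $\|h_{jk}\|_{L^2}$ (since $C_{(\upbeta)}=0$ for the $h_{jk}$-equation), so for the $L^2$-contribution coming from $\|\partial_i h_{jk}\|_{H^{N-2}}$ in \eqref{E:HSTARSTARNORM} I would invoke the second building block energy $\hstarstarenergy{N-2}^2$ defined in \eqref{E:en**}, which is precisely included to cover this gap.

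For the $\partial_{\mathbf{u}}$-versions \eqref{E:partialufirst}-\eqref{E:partialufourth}, the awkward point is that the norms $\gzerozerounorm{N-1}$ etc.\ contain $\partial_{\vec{\alpha}}\partial_{\mathbf{u}}$ while the energies contain $\partial_{\mathbf{u}}\partial_{\vec{\alpha}}$. I would apply \eqref{E:mathcalEcomparison} to $v=\partial_{\mathbf{u}}\partial_{\vec{\alpha}}g_{\mu\nu}$ to get the $\partial_{\mathbf{u}}\partial_{\vec{\alpha}}$-version of the desired norms, and then absorb the discrepancy via the commutator estimates \eqref{E:ALTERNATEequivalencecommutator1}-\eqref{E:ALTERNATEequivalencecommutator3} of Prop.~\ref{P:Sobolevtwo}; the small factors $e^{-q\Omega}$ in those estimates allow the commutator pieces to be absorbed into the left-hand side for $\epsilon$ sufficiently small. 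The top-order velocity equivalence \eqref{E:TOPORDERVELOCITYCOMPARABLETOTOPORDERVELOCITYENERGY} is direct from the definitions \eqref{E:VELOCITYNORMTOPORDER} and \eqref{E:TOPORDERVELOCITYENERGYDEF}.

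The hardest step, and the main obstacle, will be \eqref{E:ELLIPTICNORMSBOUNDEDBYTOTALENERGY}, because the elliptic norms contain $\g_i\g_j g_{\mu\nu}$ which are \emph{not} directly present in any energy. For each $\varphi\in\{g_{00},g_{0j},h_{jk}\}$ and each $\vec{\alpha}$ with $|\vec{\alpha}|\leq N-1$, I would apply Lemma~\ref{L:elliptic} to $v=\partial_{\vec{\alpha}}\varphi$ to obtain
\[
\sum_{a,b}\|\g_a\g_b\partial_{\vec{\alpha}}\varphi\|_{L^2}
\lesssim e^{2\Omega}\|\hat{\square}_g\partial_{\vec{\alpha}}\varphi\|_{L^2}
 +e^{2\Omega}\|\g_t\u\partial_{\vec{\alpha}}\varphi\|_{L^2}
 +\sum_a e^{(1-q)\Omega}\|\g_a\u\partial_{\vec{\alpha}}\varphi\|_{L^2}+\text{l.o.t.}
\]
The first term is replaced using the wave equations \eqref{E:metric00}-\eqref{E:metricjk}, which express $\hat{\square}_g\varphi$ as a linear combination of $\g_t\varphi$, $\varphi$, the dangerous term $g^{ab}\Gamma_{ajb}$ in the $g_{0j}$ case, and $\triangle_{\mu\nu}$; commuting in $\partial_{\vec{\alpha}}$ produces commutators bounded by \eqref{E:gcommutatorL2}. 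All of these are controlled by $\totalbelowtopenergy{N-1}$ with room to spare after multiplication by $e^{2\Omega}$ and the norm weights \eqref{E:G00ELLIPTICNORM}, \eqref{E:G0STARELLIPTICNORM}, \eqref{E:HSTARSTARELLIPTICNORM}. The second and third terms are absorbed into $\guenergy{N-1}$ via \eqref{E:BELOWTOPORDERUNORMCOMPARABLETOPRECISEENERGY}. The dangerous factor $g^{ab}\Gamma_{ajb}$ in the $g_{0j}$-equation, bounded by \eqref{E:gabupperGammaajblower}, only introduces the $h_{**}$ energy, which sits on the good side. The error estimates \eqref{E:triangleTWODOWNHNMINUSONE} for $\triangle_{\mu\nu}$ then close the argument. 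Combining all of the above yields \eqref{E:TOTALNORMENERGYEQUIVALENCE}.
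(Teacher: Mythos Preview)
Your outline follows the same route as the paper and handles the metric equivalences and the elliptic recovery correctly. However, there is one genuine gap: you never address the fluid $\partial_{\mathbf{u}}$-derivative norms
$\velocityunorm{N-1}=\sum_j e^{(1+q)\Omega}\|\partial_{\mathbf{u}}u^j\|_{H^{N-1}}$ and
$\densunorm{N-1}=e^{q\Omega}\|\partial_{\mathbf{u}}\dens\|_{H^{N-1}}$,
which are part of $\totalbelowtopunorm{N-1}$ and hence of the left-hand side of \eqref{E:BELOWTOPORDERUNORMCOMPARABLETOPRECISEENERGY}. There is \emph{no} fluid $\partial_{\mathbf{u}}$-energy anywhere in Def.~\ref{D:FLUIDENERGIES} or Def.~\ref{D:TOTALENERGY}, so these norms cannot be read off from any coercivity statement or commutator estimate. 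The paper handles this by using the evolution equations \eqref{E:revol}--\eqref{E:velevol} as \emph{algebraic constraints}: one writes $\partial_{\mathbf{u}}u^j=-2u^0\omega u^j - u^0\triangle_{0\ 0}^{\ j}+\triangle^j$ and $\partial_{\mathbf{u}}\dens=\triangle$, then applies the error-term bounds \eqref{E:VERYIMPORTANTCHRISTOFFELSYMBOLERRORESTIMATE}, \eqref{E:FLUIDTRIANGLEJHNMINUSONE}, \eqref{E:FLUIDTRIANGLEHNMINUSONE} to get
\[
\velocityunorm{N-1}+\densunorm{N-1}\ \lesssim\ \totalbelowtopnorm{N-1}\ \lesssim\ \totalbelowtopenergy{N-1}.
\]
Without this step, \eqref{E:BELOWTOPORDERUNORMCOMPARABLETOPRECISEENERGY} and hence \eqref{E:TOTALNORMENERGYEQUIVALENCE} remain unproven. (Your later citation of \eqref{E:BELOWTOPORDERUNORMCOMPARABLETOPRECISEENERGY} inside the elliptic argument is therefore circular as written, though in fact you only need the metric piece $\guenergy{N-1}$ there, which you do establish.)

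A second, smaller omission: the elliptic norms \eqref{E:G00ELLIPTICNORM}--\eqref{E:HSTARSTARELLIPTICNORM} each contain a mixed-derivative piece $\|\partial_i\partial_t\varphi\|_{H^{N-1}}$ in addition to the purely spatial $\|\partial_i\partial_j\varphi\|_{H^{N-1}}$ that Lemma~\ref{L:elliptic} recovers. You need a separate short argument for these: decompose $\partial_t=\frac{1}{u^0}\partial_{\mathbf{u}}-\frac{u^a}{u^0}\partial_a$ so that $\|\partial_i\partial_t\varphi\|_{H^{N-1}}\lesssim\|\partial_{\mathbf{u}}\partial_i\varphi\|_{H^{N-1}}+\sum_a\|u^a\|_{L^\infty}\|\partial_a\partial_i\varphi\|_{H^{N-1}}$, the first term going into $\gunorm{N-1}$ and the second being the purely spatial piece just bounded (with a favorable extra factor $e^{-(1+q)\Omega}$ from $\|u^a\|$).
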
 
 
\begin{proof}
With the help of Prop. \ref{P:metric}, inequality \eqref{E:mathcalEcomparison} is not difficult to prove.
It was proved in detail as the first inequality in \cite[Prop. 10.0.1]{jS2012}, and we will not repeat the 
simple proof here. We stress that the constant $C_{(\upbeta)}$ is either $0$ or positive and that when 
it is positive, it yields control over $\| v \|_{L^2}.$ Furthermore, the positivity of $C_{(\upbeta)}$
is decided by the constants $\upgamma,\updelta$ as described in the statement of Lemma \ref{L:buildingblockmetricenergy}.
The estimates~\eqref{E:gzerozeronormequivalence}-\eqref{E:GNORMANDENEGYEQUIVALENCE}
then follow as a direct consequence
of~\eqref{E:mathcalEcomparison} and the definitions of the energies and norms in question.
Note that the energy $\hstarstarenergy{N-2}$ on the right-hand side of \eqref{E:hstarstarnormequivalence} is 
needed because of the second sum in the norm \eqref{E:HSTARSTARNORM}; this second
sum is not controlled by the energy $\partialhstarstarenergy{N-1}$ because the constant
$C_{(\upbeta)}$ is $0$ for the energies corresponding to the wave equations verified by $h_{ij}.$

To prove the bound $\lesssim$ in~\eqref{E:partialufirst}, we note that 
by \eqref{E:gzerozeronormequivalence}, it suffices to show that
$\gzerozerounorm{N-1} \lesssim$ the right-hand side of \eqref{E:partialufirst}.
To this end, we first use definitions \eqref{E:G00UNORM} and \eqref{E:enpartialu00} 
and the comparison estimate \eqref{E:mathcalEcomparison}
to deduce that $\gzerozerounorm{N-1}$ and $\gzerozeropartialuenergy{N-1}$ 
are uniformly comparable up to commutator error terms.
Specifically, we deduce that
\begin{align} \label{E:GOOUNORMVSENERGYEQUIVALENCEFIRSTBOUND}
\gzerozerounorm{N-1} 
& \lesssim \gzerozeropartialuenergy{N-1}
	+ \sum_{|\vec{\alpha}|\le N-1}
		e^{q\Omega} 
		\left\|(\g_t\u\a-\a\u\g_t)g_{00} \right\|_{L^2}
			\\
& \ \ + \sum_{|\vec{\alpha}|\le N-1}
			e^{q\Omega} 
			\left\|\{\u,\a\}g_{00} \right\|_{L^2}
			+ 
			\sum_{|\vec{\alpha}|\le N-1}
			\sum_{i=1}^3
			e^{(q-1)\Omega}
			\left\|(\g_i\u\a-\a\u\g_i) g_{00} \right\|_{L^2}.
	\notag
\end{align}
Inserting the estimate \eqref{E:equivalencecommutator1} 
into the right-hand side of \eqref{E:GOOUNORMVSENERGYEQUIVALENCEFIRSTBOUND}
and using
\eqref{E:enpartialu00}
and \eqref{E:mathcalEcomparison}, 
we deduce that $\gzerozerounorm{N-1} \lesssim \gzerozeropartialuenergy{N-1} + \gzerozeronorm{N-1}.$
The desired bound $\lesssim$ in \eqref{E:partialufirst} now follows from the
previous bound and \eqref{E:gzerozeronormequivalence}.
To prove the bound $\gtrsim$ in \eqref{E:partialufirst},
we apply the same reasoning that we used to prove 
\eqref{E:GOOUNORMVSENERGYEQUIVALENCEFIRSTBOUND}, 
but we use the commutator estimate \eqref{E:ALTERNATEequivalencecommutator1}
in place of \eqref{E:equivalencecommutator1}.

The proof of \eqref{E:partialusecond}
is essentially the same as the proof of \eqref{E:partialufirst}
and relies instead on the commutator estimates
\eqref{E:equivalencecommutator2}-\eqref{E:ALTERNATEequivalencecommutator2}. 
%Note that the bound~\eqref{E:partialuthird} only requires the commutator estimate
%$e^{q\Omega}\|\{\u,\a\}h_{jk}\|_{L^2}\lesssim \hstarstarnorm{N-1}$.

The proof of the bound $\gtrsim$ in \eqref{E:partialufourth}
is also essentially the same as the proof of the bound $\gtrsim$ in \eqref{E:partialufirst} and
relies on the commutator estimate \eqref{E:ALTERNATEequivalencecommutator3}.
To prove the bound $\lesssim$ in \eqref{E:partialufourth},
we first note that \eqref{E:hstarstarnormequivalence} implies that we only have to show that
$\combinedpartialupartialhstarstarnorm{N-1}$ 
is $\lesssim$ the right-hand side of \eqref{E:partialufourth}.
To this end, we examine definition \eqref{E:DERIVATIVEHSTARSTARUNORM}.
To see that the first and the fourth sums are $\lesssim$
the right-hand side of \eqref{E:partialufourth},
we use the same argument that we used to prove the bound
$\lesssim$ in \eqref{E:partialufirst}, but we use 
the commutator estimate \eqref{E:equivalencecommutator3}  
in place of \eqref{E:equivalencecommutator1}.
To bound the second sum in \eqref{E:DERIVATIVEHSTARSTARUNORM}
by $\lesssim$ the right-hand side of \eqref{E:partialufourth},
we use the definition $\u = u^{\mu} \g_{\mu},$
Prop. \ref{P:F1FkLinfinityHN},
and the estimates
\eqref{E:u0MINUSONEHNMINUSONE} 
and $\| u^a \|_{H^{N-1}} \lesssim e^{-(1 + q) \Omega}$ to deduce that
\begin{align}
	e^{q \Omega} \| \u h_{jk} \|_{H^{N-1}}
	& \lesssim 
		e^{q \Omega}
		\left\lbrace 
			1 + \|u^0 - 1\|_{H^{N-1}} 
		\right\rbrace
		\| \g_t h_{jk} \|_{H^{N-1}}
		+ e^{q \Omega}
			\sum_{a=1}^3
			\| u^a \|_{H^{N-1}}
			\| \g_a h_{jk} \|_{H^{N-1}}
				\\
	& \lesssim 
		e^{q \Omega} \| \g_t h_{jk} \|_{H^{N-1}}
		+ \sum_{a=1}^3
			e^{-\Omega} \| \g_a h_{jk} \|_{H^{N-1}}
		\lesssim \hstarstarnorm{N-1}.
		\notag
\end{align}
The desired bound now follows from \eqref{E:hstarstarnormequivalence}.
To bound the third sum in \eqref{E:DERIVATIVEHSTARSTARUNORM}
by $\lesssim$ the right-hand side of \eqref{E:partialufourth},
we use essentially the same proof. We have thus proved \eqref{E:partialufourth}.

To prove \eqref{E:BELOWTOPORDERNORMCOMPARABLETOENERGY},
we first refer to Defs.
\ref{D:TOTALBELOWTOPSOLUTIONNORM},
\ref{D:TOTALMETRICENERGIES},
and
\ref{D:TOTALBELOWTOPSOLUTIONENERGY}.
Using the already proven estimates 
\eqref{E:partialufirst}-\eqref{E:partialufourth},
we immediately conclude the desired result.

To prove \eqref{E:BELOWTOPORDERUNORMCOMPARABLETOPRECISEENERGY},
we first refer to Defs. 
\ref{D:TOTALBELOWTOPSOLUTIONNORM},
\ref{D:TOTALMETRICENERGIES},
and
\ref{D:TOTALBELOWTOPSOLUTIONENERGY}.
Using the already proven estimates 
\eqref{E:partialufirst}-\eqref{E:BELOWTOPORDERNORMCOMPARABLETOENERGY},
we deduce that the desired bound \eqref{E:BELOWTOPORDERUNORMCOMPARABLETOPRECISEENERGY}
will follow once we prove
\begin{align}  \label{E:FLUIDUNORMSBOUNDEDBYENERGY}
\sum_{j=1}^3 e^{(1 + q) \Omega} \| \u u^j \|_{H^{N-1}}
	+ e^{q\Omega} \|\partial_{\mathbf{u}} \dens \|_{H^{N-1}}
	 	& \lesssim 
	 	\totalbelowtopenergy{N-1}.
\end{align}	 		
To prove the desired estimate \eqref{E:FLUIDUNORMSBOUNDEDBYENERGY}
for $e^{(1 + q) \Omega} \|\u u^j\|_{H^{N-1}},$
we first use Eq. \eqref{E:velevol} and Prop. \ref{P:derivativesofF1FkL2} to deduce that
\begin{align} \label{E:PARTIALUUJHNMINUSONEFIRSTBOUND}
\|\u u^j\|_{H^{N-1}}
& \lesssim (1 + \|u^0 - 1 \|_{H^{N-1}}) \| u^j \|_{H^{N-1}} 
+ (1 + \|u^0 - 1 \|_{H^{N-1}}) \| \triangle^{\ j}_{0 \ 0}\|_{H^{N-1}} 
+ \|\triangle^j \|_{H^{N-1}}.
\end{align}
Inserting the estimates
\eqref{E:u0MINUSONEHNMINUSONE},
\eqref{E:VERYIMPORTANTCHRISTOFFELSYMBOLERRORESTIMATE},
and
\eqref{E:FLUIDTRIANGLEJHNMINUSONE}
into the right-hand side of \eqref{E:PARTIALUUJHNMINUSONEFIRSTBOUND},
we deduce that the right-hand side of \eqref{E:PARTIALUUJHNMINUSONEFIRSTBOUND}
is $\lesssim e^{-(1+q) \Omega} \totalbelowtopnorm{N-1}.$
The desired bound $e^{(1+q) \Omega} \|\g_{\bf u} u^j \|_{H^{N-1}} \lesssim \totalbelowtopenergy{N-1}$
thus follows from the previously proven bound \eqref{E:BELOWTOPORDERNORMCOMPARABLETOENERGY}.
To prove the desired estimate \eqref{E:FLUIDUNORMSBOUNDEDBYENERGY} for 
$e^{q\Omega} \|\partial_{\mathbf{u}} \dens \|_{H^{N-1}},$ 
we first take the $H^{N-1}$ norm of the right-hand side of \eqref{E:revol}
and use the estimate \eqref{E:FLUIDTRIANGLEHNMINUSONE} to deduce that
$e^{q\Omega} \|\partial_{\mathbf{u}} \dens \|_{H^{N-1}} \lesssim \totalbelowtopnorm{N-1}.$
The desired bound now follows from \eqref{E:BELOWTOPORDERNORMCOMPARABLETOENERGY}.

We now prove \eqref{E:ELLIPTICNORMSBOUNDEDBYTOTALENERGY}. 
We recall that 
$	\totalellipticnorm{N-1}  
= \gzerozeroellipticnorm{N-1} 
  	+ \gzerostarellipticnorm{N-1} 
  + \hstarstarellipticnorm{N-1}.$
We will show only how to prove
$\gzerostarellipticnorm{N-1} \lesssim \totalbelowtopenergy{N-1} + \guenergy{N-1};$ 
the bounds for 
$\gzerozeroellipticnorm{N-1}$ and 
$\hstarstarellipticnorm{N-1}$
can be proved in a similar fashion.
In view of definition \eqref{E:G0STARELLIPTICNORM}, we see that
we have to prove that
$\sum_{a,b=1}^3 e^{(q-2)\Omega} \| \g_a \g_t g_{0b}\|_{H^{N-1}} 
\lesssim 
\totalbelowtopenergy{N-1} 
+ \guenergy{N-1}$
and
that 
$\sum_{a,b,j=1}^3 e^{(q-3)\Omega}\| \g_a \g_b g_{0j}\|_{H^{N-1}} 
\lesssim 
\totalbelowtopenergy{N-1} 
+ \guenergy{N-1}.$
We first bound the second sum. 
We begin by using the elliptic regularity estimate \eqref{E:ell} to deduce that
\begin{align} \label{E:G0JELLIPTICBOUNDED}
\sum_{a,b,j=1}^3 e^{(q-3)\Omega}\| \g_a \g_b g_{0j}\|_{H^{N-1}}
& \lesssim 
\sum_{|\vec{\alpha}| \leq N-1}
\sum_{j=1}^3	
	e^{(q - 1)\Omega} \|\hat{\square}_g \a g_{0j}\|_{L^2}
+ \sum_{|\vec{\alpha}| \leq N-1}
	\sum_{j=1}^3
	e^{(q - 1)\Omega}  \|\g_t \u \a g_{0j}\|_{L^2}
		\\
& \ \ 
	+  
	\sum_{|\vec{\alpha}| \leq N - 1} 
	\sum_{i,j=1}^3
	e^{- 2 \Omega} \|\g_i \u \a  g_{0j}\|_{L^2}
	+ \sum_{j=1}^3
	 	e^{-\Omega}  \| \g_t g_{0j} \|_{H^{N-1}}
	 		\notag \\
& \ \ + \sum_{i,j=1}^3
	 	e^{-2\Omega}  \| \g_i g_{0j} \|_{H^{N-1}}.	
	\notag
\end{align}
From the commutator estimate \eqref{E:ALTERNATEequivalencecommutator2} 
and \eqref{E:partialusecond},
we deduce that the last five sums on the right-hand side of
\eqref{E:G0JELLIPTICBOUNDED} are $\lesssim \gzerostarenergy{N-1} + \gzerostarpartialuenergy{N-1}$ as desired.
To show that the remaining (first) term verifies
$e^{(q - 1)\Omega} \|\hat{\square}_g \a g_{0j}\|_{L^2}$
$\lesssim 
\totalbelowtopenergy{N-1} 
+ \guenergy{N-1}$
whenever $|\vec{\alpha}| \leq N-1,$ 
we first apply $\a$ to \eqref{E:metric0j},
commute $\hat{\square}_g$ and $\a,$ 
and take the $L^2$-norm to deduce that 
\begin{align} \label{E:ANNOYINGBOXPARTIALALPHAL2COMMUTATORESTIMATE}
e^{(q - 1)\Omega} \|\hat{\square}_g\a g_{0j}\|_{L^2}
&\lesssim 
e^{(q - 1)\Omega} \|\{\hat{\square}_g,\a\} g_{0j}\|_{L^2}
+ e^{(q - 1)\Omega} \|\g_tg_{0j} \|_{H^{N-1}}
+ e^{(q - 1)\Omega} \| g_{0j}\|_{H^{N-1}}
	\\
& \ \ 
+  e^{(q - 1)\Omega} \left\| g^{ab}\Gamma_{ajb} \right\|_{H^{N-1}}
+ e^{(q - 1)\Omega} \|\triangle_{0j} \|_{H^{N-1}}.
\notag
\end{align}
From \eqref{E:gabupperGammaajblower} 
and the already proven estimate 
\eqref{E:BELOWTOPORDERNORMCOMPARABLETOENERGY},
we conclude that the second, third, and fourth terms on the right-hand side of
\eqref{E:ANNOYINGBOXPARTIALALPHAL2COMMUTATORESTIMATE}
are
$\lesssim 
\totalbelowtopenergy{N-1}$ as desired.
To bound the first and last term on the right-hand side of
\eqref{E:ANNOYINGBOXPARTIALALPHAL2COMMUTATORESTIMATE},
we use 
\eqref{E:triangleTWODOWNHNMINUSONE},
\eqref{E:gcommutatorL2},
and the already proven estimate 
\eqref{E:BELOWTOPORDERNORMCOMPARABLETOENERGY}.
We have thus shown that $\sum_{a,b,j=1}^3 e^{(q-3)\Omega}\| \g_a \g_b g_{0j}\|_{H^{N-1}} 
\lesssim 
\totalbelowtopenergy{N-1} 
+ \guenergy{N-1}$
as desired. It remains for us to show that
$\sum_{a,b=1}^3 e^{(q-2) \Omega} \| \g_a \g_t g_{0b}\|_{H^{N-1}} 
\lesssim 
\totalbelowtopenergy{N-1} 
+ \guenergy{N-1}.$
To this end, we first use the decomposition $\g_t = \frac{1}{u^0}\u - \frac{u^a}{u^0} \g_a,$
Prop. \ref{P:F1FkLinfinityHN},
and Cor. \ref{C:SobolevTaylor}
to deduce that
\begin{align} \label{E:PARTIALIPARTIALTG0JHNMINUSONEFIRSTBOUND}
	e^{(q-2)\Omega} \| \g_i \g_t g_{0j} \|_{H^{N-1}}
	& \lesssim e^{(q-2)\Omega} (1 + \| u^0 - 1 \|_{H^{N-1}}) \| \u \g_i g_{0j} \|_{H^{N-1}}
		\\
	& \ \ + 
		e^{(q-2)\Omega} (1 + \| u^0 - 1 \|_{H^{N-1}}) \sum_{a,b=1}^3 \| u^a \|_{H^{N-1}} \| \g_a \g_b g_{0j} \|_{H^{N-1}}.
			\notag
\end{align}
Using the estimate 
\eqref{E:u0MINUSONEHNMINUSONE}
and the bound $\| u^a \|_{H^{N-1}} \lesssim e^{-(1 + q)\Omega} \totalbelowtopnorm{N-1} \lesssim e^{-(1 + q)\Omega},$
we deduce that the right-hand side of
\eqref{E:PARTIALIPARTIALTG0JHNMINUSONEFIRSTBOUND}
is $\lesssim \gzerostarunorm{N-1}$
plus $e^{-3 \Omega} \sum_{a,b=1}^3 \| \g_a \g_b g_{0j} \|_{H^{N-1}}.$
Hence, using \eqref{E:partialusecond} and 
the bound for $\sum_{a,b=1}^3 \| \g_a \g_b g_{0j} \|_{H^{N-1}}$
proved just above, we deduce that the right-hand side of
\eqref{E:PARTIALIPARTIALTG0JHNMINUSONEFIRSTBOUND}
is $\lesssim 
\totalbelowtopenergy{N-1} 
+ \guenergy{N-1}.$ Summing this inequality over $i$ and $j,$
we conclude the desired result. 

Next, we note that estimate \eqref{E:TOPORDERVELOCITYCOMPARABLETOTOPORDERVELOCITYENERGY}
follows trivially from definitions 
\eqref{E:VELOCITYNORMTOPORDER}
and
\eqref{E:TOPORDERVELOCITYENERGYDEF}.

Finally, we prove \eqref{E:TOTALNORMENERGYEQUIVALENCE}.
We will prove only the bound $\totalnorm \lesssim \totalenergy$
in detail. The reverse inequality can be proved using a similar argument and
is in fact much easier to prove because it does not require the use of elliptic 
estimates.
To show that $\totalnorm \lesssim \totalenergy,$
we first appeal to Defs. \ref{D:TOTALSOLUTIONNORM} and \ref{D:TOTALENERGY}.
We then simply combine the bounds
\eqref{E:BELOWTOPORDERUNORMCOMPARABLETOPRECISEENERGY},
\eqref{E:ELLIPTICNORMSBOUNDEDBYTOTALENERGY},
and
\eqref{E:TOPORDERVELOCITYCOMPARABLETOTOPORDERVELOCITYENERGY},
and the desired estimate follows.
\end{proof}
\section{Global existence and future-causal geodesic completeness}\label{S:global}
In this section, we state and prove our main future stability theorem (see Sect.~\ref{SS:maintheorem}).  
As a crucial preparatory step, we first derive a system of integral inequalities for the energies.

\subsection{Integral inequalities for the energies}
We now use Lemma~\ref{L:metricfirstdifferentialenergyinequality}, Lemma~\ref{L:fluidfirstdifferentialenergyinequality},
the error term estimates of Sect.~\ref{S:sobolev},
and Prop. \ref{P:energynormcomparison} 
to derive our 
main energy integral inequalities. The main point is that the inequalities 
are amenable to an intricate hierarchy of Gronwall-type estimates 
that will allow us to improve our Sobolev norm bootstrap assumption \eqref{E:bootstrap}. 
These estimates are the main ingredient in the proof of our main future stability theorem 
(which is provided in Sect.~\ref{SS:maintheorem}).
\begin{proposition}[\textbf{Integral inequalities for the fluid energies}]\label{P:integralinequalities}
Let 
$\velocityenergy{N-1},$ 
$\topordervelocityenergy{N-1},$
and $\densenergy{N-1}$ be the fluid energies from Def.~\ref{D:FLUIDENERGIES},
let 
$\genergy{N-1}$
and
$\guenergy{N-1}$
be the aggregate metric energies from Def. 
\ref{D:TOTALMETRICENERGIES},
let $\totalbelowtopenergy{N-1}$ be the aggregate metric$+$fluid energy from
Def. \ref{D:TOTALBELOWTOPSOLUTIONENERGY},
and let $\totalenergy$ be the total solution energy from Def. \ref{D:TOTALENERGY}.
Let $q$ be the small positive constant defined in~\eqref{E:qdef}.
Assume that the hypotheses of Prop.~\ref{P:Sobolev}, including the smallness assumption~\eqref{E:bootstrap},
hold on the spacetime slab $[0,T) \times \mathbb{T}^3.$ Then there exists 
a large constant $C > 1$ such that 
if $\epsilon$ is sufficiently small,
then the following integral inequalities hold for $0 \leq t_1 \leq t < T:$
\begin{subequations}
 \begin{align}
\velocityenergy{N-1}^2(t) 
& \leq 
	\velocityenergy{N-1}^2(t_1) 
 	+	\int_{t_1}^t  
 				C
 				e^{- q H \tau}
 				\totalenergy^2(\tau)
 		\, d \tau,	
 			\label{E:intun-1}
 			\\ 
 	& \ \ 
 		+
 		\int_{t_1}^t 
 			\Big\lbrace
 				\underbrace{- 2(1 - q) H }_{< 0} \velocityenergy{N-1}^2(\tau) 
 				+ C \underbrace{\genergy{N-1}(\tau)}_{\mbox{\textnormal{dangerous}}}
 				\velocityenergy{N-1}(\tau) 
 			\Big\rbrace
 		\, d\tau,
 			\notag \\		
 \topordervelocityenergy{N-1}^2(t) 
& \leq 
	\topordervelocityenergy{N-1}^2(t_1) 
 	+	\int_{t_1}^t  
 				C
 				e^{- q H \tau}
 				\totalenergy^2(\tau)
 		\, d \tau,	
 			\label{E:TOPORDERintun-1}
 			\\ 
 	& \ \ 
 		+
 		\int_{t_1}^t 
 			\Big\lbrace
 				\underbrace{-2(2 - q)H}_{< 0} 
 				 \topordervelocityenergy{N-1}^2(\tau) 
 				+ C \underbrace{\left[
 				\totalbelowtopenergy{N-1}(\tau) 
 				+ \guenergy{N-1}(\tau)\right]}_{\mbox{\textnormal{dangerous}}}
 				\topordervelocityenergy{N-1}(\tau) 
 			\Big\rbrace
 		\, d\tau,
 			\notag \\		
%& \ \ +C\int_{t_1}^t \Big\{ e^{-q\Omega(\tau)}\totalnorm(\tau)\fluidenergy{N-1}(\tau) 
 % +  E_{g,N}\fluidenergy{N-1} +  \totalnorm\fluidenergy{N-1}^2 \Big\}\,d\tau\,,\label{E:intun-1} \\
\densenergy{N-1}^2(t)
& \leq \densenergy{N-1}^2(t_1)  
		+ \int_{t_1}^t  
				C
				e^{-q H \tau}
				\totalenergy^2(\tau) 
			\,d \tau. \label{E:intun} 
\end{align}
\end{subequations}
\end{proposition}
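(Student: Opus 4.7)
The plan is to integrate the differential identities from Lemma~\ref{L:fluidfirstdifferentialenergyinequality} in time, and to treat each of the three identities separately by bounding the right-hand side integrals with the Sobolev error estimates proved in Props.~\ref{P:Sobolev}--\ref{P:Sobolevtwo}, converting norms to energies via Prop.~\ref{P:energynormcomparison}, and extracting the behavior of $\omega$ from Lemma~\ref{L:backgroundaoftestimate}. The guiding principle is that each error integrand should either (i) receive a Cauchy--Schwarz bound whose non-energy factor decays like $e^{-qH\tau}\totalenergy$ so that an application of $2ab\leq a^2+b^2$ absorbs it into $Ce^{-qH\tau}\totalenergy^2$, or (ii) carry the distinguished linear-in-energy structure that produces a dangerous term (only for the velocity equations, not for the density).

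First, for \eqref{E:intun-1}, I start from \eqref{E:VELOCITYDIFFINEQ}. The coercive term $-2(1-q)\omega\velocityenergy{N-1}^2$ I replace, using Lemma~\ref{L:backgroundaoftestimate}, with $-2(1-q)H\velocityenergy{N-1}^2+O(e^{-3H\tau})\velocityenergy{N-1}^2$; the remainder is absorbed into $Ce^{-qH\tau}\totalenergy^2$. The dangerous integral $-2\sum_j\int e^{2(1+q)\Omega}(\a u^j)\a\triangle_{0\ 0}^{\ j}\,dx$ is Cauchy--Schwarz-bounded by $e^{2(1+q)\Omega}\|\a u^j\|_{L^2}\|\triangle_{0\ 0}^{\ j}\|_{H^{N-1}}$; inserting \eqref{E:VERYIMPORTANTCHRISTOFFELSYMBOLERRORESTIMATE} and the definition of $\velocityenergy{N-1}$ gives the factor $\velocityenergy{N-1}\gnorm{N-1}$, which by \eqref{E:GNORMANDENEGYEQUIVALENCE} is $\lesssim\velocityenergy{N-1}\genergy{N-1}$, exactly the dangerous term. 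The remaining integrals in \eqref{E:VELOCITYDIFFINEQ} are harmless: the $\frac{1}{u^0}\triangle^j$ term is handled by \eqref{E:FLUIDTRIANGLEJHNMINUSONE}, the commutator term by \eqref{E:UCOMMUTATORBOUND}, and the coefficient $\partial_a(u^a/u^0)$ by Sobolev embedding together with \eqref{E:PARTIALIu0HNMINUSONE}--\eqref{E:u0MINUSONEHNMINUSONE}; in each case Cauchy--Schwarz or $L^\infty\times L^2$ estimates, combined with Young's inequality, yield a contribution $\lesssim e^{-qH\tau}\totalenergy^2$.

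The argument for \eqref{E:TOPORDERintun-1} is parallel. Starting from \eqref{E:TOPORDERVELOCITYDIFFINEQ}, the $-2(2-q)\omega$ factor becomes $-2(2-q)H$ up to the $O(e^{-3H\tau})$ correction. The dangerous integral $-2\sum_j\int e^{2q\Omega}(\a\partial_i u^j)\a\partial_i\triangle_{0\ 0}^{\ j}\,dx$ is treated with \eqref{E:PARTIALIVERYIMPORTANTCHRISTOFFELSYMBOLERRORESTIMATE}: Cauchy--Schwarz gives $\topordervelocityenergy{N-1}(\gnorm{N-1}+\totalellipticnorm{N-1})$, which by \eqref{E:GNORMANDENEGYEQUIVALENCE} and \eqref{E:ELLIPTICNORMSBOUNDEDBYTOTALENERGY} is controlled by $\topordervelocityenergy{N-1}(\totalbelowtopenergy{N-1}+\guenergy{N-1})$, exactly the dangerous term in \eqref{E:TOPORDERintun-1}. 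The remaining three integrals are again absorbed into $Ce^{-qH\tau}\totalenergy^2$ using \eqref{E:PARTIALIFLUIDTRIANGLEJHNMINUSONE}, the top-order part of \eqref{E:UCOMMUTATORBOUND}, and the $L^\infty$ bound on $\partial_a(u^a/u^0)$.

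Finally, for \eqref{E:intun} I use \eqref{E:DENSITYDIFFINEQ}. Here there is no dangerous linear term: the coefficient $\partial_a(u^a/u^0)$ is an $L^\infty$ factor of size $e^{-(1+q)\Omega}$, so the first integral is bounded by $e^{-(1+q)H\tau}\densenergy{N-1}^2\lesssim e^{-qH\tau}\totalenergy^2$; the second integral is split as $\frac{1}{u^0}\triangle$ plus the commutator, handled by \eqref{E:FLUIDTRIANGLEHNMINUSONE} and \eqref{E:RHOCOMMUTATORBOUND} respectively, and Cauchy--Schwarz together with Young produces $Ce^{-qH\tau}\totalenergy^2$. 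The main subtlety throughout --- and the one point where the argument is not mechanical --- is verifying that the decay factors from the error-term estimates really combine with the $t$-weights built into the energies to yield the clean factor $e^{-qH\tau}$ claimed in the proposition; this is exactly the purpose of the Counting Principle and of choosing $q$ small in \eqref{E:qdef}, so no new ingredient is required beyond bookkeeping. Integrating each resulting differential inequality over $[t_1,t]$ yields the three stated integral inequalities.
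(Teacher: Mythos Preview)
Your proposal is correct and follows essentially the same approach as the paper: integrate the differential identities of Lemma~\ref{L:fluidfirstdifferentialenergyinequality}, replace $\omega$ by $H$ via Lemma~\ref{L:backgroundaoftestimate}, isolate the dangerous linear terms using \eqref{E:VERYIMPORTANTCHRISTOFFELSYMBOLERRORESTIMATE} and \eqref{E:PARTIALIVERYIMPORTANTCHRISTOFFELSYMBOLERRORESTIMATE} together with the norm-energy comparisons \eqref{E:GNORMANDENEGYEQUIVALENCE} and \eqref{E:ELLIPTICNORMSBOUNDEDBYTOTALENERGY}, and absorb all remaining integrals into $Ce^{-qH\tau}\totalenergy^2$ via \eqref{E:FLUIDTRIANGLEJHNMINUSONE}, \eqref{E:PARTIALIFLUIDTRIANGLEJHNMINUSONE}, \eqref{E:FLUIDTRIANGLEHNMINUSONE}, \eqref{E:UCOMMUTATORBOUND}, \eqref{E:RHOCOMMUTATORBOUND}, and the $L^\infty$ bound on $\partial_a(u^a/u^0)$. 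The paper's proof cites exactly the same ingredients in the same roles.
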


\begin{remark}
	In \cite{jS2012}, because of a typographical error that morphed into an actual error, 
	the second author overlooked the presence of dangerous linear terms
	present in the energy estimates for the lower-order derivatives of the fluid velocity.
	The term that should have been
	included is analogous to the integrand term
	$C \genergy{N-1}(\tau) \velocityenergy{N-1}(\tau)$
 	in \eqref{E:intun-1}. The argument in present article 
 	corrects this oversight and shows that it is not difficult to 
 	completely fix the error in \cite{jS2012}. 
 	In particular, the main results of \cite{jS2012} are true exactly 
 	as stated there. The main idea of the fix is to exploit the effective
 	decoupling and to derive suitable estimates for the dangerous factor
 	$\genergy{N-1}(\tau)$
 	before estimating $\velocityenergy{N-1}(\tau).$ We carry this out in complete detail
 	in our proof of Prop. \ref{P:auxiliary} below.
 \end{remark}

\begin{proof}
Throughout this proof, we freely use the comparison estimates of Prop. \ref{P:energynormcomparison} without 
explicitly mentioning it each time. To prove~\eqref{E:intun-1}, we integrate \eqref{E:VELOCITYDIFFINEQ} in time over the interval $[t_1,t].$
Using Lemma \ref{L:backgroundaoftestimate} to replace $\omega$ with $H$ up to an
error term bounded by $ \int_{t_1}^t  
 				C
 				e^{- q H \tau}
 				\totalenergy^2(\tau)
 			\,d\tau,$
we see that the first term on the right-hand side of \eqref{E:VELOCITYDIFFINEQ} 
yields the negative definite integral
$- \int 2(1 - q) \cdots$ on the right-hand side of \eqref{E:intun-1}. 
To handle the integral on the second line of~\eqref{E:VELOCITYDIFFINEQ}, we use the Cauchy-Schwarz inequality,
the estimate
$e^{(1 + q)\Omega} \| u^j \|_{H^{N-1}} \lesssim \velocityenergy{N-1}$
(which follows trivially from the definition of $\velocityenergy{N-1}$),
and the estimate 
\eqref{E:VERYIMPORTANTCHRISTOFFELSYMBOLERRORESTIMATE}
to deduce that it is bounded by the dangerous integral
$\int_{t_1}^t C \genergy{N-1}(\tau) \velocityenergy{N-1}(\tau) \,d\tau $ 
appearing on the right-hand side of~\eqref{E:intun-1}.
The former integral is dangerous because it lacks an exponentially decaying factor in the integrand. 
We emphasize that in proving this bound,  
we have used \eqref{E:GNORMANDENEGYEQUIVALENCE}.
All of the remaining integrals 
on the right-hand side of \eqref{E:VELOCITYDIFFINEQ}
are bounded by the
non-dangerous integral $C\int_{t_1}^t e^{-q H \tau} \totalenergy^2(\tau) \,d\tau$
on the right-hand side of \eqref{E:intun-1}.
To see that this is the case, we use the Cauchy-Schwarz inequality 
(in the form 
	$|\int_{\mathbb{T}^3} v_1 v_2 v_3 \, dx| \lesssim \| v_1 \|_{L^{\infty}} \| v_2 \|_{L^2} \| v_3 \|_{L^2}$
for the integrals on the last line),
the estimates
\eqref{E:FLUIDTRIANGLEJHNMINUSONE}
and
\eqref{E:UCOMMUTATORBOUND}
as well as the estimate
$\left\| \partial_a \left(\frac{u^a}{u^0} \right) \right\|_{L^{\infty}} 
\lesssim \sum_{a=1}^3 (1 + \| u^0 - 1 \|_{H^{N-1}}) \| u^a \|_{H^{N-1}} 
\lesssim e^{-(1 + q) \Omega},$
which follows easily with the help of \eqref{E:u0MINUSONEHNMINUSONE}.
We have thus proved \eqref{E:intun-1}.

The proof of \eqref{E:TOPORDERintun-1} is very similar and is based on 
the relation \eqref{E:TOPORDERVELOCITYDIFFINEQ}.
We remark that in place of the estimates
\eqref{E:VERYIMPORTANTCHRISTOFFELSYMBOLERRORESTIMATE},
$e^{(1 + q)\Omega} \| u^j \|_{H^{N-1}} \lesssim \velocityenergy{N-1},$
and
\eqref{E:FLUIDTRIANGLEJHNMINUSONE} used above, we use
\eqref{E:PARTIALIVERYIMPORTANTCHRISTOFFELSYMBOLERRORESTIMATE},
$e^{q\Omega} \| \g_i u^j \|_{H^{N-1}} \lesssim \topordervelocityenergy{N-1},$
and
\eqref{E:PARTIALIFLUIDTRIANGLEJHNMINUSONE}.
Furthermore, in deriving the structure of the dangerous integrands,
which arise from the estimate \eqref{E:PARTIALIVERYIMPORTANTCHRISTOFFELSYMBOLERRORESTIMATE},
we use \eqref{E:ELLIPTICNORMSBOUNDEDBYTOTALENERGY}
in addition to \eqref{E:GNORMANDENEGYEQUIVALENCE}.

The bound~\eqref{E:intun} is much simpler to prove. 
Specifically, it follows from
\eqref{E:DENSITYDIFFINEQ},
the Cauchy-Schwarz inequality,
the bound $\left\| \partial_a \left(\frac{u^a}{u^0} \right) \right\|_{L^{\infty}} \lesssim e^{-(1 + q) \Omega}$
mentioned above,
and the error estimates
\eqref{E:FLUIDTRIANGLEHNMINUSONE} 
and
\eqref{E:RHOCOMMUTATORBOUND}.

\end{proof}
\begin{proposition}[\textbf{Integral inequalities for the metric energies}]\label{P:integralinequalitiesmetric}
Assume that the hypotheses of Prop.~\ref{P:Sobolev}, including the smallness assumption~\eqref{E:bootstrap},
hold on the spacetime slab $[0,T) \times \mathbb{T}^3.$ Let $\totalenergy$ be the total solution energy from Def. \ref{D:TOTALENERGY}.
Let $q$ be the small positive constant defined in~\eqref{E:qdef}. Then there exists a constant $C > 0$ such 
that if $\epsilon$ is sufficiently small,
then the following integral inequalities hold for the energies 
$\gzerozeroenergy{N-1}$ and $\gzerozeropartialuenergy{N-1}$ from Def.~\ref{D:energiesforg}
whenever $0 \leq t_1 \leq t < T:$
\begin{subequations}
\begin{align}
\gzerozeroenergy{N-1}^2(t) 
& \leq 
	\gzerozeroenergy{N-1}^2(t_1) \label{E:mathfrakENg00integral} 
	+ \int_{t_1}^t  
			  C  e^{-q H \tau} \totalenergy^2(\tau) 
		\, d \tau, 
		\\
\gzerozeropartialuenergy{N-1}^2(t) 
& \le \gzerozeropartialuenergy{N-1}^2(t_1)
	+ \int_{t_1}^t  
			  C  e^{-q H \tau} \totalenergy^2(\tau) 
		\, d \tau
	\label{E:mathfrakENg00integralpartialu} \\
& \ \ 
	+\int_{t_1}^t
		\Big\lbrace
			\underbrace{- 4qH }_{< 0}\gzerozeropartialuenergy{N-1}^2(\tau) 
			+
			C \underbrace{\gzerozeroenergy{N-1}(\tau)}_{\mbox{\textnormal{dangerous}}} \gzerozeropartialuenergy{N-1}(\tau) 
		\Big\rbrace
\, d\tau.  \notag
\end{align}
\end{subequations}

Furthermore, the following integral inequalities hold for 
the energies $\gzerostarenergy{N-1}$ and $\gzerostarpartialuenergy{N-1}$ from Def.~\ref{D:energiesforg}:
\begin{subequations}
\begin{align}
		\gzerostarenergy{N-1}^2(t)
		& \le \gzerostarenergy{N-1}^2(t_1) 
		 + \int_{t_1}^t 
				C e^{-q H \tau} \totalenergy^2(\tau) 		
			\, d \tau
			\label{E:mathfrakENg0*integral} \\
		& 	+ \int_{t_1}^t 
							\underbrace{- 4qH}_{< 0} \gzerostarenergy{N-1}^2(\tau)
					\, d \tau,
					\notag \\
		& 	+ \int_{t_1}^t 
						C \underbrace{
							\Big[
								\partialhstarstarenergy{N-1}(\tau) + \hstarstarenergy{N-2}(\tau)
							\Big]
							}_{\mbox{\textnormal{dangerous}}}
							\gzerostarenergy{N-1}(\tau)
					 \, d \tau,
					\notag \\
\gzerostarpartialuenergy{N-1}^2(t) \label{E:mathfrakENg0*integralpartialu}
& \le \gzerostarpartialuenergy{N-1}^2(t_1)  
			+ \int_{t_1}^t C e^{-q H \tau} \totalenergy^2(\tau)  \, d \tau
			    \\
& + \int_{t_1}^t 
						\underbrace{- 4qH}_{< 0}  \gzerostarpartialuenergy{N-1}^2(\tau)
			\, d \tau
			\notag \\
& + \int_{t_1}^t 
						C
						\underbrace{
						\Big[
						\partialhstarstarpartialuenergy{N}(\tau)
						+ \partialhstarstarenergy{N-1}(\tau)
						+ \hstarstarenergy{N-2}
						+\gzerostarenergy{N-1}(\tau)
					\Big]}_{\mbox{\textnormal{dangerous}}}
			\notag \\				
&    	\ \ \ \ \ \ \ \ \ \ \ \ \ 
			\times		
			\gzerostarpartialuenergy{N-1}(\tau)
			\, d \tau.
			\notag
\end{align}
\end{subequations}

Finally, the following integral inequalities hold for the metric energies 
$\hstarstarenergy{N-2},$ 
$\partialhstarstarenergy{N-1},$ 
%$\hstarstarpartialuenergy{N-1},$
and $\partialhstarstarpartialuenergy{N-1}$ from Def.~\ref{D:energiesforg}:
\begin{subequations}
\begin{align}
			\hstarstarenergy{N-2}^2(t) & \leq \hstarstarenergy{N-2}^2(t_1)	
			+ \int_{t_1}^t 
					C e^{-q H \tau}\totalenergy^2(\tau) 
				\, d \tau, 
			\label{E:ENh**integral} \\
		\partialhstarstarenergy{N-1}^2(t) & \leq \partialhstarstarenergy{N-1}^2(t_1)	
			+ \int_{t_1}^t 
					C e^{-q H \tau} \totalenergy^2(\tau) 
				\, d \tau, 
			\label{E:ENpartialh**integral} \\
		%\hstarstarpartialuenergy{N-1}^2(t) & \leq \hstarstarpartialuenergy{N-1}^2(t_1)	
		%	+ \int_{t_1}^t
		%			C e^{-q H \tau} \totalenergy^2(\tau) 
		%		\, d \tau, 
		%	\label{E:ENh**integralpartialu} \\
		\partialhstarstarpartialuenergy{N-1}^2(t)
		& \le \partialhstarstarpartialuenergy{N-1}^2(t_1) 
					+ \int_{t_1}^t 
					 		C e^{-q H \tau} \totalenergy^2(\tau)
						\, d \tau 
					\label{E:ENpartialh**integralpartialu} \\
		& \ \ +
					\int_{t_1}^t
						\Big\lbrace
							\underbrace{- 4qH}_{< 0} \partialhstarstarpartialuenergy{N-1}^2(\tau)
							+ C \underbrace{\partialhstarstarenergy{N-1}(\tau)}_{\mbox{\textnormal{dangerous}}} \partialhstarstarpartialuenergy{N-1}(\tau)
						\Big\rbrace
					\,d\tau. 
					\notag
			 \end{align}
	\end{subequations}
	\end{proposition}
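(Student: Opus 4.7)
The plan is to derive each of the seven integral inequalities by integrating the corresponding differential inequality of Lemma \ref{L:metricfirstdifferentialenergyinequality} over $[t_1,t]$ and then classifying the resulting integrands into three groups: (i) the coercive dissipative piece $(2q-\upeta_{\cdot\cdot})H E^2$ from Lemma \ref{L:buildingblockmetricenergy}, which by the choice of $q$ in \eqref{E:qdef} (i.e.\ $8q \leq \upeta_{00}, \widetilde{\upeta}_{00}, \upeta_{0*}, \widetilde{\upeta}_{0*}, \upeta_{**}, \widetilde{\upeta}_{**}$) yields $(2q-\upeta_{\cdot\cdot})H E^2 \leq -6qHE^2$, leaving a surplus of $-2qHE^2$ available to absorb small cross terms; (ii) the \emph{dangerous} linear terms flagged in Lemma \ref{L:metricfirstdifferentialenergyinequality}, which we keep as explicit integrands after one application of Cauchy-Schwarz so that they factor as (sub-energy)$\times$(current energy); and (iii) all remaining ``harmless'' integrands, which will be shown to be bounded by $Ce^{-qH\tau}\totalenergy^2(\tau)$.

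First I would handle the simplest cases \eqref{E:mathfrakENg00integral}, \eqref{E:ENh**integral}, and \eqref{E:ENpartialh**integral}, whose underlying wave Eqs.\ \eqref{E:metric00} and \eqref{E:metricjk} contain no dangerous linear source. For these, one integrates \eqref{E:00est}, \eqref{**est}, and \eqref{E:partial**est}; applies Cauchy-Schwarz to each remaining integrand paired with $(\g_t\a g_{\mu\nu}+\upgamma H \a g_{\mu\nu})$; and invokes the error-term bounds \eqref{E:triangleTWODOWNHNMINUSONE}, \eqref{E:gcommutatorL2}, and \eqref{E:triangleEgamma00delta00L1}, \eqref{E:triangleEgamma**delta**L1}, together with the comparability of norms and energies from Prop.\ \ref{P:energynormcomparison}. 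The $2q(\omega-H)E^2$ piece is absorbed into $Ce^{-qH\tau}\totalenergy^2$ via Lemma \ref{L:backgroundaoftestimate}. The dissipative surplus of $-2qHE^2$ then swallows any cross term of the form $C E \cdot e^{-qH\tau/2}\totalenergy$ via Young's inequality.

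Next I would treat \eqref{E:mathfrakENg00integralpartialu}, \eqref{E:mathfrakENg0*integral}, \eqref{E:mathfrakENg0*integralpartialu}, and \eqref{E:ENpartialh**integralpartialu}, where the dangerous linear terms must be matched exactly with the sub-energies appearing on the right-hand sides. For \eqref{E:mathfrakENg00integralpartialu}, the integrand $(10H^2\g_t\a g_{00}+12H^3\a(g_{00}+1))$ from \eqref{E:partialu00est}, paired with $(\g_t\u\a g_{00}+\widetilde\upgamma_{00}H\u\a g_{00})$ and weighted by $e^{2q\Omega}$, produces via Cauchy-Schwarz precisely $C\gzerozeroenergy{N-1}\gzerozeropartialuenergy{N-1}$. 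For \eqref{E:mathfrakENg0*integral}, the crucial dangerous term $-2Hg^{ab}\Gamma_{ajb}$ is controlled via \eqref{E:gabupperGammaajblower}: the $e^{(1-q)\Omega}$ factor combines with the energy weight $e^{2(q-1)\Omega}$ and $\|g_{0j}\|\sim e^{(1-q)\Omega}\gzerostarenergy{N-1}$ to yield $C[\partialhstarstarenergy{N-1}+\hstarstarenergy{N-2}]\gzerostarenergy{N-1}$ after invoking \eqref{E:hstarstarnormequivalence}. The $\u$-commuted variant \eqref{E:mathfrakENg0*integralpartialu} additionally involves $\a\u(g^{ab}\Gamma_{ajb})$ and $(6H^2\g_t\a g_{0j}+4H^3\a g_{0j})$; the first is handled by \eqref{E:gabupperGammaajblowerNEW} and the commutator estimate \eqref{E:commutatorgabGammaajb}, producing the coupling to $\partialhstarstarpartialuenergy{N-1}+\partialhstarstarenergy{N-1}+\hstarstarenergy{N-2}$, while the second yields the additional coupling to $\gzerostarenergy{N-1}$.

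The main obstacle will be verifying that \emph{every} non-dangerous integrand on the right-hand sides of \eqref{E:partialu00est}, \eqref{E:partialu0*est}, and \eqref{E:partialpartialu**est} — namely the genuinely quadratic contributions $\a\u\triangle_{\mu\nu}$, $2H\a\triangle_{\mu\nu}$, the commutators $\{\u,\g_t\}\a g_{\mu\nu}$ and $\{\u,\a\}\triangle_{\mu\nu}$, the iterated commutator $\u(\{\hat{\square}_g,\a\}g_{\mu\nu})$, the $\triangle_{\text{Ell}}[\g^{(2)}\a g_{\mu\nu}]$ error, and the building-block error $\triangle_{\mathcal{E}}$ — can be bounded by $Ce^{-qH\tau}\totalenergy^2(\tau)$ without producing any additional borderline linear term. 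This is exactly the role of Prop.\ \ref{P:Sobolevtwo}: \eqref{E:BOXUG00COMM}-\eqref{E:BOXUGJKCOMM} handle the $g^{\mu\nu}\g_\mu\g_\nu u^\delta$-type pieces inside $\triangle_{\text{Ell}}$; \eqref{E:commutatorserror1}-\eqref{E:commutatorserror3} dispose of the remaining second-derivative couplings $(\u g^{\alpha\beta})\g^2 g$; \eqref{E:COMMUTATORPARTIALTPARTIALUMETRICHNMINUONE}, \eqref{E:COMMUTATORPARTIALUtriangleTWODOWNHNMINUSONE}, and \eqref{E:gpartialucommutatorL2} control the symbol commutators; and \eqref{E:partialutriangleEgamma00delta00L1}-\eqref{E:partialutriangleEgamma**delta**L1} handle the $\triangle_{\mathcal{E}}$ terms. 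The critical structural feature — ensured by the careful rewrite performed in Lemma \ref{L:COMMUTEDBASICSTRUCTURE}, which converted the would-be dangerous $-(\u g^{ab})\g_a\g_b\a v$ into a coercive $+2H\g_t\u\a v$ contribution shifting $\upalpha\to\upalpha+2$ — guarantees that $\triangle_{\text{Ell}}$ contains only truly quadratic terms, so that \eqref{E:ELLIPTICERRORL2} applies. Once all these bounds are combined and Young's inequality is used to absorb the harmless cross terms into the dissipative surplus $-2qHE^2$, the seven inequalities emerge exactly as stated.
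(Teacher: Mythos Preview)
Your proposal is correct and follows essentially the same approach as the paper's proof: integrate the differential inequalities of Lemma~\ref{L:metricfirstdifferentialenergyinequality}, use the choice of $q$ to extract the coercive $-4qH E^2$ piece, apply Cauchy--Schwarz to isolate the dangerous linear terms (matching them to the appropriate sub-energies via \eqref{E:gabupperGammaajblower}, \eqref{E:gabupperGammaajblowerNEW}, and Prop.~\ref{P:energynormcomparison}), and bound all remaining integrands by $Ce^{-qH\tau}\totalenergy^2(\tau)$ using the error-term catalogue of Props.~\ref{P:Sobolev}--\ref{P:Sobolevtwo}. The only cosmetic difference is that the paper handles the harmless cross terms by the direct bound $E\lesssim\totalenergy$ (so $e^{-q\Omega}E\,\totalenergy\lesssim e^{-qH\tau}\totalenergy^2$) rather than by splitting via Young's inequality against a dissipative surplus; your route works but is slightly more elaborate than necessary.
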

\begin{proof}
%{\color{blue} REDO NEEDED}
We stress that throughout this proof, 
\emph{we freely use the comparison estimates of Prop. \ref{P:energynormcomparison} without 
explicitly mentioning it each time.}

\noindent
{\em Proof of~\eqref{E:mathfrakENg00integral}-\eqref{E:mathfrakENg00integralpartialu}.}
We first prove~\eqref{E:mathfrakENg00integralpartialu}.
We integrate inequality~\eqref{E:partialu00est} in time over the interval $[t_1,t]$ 
and apply Cauchy-Schwarz.
We note that
$
(2q-\tilde{\eta}_{00})H \le -4qH
$
by our choice of $q$ [see~\eqref{E:qdef}]
and that by Lemma \ref{L:backgroundaoftestimate},
$|2q(\omega-H)\gzerozeroenergy{N-1}^2| \lesssim e^{-qHt} \totalenergy^2.$
Also using the estimates
$\sum_{|\vec{\alpha}| \leq N - 1} e^{q \Omega} \| \g_t \u \a g_{00} \|_{L^2} 
\lesssim \gzerozeropartialuenergy{N-1}$
and
$\sum_{|\vec{\alpha}| \leq N - 1} e^{q \Omega} \| \u \a g_{00} \|_{L^2} 
\lesssim \gzerozeropartialuenergy{N-1},$
which follow from \eqref{E:mathcalEcomparison} and 
the definition \eqref{E:enpartialu00} of $\gzerozeropartialuenergy{N-1},$
we deduce that
\begin{align*}
& \gzerozeropartialuenergy{N-1}^2(t) 
\le \gzerozeropartialuenergy{N-1}^2(t_1) 
		\\
& \ \ -\int_{t_1}^t 
				4qH \gzerozeropartialuenergy{N-1}^2
				\, d\tau 
			+ \int_{t_1}^t 
					C e^{-q H \tau} \totalenergy^2
				\, d \tau 
					\\
& \ \ +  
			\sum_{|\vec{\alpha}| \leq N-1}
			\int_{t_1}^t C \gzerozeropartialuenergy{N-1} 
			\Big(\underbrace{e^{q\Omega} \|\g_t \a g_{00} \|_{L^2} + e^{q\Omega} \| g_{00} + 1 \|_{L^2}}_{\mbox{\textnormal{dangerous}}}\Big) 
			\, d\tau
			\\
& \ \ +
		\sum_{|\vec{\alpha}| \leq N-1}
		\int_{t_1}^t C \gzerozeropartialuenergy{N-1}
			\Big(e^{q\Omega} \left\|\a \u \triangle_{00} \right\|_{L^2} + e^{q\Omega} \left\|\a \triangle_{00} \right\|_{L^2}\Big) 
		\, d \tau
			\\
& \ \ + 
		\sum_{|\vec{\alpha}| \leq N-1}
		\int_{t_1}^t 
			C \gzerozeropartialuenergy{N-1}	
				\Big(e^{q\Omega} \left\|\{\u,\g_t\}\a g_{00} \right\|_{L^2} 
					+ e^{q\Omega} \left\| \{\u, \a\} \triangle_{00} \right\|_{L^2}
			 	\Big) 
		\, d\tau \\
& \ \ + 
		\sum_{|\vec{\alpha}| \leq N-1}
		\int_{t_1}^t 
			C \gzerozeropartialuenergy{N-1}	
				\Big(e^{q\Omega}  \left\|\u\{\hat{\Box}_g,\a\}g_{00} \right\|_{L^2} 
					+ e^{q\Omega} \left\|\triangle_{\text{Ell}}[\partial^{(2)}\a g_{00}] \right\|_{L^2} 
				\Big) 
		\, d\tau \\
& \ \ + 
		\sum_{|\vec{\alpha}| \leq N-1}
		\int_{t_1}^t 
			C e^{2q\Omega} 	
			\left\|\triangle_{\E;(\widetilde{\upgamma}_{00}, \widetilde{\updelta}_{00})}[\u\a g_{00},\g(\u\a g_{00})] \right\|_{L^1} 
		\, d\tau.
\end{align*}
The two ``dangerous" terms on the third line above are bounded by 
$ \gzerozeronorm{N-1} \lesssim \gzerozeroenergy{N-1}$ 
and hence the corresponding quadratic integral
is bounded by the dangerous integral
$\int_{t_1}^t C \gzerozeroenergy{N-1}(\tau)\, \gzerozeropartialuenergy{N-1}(\tau)\,d\tau$ 
on the right-hand side of~\eqref{E:mathfrakENg00integralpartialu}.
As we noted in our proof of \eqref{E:intun-1},
this integral is dangerous because it lacks an exponentially decaying factor in the integrand.
The remaining integrals are not dangerous and are in fact bounded
by the integral on the right-hand side of \eqref{E:mathfrakENg00integralpartialu}
involving the integrand $e^{-q H \tau} \totalenergy^2(\tau),$
which contains the stabilizing factor $e^{-q H \tau}.$
To see this, we use the estimates
\eqref{E:triangleTWODOWNHNMINUSONE},
\eqref{E:PARTIALUtriangleTWODOWNHNMINUSONE},
\eqref{E:COMMUTATORPARTIALTPARTIALUMETRICHNMINUONE},
\eqref{E:COMMUTATORPARTIALUtriangleTWODOWNHNMINUSONE},
\eqref{E:gpartialucommutatorL2},
\eqref{E:partialutriangleEgamma00delta00L1},
and
\eqref{E:ELLIPTICERRORL2}.
We have thus proved \eqref{E:mathfrakENg00integralpartialu}. 

The proof of~\eqref{E:mathfrakENg00integral} is completely analogous to the proof of~\eqref{E:mathfrakENg00integralpartialu} 
and relies instead on the differential
inequality~\eqref{E:00est} 
and the error term estimates
\eqref{E:triangleTWODOWNHNMINUSONE},
\eqref{E:gcommutatorL2},
and \eqref{E:triangleEgamma00delta00L1}.
In particular, the estimates are less delicate because {\em no} 
``dangerous linear terms" are present 
and hence {\em no} cross term of the type 
$\int_{t_1}^t \gzerozeroenergy{N-1}(\tau)\, \gzerozeropartialuenergy{N-1}(\tau)\,d\tau$ 
appears on the right-hand side of~\eqref{E:mathfrakENg00integral}.

\noindent
{\em Proof of~\eqref{E:mathfrakENg0*integral}-\eqref{E:mathfrakENg0*integralpartialu}.}
To prove \eqref{E:mathfrakENg0*integral}, we first 
integrate \eqref{E:0*est} in time over $[t_1,t],$ use the definition of $q,$ 
and argue as above to obtain
\begin{align*}
\gzerostarenergy{N-1}^2(t) 
& \le \gzerostarenergy{N-1}^2(t_1) 
	\\
& \ \
	-  \int_{t_1}^t 
		4 q H \gzerostarenergy{N-1}^2(\tau)
	\,d\tau 
	+ \int_{t_1}^t 
			C e^{-q H \tau} \totalenergy^2
		\, d \tau 
			\\
& \ \ 
	+ C\int_{t_1}^t \gzerostarenergy{N-1} \underbrace{e^{(q-1)\Omega}\|g^{ab}\Gamma_{ajb}\|_{H^{N-1}}}_{\mbox{\textnormal{dangerous}}}\,d\tau 
	\\
& \ \ + C \sum_{|\vec{\alpha}|\leq N - 1} 
				\int_{t_1}^t 
					\gzerostarenergy{N-1}
					\Big(e^{(q-1)\Omega} \left\|\a\triangle_{0j} \right\|_{L^2}
					+e^{(q-1)\Omega} \left\|\{\hat{\Box}_g,\a\}g_{0j} \right\|_{L^2}
					\Big)
				\,d\tau \\
& \ \ 
+ C \sum_{|\vec{\alpha}|\leq N - 1} 
		\int_{t_1}^t 
			e^{2(q-1)\Omega} \left\|\triangle_{\E;(\upgamma_{0*},\updelta_{0*})}[\a g_{0j}, \g(\a g_{0j})] \right\|_{L^1}
		\,d\tau.
\end{align*}
Using the estimates
\eqref{E:triangleTWODOWNHNMINUSONE},
\eqref{E:gcommutatorL2},
and
\eqref{E:triangleEgamma0jdelta0jL1},
we can bound the third and fourth lines above 
by the non-dangerous integral $C\int_{t_1}^t e^{-q H \tau} \totalenergy^2(\tau) \,d\tau$ as desired.
Next, using the estimate \eqref{E:gabupperGammaajblower},
we bound the integral involving the ``dangerous linear term" on the second line above
by the dangerous integral $\int_{t_1}^t C \left[\partialhstarstarenergy{N-1}(\tau) + \hstarstarenergy{N-2}(\tau)\right]
\gzerostarenergy{N-1}(\tau) \, d \tau$
on the right-hand side of~\eqref{E:mathfrakENg0*integral}. 

The proof of~\eqref{E:mathfrakENg0*integralpartialu} is based on inequality \eqref{E:partialu0*est}
and is very similar to the previous proof and the proof of~\eqref{E:mathfrakENg00integralpartialu}. We omit the full details
and instead simply highlight the fact that
the ``dangerous linear terms" on the right-hand side of the estimate~\eqref{E:partialu0*est} are precisely the source of the
integral
\[
\int_{t_1}^t C 
\Big\{ 
\partialhstarstarpartialuenergy{N}(\tau)
+ \partialhstarstarenergy{N-1}(\tau)
+ \hstarstarenergy{N-2}
+\gzerostarenergy{N-1}(\tau)
\Big\}\,\gzerostarpartialuenergy{N-1}(\tau) \,d\tau 
\]
on the right-hand side of~\eqref{E:mathfrakENg0*integralpartialu}.

\noindent
{\em Proof of~\eqref{E:ENh**integral}-\eqref{E:ENpartialh**integralpartialu}.}
Inequality~\eqref{E:ENh**integral} follows easily from integrating 
the differential inequality~\eqref{**est}, using the Cauchy-Schwarz inequality,
and the estimate $\| \g_t h_{jk}\|_{H^{N-1}} \lesssim e^{-qHt} \hstarstarnorm{N-1} \lesssim e^{-qHt}\totalenergy.$

%In an entirely analogous fashion, we can prove \eqref{E:ENh**integralpartialu}
%with the help of the differential inequality~\eqref{E:partialu**est}.

To prove~\eqref{E:ENpartialh**integral}, we time integrate the differential inequality~\eqref{E:partial**est}. We then use the bound $2q-\upeta_{**}<0,$ 
which follows from our choice
of $q$, as well as $|\omega-H|\lesssim e^{-qHt}$. To bound the remaining
integrals, we apply the Cauchy-Schwarz inequality 
and use the error term estimates
\eqref{E:triangleTWODOWNHNMINUSONE},
\eqref{E:gcommutatorL2}, 
and
\eqref{E:triangleEgamma**delta**L1}.

Finally, to prove~\eqref{E:ENpartialh**integralpartialu}, 
we integrate the differential inequality~\eqref{E:partialpartialu**est} over the time interval $[t_1,t]$. 
As above, we use the estimate
$2q-\upeta_{**}<-4q$ to infer that
\[
\int_{t_1}^t(2q-\upeta_{**}) H \partialhstarstarpartialuenergy{N-1}^2(\tau) \, d\tau
< \int_{t_1}^t- 4 q H \partialhstarstarpartialuenergy{N-1}^2(\tau) \, d\tau.
\]
The term $6H^2\g_t\a h_{jk}$ [denoted ``dangerous" on the right-hand side of~\eqref{E:partialpartialu**est}] 
is bounded by 
$6H^2 \| \g_t\a h_{jk} \|_{L^2} 
\lesssim 
e^{-qH t}\partialhstarstarenergy{N-1},$
where we have used \eqref{E:mathcalEcomparison}
and the definition \eqref{E:partialen**} of $\partialhstarstarenergy{N-1}.$
Also using the 
Cauchy-Schwarz inequality and
the estimate
$\sum_{|\vec{\alpha}| \leq N - 1} e^{q \Omega} \| \g_t \u \a h_{jk} \|_{L^2} 
\lesssim \hstarstarpartialuenergy{N-1},$
which follows from \eqref{E:mathcalEcomparison} and the definition \eqref{E:partialenpartialu**}  
of $\hstarstarpartialuenergy{N-1},$
we bound the integral corresponding to the dangerous linear term by
the integral $\int_{t_1}^t C \partialhstarstarenergy{N-1}(\tau)
\partialhstarstarpartialuenergy{N-1}(\tau) \,d\tau$ 
on the right-hand side of~\eqref{E:ENpartialh**integralpartialu}. 
We bound all remaining integrals
by $\int_{t_1}^tCe^{-qH\tau}\totalenergy^2(\tau)\,d\tau$
with the help of the Cauchy-Schwarz inequality, 
the estimate $|\omega-H| \lesssim e^{-qHt},$
and the error term estimates
\eqref{E:triangleTWODOWNHNMINUSONE},
\eqref{E:PARTIALUtriangleTWODOWNHNMINUSONE},
\eqref{E:COMMUTATORPARTIALTPARTIALUMETRICHNMINUONE},
\eqref{E:COMMUTATORPARTIALUtriangleTWODOWNHNMINUSONE},
\eqref{E:gpartialucommutatorL2},
\eqref{E:partialutriangleEgamma**delta**L1},
and
\eqref{E:ELLIPTICERRORL2}.
\end{proof}
\subsection{Statement and proof of the main theorem} \label{SS:maintheorem}
We begin this section by providing two technical results that will be used in our proof of the main future stability theorem. 
The first is a simple Gronwall-type lemma. The second is a proposition  
\emph{that contains the most important estimates in the article.}
Specifically, based on the integral inequalities of Prop.~\ref{P:integralinequalities} and~\ref{P:integralinequalitiesmetric},
the proposition provides suitable a priori estimates for the total solution norm $\totalnorm.$
This is the main step in our proof of future-global existence.
We now provide the Gronwall-type lemma, which we will use to bound the ``dangerous'' error integrals
appearing in Props.~\ref{P:integralinequalities} and~\ref{P:integralinequalitiesmetric}.
\begin{lemma} [\textbf{An integral estimate}] \cite[Lemma 11.4]{iRjS2012} \label{L:integralinequality}
	Let $b(t) > 0$ be a continuous \textbf{non-decreasing} function on the interval $[0,T],$ and let $\epsilon > 0.$
	Suppose that for each $t_1 \in [0,T],$ $y(t) \geq 0$ is a continuous function satisfying the inequality
	\begin{align} 
	%\label{E:integralinequality}
		y^2(t) \leq y^2(t_1) + \int_{\tau = t_1}^t \Big\lbrace - b(\tau)y^2(\tau) + \epsilon y(\tau) \Big\rbrace \, d \tau \notag
	\end{align}
	for $t \in [t_1,T].$ Then for any $t_1, t \in [0,T]$ with $t_1 \leq t,$
	we have that
	\begin{align} 
	%\label{E:integralinequalityconclusion}
		y(t) \leq y(t_1) + \frac{\epsilon}{b(t_1)}. \notag
	\end{align}
\end{lemma}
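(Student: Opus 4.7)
The plan is to proceed by a continuity-and-contradiction argument that exploits the quantifier ``for each $t_1 \in [0,T]$'' in the hypothesis: the integral inequality is assumed to hold from every starting time, and I will ultimately invoke it with a carefully chosen lower endpoint rather than the original $t_1$. First I would fix $t_1 \in [0,T]$, set $C := y(t_1) + \epsilon/b(t_1)$, and suppose for contradiction that $y(t_0) > C$ for some $t_0 \in (t_1,T]$. Since $\epsilon > 0$ and $b(t_1) > 0$, one has $y(t_1) < C$ strictly, so by continuity of $y$ the set $\{t \in [t_1,t_0] : y(t) \leq C\}$ is closed and nonempty. Let $s$ denote its supremum; then $s \geq t_1$, $y(s) \leq C$, $s < t_0$, and $y(\tau) > C$ for every $\tau \in (s,t_0]$.

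Next I would apply the hypothesis with the starting time replaced by $s$, which yields
\begin{align*}
y^2(t_0) \leq y^2(s) + \int_s^{t_0} y(\tau)\bigl[-b(\tau)y(\tau) + \epsilon\bigr]\, d\tau.
\end{align*}
The crux of the argument is to show that the integrand is non-positive on $(s,t_0)$. For $\tau$ in that interval $y(\tau) > C$, and because $b$ is non-decreasing with $\tau \geq s \geq t_1$,
\begin{align*}
-b(\tau)y(\tau) + \epsilon \;<\; -b(\tau)C + \epsilon \;=\; -b(\tau)y(t_1) - \Bigl(\frac{b(\tau)}{b(t_1)} - 1\Bigr)\epsilon \;\leq\; 0,
\end{align*}
since $y(t_1) \geq 0$ and $b(\tau)/b(t_1) \geq 1$. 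Multiplying by $y(\tau) \geq 0$ preserves the sign, so the integral is non-positive; combined with $y(s) \leq C$, this gives $y^2(t_0) \leq C^2$, contradicting $y(t_0) > C$. Hence $y(t) \leq C = y(t_1) + \epsilon/b(t_1)$ for every $t \in [t_1,T]$, which is the claimed estimate.

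I do not expect a genuine analytic obstacle here: the argument never differentiates $y$ and uses only its continuity together with the monotonicity of $b$. The one subtle point, and the reason the lemma is stated with ``for each $t_1$,'' is the choice of lower endpoint $s$ in the second application of the hypothesis. Applying the hypothesis on the whole interval $[t_1,t_0]$ would fail because $y$ can oscillate below $C$ on subintervals, rendering the integrand $y(-by+\epsilon)$ of indefinite sign; taking $s$ to be the last time that $y$ is at or below $C$ eliminates this by forcing $y(\tau) > C$ throughout the integration range, so that the monotonicity of $b$ plus the very definition of $C$ produce the desired sign.
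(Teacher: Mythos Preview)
The paper does not supply its own proof of this lemma; it is quoted verbatim from \cite[Lemma 11.4]{iRjS2012} and used as a black box. Your continuity--contradiction argument is correct and self-contained: the essential insight is to re-invoke the hypothesis from the \emph{last} time $s \in [t_1,t_0]$ at which $y \leq C$, which forces $y(\tau) > C$ on the entire integration interval $(s,t_0)$ and, via the monotonicity of $b$ and the precise choice $C = y(t_1) + \epsilon/b(t_1)$, makes the integrand strictly negative there. The resulting inequality $y^2(t_0) \leq y^2(s) \leq C^2$ then contradicts $y(t_0) > C$. No step requires differentiability of $y$, and you correctly identified that the quantifier ``for each $t_1$'' in the hypothesis is what allows the shift of the lower endpoint to $s$.
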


\begin{remark} [\textbf{Comment on $b(t)$}]
In our proof of Prop. \ref{P:auxiliary}, the function $b(t)$ from Lemma \ref{L:integralinequality}
will be a small, positive constant.
\end{remark}

\begin{proposition} [\textbf{A priori estimates for the total solution norm}] \label{P:auxiliary}
Assume that the hypotheses of Prop.~\ref{P:Sobolev}, including the smallness assumption~\eqref{E:bootstrap},
hold on the spacetime slab $[0,T) \times \mathbb{T}^3.$ In particular, assume that the 
solution to the modified dust-Einstein system~\eqref{E:metric00}-\eqref{E:metricjk} exists on the interval $[0,T)$ 
and that $\totalnorm(t) \le \epsilon$ for $t \in [0,T).$ Let $\totalnorm(0) \eqdef \mathring{\epsilon},$
and assume that $\mathring{\epsilon} \leq \epsilon.$
Then there exist large constants $c,\,C>1$ such that for any 
$t_1 \in [0,T),$ the following inequality holds:
\begin{align} \label{E:FUNDAMENTALTOTALSOLUTIONNORMAPRIORIESTIMATE}
\totalnorm(t) \le C \left\lbrace\mathring{\epsilon} e^{ct_1}+\epsilon e^{-qHt_1/2}\right\rbrace, \quad t\in[0,\,T).
\end{align}
\end{proposition}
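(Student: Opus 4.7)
The plan is to combine a crude short-time exponential growth bound with a tier-by-tier exploitation of the dissipative $-4qH,$ $-2(1-q)H,$ $-2(2-q)H$ coefficients appearing in Props.~\ref{P:integralinequalities}--\ref{P:integralinequalitiesmetric}, coupled with the \emph{decoupling hierarchy} of dangerous linear factors advertised in the introduction. The bootstrap $\totalnorm(t) \leq \epsilon$ together with the norm--energy equivalence~\eqref{E:TOTALNORMENERGYEQUIVALENCE} yields $\totalenergy(t) \leq C\epsilon,$ so every occurrence of the harmless error integral $\int_{t_1}^t Ce^{-qH\tau}\totalenergy^2(\tau)\,d\tau$ is controlled by $C\epsilon^2 e^{-qH t_1}/(qH),$ contributing a factor of $C\epsilon e^{-qH t_1/2}$ after square-rooting.

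First I would establish a crude short-time growth estimate $\totalenergy(t) \leq C\mathring{\epsilon}e^{ct}$ valid on $[0,T)$: summing all inequalities of Props.~\ref{P:integralinequalities}--\ref{P:integralinequalitiesmetric}, bounding each dangerous factor brutally by $D(\tau) \leq C\totalenergy(\tau),$ and using $2D(\tau)y(\tau) \leq D(\tau)^2 + y(\tau)^2,$ one arrives at an inequality of the form $\totalenergy^2(t) \leq \totalenergy^2(0) + C\int_0^t \totalenergy^2(\tau)\,d\tau + C\epsilon^2,$ from which classical Gronwall gives $\totalenergy(t) \leq C\mathring{\epsilon}e^{ct}$ for some large $c > 0.$ In particular, for $t \in [0,t_1]$ this already gives $\totalnorm(t) \leq C\mathring{\epsilon}e^{ct_1},$ which handles that portion of~\eqref{E:FUNDAMENTALTOTALSOLUTIONNORMAPRIORIESTIMATE}. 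It remains to treat $t \in [t_1, T).$

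The heart of the argument is a tier-by-tier application of Lemma~\ref{L:integralinequality} on $[t_1, t].$ At \textbf{Tier~1}, the energies $\gzerozeroenergy{N-1},$ $\partialhstarstarenergy{N-1},$ $\hstarstarenergy{N-2},$ and $\densenergy{N-1}$ contain \emph{no} dangerous terms, so direct integration and the bound on the error integral give $y(t) \leq y(t_1) + C\epsilon e^{-qH t_1/2};$ combined with the Step~1 bound $y(t_1) \leq C\mathring{\epsilon}e^{ct_1},$ every Tier~1 quantity satisfies the target inequality. At \textbf{Tier~2}, the inequalities for $\gzerostarenergy{N-1},$ $\gzerozeropartialuenergy{N-1},$ and $\partialhstarstarpartialuenergy{N-1}$ involve dangerous factors built solely from Tier~1 energies; substituting the Tier~1 bound yields $D(\tau) \leq \Delta$ with $\Delta \eqdef C\{\mathring{\epsilon}e^{ct_1} + \epsilon e^{-qH t_1/2}\},$ and after absorbing $\int_{t_1}^t Ce^{-qH\tau}\totalenergy^2\,d\tau$ into a slightly inflated $y^2(t_1),$ the inequality takes precisely the form required by Lemma~\ref{L:integralinequality} with constant coercive coefficient $b = 4qH$ and forcing $C\Delta.$ The lemma then yields $y(t) \leq y(t_1) + C\epsilon e^{-qH t_1/2} + C\Delta/(4qH),$ again of the target form. \textbf{Tier~3} ($\gzerostarpartialuenergy{N-1}$ and $\velocityenergy{N-1},$ whose dangerous factors are $\partialhstarstarpartialuenergy{N-1} + \partialhstarstarenergy{N-1} + \hstarstarenergy{N-2} + \gzerostarenergy{N-1}$ and $\genergy{N-1},$ respectively) and \textbf{Tier~4} ($\topordervelocityenergy{N-1},$ whose dangerous factor is $\totalbelowtopenergy{N-1} + \guenergy{N-1}$) are handled identically, each using the already-proved bounds at strictly lower tiers to produce a constant $\Delta$-bound on the dangerous factor. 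Summing the tier bounds and invoking~\eqref{E:TOTALNORMENERGYEQUIVALENCE} yields~\eqref{E:FUNDAMENTALTOTALSOLUTIONNORMAPRIORIESTIMATE}.

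The main obstacle is not any single estimate but rather the \emph{bookkeeping of the hierarchy}: one must verify that the dangerous factor appearing in each integral inequality of Props.~\ref{P:integralinequalities}--\ref{P:integralinequalitiesmetric} involves only energies at strictly lower tiers, so that the cascade of Gronwall applications closes without circularity. This effective decoupling is a consequence of the particular tensorial structure of the modified Einstein equations in the gauge $\Gamma^{\mu} = \tilde{\Gamma}^{\mu}$---in particular, the absence of any dangerous linear term on the right-hand side of the wave equations~\eqref{E:metric00} and~\eqref{E:metricjk} for $g_{00}$ and $h_{jk}$---and it is precisely what makes the global-in-time closure of the system of integral inequalities possible.
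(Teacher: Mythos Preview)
Your proposal is essentially the paper's own proof: a crude Gronwall bound on $[0,t_1]$ giving $\totalenergy(t)\le C\mathring\epsilon e^{ct}$, followed by a hierarchical application of Lemma~\ref{L:integralinequality} on $[t_1,T)$, and your four tiers reproduce exactly the order in which the paper treats the individual energies. One slip to fix in the crude step: the stray ``$+C\epsilon^2$'' should not appear---bound the harmless integral by $Ce^{-qH\tau}\totalenergy^2(\tau)\le C\totalenergy^2(\tau)$ and absorb it into $\int_0^t C\totalenergy^2\,d\tau$ rather than invoking the bootstrap, since otherwise Gronwall would only give $\totalenergy(t)\le C\epsilon e^{ct}$, which is too weak to produce the $\mathring\epsilon e^{ct_1}$ term in~\eqref{E:FUNDAMENTALTOTALSOLUTIONNORMAPRIORIESTIMATE}.
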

\begin{proof}
%{\color{blue} CHECK THIS PROOF VERY CAREFULLY}
Throughout this proof, we freely use the comparison estimates of Prop. \ref{P:energynormcomparison} without 
explicitly mentioning it each time.
We start with a very crude application of Props.~\ref{P:integralinequalities} and~\ref{P:integralinequalitiesmetric} (with $t_1=0$)
to conclude that there exists a constant $c > 0$ such that
\begin{align*}
\totalnorm^2(t) \le \totalnorm^2(0) + 2 c \int_0^t \totalnorm^2(\tau)\,d\tau.
\end{align*}
Applying the standard Gronwall inequality and using $\totalnorm(0)=\mathring{\epsilon}$, we deduce that
$
\totalnorm(t)  \le C \mathring{\epsilon} e^{ct}  
$
and therefore
\begin{align}
\totalenergy(t) & \le C \mathring{\epsilon} e^{ct} \label{E:Gronwall2}. 
\end{align}
Now let $t_1 \in [0,T)$ be an arbitrary time. 
We {\em first} estimate the term $\hstarstarenergy{N-2}$. Using
the integral inequality~\eqref{E:ENh**integral} and the smallness assumption
$\totalnorm(t) \le \epsilon$, we deduce that
\[
\hstarstarenergy{N-2}^2(t) \le \hstarstarenergy{N-2}^2(t_1) + C \epsilon^2 \int_{t_1}^t e^{-qH \tau}\, d\tau.
\]
Using~\eqref{E:Gronwall2} at time $t=t_1$ and carrying out the integration above, we deduce that for any $t\in[t_1,T),$
we have
$
\hstarstarenergy{N-2}(t) \le C \{\mathring{\epsilon} e^{ct_1} + \epsilon e^{-q H t_1 /2}\}.
$
Using~\eqref{E:Gronwall2} to bound $\hstarstarenergy{N-2}(t)$ for $t \in [0,t_1),$  
we deduce that for any $t\in [0,T),$ we have
\begin{align}\label{E:Gronwall3}
\hstarstarenergy{N-2}(t) \le C \{\mathring{\epsilon} e^{ct_1} + \epsilon e^{-q H t_1 /2}\}.
\end{align}
A similar argument based on the integral inequalities 
\eqref{E:ENpartialh**integral},
%\eqref{E:ENh**integralpartialu}, 
\eqref{E:mathfrakENg00integral}, 
and 
\eqref{E:intun}
leads to the following estimates, which hold for $t\in[0,T)$:
\begin{align} \label{E:Gronwall4}
\partialhstarstarenergy{N-1}(t) & \le C \{\mathring{\epsilon} e^{ct_1} + \epsilon e^{-q H t_1 /2}\},
	\\
%\hstarstarpartialuenergy{N-1}(t) & \le C \{\mathring{\epsilon} e^{ct_1} + \epsilon e^{-q H t_1 /2}\},
%	\label{E:Gronwall5} \\
\gzerozeroenergy{N-1}(t) & \le C \{\mathring{\epsilon} e^{ct_1} + \epsilon e^{-q H t_1 /2}\},
	\label{E:Gronwall6}
	\\
\densenergy{N-1}(t) 
 & \le C \{\mathring{\epsilon} e^{ct_1} + \epsilon e^{-q H t_1 /2}\}.	
 \label{E:Gronwall7}
\end{align}
We have thus estimated all of the energies that do not involve any dangerous terms. We now estimate the energies
involving dangerous terms, and we stress that the order in which we proceed is important.

{\em Next}, we estimate $\gzerostarenergy{N-1}(t).$ To this end, we first use the integral inequality~\eqref{E:mathfrakENg0*integral}, 
the smallness assumption $\totalnorm(t) \le \epsilon,$
~\eqref{E:Gronwall2} at time $t=t_1,$
\eqref{E:Gronwall3},
and~\eqref{E:Gronwall4} to obtain the following inequality for $t\in[t_1,T)$:
\begin{align} \label{E:preGronwall8}
\gzerostarenergy{N-1}^2(t) 
& \le \gzerostarenergy{N-1}^2(t_1) 
	+ C \epsilon^2 \int_{t_1}^t e^{-qH \tau}\, d\tau
	\\
& \ \ + \int_{t_1}^t -4 q H \gzerostarenergy{N-1}^2(\tau) + C \{\mathring{\epsilon} e^{ct_1} + \epsilon e^{-q H t_1 /2}\} \gzerostarenergy{N-1}(\tau)\,d\tau.
	\notag \\
& \le C \{ \mathring{\epsilon}^2 e^{2 ct_1} + \epsilon^2 e^{-qHt_1} \}
	\notag \\
& \ \ 
+ \int_{t_1}^t -4 q H \gzerostarenergy{N-1}^2(\tau) + C \{\mathring{\epsilon} e^{ct_1} + \epsilon e^{-q H t_1 /2}\} 	
	\gzerostarenergy{N-1}(\tau)\,d\tau.
	\notag
\end{align}
To estimate $\gzerostarenergy{N-1}(t)$ for $t \in [t_1,T),$ we apply
Lemma \ref{L:integralinequality} to the last inequality in~\eqref{E:preGronwall8},
while to estimate $\gzerostarenergy{N-1}(t)$ for $t \in [0,t_1),$
we use ~\eqref{E:Gronwall2}. In total, we deduce that
the following inequality holds for $t\in[0,T):$
\begin{align} \label{E:Gronwall8}
\gzerostarenergy{N-1}(t) \le C \{\mathring{\epsilon} e^{ct_1} + \epsilon e^{-q H t_1 /2}\}.
\end{align}
Summing 
\eqref{E:Gronwall3}, 
\eqref{E:Gronwall4},
\eqref{E:Gronwall6},
and \eqref{E:Gronwall8},
and referring to definition
\eqref{E:METRICTOTALBELOWTOPORDERSPATIALDERIVATIVEENERGY}, we deduce that
\begin{align} \label{E:GENERGYSUITABLYBOUNDED}
	\genergy{N-1}
	\le C \{\mathring{\epsilon} e^{ct_1} + \epsilon e^{-q H t_1 /2}\}.
\end{align}

We now estimate the terms $\partialhstarstarpartialuenergy{N}, \gzerozeropartialuenergy{N}, \gzerostarpartialuenergy{N}$ 
\emph{precisely in this order} in an analogous fashion. In particular, we apply Lemma \ref{L:integralinequality}
and crucially use the {\em already} established bounds
\eqref{E:Gronwall3},
\eqref{E:Gronwall4},
%\eqref{E:Gronwall5},
\eqref{E:Gronwall6},
and
\eqref{E:Gronwall8}
to bound the potentially dangerous integrands on the right-hand side of
the integral inequalities~\eqref{E:ENpartialh**integralpartialu},~\eqref{E:mathfrakENg00integralpartialu}, and~\eqref{E:mathfrakENg0*integralpartialu}.
In total, this line of reasoning leads to the following estimates, which are valid for $t\in[0,T):$
\begin{align}\label{E:Gronwall9}
\partialhstarstarpartialuenergy{N-1}(t), 
 \ 
\gzerozeropartialuenergy{N-1}(t), 
\ \gzerostarpartialuenergy{N-1}(t)
\le C \{\mathring{\epsilon} e^{ct_1} + \epsilon e^{-q H t_1 /2}\}.
\end{align}

We now estimate $\velocityenergy{N-1}.$ To this end, we use the already proven estimate
\eqref{E:GENERGYSUITABLYBOUNDED} to bound the potentially dangerous term $\genergy{N-1}$ 
in the integral inequality \eqref{E:intun-1}
and argue as in our proof of \eqref{E:Gronwall8} to deduce that the following inequality holds for $t\in[0,T):$
\begin{align} \label{E:Gronwall10}
\velocityenergy{N-1}(t) \le C \{\mathring{\epsilon} e^{ct_1} + \epsilon e^{-q H t_1 /2}\}.
\end{align}
Next, we sum the bounds
\eqref{E:Gronwall7},
\eqref{E:GENERGYSUITABLYBOUNDED},
\eqref{E:Gronwall9},
and
\eqref{E:Gronwall10}
and refer to definitions
\eqref{E:METRICTOTALBELOWTOPORDERSPATIALDERIVATIVEENERGY},
\eqref{E:TOTALMETRICUDERIVATIVEENERGY},
and \eqref{E:totalbelowtopenergy}
to deduce that
\begin{align} \label{E:BOUNDFORALLBUTTOPORDERSPATIALUDERIVATIVES}
	\totalbelowtopenergy{N-1}(t) 
	+ \guenergy{N-1}(t)
	\le C \{\mathring{\epsilon} e^{ct_1} + \epsilon e^{-q H t_1 /2}\}.
\end{align}

Finally, we estimate $\topordervelocityenergy{N-1}.$ To this end, we use the already proven estimate
\eqref{E:BOUNDFORALLBUTTOPORDERSPATIALUDERIVATIVES} 
to bound the potentially dangerous terms 
$	\totalbelowtopenergy{N-1} + \guenergy{N-1}$ 
in the integral inequality \eqref{E:TOPORDERintun-1}
and argue as in our proof of \eqref{E:Gronwall8} to deduce that the following inequality holds for $t\in[0,T):$
\begin{align} \label{E:Gronwall11}
\topordervelocityenergy{N-1}(t) \le C \{\mathring{\epsilon} e^{ct_1} + \epsilon e^{-q H t_1 /2}\}.
\end{align}
Summing the bounds 
\eqref{E:BOUNDFORALLBUTTOPORDERSPATIALUDERIVATIVES} and \eqref{E:Gronwall11},
referring to the definition \eqref{E:TOTALENERGYDEF} of $\totalenergy,$
and using the comparison estimate \eqref{E:TOTALNORMENERGYEQUIVALENCE},
we finally arrive at the desired estimate \eqref{E:FUNDAMENTALTOTALSOLUTIONNORMAPRIORIESTIMATE}.
%The proof is identical to the proof of Theorem 11.1 from~\cite{jS2012}. The idea is to combine the bootstrap assumption $\totalnorm(t) \le \epsilon$ with the Gronwall inequality and apply it to 
%the differential inequalities for the energies from Prop.~\ref{P:integralinequalities} in the correct order. We start with the estimate~\eqref{E:mathfrakENg00integral} and infer that for any
%$t_1<T$, we have
%\begin{align*} 
%%\label{E:mathfrakEg00Gronwall}
%		\gzerozeroenergy{N-1}(t) & \leq C \lbrace \mathring{\epsilon} e^{ct_1} + \epsilon e^{-qHt_1 /2} \rbrace.
%	\end{align*}
%We then apply the Gronwall inequality argument to~\eqref{E:ENpartialh**integral}, ~\eqref{E:intun-1},~\eqref{E:intun}, and finally~\eqref{E:mathfrakENg0*integral}
%respectively, and conclude the statement of the theorem.	
\end{proof}
\begin{theorem}[\textbf{Global stability of the FLRW solutions}]\label{T:maintheorem}
Let $\Lambda>0$ be a fixed cosmological constant, and let $N\geq4$ be an integer. 
Let $(\mathring{g}_{\mu\nu} = g_{\mu \nu}|_{t=0}, 2 \mathring{K}_{\mu\nu} = \partial_t g_{\mu \nu}|_{t=0}, 
\mathring{u}^{\mu} = u^{\mu}|_{t=0}, \mathring{\dens} = e^{3 \Omega(0)} \rho|_{t=0}),$
$(\mu, \nu = 0,1,2,3),$
be an initial data set for the modified dust-Einstein system 
\eqref{E:metric00}-\eqref{E:velevol}
generated by the initial data set 
$(\T^3, \mathring{\underline{g}}_{jk}, \mathring{\underline{K}}_{jk}, \mathring{\rho}, \underline{\mathring{u}}^j),$ $(j,k = 1,2,3)$, for the unmodified system as explained in Sect.~\ref{S:modified}. 
Assume that there exists a constant $c_1 > 2$ such that
	\begin{align}
	\label{E:initialassumption}
		\frac{2}{c_1} \delta_{ab} X^a X^b \leq \mathring{g}_{ab}X^a X^b \leq \frac{c_1}{2} \delta_{ab} X^a X^b,  \ \
		\forall(X^1,X^2,X^3) \in \mathbb{R}^3.
	\end{align}
Let $\totalnorm$ be the total norm defined in~\eqref{E:totalnorm}.	
Then there exist constants $\epsilon_0>0$ and $C_*> 1$ such that if 
$\epsilon \le \epsilon_0$ and $\totalnorm(0) \le C_*^{-1} \epsilon$, then
there exists a unique classical solution $(g_{\mu\nu}, u^{\mu},\dens)$, $\mu,\nu=0,1,2,3,$ to 
the dust-Einstein equations. The solution exists on
$[0,\infty)\times\T^3$ and furthermore, the following bound holds:
\begin{align*}
	\totalnorm(t) \leq \epsilon, \quad t\in[0,\infty).
\end{align*}
Moreover, the time $T_{\text{max}}$ from Prop.~\ref{P:continuationprinciple} is infinite and the
spacetime-with-boundary $([0,\infty)\times\T^3,\,g_{\mu\nu})$
is future geodesically complete. Furthermore, the solution to the modified equations also solves 
the unmodified dust-Einstein system.
\end{theorem}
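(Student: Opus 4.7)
The proof will be a bootstrap-and-continuation argument centered around the a priori estimate \eqref{E:FUNDAMENTALTOTALSOLUTIONNORMAPRIORIESTIMATE} from Prop.~\ref{P:auxiliary}. Local well-posedness (Theorem~\ref{T:localwellposedness}) guarantees that the data launch a classical solution on some maximal slab $[0,T_{\text{max}})\times\T^3$ in the sense of Prop.~\ref{P:continuationprinciple}. I will define $T_*$ to be the supremum of times $T \in [0,T_{\text{max}})$ such that the bootstrap assumption $\totalnorm(t) \leq \epsilon$ together with the auxiliary assumptions \eqref{E:metricBAeta}--\eqref{E:g0jBALinfinity} (with $\upeta \leq q$) hold on $[0,T)\times\T^3$. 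The smallness assumption $\totalnorm(0)\leq C_*^{-1}\epsilon$ together with the data hypothesis \eqref{E:initialassumption}, continuity in $t,$ and Remark~\ref{R:RoughBootstrapAutomatic} ensure that $T_*>0.$

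The heart of the matter is then improving the bootstrap on $[0,T_*)$. The constants $c,C>1$ from Prop.~\ref{P:auxiliary} are universal, so I will first choose $t_1 = t_1(C,q,H)>0$ large enough that $C e^{-qHt_1/2} \leq 1/4,$ and then define $C_* \eqdef 4C e^{ct_1}.$ With this choice, whenever $\mathring{\epsilon} \leq C_*^{-1}\epsilon,$ the bound \eqref{E:FUNDAMENTALTOTALSOLUTIONNORMAPRIORIESTIMATE} yields
\[
\totalnorm(t) \leq C\mathring{\epsilon} e^{ct_1} + C\epsilon e^{-qHt_1/2} \leq \frac{\epsilon}{4} + \frac{\epsilon}{4} = \frac{\epsilon}{2},\qquad t\in[0,T_*),
\]
which is a strict improvement of the bootstrap. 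A standard continuity argument then forces $T_* = T_{\text{max}}.$ To conclude $T_{\text{max}} = \infty,$ I will invoke Prop.~\ref{P:continuationprinciple}: the improved bound $\totalnorm(t)\leq \epsilon/2,$ combined with the Sobolev embedding $H^{N-1}\hookrightarrow C_b^1$ (valid since $N\geq 4$), the elliptic norm control of the top-order spatial derivatives, and Lemmas~\ref{L:ginverseformluas}--\ref{L:ginverseestimates}, rules out each of the three breakdown scenarios---$g_{00}$ stays uniformly bounded away from $0,$ the smallest eigenvalue of $e^{-2\Omega}g_{jk}$ stays uniformly bounded below by $(2c_1)^{-1}$ (after integrating the $H^{N-1}$ bound on $\partial_t h_{jk}$ in time against the data bound), and the $C_b^2$ norms of $g_{\mu\nu}$ together with the $C_b^1$ norms of $\partial_t g_{\mu\nu},$ $\rho,$ and $u^j$ remain uniformly bounded. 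This yields global existence on $[0,\infty)\times\T^3,$ and the preservation of wave coordinates established in Theorem~\ref{T:localwellposedness} transfers this to a global solution of the unmodified dust-Einstein system.

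It remains to establish future causal geodesic completeness, which I expect to be the main technical obstacle beyond the bootstrap bookkeeping. The plan is to study an arbitrary future-directed causal geodesic $\gamma(s) = (x^0(s),x^1(s),x^2(s),x^3(s))$ parametrized by affine parameter $s\geq 0$ and to show that the coordinate time $x^0(s) = t$ satisfies $t \to \infty$ as $s$ approaches the supremum of the maximal interval of existence, which forces $s \to \infty.$ Setting $P^\mu \eqdef \dot{\gamma}^\mu,$ the normalization $g_{\alpha\beta}P^\alpha P^\beta = \kappa \leq 0$ is preserved and, since the global bounds on $\totalnorm$ give $g_{00}\approx -1,$ $|g_{0j}|\lesssim \epsilon e^{(1-q)\Omega},$ and $e^{-2\Omega}g_{jk}\approx \delta_{jk},$ the normalization yields an a priori bound $P^0 = \sqrt{-\kappa + e^{-2\Omega}\delta_{ab}\widetilde{P}^a\widetilde{P}^b}(1+O(\epsilon))$ for a suitable rescaled spatial momentum $\widetilde{P}^j.$ The geodesic equation for $\widetilde{P}^j,$ combined with the uniform smallness of the Christoffel symbols $\Gamma_{\mu\ \nu}^{\ j} + \omega(t)\delta^j_\mu\delta^0_\nu$ provided by the estimates \eqref{E:VERYIMPORTANTCHRISTOFFELSYMBOLERRORESTIMATE} and Lemma~\ref{L:backgroundaoftestimate}, will show that $\widetilde{P}^a$ stays bounded for all $t$ (this is essentially the same dissipative effect that drives the stabilization of $u^j$). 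Hence $P^0$ remains bounded above and bounded below by a positive constant, so $dx^0/ds$ is comparable to $1$ and $s\to\infty$ is equivalent to $t\to\infty,$ giving geodesic completeness. This last step is a short ODE-type analysis that follows closely the analogous argument in~\cite{iRjS2012,jS2012}, with only minor modifications to accommodate the slightly different norm hierarchy imposed by the loss of one derivative for $\dens.$
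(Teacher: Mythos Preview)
Your proposal is correct and follows essentially the same bootstrap-and-continuation scheme as the paper: apply Prop.~\ref{P:auxiliary} with a judiciously chosen $t_1$ to strictly improve $\totalnorm\le\epsilon$, integrate $\partial_t h_{jk}$ in time to improve the spatial-metric positivity bootstrap, then rule out the three breakdown scenarios of Prop.~\ref{P:continuationprinciple}; geodesic completeness is handled (as in the paper, which cites \cite[Theorem~11.7]{iRjS2012}) by the ODE analysis you sketch. The one point you gloss over is that Prop.~\ref{P:auxiliary} requires $t_1\in[0,T_*)$, so you must first check $T_*>t_1$; the paper does this via the crude Cauchy-stability bound $\totalnorm(t)\le C\mathring{\epsilon}e^{ct}$, and with your choice $C_*=4Ce^{ct_1}$ the same bound already gives $\totalnorm(t)\le\epsilon/4$ on $[0,\min\{t_1,T_*\})$, so the bootstrap improves there too---but this step should be made explicit.
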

\begin{remark}[\textbf{Alternate stability characterization}]
As stated above, the stability condition is phrased in terms of the initial data for the unmodified system.
However, it is also possible to formulate a near-FLRW smallness condition solely in terms of
the pure Einstein data $(\T^3, \mathring{\underline{g}}_{jk}, \mathring{\underline{K}}_{jk}, \mathring{\dens}, \underline{\mathring{u}}^j),$ $(j,k = 1,2,3).$
The condition would imply that $\totalnorm(0)\le \epsilon;$ 
see Remark 11.2.1 in~\cite{jS2012} for additional details.
\end{remark}
%

%\begin{remark}
%Note that the initial Cauchy hypersurface is diffeomorphic to $\T^3.$
%\end{remark}

\noindent
{\bf Proof of Theorem~\ref{T:maintheorem}.}
Let $\totalnorm(0)=\mathring{\epsilon}<\frac{\epsilon}{2}$. Then by the local well-posedness Theorem~\ref{T:localwellposedness},
if $\epsilon$ is sufficiently small,
then there exists a $T>0$ such that a unique solution exists on the time interval $[0,T)$ and satisfies 
\begin{align} 
		\totalnorm(t) & \leq \epsilon, \label{E:proofbootstrapQN} \\
		c_1^{-1} \delta_{ab}X^{a}X^{b} & \leq e^{-2 \Omega} g_{ab} X^{a}X^{b} 
			\leq c_1 \delta_{ab}X^{a}X^{b},&& \forall (X^1,X^2,X^3) \in \mathbb{R}^3. 
			\label{E:gjproofBAkvsstandardmetric} 
		\end{align}

Observe that by Remark \ref{R:RoughBootstrapAutomatic}, \eqref{E:proofbootstrapQN} implies that
	the rough bootstrap assumptions \eqref{E:metricBAeta} and \eqref{E:g0jBALinfinity} are satisfied if $\epsilon$ is sufficiently small.
Let now
\[
\mathcal{T}\eqdef\sup_{T > 0}\{\text{ solution exists on} \  [0,T) \times \T^3\ \text{and~\eqref{E:proofbootstrapQN}-\eqref{E:gjproofBAkvsstandardmetric} hold } \}.
\]
Using Prop.~\ref{P:auxiliary}, we infer that there exist constants $c,C>0$ such that for any $t_1<\mathcal{T}$ the following bound holds:
\[
\totalnorm(t) \le C\left\lbrace\mathring{\epsilon} e^{ct_1}+\epsilon e^{-qHt_1/2}\right\rbrace, \qquad t\in[0,\,\mathcal{T}).
\]
Choosing $t_1$ sufficiently large so that $Ce^{-qHt_1/2}\le 1/4$ and $\mathring{\epsilon}/\epsilon$ sufficiently small so that $\mathring{\epsilon} e^{ct_1} \le \epsilon/4$, we ensure that $\totalnorm(t) \le \epsilon/2$. This demonstrates the improvement of the inequality~\eqref{E:proofbootstrapQN} on the time interval $[0,\mathcal{T})$. 
In order to show that the time $\mathcal{T}>0$ is sufficiently large so that $\mathcal{T}>t_1$, we use the standard Cauchy-stability bound
$\totalnorm(t)\le \mathring{\epsilon}e^{ct}$ to infer that the time of existence is at the 
order of $c^{-1} \mbox{ln} \big[\epsilon/(\widetilde{C}\mathring{\epsilon})\big]$ if $\epsilon$ and $\mathring{\epsilon}/\epsilon$ are sufficiently small.
Since $\mathring{\epsilon}/\epsilon$ is chosen after fixing $t_1$, we can ensure that $t_1 <\mathcal{T}$.
Next, we observe that $\|\g_t(e^{-2\Omega}g_{jk})\|_{L^{\infty}}=\|\g_th_{jk}\|_{L^{\infty}}\lesssim \epsilon e^{-qHt}.$ 
Hence, through integration in time, we deduce that
\[
\|e^{-2\Omega}g_{jk}(t,\cdot)-\mathring{g}_{jk}(\cdot)\|_{L^{\infty}} \lesssim \epsilon.
\]
It is easy to see that the previous bound and~\eqref{E:initialassumption} lead to the
following improvement of~\eqref{E:gjproofBAkvsstandardmetric} on the time interval $[0,\mathcal{T})$
when $\epsilon$ is sufficiently small:
\be\label{E:improvedassumption}
\frac{3}{2c_1} \delta_{ab}X^{a}X^{b}  \leq e^{-2 \Omega} g_{ab} X^{a}X^{b} 
			\leq \frac{2c_1}{3} \delta_{ab}X^{a}X^{b}, \ \  \forall (X^1,X^2,X^3) \in \mathbb{R}^3. 
\ee
To prove future-global existence, we will assume that $\mathcal{T}<\infty$ and
show that this leads to a contradiction.
Specifically, we will use
\eqref{E:proofbootstrapQN}
and
\eqref{E:gjproofBAkvsstandardmetric}
to deduce that
\emph{none of the three breakdown scenarios from Prop.~\ref{P:continuationprinciple} can occur},
which will easily lead to the contradiction.
The first breakdown scenario is easily excluded since on $[0,\mathcal{T}),$
we have the Sobolev embedding estimate
$\|g_{00}+1\|_{L^{\infty}} \lesssim e^{-q \Omega} \totalnorm \le \epsilon.$ 
The second one is excluded by~\eqref{E:gjproofBAkvsstandardmetric}.
Finally, the third breakdown scenario is excluded by the Sobolev embedding estimates
$\|g_{00}+1\|_{C_b^2} + \|\g_t g_{00}\|_{C_b^1}\lesssim e^{(1-q)\Omega} \totalnorm,$ 
$\sum_{j=1}^3(\|g_{0j}\|_{C_b^2}+\|\g_tg_{0j}\|_{C_b^1})\lesssim e^{(2-q)\Omega}\totalnorm,$
$\sum_{j,k=1}^3 \|\g_t h_{jk}\|_{C_b^1}\lesssim e^{-q\Omega} \totalnorm,$
$\sum_{j,k=1}^3 \|\underpartial h_{jk}\|_{C_b^1}\lesssim e^{(1-q)\Omega} \totalnorm,$
$\|\dens-\bar{\dens}\|_{C_b^1}\lesssim \totalnorm,$ 
$\sum_{j=1}^3\|u^j\|_{C_b^1}\lesssim e^{-(1+q)\Omega}\totalnorm,$ 
the relations $g_{jk} = e^{2 \Omega} h_{jk}$
and $\dens = e^{3 \Omega} \rho,$
and~\eqref{E:improvedassumption}.
Using the continuity of $\totalnorm(\cdot),$ 
the improved bounds $\totalnorm(\tau)\le \epsilon/2$ and~\eqref{E:improvedassumption}, 
we apply Theorem~\ref{T:localwellposedness}
and Prop.~\ref{P:continuationprinciple}
to deduce that there exists a $\delta>0$ such that we can extend the solution to the time interval $[0,\mathcal{T}+\delta)$
on which ~\eqref{E:proofbootstrapQN}-\eqref{E:gjproofBAkvsstandardmetric} hold. 
This contradicts the definition of $\mathcal{T}$ and implies that $\mathcal{T} = \infty.$

Finally, we note that based on the estimates of Prop. \ref{P:metric},
future geodesic completeness can be proved using the same argument given in~\cite[Theorem~11.7]{iRjS2012};
we omit the details.
\prfe

\section*{Acknowledgments}
MH gratefully acknowledges support from NSF grant \# DMS-1211517.
JS gratefully acknowledges support from NSF grant \# DMS-1162211 
and from a Solomon Buchsbaum grant administered by the Massachusetts Institute of Technology.
Any opinions, findings, and conclusions or recommendations expressed in this material are those of the authors and do not necessarily reflect the views of the National Science Foundation.

\appendix

\setcounter{section}{0}
   \setcounter{subsection}{0}
   \setcounter{subsubsection}{0}
   \setcounter{paragraph}{0}
   \setcounter{subparagraph}{0}
   \setcounter{figure}{0}
   \setcounter{table}{0}
   \setcounter{equation}{0}
   \setcounter{theorem}{0}
   \setcounter{definition}{0}
   \setcounter{remark}{0}
   \renewcommand{\thesection}{\Alph{section}}
   \renewcommand{\theequation}{\Alph{section}.\arabic{equation}}
   \renewcommand{\theproposition}{\Alph{section}.\arabic{proposition}}
   \renewcommand{\thecorollary}{\Alph{section}.\arabic{corollary}}
   \renewcommand{\thedefinition}{\Alph{section}.\arabic{definition}}
   \renewcommand{\thetheorem}{\Alph{section}.\arabic{theorem}}
   \renewcommand{\theremark}{\Alph{section}.\arabic{remark}}
   \renewcommand{\thelemma}{\Alph{section}.\arabic{lemma}}

\section{Derivation of the Modified System} \label{S:APPENDIXA}

In Appendix \ref{S:APPENDIXA}, we sketch a derivation of the modified system \eqref{E:metric00}-\eqref{E:velevol}.

\begin{proposition}[\textbf{Decomposition of the modified equations}] \label{AP:Decomposition}
	The Eqs. \eqref{E:Rhat00}-\eqref{E:secondEulersummary} in the unknowns $(g_{\mu \nu},\dens,u^j),$ $(\mu, \nu = 0,1,2,3),$
	$(j = 1,2,3),$ can be written as 
\begin{subequations}
\begin{align}
	\hat{\square}_g (g_{00} + 1) & = 5 H \partial_t g_{00} + 6 H^2 (g_{00} + 1) + \triangle_{00},
		\label{AE:finalg00equation} \\
	\hat{\square}_g g_{0j} & = 3H \partial_t g_{0j} + 2 H^2 g_{0j} - 2Hg^{ab}\Gamma_{a j b} + \triangle_{0j}, 
		\label{AE:finalg0jequation} \\
	\hat{\square}_g h_{jk} & = 3H \partial_t h_{jk} + \triangle_{jk}, \label{AE:finalhjkequation}  \\
%\end{align}   
%\begin{align}
u^{\alpha}\g_{\alpha}(\dens-\bar{\dens})
& =  \triangle\,,\label{AE:rhoevolution}\\
u^{\alpha} \partial_{\alpha} u^j & = 
-2 \omega u^0 u^j 
- u^0 \triangle^{\ j}_{0 \ 0}
+ \triangle^j\,,\label{AE:velocityevolution}
	%u^{\alpha} \partial_{\alpha} (P - \bar{p})+ (1 + \speed^2) \Big(\frac{-1}{u_0}\Big) P u_a \partial_t u^a + (1 + \speed^2) P 
	%	\partial_{a}u^{a} & = \triangle, \label{AE:rhoevolution} \\
	%u^{\alpha} \partial_{\alpha} u^{j} + \frac{\speed^2}{(1 + \speed^2)P}\Pi^{j \alpha} \partial_{\alpha} (P - \bar{p})
	%	& = (\decayparameter - 2) \omega u^{j} + \triangle^j, \label{AE:velocityevolution}
\end{align}
\end{subequations}
where $\omega(t),$ which is uniquely determined by the parameters $\Lambda > 0$ and $\bar{\dens} \geq 0$,
is the function from \eqref{E:LITTLEOMEGADEFINED},
\begin{align*} 
	u^0 & = - \frac{g_{0a}u^a}{g_{00}} + \sqrt{1 + \Big(\frac{g_{0a}u^a}{g_{00}}\Big)^2 - \frac{g_{ab}u^a u^b}{g_{00}} 
		- \Big(\frac{g_{00} + 1}{g_{00}}\Big)}, \\
	\dens & \eqdef e^{3\Omega}\rho, \ \
	h_{jk}  \eqdef e^{-2\Omega}g_{jk}, \ \
%	\end{align*}
%	\begin{align*}
		H  \eqdef \sqrt{\frac{\Lambda}{3}}.
	\end{align*}
	The error terms $\triangle_{\mu \nu},$ $\triangle,$ $\triangle^j$ can be expressed as
\begin{subequations} 
\begin{align}
	\triangle_{00} & = -(g_{00}+1)e^{-3\Omega}\bar{\dens} - e^{-3\Omega}(\dens-\bar{\dens})
                           +2e^{-3\Omega}\dens(1-u_0^2)-e^{-3\Omega}\dens(g_{00}+1)\label{AE:triangle00}\\
                       & \ \  +2(\triangle_{A,00}+\triangle_{C,00})
                           +5(\omega-H)\g_tg_{00}+6(\omega^2-H^2)(g_{00}+1),\notag\\
	%\triangle_{A,00} + \triangle_{C,00}
	%	- \frac{3\speed^2 + 1}{2 \speed^2} (g_{00} + 1) e^{-3(1 + \speed^2) \Omega} \bar{p}
	%	 \label{AE:triangle00} \\
	%& \ \ - \frac{3\speed^2 + 1}{2 \speed^2}e^{-3(1 + \speed^2) \Omega}(P - \bar{p})
	%	- \frac{1 + \speed^2}{\speed^2} (u_0 + 1)(u_0 - 1) e^{-3(1 + \speed^2) \Omega}P  \notag \\
	%& \ \ - \frac{1 - \speed^2}{2 \speed^2} \big(g_{00} + 1\big)e^{-3(1 + \speed^2) \Omega}P
	%	+ \frac{5}{2} (\omega - H) \partial_t g_{00} + 3 (\omega^2 - H^2)(g_{00} + 1), \notag \\
	\triangle_{0j} & = \frac{1}{2}e^{-3\Omega}\bar{\dens}g_{0j}
                                       -2e^{-3\Omega} \dens u_0u_j-e^{-3\Omega} \dens g_{0j}\label{AE:triangle0j}\\
                                   & \ \  +2(H^2-\omega^2)g_{0j}+3(\omega-H)\g_tg_{0j}
                                        -2(\omega-H)g^{ab}\Gamma_{ajb}+2(\triangle_{A,0j}+\triangle_{C,0j}),\notag\\
	%\triangle_{A,0j} + \triangle_{C,0j}
	%	+ \frac{1 - 3 \speed^2}{4 \speed^2} e^{-3(1 + \speed^2)} \bar{p} g_{0j} 
	%	- \frac{1 + \speed^2}{\speed^2}e^{-3(1 + \speed^2) \Omega}Pu_0u_j \label{AE:triangle0j} \\
	%& \ \ - \frac{1 - \speed^2}{2\speed^2} e^{-3(1 + \speed^2) \Omega}P g_{0j} 
	%	+ \frac{3}{2} (\omega - H) \partial_t g_{0j} + (\omega^2 - H^2) g_{0j} \notag \\
	%&	\ \ - (\omega - H) g^{ab}\Gamma_{a j b}, \notag \\
	\triangle_{jk} & = \bar{\dens}e^{-3\Omega}(g^{00}+1)h_{jk}-e^{-3\Omega}(\dens-\bar{\dens})h_{jk}\label{AE:trianglejk}\\
                                   & \ \  -2e^{-5\Omega} \dens u_ju_k
                                        +3(\omega-H)\g_th_{jk}-4\omega g^{0a}\g_ah_{jk}+2e^{-2\Omega}\triangle_{A,jk},\notag \\
	%e^{-2 \Omega} \triangle_{A,jk} + \frac{1 + \speed^2}{2\speed^2}
	%	\bar{p}e^{-3(1 + \speed^2) \Omega}(g^{00} + 1)h_{jk} - 2 \omega g^{0a} \partial_{a} h_{jk} \label{AE:trianglejk} \\
	%	& \ \ - \frac{1 - \speed^2}{2\speed^2}e^{-3(1 + \speed^2) \Omega}(P - \bar{p})h_{jk} 
	%		- \frac{1 + \speed^2}{\speed^2} e^{-2 \Omega} e^{-3(1 + \speed^2) \Omega} P u_j u_k \notag \\
	%	& \ \ + \frac{3}{2}(\omega - H) \partial_t h_{jk}, \notag 
%\end{align}
%\begin{align}
	\triangle & =  - \dens \partial_a u^a  
								- \frac{\dens }{u_0 u^0} u_a u^b \partial_b u^a	
								- 2 \omega \frac{\dens }{u_0} u_a u^a
								-\frac{\dens}{u_0}u_a\triangle^{\ a}_{0 \ 0}
								+ \frac{\dens }{u_0 u^0} u_a \triangle^a						
								 \label{AE:triangledef} \\
						& \ \ - \dens \triangle_{\alpha \ \beta}^{\ \alpha}u^{\beta}
                  + \frac{\dens}{2u_0} 
                  	\Big( (\g_tg_{00})(u^0)^2
                     	+2 (\g_tg_{0a}) u^au^0
            					+ \underbrace{(\g_tg_{ab}) u^au^b}_{(e^{2\Omega}\g_th_{ab}+2\omega g_{ab})u^au^b} 
            				\Big),
                       \notag \\
	%- (1 + \speed^2)P \triangle_{\alpha \ 0}^{\ \alpha}u^0  
	%	-(1 + \speed^2) P \triangle_{\alpha \ a}^{\ \alpha}u^a \label{AE:triangledef} \\
	%	&  \ \ + \frac{(1 + \speed^2)P}{2u_0} 
	%		\Big\lbrace (\partial_t g_{00})(u^0)^2 + 2 (\partial_t g_{0a})u^0 u^a + \underbrace{(\partial_t g_{ab}) u^a u^b}_{(e^{2 
	%		\Omega} \partial_t h_{ab} + 2 \omega g_{ab})u^a u^b} \Big\rbrace, \notag \\
	\triangle^j & = 
		- u^0 (u^0 - 1) \triangle^{\ j}_{0 \ 0} 
		- 2  u^0 u^a \triangle^{\ j}_{0 \ a}
		- u^a u^b \triangle^{\ j}_{a \ b} 
	\label{AE:trianglejdef},
\end{align}
\end{subequations}
where the $\triangle_{A,\mu \nu}$ are defined in \eqref{AE:triangleA00def}-\eqref{AE:triangleAjkdef}, 
$\triangle_{C,00},$ $\triangle_{C,0j}$ are defined in \eqref{AE:triangleC00def}-\eqref{AE:triangleC0jdef},
and the $\triangle_{\mu \ \nu}^{\ \alpha}$ are defined in \eqref{AE:triangleGamma000}-\eqref{AE:triangleGammaikj}.
\end{proposition}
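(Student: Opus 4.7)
The statement asserts that five equations (three wave-type equations for the metric components $g_{00}$, $g_{0j}$, $h_{jk}$, and two fluid evolution equations) can be algebraically derived from \eqref{E:Rhat00}--\eqref{E:secondEulersummary}, with a precisely specified distinction between ``dominant'' linear/dissipative terms on the right-hand side and ``error'' terms $\triangle_{\mu\nu}$, $\triangle$, $\triangle^j$. Because no analysis is involved, the proof is a careful algebraic bookkeeping exercise; I organize it equation by equation.

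\noindent\emph{Step 1: the metric wave equations.} I substitute the expression \eqref{E:modifiedRicci} for $\widehat{\mbox{Ric}}_{\mu\nu}$ into each of \eqref{E:Rhat00}--\eqref{E:Rhatjk} to isolate $-\tfrac{1}{2}\hat{\square}_g g_{\mu\nu}$, then collect the quadratic Christoffel terms into the shorthand $\triangle_{A,\mu\nu}$ defined at the end of the appendix. The terms $2\omega \Gamma^0 - 6\omega^2$, $-2\omega(\Gamma_j - 3\omega g_{0j})$, together with $-\Lambda g_{\mu\nu}$, are rewritten using $H = \sqrt{\Lambda/3}$ and the background identity \eqref{E:omegadotidentity2}; the non-background residue is grouped into $\triangle_{C,\mu\nu}$. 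The fluid source $\rho(u_\mu u_\nu + \tfrac{1}{2}g_{\mu\nu})$ is rewritten using $\rho = e^{-3\Omega}\dens$ and the splittings $\dens = \bar{\dens} + (\dens - \bar{\dens})$, $u_0^2 = 1 + (u_0^2 - 1)$, $g_{00} = -1 + (g_{00}+1)$, which extract the linearly small terms $-(g_{00}+1)e^{-3\Omega}\bar{\dens} - e^{-3\Omega}(\dens - \bar{\dens})$ in the $00$ case and their analogs in the $0j$ and $jk$ cases. Finally, replacing $\omega$ by $H + (\omega - H)$ in the dominant terms produces the damping coefficients $5H, 3H, 3H$ plus residues $5(\omega-H)\partial_t g_{00}$ etc., which I absorb into $\triangle_{00}, \triangle_{0j}, \triangle_{jk}$; this yields \eqref{AE:finalg00equation} and \eqref{AE:finalg0jequation} directly, and yields \eqref{AE:finalhjkequation} after dividing by $e^{2\Omega}$ (see Step 2).

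\noindent\emph{Step 2: conversion of the $jk$ equation to $h_{jk}$.} From $h_{jk} = e^{-2\Omega}g_{jk}$ and $\tfrac{d}{dt}\Omega = \omega$, I compute
\begin{align*}
\hat{\square}_g g_{jk} = e^{2\Omega}\hat{\square}_g h_{jk} + 4\omega e^{2\Omega}g^{00}\partial_t h_{jk} + 4\omega e^{2\Omega}g^{0a}\partial_a h_{jk} + \bigl(2\tfrac{d}{dt}\omega + 4\omega^2\bigr)e^{2\Omega}g^{00}h_{jk}.
\end{align*}
The $g^{00}\partial_t h_{jk}$ coefficient is $-4\omega$ to leading order; writing $g^{00} = -1 + (g^{00}+1)$ extracts the dominant $4\omega\partial_t h_{jk}$, which combines with the $3H\partial_t g_{jk}/e^{2\Omega} = 6H\omega h_{jk} + 3H\partial_t h_{jk}$ coming from the translated dissipative term; the last piece combines with \eqref{E:omegadotidentity2} applied to $g^{00}h_{jk}$ to cancel against the cosmological/density contributions modulo terms that I collect into $\triangle_{jk}$ (giving in particular the $-4\omega g^{0a}\partial_a h_{jk}$ summand displayed in \eqref{AE:trianglejk}). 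This is the step where careful bookkeeping is most at risk of error.

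\noindent\emph{Step 3: the velocity equation.} Starting from \eqref{E:secondEulersummary}, I write $\Gamma_{\alpha\ \beta}^{\ j} = \widetilde{\Gamma}_{\alpha\ \beta}^{\ j} + \triangle_{\alpha\ \beta}^{\ j}$ and compute $\widetilde{\Gamma}_{\alpha\ \beta}^{\ j}u^\alpha u^\beta = 2\omega u^0 u^j$, producing the damping term. To expose the $-u^0\triangle_{0\ 0}^{\ j}$ contribution that plays the role of the ``dangerous linear term'' in later energy estimates, I expand
\begin{align*}
\triangle_{\alpha\ \beta}^{\ j}u^\alpha u^\beta = (u^0)^2\triangle_{0\ 0}^{\ j} + 2u^0 u^a\triangle_{0\ a}^{\ j} + u^a u^b\triangle_{a\ b}^{\ j}
\end{align*}
and split $(u^0)^2 = u^0 + u^0(u^0-1)$. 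The remaining pieces form $\triangle^j$ exactly as in \eqref{AE:trianglejdef}, yielding \eqref{AE:velocityevolution}.

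\noindent\emph{Step 4: the density equation.} Rewriting $\rho = e^{-3\Omega}\dens$ in \eqref{E:firstEulersummary} gives
\begin{align*}
u^\alpha \partial_\alpha \dens = 3\omega u^0 \dens - \dens(\partial_\alpha u^\alpha + \Gamma_{\alpha\ \beta}^{\ \alpha}u^\beta).
\end{align*}
I split $\Gamma_{\alpha\ \beta}^{\ \alpha} = \widetilde{\Gamma}_{\alpha\ \beta}^{\ \alpha} + \triangle_{\alpha\ \beta}^{\ \alpha}$, use $\widetilde{\Gamma}_{\alpha\ \beta}^{\ \alpha}u^\beta = 3\omega u^0$ to cancel the first term against the background divergence, and solve for $\partial_t u^0$ from the normalization $g_{\alpha\beta}u^\alpha u^\beta = -1$ differentiated in time, substituting the expression for $\partial_t u^a$ obtained from \eqref{AE:velocityevolution}. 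Collecting everything and using $u^\alpha \partial_\alpha(\dens-\bar\dens) = u^\alpha\partial_\alpha\dens$ produces $\triangle$ exactly in the form \eqref{AE:triangledef}.

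\noindent The main obstacle is the consistent grouping of many small terms into the prescribed $\triangle$-symbols, particularly in Step 2, where the rescaling $h_{jk}=e^{-2\Omega}g_{jk}$ interacts with the inverse metric factors $g^{00}$ and $g^{0a}$ and with the $-\Lambda g_{jk}$ contribution via \eqref{E:omegadotidentity}; no estimates are needed, only algebraic identities and the Friedmann relations from Lemma \ref{L:backgroundaoftestimate}.
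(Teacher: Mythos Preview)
Your proposal is correct and follows essentially the same route as the paper. The paper organizes the computation by first packaging the intermediate algebra into four auxiliary lemmas (Lemmas~\ref{AL:modifiedRiccidecomposition}--\ref{AL:Christoffeldecomposition}), which respectively isolate $\hat{\square}_g g_{\mu\nu}$ from $\widehat{\mbox{Ric}}_{\mu\nu}$, decompose the quadratic Christoffel block $A_{\mu\nu}$ into principal parts plus $\triangle_{A,\mu\nu}$, handle the gauge-correction terms $2\omega\Gamma^0-6\omega^2$ and $-2\omega(\Gamma_j-3\omega g_{0j})$ via $\triangle_{C,\mu\nu}$, and split the Christoffel symbols $\Gamma_{\mu\ \nu}^{\ \alpha}$ into background plus $\triangle_{\mu\ \nu}^{\ \alpha}$; the proof of the proposition then just cites these and works out the $h_{jk}$, $u^j$, and $\dens$ cases explicitly. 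Your Steps~1--4 reproduce exactly this content inline. The only organizational difference is in the $h_{jk}$ derivation: the paper applies the lemmas directly to \eqref{E:Rhatjk} and arrives at the formula \eqref{AE:boxhjkfirstformula} for $\hat{\square}_g h_{jk}$, whereas you first obtain the $g_{jk}$ wave equation and then substitute the rescaling identity for $\hat{\square}_g g_{jk}$; both routes lead to the same intermediate expression after the $4\omega g^{00}e^{2\Omega}\partial_t h_{jk}$ terms cancel. One small correction: there is no $\triangle_{C,jk}$ term (the $jk$ modified equation \eqref{E:Rhatjk} has no gauge-correction piece), consistent with its absence from \eqref{AE:trianglejk}.
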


\begin{proof}
	The proof is a series of tedious computations based on Lemmas 
	\ref{AL:modifiedRiccidecomposition}-\ref{AL:Christoffeldecomposition}. We provide the proofs of 
	\eqref{AE:finalhjkequation}-\eqref{AE:velocityevolution} 
	and leave the remaining details to the reader. To obtain \eqref{AE:finalhjkequation}, we first use Eq. \eqref{E:Rhatjk}, 
	Lemma \ref{AL:modifiedRiccidecomposition}, and Lemma \ref{AL:Amunudecomposition} to obtain the following equation for $h_{jk} 
	= e^{-2\Omega} g_{jk}:$	
	\begin{align} \label{AE:boxhjkfirstformula}
		\hat{\square}_g h_{jk} & = 3 \omega \partial_t h_{jk}
			+ 2[3 \omega^2 + \frac{d}{dt} \omega - \Lambda]h_{jk} 
			- 4 \omega g^{0a} \partial_a h_{jk} \\
		& \ \ + 2 e^{-2\Omega} \triangle_{A;jk} 
			- 2\left(\frac{d}{dt} \omega\right)(g^{00} + 1)h_{jk} \notag \\
		& \ \ - 2e^{-5\Omega} \dens u_j u_k - e^{-3\Omega} \dens h_{jk}. \notag
		\end{align}
		Using \eqref{E:backgroundrhoafact}-\eqref{E:omegadotidentity2}, we have that
		$2[3 \omega^2 + \frac{d}{dt} \omega - \Lambda]h_{jk}$ $= -\bar{\dens}e^{-3\Omega} 
		h_{jk}.$ Substituting into \eqref{AE:boxhjkfirstformula}, and using 
		$\frac{d}{dt} \omega = - \frac{1}{2}e^{-3\Omega}\bar{\dens}$ [that is, 
		\eqref{E:omegadotidentity}], we find that
		\begin{align} \label{AE:boxhjksecondformula}
			\hat{\square}_g h_{jk} & = 3 \omega \partial_t h_{jk}
				- e^{-3\Omega}(\dens-\bar{\dens}) h_{jk} 
				- 4 \omega g^{0a} \partial_a h_{jk} \\
			& \ \ + 2 e^{-2\Omega} \triangle_{A;jk} 
				+ \bar{\dens} e^{-3\Omega}(g^{00} + 1)h_{jk} 
			 - 2e^{-5\Omega} \dens u_j u_k. \notag
		\end{align}	
		Eq. \eqref{AE:finalhjkequation} now easily follows from \eqref{AE:boxhjksecondformula}. We remark that the proofs of 
		\eqref{AE:finalg00equation} and \eqref{AE:finalg0jequation} require the use of Lemma \ref{AL:AmunuplusImunudecomposition}.
		To obtain \eqref{AE:velocityevolution}, we first recall Eq. (\ref{E:secondEulersummary}): 		
		\begin{align}  \label{AE:finalEulerChristoffel}
			u^{\alpha} \partial_{\alpha} u^j +\Gamma_{\alpha \ \beta}^{\ j} u^{\alpha} u^{\beta}
				& = 0.
		\end{align}
		Lemma \ref{AL:Christoffeldecomposition} implies that 
		$\Gamma_{\alpha \ \beta}^{\ j} u^{\alpha} u^{\beta} 
		= 2 \omega u^0 u^j 
		+
		\triangle_{\alpha \ \beta}^{\ j} u^{\alpha} u^{\beta}
		=2 \omega u^0 u^j 
		+ u^0 \triangle_{0 \ 0}^{\ j}
		+ u^0(u^0-1)\triangle_{0 \ 0}^{\ j} 
		+ 2 u^0 u^a \triangle_{0 \ a}^{\ j}
		+ u^a u^b \triangle_{a \ b}^{\ j}.$ Eq. 
		\eqref{AE:velocityevolution}
		now follows from plugging the previous formula into~\eqref{AE:finalEulerChristoffel}.
		To obtain
		\eqref{AE:rhoevolution}, we first recall Eq. \eqref{E:firstEulersummary}:
		\begin{align} \label{AE:firstEulerChristoffel}
			u^{\alpha} \partial_{\alpha} \rho + \rho \partial_{\alpha}u^{\alpha} 
			+ \rho \Gamma_{\alpha \ \beta}^{\ \alpha} u^{\beta}  = 0.
		\end{align}
		Lemma \ref{AL:Christoffeldecomposition} implies that 
		$\Gamma_{\alpha \ \beta}^{\ \alpha} u^{\beta} = 3 \omega u^0 + \triangle_{\alpha \ \beta}^{\ \alpha} u^{\beta},$
		while the normalization condition $g_{\alpha \beta} u^{\alpha} u^{\beta} = -1$ implies that 
		$\partial_t u^0 = - \frac{1}{u_0} \big\lbrace u_a \partial_t u^a 
		+ \frac{1}{2}(\partial_t g_{\alpha \beta})u^{\alpha} u^{\beta} \big\rbrace.$ 
		Eq. \eqref{AE:rhoevolution} now follows from multiplying both sides of
		\eqref{AE:firstEulerChristoffel} by $e^{3 \Omega},$ using the fact that
		$e^{3 \Omega}\rho = \dens,$ and using Eq. \eqref{AE:velocityevolution}
		to replace $\partial_t u^a$ with spatial derivatives of $u$
		plus inhomogeneous terms.
		\end{proof}

We now state the four lemmas used in the proof of Prop. \ref{AP:Decomposition}.
\begin{lemma} \cite[Lemma 4]{hR2008} \label{AL:modifiedRiccidecomposition}
	The modified Ricci tensor from \eqref{E:modifiedRicci} can be decomposed as follows:
	\begin{align}
		\widehat{\mbox{Ric}}_{\mu \nu} & = - \frac{1}{2} \hat{\square}_g g_{\mu \nu}
			+ \frac{3}{2}(g_{0 \mu} \partial_{\nu} \omega + g_{0 \nu} \partial_{\mu} \omega)
			+ \frac{3}{2} \omega \partial_t g_{\mu \nu} + A_{\mu \nu}, &&  (\mu, \nu = 0,1,2,3), \nonumber \\
			\mbox{where} \notag \\
		A_{\mu \nu} & \eqdef g^{\alpha \beta} g^{\kappa \lambda}
			\big[(\partial_{\alpha} g_{\nu \kappa})(\partial_{\beta} g_{\mu \lambda})
			- \Gamma_{\alpha \nu \kappa} \Gamma_{\beta \mu \lambda}\big], && (\mu, \nu = 0,1,2,3).	\label{AE:Amunudef}
	\end{align}
	\end{lemma}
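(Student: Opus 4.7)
The plan is to derive the identity by a direct computation starting from the definition of $\widehat{\mbox{Ric}}_{\mu\nu}$ and invoking the classical wave-gauge decomposition of the ordinary Ricci tensor. Recall that $\widehat{\mbox{Ric}}_{\mu\nu} \eqdef \mbox{Ric}_{\mu\nu} + \frac{1}{2}\left[D_\mu(3\omega g_{\nu 0}-\Gamma_\nu)+D_\nu(3\omega g_{\mu 0}-\Gamma_\mu)\right]$, where $3\omega g_{\nu 0}-\Gamma_\nu$ is treated as a one-form in $\nu$. First I would expand the modification terms via the product rule for the scalar $\omega$:
$$D_\mu(3\omega g_{\nu 0}-\Gamma_\nu)=3(\partial_\mu\omega)g_{\nu 0}+3\omega D_\mu g_{\nu 0}-D_\mu\Gamma_\nu.$$
Symmetrizing in $\mu,\nu$ and substituting into the definition immediately converts the $3(\partial_\mu\omega)g_{\nu 0}$ contributions into the $\frac{3}{2}(g_{0\mu}\partial_\nu\omega+g_{0\nu}\partial_\mu\omega)$ term in the conclusion.

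Next I would invoke the classical identity
$$\mbox{Ric}_{\mu\nu}=-\tfrac{1}{2}\hat{\square}_g g_{\mu\nu}+\tfrac{1}{2}(D_\mu\Gamma_\nu+D_\nu\Gamma_\mu)+A_{\mu\nu},$$
which underlies the harmonic-coordinate formulation of the Einstein equations. This is verified by expanding $\mbox{Ric}_{\mu\nu}=\partial_\alpha \Gamma^{\ \alpha}_{\mu\ \nu}-\partial_\mu \Gamma^{\ \alpha}_{\alpha\ \nu}+\Gamma^{\ \alpha}_{\alpha\ \lambda}\Gamma^{\ \lambda}_{\mu\ \nu}-\Gamma^{\ \alpha}_{\mu\ \lambda}\Gamma^{\ \lambda}_{\alpha\ \nu}$, converting the Christoffel symbols to $\frac{1}{2}g^{\alpha\lambda}(\partial_\mu g_{\lambda\nu}+\partial_\nu g_{\mu\lambda}-\partial_\lambda g_{\mu\nu})$, and carefully collecting the resulting second-order and first-order-squared contributions. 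Substituting this identity into the previously derived expression cancels the $D_\mu\Gamma_\nu+D_\nu\Gamma_\mu$ contributions arising from the gauge modification, leaving only $-\frac{1}{2}\hat{\square}_g g_{\mu\nu}+A_{\mu\nu}$ together with the gauge pieces.

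Finally, I would handle the remaining term $\frac{3}{2}\omega(D_\mu g_{\nu 0}+D_\nu g_{\mu 0})$ using the defining formula $\Gamma_{\mu\nu 0}=\frac{1}{2}(\partial_\mu g_{\nu 0}+\partial_\nu g_{\mu 0}-\partial_t g_{\mu\nu})$. Since $g_{\nu 0}$ is viewed as a one-form in $\nu$,
$$D_\mu g_{\nu 0}+D_\nu g_{\mu 0}=(\partial_\mu g_{\nu 0}+\partial_\nu g_{\mu 0})-2\Gamma_{\mu\nu 0}=\partial_t g_{\mu\nu},$$
which produces the $\frac{3}{2}\omega\,\partial_t g_{\mu\nu}$ term in the statement. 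Combining all of the above yields exactly the claimed decomposition. The main obstacle is the careful bookkeeping needed to verify that the quadratic remainder in the Ricci decomposition takes precisely the compact form $A_{\mu\nu}=g^{\alpha\beta}g^{\kappa\lambda}\left[(\partial_\alpha g_{\nu\kappa})(\partial_\beta g_{\mu\lambda})-\Gamma_{\alpha\nu\kappa}\Gamma_{\beta\mu\lambda}\right]$; this requires combining all of the $\Gamma\cdot\Gamma$ cross terms produced when expanding $\mbox{Ric}_{\mu\nu}$ in local coordinates, making systematic use of the symmetries $\Gamma_{\alpha\beta\gamma}=\Gamma_{\gamma\beta\alpha}$ and $\Gamma^{\ \alpha}_{\mu\ \nu}=g^{\alpha\lambda}\Gamma_{\mu\lambda\nu}$ to group the outputs symmetrically in $\mu,\nu$, but is otherwise a routine algebraic manipulation.
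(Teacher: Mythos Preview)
The paper does not supply its own proof of this lemma; it is quoted directly from Ringstr\"om \cite[Lemma~4]{hR2008} without argument. Your direct computation---expanding $D_\mu(3\omega g_{\nu 0}-\Gamma_\nu)$ via the Leibniz rule, invoking the classical harmonic-gauge decomposition $\mbox{Ric}_{\mu\nu}=-\tfrac{1}{2}\hat{\square}_g g_{\mu\nu}+\tfrac{1}{2}(D_\mu\Gamma_\nu+D_\nu\Gamma_\mu)+A_{\mu\nu}$ so that the $D\Gamma$ contributions cancel, and reducing $D_\mu g_{\nu 0}+D_\nu g_{\mu 0}$ to $\partial_t g_{\mu\nu}$ via the Christoffel identity---is correct and would constitute a valid self-contained proof. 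One small notational point: in the paper's convention the lowered index sits in the middle, so the symbol appearing in your covariant-derivative step is $\Gamma_{\mu 0 \nu}$ rather than $\Gamma_{\mu\nu 0}$; the explicit formula you wrote is in fact that of $\Gamma_{\mu 0 \nu}$, so the computation itself is unaffected.
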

\begin{lemma} \cite[Lemma 5]{hR2008} \label{AL:Amunudecomposition}
	The term $A_{\mu \nu}$ ($\mu, \nu = 0,1,2,3$) defined in \eqref{AE:Amunudef} can be decomposed into principal terms and error 
	terms $\triangle_{A,\mu \nu}$ as follows:
	\begin{subequations}
	\begin{align*}
		A_{00} & = 3 \omega^2 - \omega g^{ab}\partial_t g_{ab} + 2 \omega g^{ab} \partial_a g_{0b} + \triangle_{A,00}, \\
		A_{0j} & = 2 \omega g^{00} \partial_t g_{0j} - 2 \omega^2 g^{00}g_{0j} - \omega g^{00} \partial_j g_{00} 
			+ \omega g^{ab}\Gamma_{ajb} + \triangle_{A,0j}, && (j=1,2,3), \\
		A_{jk} & = 2 \omega g^{00} \partial_t g_{jk} - 2 \omega^2 g^{00} g_{jk} + \triangle_{A,jk}, && (j,k = 1,2,3),
	\end{align*}
	\end{subequations}
	where
	\begin{subequations}
	\begin{align}
		\triangle_{A,00} & \eqdef (g^{00})^2 \Big\lbrace(\partial_t g_{00})^2 - (\Gamma_{000})^2 \Big\rbrace 
			 \label{AE:triangleA00def} \\
		& \ \ + g^{00}g^{0a} \Big\lbrace 2 (\partial_t g_{00})(\partial_t g_{0a} + \partial_a g_{00}) 
			- 4 \Gamma_{000} \Gamma_{00a} \Big\rbrace \notag \\
		& \ \ + g^{00}g^{ab} \Big\lbrace (\partial_t g_{0a})(\partial_t g_{0b}) 
				+ (\partial_a g_{00}) (\partial_b g_{00}) 
				- 2 \Gamma_{00a} \Gamma_{00b} \Big\rbrace \notag \\
		& \ \ + g^{0a} g^{0b} \Big\lbrace 2(\partial_t g_{00})(\partial_a g_{0b}) + 2(\partial_t g_{0b})(\partial_a g_{00}) 
				- 2 \Gamma_{000} \Gamma_{a0b} - 2 \Gamma_{00b} \Gamma_{00a} \Big\rbrace \notag \\
		& \ \ + g^{ab} g^{0l} \Big\lbrace 2(\partial_t g_{0a})(\partial_l g_{0b}) + 2(\partial_b g_{00})(\partial_a g_{0l}) 
				- 4\Gamma_{00a} \Gamma_{l0b}  \Big\rbrace \notag \\
		& \ \ + g^{ab}g^{lm}(\partial_a g_{0l})(\partial_b g_{0m}) 
				+ \frac{1}{2} g^{lm}(\underbrace{g^{ab} \partial_t g_{al} - 2\omega \delta_l^b}_{
				e^{2 \Omega} g^{ab} \partial_t h_{al}- 2 \omega g^{0b} g_{0l}})(\partial_b g_{0m} + \partial_m g_{0b}) \notag \\
		& \ \ - \frac{1}{4} g^{ab} g^{lm}(\partial_a g_{0l} + \partial_l g_{0a})(\partial_b g_{0m} + \partial_m g_{0b}) \notag \\
		& \ \ - \frac{1}{4}(\underbrace{g^{ab} \partial_t g_{al} - 2\omega \delta_l^b}_{
				e^{2 \Omega} g^{ab} \partial_t h_{al} - 2 \omega g^{0b} g_{0l}}) 
				(\underbrace{g^{lm} \partial_t g_{bm} - 2 \omega \delta_b^l}_{e^{2 \Omega} g^{lm} \partial_t h_{bm} 
				- 2 \omega g^{0l} g_{0b}}), \notag 
		\end{align}
		\begin{align}
		\triangle_{A,0j} & \eqdef (g^{00})^2 \Big\lbrace (\partial_t g_{00}) (\partial_t g_{0j}) - \Gamma_{000} \Gamma_{0j0} \Big\rbrace
			\label{AE:triangleA0jdef} \\
		& \ \ + g^{00}g^{0a} \Big\lbrace (\partial_t g_{00})(\partial_t g_{aj} + \partial_a g_{0j}) 
			+(\partial_t g_{0j})(\partial_t g_{0a} + \partial_a g_{00}) \notag \\
		& \hspace{1in} - 2 \Gamma_{000} \Gamma_{0ja} - 2 \Gamma_{0j0} \Gamma_{00a} \Big\rbrace \notag \\
		& \ \ + g^{00} (\underbrace{g^{ab} \partial_t g_{bj} - 2 \omega \delta_j^a}_{e^{2 \Omega} g^{ab} \partial_t h_{bj}
			- 2 \omega g^{0a}g_{0j}}) \Big(\partial_t g_{0a} - \frac{1}{2} \partial_a g_{00}\Big) 
		 \ \ + \frac{1}{2} g^{00} g^{ab}(\partial_a g_{00})(\partial_b g_{0j} + \partial_j g_{0b}) \notag \\
		& \ \ + g^{0a} g^{0b} \Big\lbrace (\partial_t g_{00})(\partial_a g_{bj}) + (\partial_t g_{0b})(\partial_a g_{0j})  			
			+ (\partial_a g_{00})(\partial_t g_{bj}) + (\partial_a g_{0b})(\partial_t g_{0j})  \notag  \\
		& \hspace{1 in} - \Gamma_{000} \Gamma_{ajb} - 2 \Gamma_{00b} \Gamma_{0ja} - \Gamma_{a0b} \Gamma_{0j0} \Big\rbrace \notag \\
		& \ \ + g^{ab} g^{0l} \Big\lbrace (\partial_t g_{0a})(\partial_l g_{bj}) + (\partial_l g_{0a})(\partial_t g_{bj}) 
			+ (\partial_b g_{00})(\partial_a g_{lj}) 
		 + (\partial_b g_{0l})(\partial_a g_{0j}) - 2 \Gamma_{00a} \Gamma_{ljb} 
			\Big\rbrace \notag \\
		& \ \ - g^{ab} g^{0l} \Big\lbrace (\partial_l g_{0a} + \partial_a g_{0l})\Gamma_{0jb} 
			-\frac{1}{2}(\partial_t g_{la})(\partial_b g_{0j} - \partial_j g_{0b}) \Big\rbrace \notag \\
		& \ \ + \omega g^{0a}(\underbrace{\partial_t g_{aj} - 2 \omega g_{aj}}_{e^{2 \Omega} \partial_t h_{aj}})
			+ \frac{1}{2}g^{0l}(\underbrace{g^{ab} \partial_t g_{la} - 2\omega \delta_{l}^b}_{e^{2 \Omega} g^{ab} \partial_t h_{la} 
				- 2 \omega g^{0b}g_{0l}})\partial_t g_{bj} \notag \\
		& \ \ + g^{ab} g^{lm} \Big\lbrace (\partial_a g_{0l})(\partial_b g_{mj}) 
			- \frac{1}{2} (\partial_a g_{0l} + \partial_l g_{0a})\Gamma_{bjm} \Big\rbrace 
		+ \frac{1}{2} g^{ab} (\underbrace{g^{lm} \partial_t g_{la} - 2\omega \delta_a^m}_{e^{2 \Omega} g^{lm} \partial_t h_{la} - 
			2 \omega g^{0m}g_{0a}})\Gamma_{bjm}, \notag
		\end{align}
		\begin{align}
		\triangle_{A,jk} & \eqdef (g^{00})^2 \Big\lbrace (\partial_t g_{0j}) (\partial_t g_{0k}) 
			- \Gamma_{0j0} \Gamma_{0k0} \Big\rbrace \label{AE:triangleAjkdef} \\
		& \ \ + g^{00}g^{0a} \Big\lbrace (\partial_t g_{0j})(\partial_t g_{ak} + \partial_a g_{0k}) 
			+ (\partial_t g_{0k})(\partial_t g_{aj} + \partial_a g_{0j}) \notag \\
		& \hspace{1in} - 2 \Gamma_{0j0} \Gamma_{0ka} - 2 \Gamma_{0k0} \Gamma_{0ja} \Big\rbrace \notag \\
		& \ \ + g^{00}g^{ab} \Big\lbrace (\partial_a g_{0j})(\partial_b g_{0k})  
			- \frac{1}{2}(\partial_a g_{0j} - \partial_j g_{0a})(\partial_b g_{0k} - \partial_k g_{0b}) \Big\rbrace	\notag \\
		& \ \ - \frac{1}{2} g^{00} \Big\lbrace (\underbrace{g^{ab} \partial_t g_{aj} - 2 \omega \delta_j^b}_{
			e^{2 \Omega} g^{ab} \partial_t h_{aj} - 2 \omega g^{0b}g_{0j}})(\partial_b g_{0k} - \partial_k g_{0b}) \notag \\
		& \hspace{1in} + (\underbrace{g^{ab} \partial_t g_{bk} - 2 \omega \delta_k^a}_{e^{2 \Omega}g^{ab} \partial_t 
			h_{bk} - 2 \omega g^{0a}g_{0k}})(\partial_a g_{0j} - \partial_j g_{0a}) \Big\rbrace \notag \\
		& \ \ + \omega g^{00}(\underbrace{g_{bk}g^{ab} - \delta_k^a}_{-g_{0k}g^{0a}})\partial_t g_{aj} 
			+ \frac{1}{2} g^{00}(\underbrace{g^{ab}\partial_t g_{aj} - 2 \omega \delta_j^b}_{e^{2 \Omega} g^{ab} \partial_t 
			h_{aj} - 2 \omega g^{0b}g_{0j}})(\underbrace{\partial_t g_{bk} - 2 \omega g_{bk}}_{e^{2 \Omega}\partial_t h_{bk}}) 
			\notag \\
		& \ \ + g^{0a} g^{0b} \Big\lbrace (\partial_t g_{0j})(\partial_a g_{bk}) + (\partial_t g_{bj})(\partial_a g_{0k})  			+ 
			(\partial_a g_{0j})(\partial_t g_{bk})  \notag  \\
		& \hspace{1in} + (\partial_a g_{bj})(\partial_t g_{0k})
			- \Gamma_{0j0} \Gamma_{akb} - 2 \Gamma_{0jb} \Gamma_{0ka} - \Gamma_{ajb} \Gamma_{0k0} \Big\rbrace \notag \\
		& \ \ + g^{ab} g^{0l} \Big\lbrace (\partial_t g_{aj})(\partial_l g_{bk}) + (\partial_l g_{aj})(\partial_t g_{bk}) 
			+ (\partial_b g_{0j})(\partial_a g_{lk})  \notag \\
		& \hspace{1in} + (\partial_b g_{lj})(\partial_a g_{0k})
			- 2 \Gamma_{0ja} \Gamma_{lkb} - 2 \Gamma_{lja} \Gamma_{0kb} \Big\rbrace \notag \\
		& \ \ + g^{ab} g^{ml} \Big\lbrace (\partial_a g_{lj})(\partial_b g_{mk}) - \Gamma_{ajl} \Gamma_{bkm} \Big\rbrace. 
		\notag
	\end{align}
	\end{subequations}
	\end{lemma}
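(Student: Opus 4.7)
The plan is to prove Lemma~\ref{AL:Amunudecomposition} by a direct brute-force expansion of the definition~\eqref{AE:Amunudef}, organizing the resulting terms according to how many powers of $\omega$ they carry. The key structural fact is the FLRW identity $\partial_t g_{jk} = 2\omega g_{jk}$ for the background metric, which motivates the universal splitting
\begin{equation*}
\partial_t g_{jk} = 2\omega g_{jk} + e^{2\Omega}\partial_t h_{jk}.
\end{equation*}
Whenever this splitting is applied to a factor of $\partial_t g$ appearing inside a quadratic product, it peels off an explicit scalar factor of $\omega$; the principal terms displayed in the lemma are exactly those obtained by peeling $\omega$ once (producing the $\omega$-linear terms like $-\omega g^{ab}\partial_t g_{ab}$ in $A_{00}$ and $2\omega g^{00}\partial_t g_{jk}$ in $A_{jk}$) or twice (producing the $\omega^2$ contributions $3\omega^2$, $-2\omega^2 g^{00}g_{0j}$, and $-2\omega^2 g^{00}g_{jk}$).

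Concretely, I would proceed as follows. First, substitute the Christoffel-symbol expression $\Gamma_{\alpha\nu\kappa} = \tfrac{1}{2}(\partial_\alpha g_{\nu\kappa} + \partial_\kappa g_{\alpha\nu} - \partial_\nu g_{\alpha\kappa})$ into the right-hand side of~\eqref{AE:Amunudef} so that the whole expression becomes a quadratic form in first derivatives of the metric with $g^{\alpha\beta}g^{\kappa\lambda}$ coefficients. Next, partition the summation over $\alpha,\beta,\kappa,\lambda\in\{0,1,2,3\}$ into temporal-versus-spatial cases, yielding a finite list of products of the form $g^{\cdots}g^{\cdots}(\partial_{\cdots}g_{\cdots})(\partial_{\cdots}g_{\cdots})$. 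In each case where a factor of $\partial_t g_{ab}$ or $\partial_t g_{aj}$ appears, apply the splitting above; similarly, for factors involving the Christoffel-type combinations $\Gamma_{a0c}$ and $\Gamma_{ajc}$, split the $\partial_t g_{\cdot\cdot}$ portion into a $2\omega g$-piece plus a perturbative piece. Finally, contract using $g^{ab}g_{ab} = 3$ on the background (and more generally tracking $g^{00}g_{00}$- and $g^{ab}g_{ab}$-type identities), so that the $\omega^1$- and $\omega^2$-homogeneous contributions collapse into the principal terms stated. Everything that is genuinely quadratic in perturbative objects $\partial_t h_{jk}$, $g^{0a}$, $g_{00}+1$, $\partial_{\mu}g_{0\nu}$, $\partial_j g_{00}$, and the like, together with all terms from which no $\omega$ can be extracted, is collected verbatim into the error terms $\triangle_{A,00}$, $\triangle_{A,0j}$, $\triangle_{A,jk}$ exactly as defined in \eqref{AE:triangleA00def}--\eqref{AE:triangleAjkdef}.

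The main obstacle is simply the algebraic volume: with four summation indices each ranging over four values, the raw expansion of each $A_{\mu\nu}$ component yields on the order of hundreds of terms before simplification, and the bookkeeping of signs and combinatorial factors is delicate. Many terms collapse or combine by the symmetries $g^{\alpha\beta} = g^{\beta\alpha}$, $g^{\kappa\lambda} = g^{\lambda\kappa}$, and the fact that the summand in \eqref{AE:Amunudef} is invariant under $(\alpha,\kappa)\leftrightarrow(\beta,\lambda)$, but verifying the precise numerical coefficients of the $\omega$- and $\omega^2$-terms requires patience. A useful sanity check at each step is to evaluate both sides on the exact FLRW background, where all perturbative quantities vanish and the principal terms alone must reproduce the correct background value of $A_{\mu\nu}$; for instance, $A_{00}|_{\text{FLRW}} = 3\omega^2 - \omega g^{ab}(2\omega g_{ab}) = 3\omega^2 - 6\omega^2 = -3\omega^2$, which can be independently confirmed from $\Gamma_{j0k}|_{\text{FLRW}} = -\omega g_{jk}$ and the contraction $g^{ab}g^{cd}\omega^2 g_{ac}g_{bd} = 3\omega^2$. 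Once this background consistency is confirmed, the remaining work reduces to checking that the first-order-in-perturbation terms are correctly absorbed into the principal pieces and that the strictly higher-order perturbative products match the explicit expressions written out for $\triangle_{A,\mu\nu}$.
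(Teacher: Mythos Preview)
Your proposal is correct and matches the approach the paper implicitly endorses: the paper does not supply its own proof of this lemma but cites it directly from Ringstr\"om~\cite[Lemma~5]{hR2008}, and the nearby Lemma~\ref{AL:Christoffeldecomposition} is dismissed with the one-line remark that it follows from ``a series of tedious computations.'' Your plan---expand $A_{\mu\nu}$ from~\eqref{AE:Amunudef}, split the summation indices into temporal and spatial cases, apply the identity $\partial_t g_{jk} = 2\omega g_{jk} + e^{2\Omega}\partial_t h_{jk}$ to peel off the $\omega$- and $\omega^2$-homogeneous pieces, and collect the remainder into the explicitly written $\triangle_{A,\mu\nu}$---is exactly the direct computation that underlies the cited result, and your FLRW sanity check is a sound consistency test.
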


\begin{lemma} \cite[Lemma 6]{hR2008} \label{AL:AmunuplusImunudecomposition}
	The following identities hold, where 
	$A_{\mu \nu}$ is defined in \eqref{AE:Amunudef}
	and
	$\triangle_{A,00},$ 
	$\triangle_{A,0j},$ 
	are defined in \eqref{AE:triangleA00def}-\eqref{AE:triangleA0jdef}:
	\begin{subequations}
	\begin{align*}
		A_{00} + 2 \omega \Gamma^0 - 6 \omega^2 & = \omega \partial_t g_{00} + 3 \omega^2(g_{00} + 1)
			+ 3 \omega^2 g_{00} + \triangle_{A,00} + \triangle_{C,00},  \\
		A_{0j} + 2\omega(3 \omega g_{0j} - \Gamma_j) & = 4 \omega^2 g_{0j} - \omega g^{ab} \Gamma_{ajb} + \triangle_{A,0j} 
			+ \triangle_{C,0j}, 
	\end{align*}
	\end{subequations}
	and
	\begin{subequations}
	\begin{align}
		\triangle_{C,00} & \eqdef -6 (g_{00})^{-1} \omega^2 \Big\lbrace (g_{00} + 1)^2 - g^{0a}g_{0a} \Big\rbrace 
			- \omega (g^{00} + 1)(\underbrace{g^{ab} \partial_t g_{ab} - 6 \omega}_{e^{2\Omega}g^{ab} \partial_t h_{ab} - 2 \omega 
			g^{0a}g_{0a}})  \label{AE:triangleC00def} \\
		& \ \ + 2\omega(g^{00} + 1)g^{ab} \partial_a g_{0b} 
			+ 4\omega g^{0a} g^{0b} \Gamma_{0ab} + 2 \omega g^{ab} g^{0l} \Gamma_{alb}, \notag \\
		\triangle_{C,0j} & \eqdef 2 \omega^2(g^{00} + 1) g_{0j}
		 - 2 \omega g^{0a} \Big\lbrace (\underbrace{\partial_t g_{aj} - 2\omega g_{aj}}_{e^{2 \Omega} \partial_t h_{aj}}) + 
		 \partial_a g_{0j} - \partial_j g_{0a} \Big\rbrace. \label{AE:triangleC0jdef}
	\end{align}
	and $A_{00},A_{0j}$ are defined in \eqref{AE:Amunudef}.
	\end{subequations}
\end{lemma}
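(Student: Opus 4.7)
The identities in this lemma are pure algebra: they re-package the $A_{00}$ and $A_{0j}$ decompositions of Lemma~\ref{AL:Amunudecomposition} so that the contribution of the modified-Ricci ``gauge'' pieces $2\omega\Gamma^0 - 6\omega^2$ and $2\omega(3\omega g_{0j}-\Gamma_j)$ is visible as a sum of a manifestly linear-in-perturbation term plus a manifestly quadratic remainder, which is then named $\triangle_{C,\mu\nu}$. Accordingly, the plan is a bookkeeping exercise with no new analytic ideas. I will start from Lemma~\ref{AL:Amunudecomposition} on the one hand and from the defining relations
\[
\Gamma^0 = g^{\alpha\beta} g^{0\lambda}\Gamma_{\alpha\lambda\beta},\qquad \Gamma_j = g_{j\kappa}g^{\alpha\beta}\Gamma^{\ \kappa}_{\alpha\ \beta} = g^{\alpha\beta}\Gamma_{\alpha j \beta}
\]
on the other, and reduce everything to an equality of explicit polynomials in $g_{\mu\nu}$, $g^{\mu\nu}$, and their first derivatives.

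For the $00$ identity I would first compute $\Gamma^0$ by splitting the contracted sum into the three index ranges $(\alpha,\beta)=(0,0),(0,a),(a,b)$, and within each range splitting $g^{0\lambda}=g^{00}+g^{0a}$, and then using the definition \eqref{E:EMBIChristoffeldef} of $\Gamma_{\mu\nu\kappa}$. Writing $g^{00}=-1+(g^{00}+1)$ inside the $(a,b)$ block produces the crucial principal contribution $-\tfrac12 g^{ab}\partial_t g_{ab}\cdot 2\omega = $ (after adding the $-\omega g^{ab}\partial_t g_{ab}$ already present in $A_{00}$) the piece that recombines with the background identity $g^{ab}\partial_t g_{ab}=6\omega+(e^{2\Omega}g^{ab}\partial_t h_{ab}-2\omega g^{0a}g_{0a})$ to yield $\omega\partial_t g_{00}$ plus the $-\omega(g^{00}+1)(e^{2\Omega}g^{ab}\partial_t h_{ab}-2\omega g^{0a}g_{0a})$ term in \eqref{AE:triangleC00def}. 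All remaining cross-terms generated by $(g^{00}+1)$, by $g^{0a}$ factors, and by $\Gamma_{0ab}$, $\Gamma_{alb}$ contracted with double factors of $g^{0a}$, are manifestly quadratic in the perturbation and are swept into $\triangle_{C,00}$. Comparing the coefficient of the linear piece proportional to $g_{00}$ then gives the $3\omega^2(g_{00}+1)+3\omega^2 g_{00}$ split: the first summand is the obvious one, while the second comes from the $-6(g_{00})^{-1}\omega^2\{(g_{00}+1)^2-g^{0a}g_{0a}\}$ piece of $\triangle_{C,00}$ only up to an extra $3\omega^2 g_{00}$ that must be peeled off to make it quadratic -- this peeling is exactly what produces the second $3\omega^2 g_{00}$ summand on the right-hand side.

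For the $0j$ identity the same strategy applies, but the starting expression in Lemma~\ref{AL:Amunudecomposition} already contains $-\omega g^{00}\partial_j g_{00}$ and $\omega g^{ab}\Gamma_{ajb}$, so the contracted Christoffel $\Gamma_j = g^{\alpha\beta}\Gamma_{\alpha j\beta}$ has to be expanded as $\Gamma_j = -\partial_t g_{0j}+\tfrac12\partial_j g_{00}+g^{0a}\Gamma_{0ja}+g^{0a}\Gamma_{a j 0}+g^{ab}\Gamma_{ajb}$ (plus corrections from $g^{00}+1$ and from $g^{0\lambda}$ terms). Feeding this into $2\omega(3\omega g_{0j}-\Gamma_j)$ and combining with the principal part of $A_{0j}$ gives the stated $4\omega^2 g_{0j}-\omega g^{ab}\Gamma_{ajb}$ plus the quadratic leftover displayed in \eqref{AE:triangleC0jdef}, where the residual $2\omega^2(g^{00}+1)g_{0j}$ arises from pushing $g^{00}=-1+(g^{00}+1)$ inside the $2\omega g^{00}\partial_t g_{0j}$ and $-2\omega^2 g^{00}g_{0j}$ pieces and isolating the linear contribution $2\omega\partial_t g_{0j}+2\omega^2 g_{0j}$ which then cancels against $-2\omega\Gamma_j|_{\text{linear}}+6\omega^2 g_{0j}$ to produce the claimed $4\omega^2 g_{0j}$.

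The only real obstacle is purely combinatorial: verifying that no stray term is left over after all of the expansions, symmetrizations, and repeated uses of $g^{\mu\alpha}g_{\alpha\nu}=\delta^\mu_\nu$ (in the form $g^{00}g_{00}+g^{0a}g_{0a}=1$ and $g^{0a}g_{00}+g^{ab}g_{0b}=0$) are performed. Since the definitions of $\triangle_{C,00}$ and $\triangle_{C,0j}$ in \eqref{AE:triangleC00def}-\eqref{AE:triangleC0jdef} are \emph{designed} so that every such residual term has a home, the verification is a finite check that I would perform by writing both sides as sums of monomials in $\{(g^{00}+1), g^{0a}, g^{ab}, \partial g_{\mu\nu}, \omega\}$ and matching coefficients block by block; no analytic estimate is required.
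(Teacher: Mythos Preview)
The paper does not supply its own proof of this lemma: it is stated with a citation to \cite[Lemma 6]{hR2008} and no argument is given beyond the remark following the statement. Your proposal---direct algebraic expansion of $\Gamma^0$ and $\Gamma_j$, combination with the $A_{\mu\nu}$ decomposition of Lemma~\ref{AL:Amunudecomposition}, and systematic isolation of linear-in-perturbation pieces from quadratic remainders via $g^{00}=-1+(g^{00}+1)$ and the inverse-metric identities---is the natural and correct route, and is presumably what Ringstr\"om does in the cited reference. Since the lemma is a pure algebraic identity with $\triangle_{C,\mu\nu}$ \emph{defined} to absorb the residuals, your plan is sound; the only labor is the coefficient-matching you describe.
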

\begin{remark}
Terms $2 \omega \Gamma^0 - 6 \omega^2$ and $ 2\omega(3 \omega g_{0j} - \Gamma_j)$ are exactly the terms $I_{00}$ and $I_{0j}$ from 
\cite[Sect. 5]{jS2012}.
They are given by $I_{00}=-2 \omega (\tilde{\Gamma}^0-\Gamma^0)$ and $I_{0j}=2\omega (\tilde{\Gamma}^j-\Gamma^j)$ and vanish if the wave coordinate condition is satisfied.
The above lemma thus states that
the sums $A_{00} + I_{00}$ and $A_{0j} + I_{0j},$ $(j=1,2,3),$ can be decomposed into principal terms and error terms. 
\end{remark}

\begin{lemma} \label{AL:Christoffeldecomposition}
	The Christoffel symbols $\Gamma_{\mu \ \nu}^{\ \alpha}$ can be decomposed into principal terms and
	error terms $\triangle_{\mu \ \nu}^{\ \alpha}$ as follows:
	\begin{subequations}
	\begin{align}
		\Gamma_{0 \ 0}^{\ 0} & = \triangle_{0 \ 0}^{\ 0},  \label{AE:triangleGamma000} \\
		\Gamma_{j \ 0}^{\ 0} = \Gamma_{0 \ j}^{\ 0} & = \triangle_{j \ 0}^{\ 0} = \triangle_{0 \ j}^{\ 0}, 
			 \\
		\Gamma_{0 \ 0}^{\ j} & = \triangle_{0 \ 0}^{\ j},  \\
		\Gamma_{0 \ k}^{\ j} = \Gamma_{k \ 0}^{\ j} & = \omega \delta_k^j +  \triangle_{0 \ k}^{\ j}  
			= \omega \delta_k^j + \triangle_{k \ 0}^{\ j},  \\
		\Gamma_{j \ k}^{\ 0} & =\omega g_{jk} + \triangle_{j \ k}^{\ 0}, \\
		\Gamma_{i \ j}^{\ k} & = \triangle_{i \ j}^{\ k},  \label{AE:triangleGammaikj}
\end{align}
\end{subequations}
where
	\begin{subequations}
	\begin{align}
		2 \triangle_{0 \ 0}^{\ 0} & \eqdef g^{00} \partial_t g_{00} + 2 g^{0a} \partial_t g_{0a} - g^{0a}\partial_a g_{00}, 
			\label{AE:triangle000} \\
		2 \triangle_{j \ 0}^{\ 0} & \eqdef g^{00} \partial_j g_{00} + g^{0a}(\partial_j g_{a0} - \partial_a g_{j0}) 
			+ 2 \omega g^{0a} g_{ja}
			+ g^{0a}(\underbrace{\partial_t g_{ja} - 2 \omega g_{ja}}_{e^{2 \Omega} \partial_t h_{aj}}), 
			%\label{AE:trianglej00} 
			\\
		2 \triangle_{0 \ 0}^{\ j} & \eqdef g^{0j} \partial_t g_{00} + 2 g^{ja} \partial_t g_{0a} - g^{ja} \partial_a g_{00},  
			\label{AE:TRIANGLE0JUPPER0DEF} \\
		2 \triangle_{0 \ k}^{\ j} & \eqdef g^{0j} \partial_k g_{00} + g^{ja} \partial_k g_{0a} - g^{ja} \partial_a g_{0k}
			+ (\underbrace{g^{ja} \partial_t g_{ak} - 2 \omega \delta_k^j}_{e^{2 \Omega} g^{ja} \partial_t 
			h_{ak} - 2 \omega g^{0j}g_{0k}}),  \\
		2 \triangle_{j \ k}^{\ 0} & \eqdef g^{00}(\partial_j g_{0k} + \partial_k g_{0j}) + g^{0a}(\partial_j g_{ak} 
			+ \partial_k g_{aj} - \partial_a g_{jk}) \\
		& \ \ + (\underbrace{\partial_t g_{jk} - 2 \omega g_{jk}}_{e^{2 \Omega} \partial_t h_{jk}}) - 2 \omega (g^{00} + 1) g_{jk} 
			- (g^{00} + 1)(\underbrace{\partial_t g_{jk} - 2 \omega g_{jk}}_{e^{2 \Omega} \partial_t h_{jk}}), \notag \\
		2 \triangle_{i \ j}^{\ k} & \eqdef g^{0k}(\partial_i g_{0j} + \partial_j g_{0i})
				- g^{0k} \underbrace{(\partial_t g_{ij} - 2 \omega g_{ij})}_{e^{2\Omega} \partial_t h_{ij}} 
				- \underbrace{2 \omega g^{0k} g_{ij}}_{2 \omega e^{2 \Omega} g^{0k} h_{ij}}  \\
			& \ \ + g^{ka}(\partial_i g_{aj} + \partial_j g_{ia} - \partial_a g_{ij}). \label{AE:trianglekij}
		\end{align}
	\end{subequations}
\end{lemma}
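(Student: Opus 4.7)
The proof is a direct computation from the definition
\begin{equation*}
\Gamma_{\mu \ \nu}^{\ \alpha} = \tfrac{1}{2} g^{\alpha \lambda}(\partial_{\mu} g_{\lambda \nu} + \partial_{\nu} g_{\mu \lambda} - \partial_{\lambda} g_{\mu \nu}),
\end{equation*}
carried out case by case for the six index patterns listed. In each case the plan is to split the contraction in $\lambda$ into its $\lambda=0$ and $\lambda=a$ pieces, collect the resulting six ``Christoffel of the first kind'' terms, and then isolate whatever part matches the FLRW background contribution. All remaining pieces are absorbed into $\triangle_{\mu\ \nu}^{\ \alpha}$ by inspection.

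The cases $(\mu,\nu,\alpha)=(0,0,0)$, $(j,0,0)$, $(0,0,j)$, and $(i,j,k)$ are purely ``error'' because the FLRW background contributes nothing: for example, for $\Gamma_{0\ 0}^{\ 0}$ one expands $g^{0\lambda}(\partial_t g_{\lambda 0} + \partial_0 g_{0\lambda} - \partial_\lambda g_{00})$, cancels $\partial_t g_{0\lambda}$ against itself, and reads off~(\ref{AE:triangle000}); similarly for $\Gamma_{j\ 0}^{\ 0}$ one collects $g^{00}\partial_j g_{00}$, rewrites $g^{0a}\partial_t g_{ja}$ as $g^{0a}(\partial_t g_{ja}-2\omega g_{ja})+2\omega g^{0a}g_{ja}$, and pairs the antisymmetric combination $\partial_j g_{a0}-\partial_a g_{j0}$. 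The analogous manipulations yield~(\ref{AE:TRIANGLE0JUPPER0DEF}) and~(\ref{AE:trianglekij}).

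The two cases that actually produce a nonzero ``principal'' piece are $(0,k,j)$ and $(j,k,0)$. For $\Gamma_{0\ k}^{\ j}$, isolating the $\lambda=a$ contribution one obtains $\tfrac{1}{2}g^{ja}\partial_t g_{ak}$ as the only term that does not vanish on the FLRW background; we split it as
\begin{equation*}
\tfrac12 g^{ja}\partial_t g_{ak} = \omega\delta_k^j + \tfrac12\bigl(g^{ja}\partial_t g_{ak}-2\omega\delta_k^j\bigr),
\end{equation*}
collect the remaining $g^{0j}\partial_k g_{00}$, $g^{ja}\partial_k g_{0a}$, $-g^{ja}\partial_a g_{0k}$ pieces, and arrive at the stated formula. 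For $\Gamma_{j\ k}^{\ 0}$, the $\lambda=0$ contribution is $\tfrac12 g^{00}(\partial_j g_{0k}+\partial_k g_{0j}-\partial_0 g_{jk})$. Writing $g^{00}=-1+(g^{00}+1)$ and $\partial_t g_{jk}=2\omega g_{jk}+e^{2\Omega}\partial_t h_{jk}$, the ``$-1\cdot(-2\omega g_{jk})$'' piece produces the principal $\omega g_{jk}$; the remaining pieces, together with the $\lambda=a$ contribution $\tfrac12 g^{0a}(\partial_j g_{ak}+\partial_k g_{aj}-\partial_a g_{jk})$, reassemble into~(\ref{AE:triangle000})--type expressions matching $2\triangle_{j\ k}^{\ 0}$.

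The symmetry statements $\Gamma_{j\ 0}^{\ 0}=\Gamma_{0\ j}^{\ 0}$ and $\Gamma_{0\ k}^{\ j}=\Gamma_{k\ 0}^{\ j}$ follow immediately from the symmetry of $g^{\alpha\lambda}$ and $g_{\mu\nu}$ in the Christoffel definition. There is no conceptual obstacle here; the only hazard is bookkeeping, particularly keeping straight which occurrences of $\partial_t g_{jk}$ should be replaced by $e^{2\Omega}\partial_t h_{jk}+2\omega g_{jk}$ and which occurrences of $g^{ja}\partial_t g_{ak}$ should be replaced by $(g^{ja}\partial_t g_{ak}-2\omega\delta^j_k)+2\omega\delta^j_k$ so that the principal $\omega$ contributions are cleanly extracted. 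This is the sole ``hard'' step, in the sense that the decomposition of $\triangle$ used in later sections depends on making these substitutions in precisely the way displayed in~(\ref{AE:triangle000})--(\ref{AE:trianglekij}), so that the factors $e^{2\Omega}\partial_t h_{jk}$ and $g^{ja}\partial_t g_{ak}-2\omega\delta^j_k$ later satisfy the improved decay estimates~\eqref{E:PARTIALTGJKMINUS2OMEGAGJKHNMINUSONE} and~\eqref{E:partialu5}.
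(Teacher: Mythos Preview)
Your proposal is correct and follows exactly the approach the paper takes: the paper's proof consists of a single sentence stating that the lemma follows by tedious direct computation from the definition $\Gamma_{\mu\ \nu}^{\ \alpha} = \tfrac{1}{2} g^{\alpha\lambda}(\partial_\mu g_{\lambda\nu} + \partial_\nu g_{\mu\lambda} - \partial_\lambda g_{\mu\nu})$, and you have simply spelled out those computations in more detail than the authors did.
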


\begin{proof}
	The proof is again a series of tedious computations that follow from the definition 
	$\Gamma_{\mu \ \nu}^{\ \alpha} \eqdef \frac{1}{2} g^{\alpha \lambda} 
	(\partial_{\mu} g_{\lambda \nu} + \partial_{\nu} g_{\mu \lambda} - \partial_{\lambda} g_{\mu \nu}).$ 
\end{proof}
\setcounter{equation}{0}
   \setcounter{theorem}{0}
   \setcounter{definition}{0}
   \setcounter{remark}{0}

\section{Sobolev-Moser Inequalities} \label{B:SobolevMoser}
		In Appendix \ref{B:SobolevMoser}, we recall the Sobolev-Moser inequalities stated in the Appendix of \cite{iRjS2012}. The propositions 			and corollaries stated below can be proved using standard methods found in e.g. \cite[Chapter 6]{lH1997} and  
		\cite{sKaM1981}. Throughout we abbreviate $L^p=L^p(\mathbb{T}^3),$ and $H^M=H^M(\mathbb{T}^3).$

%\begin{lemma}  [\cite{lN1959}]                          \label{L:GN}
%    If $M,N$ are integers such that $0 \leq M \leq N,$ and $v$ is a function on $\mathbb{T}^3$ such that $v \in
%    L^{\infty}, \|\underpartial^{(N)} v \|_{L^2} < \infty,$ then
%    \begin{align}
%        \| \underpartial^{(M)} v \|_{L^{2N/M}} \leq C(M,N) \| v \|_{L^{\infty}}^{1 -
%        \frac{M}{N}}\|\underpartial^{(N)} v \|_{L^2}^{\frac{M}{N}}.
%    \end{align}
%\end{lemma}

\begin{proposition} \label{P:derivativesofF1FkL2}
	Let $M \geq 0$ be an integer. If $\lbrace v_a \rbrace_{1 \leq a \leq l}$ are functions such that $v_a \in
    L^{\infty}, \|\underpartial^{(M)} v_a \|_{L^2} < \infty$ for $1 \leq a \leq l,$ and
	$\vec{\alpha}_1, \cdots, \vec{\alpha}_l$ are spatial derivative multi-indices with 
	$|\vec{\alpha}_1| + \cdots + |\vec{\alpha}_l| = M,$ then
	\begin{align*}
		\| (\partial_{\vec{\alpha}_1}v_1) (\partial_{\vec{\alpha}_2}v_2) \cdots (\partial_{\vec{\alpha}_l}v_l)\|_{L^2}
		& \leq C(l,M) \sum_{a=1}^l \Big( \| \underpartial^{(M)} v_a  \|_{L^2} \prod_{b \neq a} \|v_{b} \|_{L^{\infty}} \Big).
	\end{align*}
\end{proposition}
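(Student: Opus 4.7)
\medskip

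\noindent \textbf{Proof plan.} The strategy is the classical Gagliardo--Nirenberg--Moser argument: first, apply H\"older's inequality in a carefully chosen family of Lebesgue exponents, then use interpolation to bound each mixed-order factor by the two endpoint quantities $\|v_a\|_{L^\infty}$ and $\|\underpartial^{(M)}v_a\|_{L^2}$, and finally convert the resulting product of powers into the desired sum via Young's inequality. The assumption $|\vec{\alpha}_1|+\cdots+|\vec{\alpha}_l|=M$ is what makes the exponents fit together.

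First, I would set $p_a=2M/|\vec{\alpha}_a|$ whenever $|\vec{\alpha}_a|>0$, and $p_a=\infty$ when $|\vec{\alpha}_a|=0$. The identity
\[
\sum_{a=1}^l \frac{1}{p_a}=\sum_{a=1}^l \frac{|\vec{\alpha}_a|}{2M}=\frac{1}{2}
\]
lets me apply H\"older's inequality on $\mathbb{T}^3$ to get
\[
\|(\partial_{\vec{\alpha}_1}v_1)\cdots(\partial_{\vec{\alpha}_l}v_l)\|_{L^2} \le \prod_{a=1}^l \|\partial_{\vec{\alpha}_a}v_a\|_{L^{p_a}},
\]
with the convention that factors corresponding to $|\vec{\alpha}_a|=0$ contribute $\|v_a\|_{L^\infty}$.

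Second, for each index $a$ with $0<|\vec{\alpha}_a|\le M$ I would invoke the Gagliardo--Nirenberg interpolation inequality on $\mathbb{T}^3$,
\[
\|\partial_{\vec{\alpha}_a}v_a\|_{L^{2M/|\vec{\alpha}_a|}}\le C(M)\|v_a\|_{L^\infty}^{1-|\vec{\alpha}_a|/M}\|\underpartial^{(M)}v_a\|_{L^2}^{|\vec{\alpha}_a|/M}.
\]
Multiplying these bounds over $a$ and using $\sum_a|\vec{\alpha}_a|=M$, the combined $L^\infty$-exponent on $v_b$ collapses to $1-(|\vec{\alpha}_b|/M+\sum_{a\ne b}|\vec{\alpha}_a|/M)\cdot 0$ upon regrouping, yielding an estimate of the schematic form
\[
\|(\partial_{\vec{\alpha}_1}v_1)\cdots(\partial_{\vec{\alpha}_l}v_l)\|_{L^2} \le C(l,M)\prod_{a=1}^l\|\underpartial^{(M)}v_a\|_{L^2}^{|\vec{\alpha}_a|/M}\|v_a\|_{L^\infty}^{1-|\vec{\alpha}_a|/M}.
\]

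Finally, I would absorb the product of powers into a sum via Young's inequality with weights $|\vec{\alpha}_a|/M$, which sum to $1$: for non-negative reals $X_a,Y_a$,
\[
\prod_{a=1}^l X_a^{|\vec{\alpha}_a|/M}\,Y_a^{1-|\vec{\alpha}_a|/M} \le \sum_{a=1}^l X_a \prod_{b\ne a}Y_b
\]
(after applying weighted AM--GM to the factors $X_a^{|\vec{\alpha}_a|/M}$ while keeping the remaining $Y_b$'s as an $L^\infty$-background). Setting $X_a=\|\underpartial^{(M)}v_a\|_{L^2}$ and $Y_a=\|v_a\|_{L^\infty}$ produces exactly the bound claimed in the proposition.

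The only mildly delicate point, and the place I expect to spend the most care, is the Gagliardo--Nirenberg step on the torus: one has to ensure that the interpolation inequality holds in the form stated (an $L^\infty$ endpoint rather than a higher Sobolev endpoint), which on $\mathbb{T}^3$ follows from covering by coordinate charts and applying the standard Euclidean Gagliardo--Nirenberg inequality together with a Poincar\'e-type control for the zero-mode---or, alternatively, from the classical Moser-type interpolation lemma (see e.g.\ \cite[Ch.~6]{lH1997}, \cite{sKaM1981}). Everything else is bookkeeping with the homogeneity identity $\sum_a|\vec{\alpha}_a|=M$.
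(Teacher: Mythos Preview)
Your argument is correct and is precisely the standard Gagliardo--Nirenberg--Moser proof; the paper does not give its own proof of this proposition but simply defers to the classical references \cite[Ch.~6]{lH1997} and \cite{sKaM1981}, which carry out exactly the H\"older--interpolation--Young scheme you describe. One tiny housekeeping point: your H\"older/interpolation setup implicitly assumes $M\ge 1$ (when $M=0$ all $p_a=\infty$ and the exponents do not sum to $1/2$), but the $M=0$ case is immediate from $\|v_1\cdots v_l\|_{L^2}\le \|v_1\|_{L^2}\prod_{b\ge 2}\|v_b\|_{L^\infty}$, which is already one summand on the right.
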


\begin{corollary}                                                  \label{C:DifferentiatedSobolevComposition}
    Let $M \geq 1$ be an integer, let $\mathfrak{K}$ be a compact set, and let $F \in C_b^M(\mathfrak{K})$ be a 
    function. Assume that $v$ is a function such that $v(\mathbb{T}^3) \subset \mathfrak{K}$ and $ \underpartial v \in H^{M-1}.$
    Then $\underpartial (F \circ v) \in H^{M-1},$ and
    \begin{align*} 														
    %\label{E:DifferentiatedModifiedSobolevEstimate}
    	\| \underpartial (F \circ v) \|_{H^{M-1}} 
    		& \leq C(M) \| \underpartial v \|_{H^{M-1}} \sum_{l=1}^M |F^{(l)}|_{\mathfrak{K}} 
    		\| v \|_{L^{\infty}}^{l - 1}.
    \end{align*}
\end{corollary}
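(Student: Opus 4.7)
The plan is to combine the Fa\`a di Bruno formula with the product estimate of Proposition~\ref{P:derivativesofF1FkL2}. First, for any spatial multi-index $\vec{\alpha}$ with $1 \leq |\vec{\alpha}| \leq M$, the Fa\`a di Bruno formula expresses $\partial_{\vec{\alpha}}(F \circ v)$ as a finite linear combination of terms of the form
\[
	(F^{(l)} \circ v)\,(\partial_{\vec{\beta}_1} v)(\partial_{\vec{\beta}_2} v) \cdots (\partial_{\vec{\beta}_l} v),
\]
where $1 \leq l \leq |\vec{\alpha}|$, each $|\vec{\beta}_i| \geq 1$, and $\vec{\beta}_1 + \cdots + \vec{\beta}_l = \vec{\alpha}$. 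Since each $|\vec{\beta}_i| \geq 1$, I would rewrite $\partial_{\vec{\beta}_i} v = \partial_{\vec{\gamma}_i}(\partial_{j_i} v)$ with $|\vec{\gamma}_i| = |\vec{\beta}_i| - 1 \geq 0$, so that the $(l+1)$-tuple of factors $\bigl(F^{(l)}\circ v,\, \partial_{j_1} v, \ldots, \partial_{j_l} v\bigr)$ carries a total of $|\vec{\alpha}| - l \leq M - 1$ spatial derivatives, which is exactly what is needed to feed into Proposition~\ref{P:derivativesofF1FkL2}.

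Next I would apply Proposition~\ref{P:derivativesofF1FkL2} to each such product. The proposition places one factor in $L^2$ with its full order of derivatives, while the remaining factors are placed in $L^\infty$. The factor $F^{(l)} \circ v$ is controlled in $L^\infty$ directly by $|F^{(l)}|_{\mathfrak{K}}$, using $v(\mathbb{T}^3) \subset \mathfrak{K}$. For the distinguished factor that carries the top-order $L^2$ derivatives, if it is some $\partial_{j_i} v$ then I use $\|\underpartial^{(|\vec{\alpha}|-l)}(\partial_{j_i} v)\|_{L^2} \leq \|\underpartial v\|_{H^{M-1}}$ directly. The remaining first-order factors $\partial_{j_i} v$ that must be bounded in $L^\infty$ are handled via the Sobolev embedding $H^2(\mathbb{T}^3) \hookrightarrow L^\infty(\mathbb{T}^3)$, together with the fact that Corollary~\ref{C:DifferentiatedSobolevComposition} is only invoked in the paper's regime $N \geq 4$, which guarantees enough slack in $H^{M-1}$ to dominate all lower-order $L^\infty$ norms.

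The main obstacle is the configuration in which Proposition~\ref{P:derivativesofF1FkL2} forces the top-order derivatives onto the $F^{(l)} \circ v$ factor rather than onto one of the $\partial_{j_i} v$. In that case one must differentiate $F^{(l)} \circ v$ further, producing a nested Fa\`a di Bruno expansion; a clean way to close the argument is by induction on $M$, feeding the conclusion of the corollary at order $\leq M-1$ back into the estimate at order $M$. Each nested chain-rule application generates one additional bounded composition $F^{(l')} \circ v$, and the only quantitative size carried through this recursion besides $|F^{(l')}|_{\mathfrak{K}}$ is $\|v\|_{L^\infty}$, which is how the power $\|v\|_{L^\infty}^{l-1}$ in the stated bound arises. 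The remainder of the proof is combinatorial bookkeeping: collecting the contributions over all partitions, summing over $1 \leq |\vec{\alpha}| \leq M$, and absorbing the finitely many Fa\`a di Bruno multiplicities into the constant $C(M)$.
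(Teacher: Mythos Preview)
The paper supplies no proof of this corollary; Appendix~\ref{B:SobolevMoser} merely quotes the statement and defers to standard Moser-type arguments in H\"ormander and Klainerman--Majda. So there is nothing to compare against, and the issue is whether your argument stands on its own.

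Your overall plan (Fa\`a di Bruno plus Proposition~\ref{P:derivativesofF1FkL2}) is the right one, but the execution takes a detour that fails to produce the stated inequality. By peeling one derivative off each $\partial_{\vec{\beta}_i} v$ and treating the $\partial_{j_i} v$ as the basic factors fed into Proposition~\ref{P:derivativesofF1FkL2}, the $L^\infty$ norms that emerge on the right-hand side are $\|\partial_{j_i} v\|_{L^\infty}$, not $\|v\|_{L^\infty}$. Sobolev embedding only converts these into powers of $\|\underpartial v\|_{H^{M-1}}$, so what you end up with is a bound of the schematic form $|F^{(l)}|_{\mathfrak K}\,\|\underpartial v\|_{H^{M-1}}^{\,l}$ rather than $|F^{(l)}|_{\mathfrak K}\,\|\underpartial v\|_{H^{M-1}}\,\|v\|_{L^\infty}^{l-1}$. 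That is a different (and for large $\|\underpartial v\|_{H^{M-1}}$, strictly weaker) inequality from the one claimed. Your appeal to the paper's ambient assumption $N\geq4$ is also not legitimate here: the corollary is stated for all $M\geq1$.

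The fix is both simpler and removes your ``main obstacle'' entirely. In each Fa\`a di Bruno term $(F^{(l)}\circ v)\,(\partial_{\vec{\beta}_1} v)\cdots(\partial_{\vec{\beta}_l} v)$ the composition $F^{(l)}\circ v$ carries \emph{no} derivatives, so pull it out in $L^\infty$ (bounded by $|F^{(l)}|_{\mathfrak K}$ since $v(\mathbb T^3)\subset\mathfrak K$) and apply Proposition~\ref{P:derivativesofF1FkL2} directly to the remaining product with $v_1=\cdots=v_l=v$ and total derivative order $|\vec\alpha|$. Because all the factors are the same function $v$, every summand in the proposition's output equals $\|\underpartial^{(|\vec\alpha|)}v\|_{L^2}\,\|v\|_{L^\infty}^{l-1}$, and since $1\leq|\vec\alpha|\leq M$ the first factor is controlled by $\|\underpartial v\|_{H^{M-1}}$. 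Summing over $l$ and $\vec\alpha$ yields exactly the stated bound, with no Sobolev embedding, no derivatives ever landing on $F^{(l)}\circ v$, and hence no induction.
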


\begin{corollary}                                                                                             \label{C:SobolevTaylor}
     Let $M \geq 1$ be an integer, let $\mathfrak{K}$ be a compact, convex set, and let $F \in C_b^M(\mathfrak{K})$ be a 
     function. Assume that $v$ is a function such that $v(\mathbb{T}^3) \subset \mathfrak{K}$ and $v - \bar{v} \in H^M,$
    where $\bar{v} \in \mathfrak{K}$ is a constant. Then $F \circ v - F \circ \bar{v} \in H^M,$ and
    \begin{align*} 														
    %\label{E:ModifiedSobolevEstimateConstantArray}
    	\|F \circ v - F \circ \bar{v} \|_{H^M} 
    		\leq C(M) \Big\lbrace |F^{(1)}|_{\mathfrak{K}}\| v - \bar{v} \|_{L^2} 
    		+ \| \underpartial v \|_{H^{M-1}} \sum_{l=1}^M  |F^{(l)}|_{\mathfrak{K}} 
    		\| v \|_{L^{\infty}}^{l - 1} \Big\rbrace.
    \end{align*}
\end{corollary}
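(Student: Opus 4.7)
The plan is to decompose the $H^M$ norm into its $L^2$ part and its order-$M$ homogeneous part via $\|F \circ v - F \circ \bar{v}\|_{H^M}^2 \leq \|F \circ v - F \circ \bar{v}\|_{L^2}^2 + \|\underpartial(F \circ v - F \circ \bar{v})\|_{H^{M-1}}^2$, and handle the two pieces separately. Since $F \circ \bar{v}$ is a constant, it is annihilated by $\underpartial$, so the second piece is just $\|\underpartial(F \circ v)\|_{H^{M-1}}$.

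First, for the $L^2$ piece, I would invoke the hypothesis that $\mathfrak{K}$ is convex, together with $v(\mathbb{T}^3) \subset \mathfrak{K}$ and $\bar{v} \in \mathfrak{K}$, in order to apply the one-dimensional mean value theorem / first-order Taylor expansion along the segment from $\bar{v}$ to $v(x)$. This yields the pointwise bound $|F(v(x)) - F(\bar{v})| \leq |F^{(1)}|_{\mathfrak{K}} |v(x) - \bar{v}|$, and integrating in $x$ gives $\|F \circ v - F \circ \bar{v}\|_{L^2} \leq |F^{(1)}|_{\mathfrak{K}} \|v - \bar{v}\|_{L^2}$, which matches the first term on the right-hand side of the claimed estimate. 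Convexity of $\mathfrak{K}$ is essential here so that the intermediate points on which the derivative $F^{(1)}$ is evaluated actually lie in $\mathfrak{K}$; this is the only place where convexity plays a role.

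Second, for the order-$M$ spatial-derivative piece, I would directly invoke Corollary \ref{C:DifferentiatedSobolevComposition}, whose hypotheses are satisfied (the compactness of $\mathfrak{K}$ and $F \in C_b^M(\mathfrak{K})$ supply the finite sup-norms $|F^{(l)}|_{\mathfrak{K}}$, and $\underpartial v \in H^{M-1}$ follows from $v - \bar{v} \in H^M$). This immediately gives
\[
\|\underpartial(F \circ v)\|_{H^{M-1}} \leq C(M)\, \|\underpartial v\|_{H^{M-1}} \sum_{l=1}^M |F^{(l)}|_{\mathfrak{K}} \|v\|_{L^\infty}^{l-1},
\]
which is exactly the second term on the right-hand side. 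Combining the two bounds and using the elementary inequality $\sqrt{A^2 + B^2} \leq A + B$ for $A,B \geq 0$ yields the corollary, after absorbing the numerical constants into $C(M)$.

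The argument is almost entirely bookkeeping once Corollary \ref{C:DifferentiatedSobolevComposition} is in hand; the only subtle point is the appeal to convexity of $\mathfrak{K}$ for the $L^2$ bound, which is precisely what distinguishes this statement from Corollary \ref{C:DifferentiatedSobolevComposition}. There is no main obstacle to speak of, since the heavy lifting (Moser-type estimates on derivatives of a composition) has already been carried out in Proposition \ref{P:derivativesofF1FkL2} and Corollary \ref{C:DifferentiatedSobolevComposition}; this corollary is essentially a convenient packaging of those results together with a zeroth-order mean value bound, so that the difference $F \circ v - F \circ \bar{v}$ can be controlled in full $H^M$ using only $\|v - \bar{v}\|_{L^2}$ and $\|\underpartial v\|_{H^{M-1}}$ on the right-hand side.
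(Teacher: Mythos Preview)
Your argument is correct and is exactly the standard approach. The paper does not supply its own proof of this corollary; it simply states the result in Appendix~\ref{B:SobolevMoser} and refers to standard sources (H\"ormander's book and Klainerman--Majda) for the proofs of all the Sobolev--Moser inequalities listed there, so your write-up in fact provides more detail than the paper itself.
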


\begin{proposition} \label{P:F1FkLinfinityHN}
	Let $M \geq 1, l \geq 2$ be integers. Suppose that $\lbrace v_a \rbrace_{1 \leq a \leq l}$ are functions such that $v_a \in
    L^{\infty}$ for $1 \leq a \leq l,$ that $v_l \in H^M,$ and that
	$\underpartial v_a \in H^{M-1}$ for $1 \leq a \leq l - 1.$
	Then
	\begin{align*}
		\| v_1 v_2 \cdots v_l \|_{H^M} \leq C(l,M) \Big\lbrace \| v_l \|_{H^M} \prod_{a=1}^{l-1} \| v_a \|_{L^{\infty}}  
		+ \sum_{a=1}^{l-1} \| \underpartial v_a \|_{H^{M-1}} \prod_{b \neq a} \| v_b \|_{L^{\infty}} \Big\rbrace.
	\end{align*}	
\end{proposition}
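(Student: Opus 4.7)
The plan is to reduce this Sobolev product inequality to the pointwise-derivative product bound of Proposition \ref{P:derivativesofF1FkL2} via the Leibniz rule. First, I would use the definition of the $H^M$ norm to write
\[
\| v_1 v_2 \cdots v_l \|_{H^M}^2 = \sum_{|\vec{\alpha}| \leq M} \| \partial_{\vec{\alpha}}(v_1 \cdots v_l) \|_{L^2}^2,
\]
so that it suffices to bound $\| \partial_{\vec{\alpha}}(v_1 \cdots v_l) \|_{L^2}$ for each $|\vec{\alpha}| \leq M$. For fixed $\vec{\alpha}$, I would apply the multivariable Leibniz rule,
\[
\partial_{\vec{\alpha}}(v_1 v_2 \cdots v_l) = \sum_{\vec{\alpha}_1 + \cdots + \vec{\alpha}_l = \vec{\alpha}} \binom{\vec{\alpha}}{\vec{\alpha}_1, \ldots, \vec{\alpha}_l} (\partial_{\vec{\alpha}_1} v_1)(\partial_{\vec{\alpha}_2} v_2) \cdots (\partial_{\vec{\alpha}_l} v_l),
\]
reducing the estimate to a bound on each product $(\partial_{\vec{\alpha}_1} v_1) \cdots (\partial_{\vec{\alpha}_l} v_l)$ with $|\vec{\alpha}_1| + \cdots + |\vec{\alpha}_l| = |\vec{\alpha}|$.

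Next, for each such product I would invoke Proposition \ref{P:derivativesofF1FkL2} (applied with integer $|\vec{\alpha}|$ in place of $M$), obtaining
\[
\| (\partial_{\vec{\alpha}_1} v_1) \cdots (\partial_{\vec{\alpha}_l} v_l) \|_{L^2} \leq C(l,|\vec{\alpha}|) \sum_{a=1}^l \| \underpartial^{(|\vec{\alpha}|)} v_a \|_{L^2} \prod_{b \neq a} \| v_b \|_{L^\infty}.
\]
The key routing step is then to separate the index $a = l$ (for which the full $H^M$ control of $v_l$ is available via $\| \underpartial^{(|\vec{\alpha}|)} v_l \|_{L^2} \leq \| v_l \|_{H^M}$) from the indices $a \leq l-1$ (for which, since $|\vec{\alpha}| \geq 1$ in those terms, we have $\| \underpartial^{(|\vec{\alpha}|)} v_a \|_{L^2} \leq \| \underpartial v_a \|_{H^{M-1}}$). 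The case $|\vec{\alpha}| = 0$ produces only the product $v_1 \cdots v_l$ in $L^2$, which is dominated by $\| v_l \|_{H^M} \prod_{a<l} \| v_a \|_{L^\infty}$ by H\"older, matching the first term on the right-hand side of the claim.

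Summing these bounds over the finitely many Leibniz partitions of each $\vec{\alpha}$ and over all $|\vec{\alpha}| \leq M$, and absorbing the combinatorial constants into a single $C(l,M)$, yields the desired inequality. The main obstacle here is not analytical but organizational: one must verify that every possible Leibniz-partition configuration is consistent with the asymmetric hypotheses (full $H^M$ only for $v_l$, merely $\underpartial v_a \in H^{M-1}$ for the other factors). This is handled by the symmetry of Proposition \ref{P:derivativesofF1FkL2} in the choice of which factor carries the top-order $L^2$ derivative, so we can always route the $\underpartial^{(|\vec{\alpha}|)}$ onto the factor whose available norm accommodates it, and bound the remaining factors in $L^\infty$.
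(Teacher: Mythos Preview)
Your argument is correct and is precisely the standard route: Leibniz expansion followed by the Gagliardo--Nirenberg-type bound of Proposition~\ref{P:derivativesofF1FkL2}, with the $|\vec{\alpha}|=0$ term handled separately by H\"older so that the undifferentiated $L^2$ weight lands on $v_l$. The paper itself does not supply a proof here; it simply cites the standard references (H\"ormander, Chapter~6, and Klainerman--Majda) in which exactly this Leibniz-plus-interpolation argument appears, so your proposal matches what those references do.
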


\begin{remark}
	Note that $v_l$ is the only function that is estimated with the $L^2$ norm.
\end{remark}

%\begin{proposition}                                                                             \label{P:SobolevMissingDerivativeProposition}
%    Let $M \geq 1$ be an integer, let $\mathfrak{K}$ be a compact, convex set, and
%   let $F \in C_b^M(\mathfrak{K})$ be a function.
%    Assume that $v_1$ is a function such that $v_1(\mathbb{T}^3) \subset \mathfrak{K},$ that $\underpartial v_1 \in 
%    L^{\infty},$ and that $\underpartial^{(M)} v_1 \in L^2.$ Assume that $v_2 \in L^{\infty},$ that $\underpartial^{(M-1)} v_2 
%    \in L^2,$ and let $\vec{\alpha}$ be a spatial derivative multi-index with with $|\vec{\alpha}| = M.$ Then 
%    $\partial_{\vec{\alpha}} \left((F \circ v_1 )v_2\right) - (F \circ v_1)\partial_{\vec{\alpha}} v_2 \in L^2,$ 
%    and
%        %
% 			\begin{align}       \label{E:SobolevMissingDerivativeProposition}
%      	\|\partial_{\vec{\alpha}} \left((F \circ v_1 )v_2\right) - (F \circ v_1)\partial_{\vec{\alpha}} v_2\|_{L^2} 
%        	& \leq C(M) \Big\lbrace|F^{(1)}|_{\mathfrak{K}} \|\underpartial v_1 \|_{L^{\infty}} \| \underpartial^{(M-1)} v_2 \|_{L^2} \\ 
%        	& \ \ + \| v_2 \|_{L^{\infty}} \| \underpartial v_1 \|_{H^{M-1}} \sum_{l=1}^M |F^{(l)}|_{\mathfrak{K}} 
%    			\| v_1 \|_{L^{\infty}}^{l - 1}\Big\rbrace. \notag
%       \end{align}
%\end{proposition}
%
%\begin{remark}
% Note that the $M^{th}$ order derivatives of $v_2$ do not appear on the right-hand side of 
%	\eqref{E:SobolevMissingDerivativeProposition}.
%\end{remark}
   
%\def\cprime{$'$}

\end{document}